\DeclareMathOperator{\route}{route}
\DeclareMathOperator{\cycles}{cycles}
\DeclareMathOperator{\gloop}{gg}
\newcommand{\Id}{\text{Id}}
\newcommand{\symbool}{\text{Sym}^V_{bool}}
\newcommand{\PM}{\mathcal{PM}}
\newcommand{\M}{\mathcal{M}}
\newcommand{\falling}[1]{\underline{\underline{#1}}}
\newcommand{\rising}[1]{\overline{\overline{#1}}}
\newcommand{\lrarrow}{\leftrightarrow}
\newcommand{\notlrarrow}{\not\lrarrow}
\title{Almost-Orthogonal Bases for Inner Product Polynomials}
\author{Chris Jones\thanks{University of Chicago. {\tt csj@uchicago.edu.} Supported in part by NSF grant CCF-2008920.
}\and Aaron Potechin\thanks{University of Chicago. {\tt potechin@uchicago.edu.} Supported in part by NSF grant CCF-2008920.}}
\date{\vspace{-5ex}}
\begin{document}

\maketitle
\begin{abstract}
    In this paper, we consider low-degree polynomials of inner products between a collection of random vectors. We give an almost orthogonal basis for this vector space of polynomials when the random vectors are Gaussian, spherical, or Boolean. In all three cases, our basis admits an interesting combinatorial description based on the topology of the underlying graph of inner products. 
    
    We also analyze the expected value of the product of two polynomials in our basis.
    In all three cases, we show that this expected value can be expressed in terms of collections of matchings on the underlying graph of inner products. In the Gaussian and Boolean cases, we show that this expected value is always non-negative. In the spherical case, we show that this expected value can be negative but we conjecture that if the underlying graph of inner products is planar then this expected value will always be non-negative.
\end{abstract}

\section{Introduction}
When we have a collection of random variables, it is often extremely useful to find a basis of polynomials in the random variables which is orthonormal under the natural inner product $\ip{f}{g} \defeq \E[f\cdot g]$. Some important examples are as follows:
\begin{enumerate}
    \item If $x$ is a random point of the Boolean hypercube $\{-1,1\}^n$ then the multilinear monomials $\{\prod_{i \in S}{x_i}: S \subseteq [n]\}$ are an orthonormal basis.
    \item When we have a single Gaussian variable $x \sim \calN(0,1)$, the Hermite polynomials (with the correct normalization) are an orthonormal basis. When $x$ is an $n$-dimensional vector with Gaussian coordinates (i.e. $x \sim \calN(0,\Id_n)$), the multivariate Hermite polynomials form an orthonormal basis.
    \item When $x \in \mathbb{R}^n$ is a random unit vector (i.e. $x \unif S^{n-1}$), spherical harmonics give an orthonormal basis.
\end{enumerate}

In this paper, we consider polynomials of inner products between a collection of random vectors. More precisely,
fix a finite set of vertices $V$ and $n \in \N$ and consider drawing i.i.d. random $n$-dimensional
vectors $d_u$ for each $u \in V$. We will work in three settings: when the $n$-dimensional vector $d_u$ is a standard Gaussian, a uniform unit vector, and a uniform Boolean vector. We consider polynomials in the variables $d_{u,i}$ with real coefficients which have degree less than $n$ and are orthogonally invariant i.e. unchanged if the $\{d_u\}$ are simultaneously replaced by $\{Td_u\}$ for any orthogonal matrix $T$. Any such orthogonally invariant polynomial will also be expressible\footnote{When $d_u$ is a Boolean vector, we instead require that the polynomials are invariant under permutations of $[n]$ and changing the signs of coordinates (i.e. automorphisms of the Boolean hypercube). In this setting, in addition to inner products, we also have $k$-wise inner products for all even $k > 2$. For more details, see \cref{sec:boolean-case}.} in terms of the inner product variables $x_{uv} \defeq \ip{d_u}{d_v}$.

A natural spanning set for the space of orthogonally invariant polynomials is the set of monomials $\prod_{u, v \in V} x_{uv}^{k_{uv}}$ where each $k_{uv} \in \N$. Equivalently, there is one monomial for each undirected multigraph on $V$ (with self-loops allowed in the Gaussian case): 
for the monomial $\prod_{u, v \in V} x_{uv}^{k_{uv}}$ we take the graph where there are $k_{uv}$ multi-edges from $u$ to $v$. We denote this monomial by $m_G$ where $G$ is the underlying graph.

However, the monomials $m_G$ are not orthogonal. For example, one can check that in the Gaussian case, the graph shown in \cref{fig:four-cycle} has
\[ \E[x_{12}x_{23}x_{34}x_{14}] = \E_{d_1,d_2,d_3,d_4 \sim \mathcal{N}(0, \Id_n)}[\ip{d_1}{d_2}\ip{d_2}{d_3}\ip{d_3}{d_4}\ip{d_1}{d_4} ] = n.\]
\begin{figure}[h]
    \centering
    {\begin{tikzpicture}[scale=0.15]
\tikzstyle{every node}+=[inner sep=0pt]
\draw [black] (36.7,-34.3) circle (3);
\draw (36.7,-34.3) node {$4$};
\draw [black] (48.8,-34.3) circle (3);
\draw (48.8,-34.3) node {$3$};
\draw [black] (36.7,-20.8) circle (3);
\draw (36.7,-20.8) node {$1$};
\draw [black] (48.8,-20.8) circle (3);
\draw (48.8,-20.8) node {$2$};
\draw [black] (36.7,-23.8) -- (36.7,-31.3);
\draw [black] (39.7,-34.3) -- (45.8,-34.3);
\draw [black] (48.8,-31.3) -- (48.8,-23.8);
\draw [black] (45.8,-20.8) -- (39.7,-20.8);
\end{tikzpicture}}
    \caption{}
    \label{fig:four-cycle}
\end{figure}
Our goal in this paper is to orthogonalize the $m_G$ into a basis of polynomials $p_G$. As it turns out, the basis $p_G$ which we will obtain is not quite orthogonal, but it is very close. In particular, we will have that $\ip{p_G}{p_H} = 0$ unless $V(G) = V(H)$ and $G$ and $H$ have the same degree at every vertex. In addition, even when $G \neq H$ and $\ip{p_G}{p_H} \neq 0$, $\ip{p_G}{p_H}$ will be small (see Lemma \ref{lem:approximate-inversion}).

While the $p_G$ basis is not quite orthogonal, it exhibits some surprisingly beautiful combinatorics based on the underlying graph $G$.
Even computing $\E[m_G]$, one can already see a connection to the topology of
the graph $G$. In the Gaussian case, the magnitude of 
$\E[m_G]$ is $n^k$ where $k$ is the maximum number of cycles that $E(G)$ can be partitioned into (and is 0 if $G$ has a vertex with odd degree) and analogous results hold for the spherical and Boolean cases (see \cref{lem:monom-expectation}, \cref{lem:sphere-monom-expectation}, and \cref{lem:boolean-expectation}).
A theme of this paper is that quantities involving the $m_G$ and $p_G$ may not have clean
exact formulas, but their magnitudes in $n$ are determined by combinatorial and topological
properties of $G$.

\subsection{Outline}

In the remainder of the introduction, we give more overview on the $p_G$ in general.
In \cref{sec:gaussian-case} we specialize to the Gaussian case $d_u \sim \calN(0, \Id_n)$.
In the Gaussian case, the calculations work out cleanly once one has
the right definitions.
In \cref{sec:spherical-case} we continue to the spherical case. Here the calculations become more involved, and we investigate a conjecture relating the spherical case to planar graphs.
In \cref{sec:boolean-case} we investigate the Boolean case $d_u \unif \{-1,+1\}^n$ which combines aspects of the Gaussian and spherical cases. In \cref{sec:inversion-formula} we give a ``Fourier inversion'' lemma for potential applications. \cref{sec:appendix} lists some small polynomials. For the purpose of 
gaining intuition about the family $p_G$, it may be helpful to carry around a few small examples from the tables in~\cref{sec:appendix} and see how the results and proofs apply to these polynomials.

\subsection*{Acknowledgements}

We would like to thank Goutham Rajendran for discussions and many comments on this work.
We also thank Mrinalkanti Ghosh, Fernando Granha Jeronimo, and Madhur Tulsiani for early
discussions on the polynomials in the context of Sum-of-Squares.

\subsection{Constructing the polynomials}

Given any inner product on polynomials, we can automatically construct an orthonormal basis of polynomials by using the Gram-Schmidt process.
However, to run Gram-Schmidt, it is necessary to choose an order. A natural order for polynomials is by degree, though within each degree it is not clear how the polynomials should be ordered. We skirt this issue by only orthogonalizing a monomial against polynomials with lower degree\footnote{Degree of a polynomial in this paper always
refers to total degree.}.
The resulting polynomials we produce are ``mostly orthogonal'', with $\E[p_G \cdot p_H]$ possibly nonzero for polynomials of the same degree (in fact, they will be orthogonal unless $G$ and $H$ have the same degree on every vertex).
We call this the \textit{degree-orthogonal Gram-Schmidt process}.

\begin{definition}
    A polynomial family $\{p_I\}_{I \in \calI}$ is degree-orthogonal (with respect to $\calD$) if $\E_{d \sim \calD}[p_I(d) p_J(d)]~=~0$ whenever
    $\deg(p_I) \neq \deg(p_J)$.
\end{definition}

The degree-orthogonal Gram-Schmidt process outputs the unique monic degree-orthogonal basis. 
\begin{fact}[(Uniqueness of Gram-Schmidt orthogonalization)]
\label{lem:unique-gs}
Let $\{m_I\}_{I \in \calI}$ be the set of monomials of degree at most $\tau$ in
a set of variables $\nu$ and let $\calD$ be a distribution on $\R^{\nu}$
such that $\{m_I\}_{I \in \calI}$ are linearly independent as functions on the support of $\calD$. There is a unique set of monic polynomials $\{p_I\}_{I \in \calI}$ such that 
\begin{enumerate}[(i)]
\item The unique monomial of maximum degree in $p_I$ is $m_I$,
\item The family $p_I$ is degree-orthogonal with respect to $\calD$.
\end{enumerate}
Furthermore, the $p_I$ are linearly independent and span the same space as the $m_I$.
\end{fact}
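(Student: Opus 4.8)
The plan is to recognize $\ip{f}{g} = \E_{d\sim\calD}[f(d)g(d)]$ as a genuine (positive-definite) inner product on the space $W \defeq \mathrm{span}\{m_I\}_{I\in\calI}$, and then construct the $p_I$ degree by degree as orthogonal projections, proving existence first and uniqueness second. First I would check positive-definiteness: the form $\ip{\cdot}{\cdot}$ is always symmetric and positive semidefinite, and $\ip{f}{f} = \E[f^2] = 0$ forces $f$ to vanish on $\mathrm{supp}(\calD)$; by the hypothesis that the $\{m_I\}$ are linearly independent as functions on $\mathrm{supp}(\calD)$, this forces $f = 0$ for $f \in W$. Hence $\ip{\cdot}{\cdot}$ is an inner product on the finite-dimensional space $W$, so every subspace $S\subseteq W$ has a bona fide orthogonal complement with $W = S \oplus S^{\perp}$.

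Next I would set up the degree filtration. Let $W_{\le k} \defeq \mathrm{span}\{m_I : \deg(m_I)\le k\}$, so that $0 = W_{\le -1}\subseteq W_{\le 0}\subseteq\cdots\subseteq W_{\le \tau} = W$, and define $U_k \defeq W_{\le k}\cap (W_{\le k-1})^{\perp}$. Positive-definiteness yields the orthogonal splitting $W_{\le k} = W_{\le k-1}\oplus U_k$, and inductively $W = U_0\perp U_1\perp\cdots\perp U_\tau$ with $\dim U_k = \#\{I:\deg(m_I)=k\}$. For existence, for each $I$ with $\deg(m_I)=k$ set $p_I \defeq m_I - \mathrm{proj}_{W_{\le k-1}}(m_I)\in U_k$. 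Then $p_I - m_I\in W_{\le k-1}$, so $m_I$ is the unique maximum-degree monomial of $p_I$ with coefficient $1$, giving (i); and if $\deg(m_I)\ne\deg(m_J)$ then $p_I,p_J$ lie in orthogonal summands, giving (ii). Since each $p_I\equiv m_I \pmod{W_{\le k-1}}$, the images of $\{p_I:\deg(m_I)=k\}$ form a basis of $W_{\le k}/W_{\le k-1}\cong U_k$, so these $p_I$ are a basis of $U_k$; the union over $k$ is then a basis of $W$, which establishes the ``furthermore'' clause.

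The harder direction is uniqueness. Given any family $\{p_I\}$ satisfying (i)--(ii), put $U_k'\defeq\mathrm{span}\{p_I:\deg(m_I)=k\}$. Because each $p_I$ is $m_I$ plus strictly-lower-degree monomials, a unitriangular change-of-basis argument shows $\mathrm{span}\{p_I:\deg(m_I)\le k\} = W_{\le k}$, and a dimension count then gives $W_{\le k} = W_{\le k-1}\oplus U_k'$ with $U_k'\cap W_{\le k-1} = 0$. The crucial step is to promote this to an orthogonal splitting: by induction $W_{\le k-1} = U_0'\perp\cdots\perp U_{k-1}'$, and degree-orthogonality (ii) gives $U_k'\perp U_j'$ for every $j<k$, hence $U_k'\perp W_{\le k-1}$; combined with $U_k'\subseteq W_{\le k}$ and $U_k'\cap W_{\le k-1}=0$, this forces $U_k' = W_{\le k}\cap(W_{\le k-1})^{\perp} = U_k$. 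Finally, inside the orthogonal direct sum $W_{\le k} = W_{\le k-1}\oplus U_k$, the unique element of $U_k$ congruent to $m_I$ modulo $W_{\le k-1}$ is $\mathrm{proj}_{U_k}(m_I)$; since (i) places $p_I\in U_k$ with $p_I\equiv m_I$, we get $p_I = \mathrm{proj}_{U_k}(m_I)$, matching the constructed family.

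I expect the main obstacle to be exactly this uniqueness step: justifying that cross-degree orthogonality \emph{alone} (with no orthogonality imposed \emph{within} a fixed degree) still rigidly pins each span $U_k'$ to the canonical complement $U_k$. The filtration-and-dimension bookkeeping must be carried out with care, but once the spans are forced to coincide, pinning down each individual $p_I$ by its leading monomial is immediate.
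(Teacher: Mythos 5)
Your proof is correct and follows essentially the same route as the paper's: the paper's two-line argument is precisely your final step, namely that (i) confines $p_I$ to the span of $m_I$ together with the lower-degree monomials, (ii) forces $p_I$ to be orthogonal to that codimension-one lower-degree subspace, and monicity then pins $p_I$ down. Your write-up simply makes explicit what the paper glosses over --- the unitriangular change of basis showing $\mathrm{span}\{p_J : \deg(p_J) < k\} = \mathrm{span}\{m_J : \deg(m_J) < k\}$ (needed because (ii) only gives orthogonality to the lower-degree $p_J$, not directly to the $m_J$), the existence construction via orthogonal projection, and the ``furthermore'' clause.
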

\begin{proof}
Condition (i) says that $p_I$ lies in the space $\linspan(\{m_I\} \cup \{m_J \; : \; \deg(m_J) < \deg(m_I)\})$. Condition (ii) says that $p_I$ is orthogonal to the latter subspace of codimension 1, and therefore $p_I$ is determined since it's monic.
%
\end{proof}
\begin{remark}
    Our monomials $\{m_G\}$ are not linearly independent when the degree is too high. In this case, $\{p_G\}$ will be a spanning set rather than a basis.
\end{remark}

However, Gram-Schmidt certainly does not guarantee any nice description of the resulting
polynomials. It turns out that the $p_G$ also have closed-form combinatorial descriptions and we now give one such description.
However, calculations are still a pain using this description.
In the next sections we will give alternate combinatorial formulas for the $p_G$ based on 
collections of matchings that allow for calculations, and also highlight
the connection between the $p_G$ and the topology of the graph $G$.

Let $d \sim \calD^{\otimes V}$ for some distribution $\calD$ on $\R^n$, which we will later take
to be either Gaussian, uniformly spherical, or uniformly Boolean.
We want to find an orthogonal polynomial
basis for $\Aut(\calD)$\nobreakdash-invariant functions. Let $\{\chi_{\alpha}: \alpha \in \N^n\}$ be the monic polynomial family on $\R^n$ which is degree-orthogonal under $\calD$ (this is the orthogonal basis for entries of a single vector, e.g. the multivariate Hermite polynomials in the Gaussian case).
We assume that we have a set $\{m_I\}_{I \in \calI}$ of homogeneous polynomials in the $d_{u,i}$ which form a (not necessarily orthogonal) basis for the $\Aut(\calD)$-invariant functions up to a certain degree. For example, this can be the inner product functions $ \prod_{u,v \in V}\ip{d_u}{d_v}^{k_{uv}}$ in the Gaussian and spherical cases. Construct $\{p_I\}_{I \in \calI}$ by applying to $m_I$ the map (extending by linearity),
\[ \prod_{u \in V} d_u^{\alpha_u} \mapsto \prod_{u \in V} \chi_{\alpha_u}(d_u).\]
In words, each monomial is replaced by the $\calD$-orthogonal polynomial with that leading monomial. 
As shown by the following proposition, the $p_I$ are monic, degree-orthogonal, and have the same degree as the corresponding $m_I$, so if the $p_I$ are $\Aut(\calD)$-invariant then the $p_I$ are a monic degree-orthogonal basis for the $\Aut(\calD)$-invariant functions, and hence equal the output of the Gram-Schmidt process on $m_I$.
\begin{proposition}
    \label{prop:generic-construction}
    The polynomials $\{p_I\}_{I \in \calI}$ are monic, satisfy $\deg(p_I) = \deg(m_I)$, and
    are degree-orthogonal.
    \begin{proof}
        The degree is preserved by the map sending $d_u^\alpha \mapsto \chi_\alpha(d_u)$ and the leading coefficient is 1 since the $\chi_\alpha$ are monic.
        Suppose that $p_I, p_J$ have distinct degrees; then so do $m_I, m_J$. 
        For each term in the expression
        $p_I p_J$,
        because $m_I, m_J$ are homogeneous and have different degree, 
        there must be $u \in V$ such that
        the degree in $d_u$ differs between $p_I, p_J$.
        Because of degree-orthogonality of the $\chi_\alpha$,
        the expectation over $d_u$ is zero.
    \end{proof}
\end{proposition}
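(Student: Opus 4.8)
The plan is to view the entire construction as a single linear map and reduce all three claims to two structural facts about it. Define $\Phi$ on polynomials in the $d_{u,i}$ by $\Phi\bigl(\prod_{u \in V} d_u^{\alpha_u}\bigr) = \prod_{u \in V} \chi_{\alpha_u}(d_u)$ and extend linearly (this is well-defined since the $\prod_u d_u^{\alpha_u}$ are a genuine monomial basis), so that by definition $p_I = \Phi(m_I)$. The two facts I would isolate are that $\Phi$ acts as the identity on top-degree parts, and that it respects the block structure indexed by $V$ under the product measure $\calD^{\otimes V}$.

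First I would dispatch monicity and the degree claim together. Since each $\chi_\alpha$ is monic of degree $|\alpha|$, we have $\chi_{\alpha_u}(d_u) = d_u^{\alpha_u} + (\text{terms of degree} < |\alpha_u| \text{ in } d_u)$, so multiplying over $u$ the unique top-degree term of $\prod_u \chi_{\alpha_u}(d_u)$ is $\prod_u d_u^{\alpha_u}$. Hence on the top-degree homogeneous part $\Phi$ acts as the identity. Because $m_I$ is homogeneous, applying $\Phi$ term by term gives $p_I = m_I + (\text{strictly lower-degree terms})$, which yields both that $p_I$ is monic with leading part $m_I$ and that $\deg(p_I) = \deg(m_I)$.

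For degree-orthogonality I would expand both factors and exploit that the blocks $d_u$ are i.i.d. Writing $p_I = \sum_t c_t \prod_u \chi_{\alpha^{(t)}_u}(d_u)$ and $p_J = \sum_s c'_s \prod_u \chi_{\beta^{(s)}_u}(d_u)$, linearity of expectation together with independence across $u$ gives $\E[p_I p_J] = \sum_{t,s} c_t c'_s \prod_u \E_{d_u}\bigl[\chi_{\alpha^{(t)}_u}(d_u)\,\chi_{\beta^{(s)}_u}(d_u)\bigr]$. By the single-vector degree-orthogonality of the $\chi_\alpha$, each factor vanishes unless $|\alpha^{(t)}_u| = |\beta^{(s)}_u|$. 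When $\deg(m_I) \ne \deg(m_J)$, homogeneity of $m_I, m_J$ forces $\sum_u |\alpha^{(t)}_u| = \deg(m_I) \ne \deg(m_J) = \sum_u |\beta^{(s)}_u|$ for every pair $(t,s)$, so some block $u$ must have $|\alpha^{(t)}_u| \ne |\beta^{(s)}_u|$ and that factor is zero; thus every term of the sum vanishes separately.

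The main thing to get right is the bookkeeping of block-degrees: the argument hinges on recognizing that the per-vertex degree $|\alpha_u|$ is exactly the degree of the single-vector polynomial $\chi_{\alpha_u}$ placed in block $u$, and that summing these over $u$ recovers the total degree of the homogeneous $m_I$. Once this is in place, distinct total degrees force a per-block mismatch in every cross term, and single-vector orthogonality finishes the job with no cancellation between terms needed. I would also remark that the same computation, tracked vertex by vertex, gives the stronger orthogonality advertised in the introduction (vanishing whenever $G$ and $H$ differ in degree at some vertex), although only the total-degree version is required here. The separate matter of whether the $p_I$ are $\Aut(\calD)$-invariant, which is what lets one identify them with the Gram-Schmidt output, is not part of this proposition and would be argued elsewhere.
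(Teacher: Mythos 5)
Your proof is correct and takes essentially the same approach as the paper's: monicity and degree preservation follow from the monicity of the $\chi_\alpha$, and degree-orthogonality follows because homogeneity of $m_I, m_J$ forces, in every cross term, some vertex $u$ at which the block-degrees $\abs{\alpha^{(t)}_u} \neq \abs{\beta^{(s)}_u}$ differ, so that single-vector degree-orthogonality kills that term. Your write-up simply makes explicit the factorization of $\E[p_I p_J]$ over the independent blocks $d_u$, which the paper's terser argument leaves implicit.
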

However, it's not clear that the new polynomials $p_I$ have the desired $\Aut(\calD)$ symmetry
without more assumptions on $\calD$. For our settings we will check that this is indeed the case.

\subsection{Related work}

Some of the combinatorics of the monomials $m_G$ is captured by the \emph{circuit partition polynomial}~\cite{Bollobas02CircuitPartitionPolynomial} (see also 
the \emph{Martin polynomial}~\cite{MartinThesis, EM98MartinPolynomial}) which is the univariate
generating function for circuit partitions of $G$:
\[r_G(x) = \sum_{k \geq 0} r_k(G) x^k \]
where $r_k(G)$ is the number of ways to split the edges of $G$ into exactly $k$ circuits.
${r_G(n) = \E[m_G]}$ for the Gaussian distribution, as we show in \cref{lem:monom-expectation}.
This formula
was also
computed by Moore and Russell~\cite{MR10CircuitPartitions}, who also
prove the spherical case, \cref{lem:sphere-monom-expectation}.

Although Gram-Schmidt works well for univariate polynomials, in general finding an \emph{explicit} orthogonal basis of polynomials for a given space is a difficult task.
Examples include polynomials on the unit ball and simplex~\cite{DaiXu-SphericalPolysBook} or a
slice of the hypercube~\cite{Filmus16SliceBasis}.
Occasionally it is simpler to find a degree-orthogonal family, as we do here.
For example, ``the'' spherical harmonics (as originally given by Laplace in $n=3$ dimensions,
see Chapter 4 of~\cite{DunklXuBookOrthogonalPolynomials} for general $n$)
are an orthogonal basis for functions on the sphere.
However, it is easier to use the ``Maxwell representation'', which is only degree-orthogonal, as we do in \cref{sec:spherical-case}.

To the best of our knowledge, the $p_G$ have not been explored before.
We now compare the $p_G$ with several similar families of polynomials.

When $G$ equals $k$ multiedges between two vertices 1 and 2, $p_G$
generalizes a univariate orthogonal polynomial family evaluated on $\ip{d_1}{d_2}$. For the spherical case
this is the Gegenbauer polynomials. For the Boolean case, this is the Kravchuk polynomials (after an affine shift).
For the Gaussian case, $p_G$ also depends on $\norm{d_1}$ and $\norm{d_2}$, but
evaluated on $\ip{d_1}{d_1} = \ip{d_2}{d_2} = n$ this is the (probabilist's) Hermite polynomials. 

For a collection of jointly Gaussian random variables $X_i$, 
the \textit{Wick product} gives a monic orthogonal polynomial family under the expectation inner product~\cite{JansonBookGaussianHilbertSpaces}. In our set-up, there are two differences with 
the Wick product. First, the variables $x_{uv}$ are not themselves Gaussian; they are individually distributed as $\sqrt{X} \cdot Y$ where $X$ is a chi-squared random variable with $n$ degrees of freedom and $Y$ is an independent standard Gaussian. This however could be fixed by using bipartite graphs $G$ and sampling $d_u$ as a Gaussian vector on one bipartition and as a spherical vector on the other. The second and more important difference is that even if this change is made, the $x_{uv}$ are individually Gaussian but not jointly Gaussian. The graph structure of $G$ enforces nontrivial correlations. For example, in the four-cycle given earlier in Figure~\ref{fig:four-cycle}, each edge is mean-zero and each pair of edge variables is uncorrelated, and so if the variables were jointly Gaussian then they would be independent and mean-zero. However, $\E[x_{12}x_{23}x_{34}x_{14}] > 0$.

The \textit{matching polynomial} of a graph $G$ is the univariate generating function for the number of matchings in $G$. Despite both families generalizing e.g. the Hermite polynomials, the matching polynomials and $p_G$ seem incomparable.

For a permutation group $G \leq S_k$, one defines the \textit{cycle polynomial}~\cite{CameronSemeraro18CyclePolynomial}
\[\displaystyle\sum_{g \in G} x^{\text{number of cycles in }g}.\]
Though this is similar in appearance to some calculations in this paper, there is not a clear group $G$ associated with the matching structures that we consider.

\subsection{Applying the \texorpdfstring{$p_G$}{pG} basis}

We end the introduction by describing how the $p_G$ basis may be applied. 
The $p_G$ basis behaves like a Fourier basis for orthogonally invariant functions of a collection of vectors $d = \{d_u\}$. While other bases may be simpler, the $p_G$ basis is specialized to orthogonally invariant functions and it exhibits nontrivial combinatorial cancellations which would be hard to spot and explain in other bases, and which might be intrinsic to some problems. 
We expect that the $p_G$ will be most useful for applications where we work with large $n$ and relatively low-degree moments of $\calD$, such as analyzing the sum of squares hierarchy or the trace power method at low degrees.

We encountered the $p_G$ basis in the course of the work~\cite{GJJPR20SherringtonKirkpatrickPlantedAffinePlanes}, in which a superset of the current authors prove lower bounds against the sum of squares hierarchy for the Sherrington-Kirkpatrick problem. Technically, this work constructs a matrix $\calM$ which is a function of a collection of random Gaussian vectors $\{d_u\}$; the entries of $\calM$ are naturally expressed (via ``pseudocalibration'') in terms of an orthogonal polynomial basis evaluated on the $d_u$. 
Ultimately, we ended up using the standard Hermite basis as this was sufficient for our purposes, though we also considered using the $p_G$ basis.

\section{Polynomial Basis for the Gaussian Setting}
\label{sec:gaussian-case}

In this section we investigate the family $\{p_G\}$ when $d_u \sim \calN(0, \Id_n)$ 
i.i.d. The graph $G$ is a multigraph on $V$, possibly with self-loops. 
We will develop a combinatorial understanding of the polynomials through ``routings'' (\cref{def:routing})
and use it to give formulas for the inner product (\cref{lem:gaussian-inner-product}) and variance (\cref{cor:gaussian-variance}).

Note that the $m_G$ are not completely linearly independent. For example,
if $n = 1$, then $m_G$ is determined by its degrees on each vertex.
Despite this, the low-degree monomials are linearly independent.
\begin{lemma}
    The set of $m_G$ for $\abs{E(G)} \leq n$ is linearly independent.
\end{lemma}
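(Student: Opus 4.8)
The plan is to prove the linear independence of $\{m_G : \abs{E(G)} \le n\}$ by exhibiting a specific evaluation at which the monomials are ``decoupled.'' The monomials $m_G = \prod_{u,v} x_{uv}^{k_{uv}}$ are ordinary monomials in the inner-product variables $x_{uv} = \ip{d_u}{d_v}$, so if the $x_{uv}$ could range over an open subset of $\R^{\binom{V}{2}}$ (or all of it), linear independence would be immediate since distinct monomials in free variables are linearly independent. The only obstacle to this is that the $x_{uv}$ are \emph{not} free: they are the Gram matrix entries of vectors living in $\R^n$, and when $\abs{V}$ exceeds $n$ the Gram matrix is forced to be rank-deficient, imposing polynomial relations among the $x_{uv}$. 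So the heart of the matter is to show that as long as every graph $G$ in our collection has at most $n$ edges, the relevant monomials still see enough ``room'' in the space of realizable Gram matrices.

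\textbf{Main approach.} First I would recall that a symmetric matrix $X \in \R^{V \times V}$ is the Gram matrix of vectors in $\R^n$ if and only if $X \succeq 0$ and $\rank(X) \le n$. Thus the $x_{uv}$ range over the image of the map sending $(d_u)_{u \in V}$ to their Gram matrix, which is the cone of PSD matrices of rank at most $n$. The key observation is a \emph{dimension/degree count}: a monomial $m_G$ with $\abs{E(G)} \le n$ involves at most $n$ edges, so it only reads off at most $n$ of the off-diagonal entries (with multiplicity). I would try to set up a substitution that makes the relevant $x_{uv}$ genuinely independent. Concretely, suppose toward a contradiction that $\sum_G c_G m_G = 0$ identically on the support. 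I would pick, for each graph $G$ appearing with $c_G \neq 0$, an orientation or a way to ``spread out'' its edges across the $n$ coordinate directions, using that there are exactly $n$ coordinates available and at most $n$ edges to place.

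\textbf{Key steps in order.} (1) Reduce to showing that the evaluation map, restricted to the monomials of interest, is injective on coefficients. (2) Construct an explicit family of vector assignments: I would assign to each edge $e$ of the combined support a distinct ``direction,'' parametrized so that each inner product $x_{uv}$ becomes an independent formal parameter $t_e$ when $G$ has at most $n$ edges. The most natural device is to let the vectors $d_u$ have entries that are formal variables and then argue that the resulting $x_{uv}$ are algebraically independent up to the rank-$n$ constraint, which only becomes binding once more than $n$ edges are present. (3) Conclude that a vanishing linear combination forces all $c_G = 0$, since distinct $G$ give distinct monomials in these now-independent parameters. I expect the cleanest route is an induction on $\abs{V}$ or on the number of edges, peeling off one edge at a time and using that we always have a fresh coordinate available.

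\textbf{Where the difficulty lies.} The main obstacle is making precise the claim that ``at most $n$ edges'' exactly matches ``enough free directions,'' i.e.\ controlling the rank constraint. With $n$ coordinates the Gram matrices of rank up to $n$ have a rich enough image that any single monomial of degree at most $n$ in the off-diagonal entries can be realized nontrivially, but ruling out \emph{cancellation across different} $G$ requires a genuinely separating evaluation, not just a nonzero one. I anticipate the technical crux is either (a) finding vectors $d_u$ for which the map $G \mapsto m_G(d)$ separates all graphs with $\abs{E(G)} \le n$ simultaneously, or (b) arguing via a generic/algebraic-independence statement that the off-diagonal $x_{uv}$ impose no relation of total degree $\le n$. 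Approach (b) feels more robust: I would show that the ideal of relations among the $x_{uv}$ (generated by the vanishing of all $(n+1)\times(n+1)$ minors of the Gram matrix) contains no element of degree at most $n$ that is a combination of our monomials, since every minor relation has degree exactly $n+1$ in the matrix entries, hence cannot produce a dependency among monomials of total degree at most $n$.
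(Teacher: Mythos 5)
Your proposal (in its concrete version, approach (b)) is correct in outline but takes a genuinely different route from the paper, and it outsources its crux to a theorem you assert rather than prove. The paper's proof is elementary and self-contained: expand each $m_G$ in the coordinates $d_{u,i}$ as a sum over edge-labelings $\sigma : E(G) \to [n]$; since $\abs{E(G)} \leq n$ there is an injective labeling, and from the resulting monomial (each label used on exactly one edge) one can read the graph $G$ back off, so this monomial's coefficient in $\sum_G c_G m_G$ is a nonzero multiple of $c_G$, forcing $c_G = 0$. This is precisely the completion of your ``main approach'' of spreading edges over distinct coordinates; the injective labeling plus graph-recovery step is what rules out the cross-$G$ cancellation you correctly flagged as the difficulty but left unresolved. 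Your route (b) instead requires that the kernel of the substitution $x_{uv} \mapsto \ip{d_u}{d_v}$ be generated by the $(n+1) \times (n+1)$ minors of the Gram matrix. Granting that, your degree count is sound: an ideal generated by homogeneous polynomials of degree $n+1$ contains no nonzero element of degree $\leq n$, and distinct multigraphs give distinct monomials in the $x_{uv}$. But that generation statement is exactly the second fundamental theorem of invariant theory for the orthogonal group (equivalently, Kutz's theorem that symmetric determinantal ideals are prime); it is classical but far from obvious, and must be cited, not treated as the definition of ``the ideal of relations.'' Two further notes: (i) your detour through the PSD cone and separating evaluations is unnecessary and would force you to handle Zariski-density of real points; since linear independence as functions on $(\R^n)^V$ is the same as linear independence as formal polynomials in the $d_{u,i}$, only the algebraic kernel of the substitution homomorphism matters, which is what the fundamental theorem describes. (ii) What your heavier machinery buys is a structural explanation of the threshold: all relations live in degree $\geq n+1$, so the bound $\abs{E(G)} \leq n$ is exactly sharp against the minor relations, a fact the paper's combinatorial argument does not reveal.
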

\begin{proof}
    Suppose that $\sum_{G: \abs{E(G)} \leq n} c_G m_G = 0$; we show $c_G = 0$.
    Each inner product $\ip{d_u}{d_v}$ can be expanded as $\sum_{i=1}^n d_{u,i}d_{v,i}$. In this way, each edge gets a label from 1 to $n$. Expanding $m_G$,
    \[ m_G = \sum_{\sigma: E(G) \to [n]} \prod_{\{u,v\} \in E(G)} d_{u, \sigma(\{u,v\})}d_{v, \sigma(\{u,v\})}.\]
    Since $\abs{E(G)} \leq n$, one monomial that appears in $m_G$ will have $\sigma$ assign a distinct label to each edge.
    We claim that this monomial appears in the sum with coefficient $c_G$: because the edge labels are distinct, we can recover the graph $G$ from the monomial.
    Therefore $c_G = 0$.
\end{proof}
For low-degree polynomials the $p_G$ will therefore be a basis.

The polynomials $p_G$ admit several nice combinatorial descriptions based on the graph $G$. To see why something combinatorially nice might be expected to happen, there is a combinatorially-flavored method for computing $\E[m_G]$ via Isserlis' theorem (also known as Wick's lemma).
\begin{lemma}[(Isserlis' theorem)]
Fix vectors $d_1, \dots, d_{2k} \in \R^n$. Then for $v$ a standard $n$-dimensional Gaussian random variable,
\[\E_{v} [\ip{v}{d_1}\cdots \ip{v}{d_{2k}}] = \displaystyle\sum_{\substack{\text{perfect matchings}\\ M \text{ on }[2k]}} \prod_{(u,v) \in {M}} \ip{d_u}{d_v}.\]
Observe also that the expectation is zero when there are an odd number of inner products.
\end{lemma}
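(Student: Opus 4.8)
The plan is to reduce the statement to a single generating-function identity by treating the $2k$ quantities $\ip{v}{d_i}$ as a family of linear forms in $v$ and packaging all their moments at once. First I would introduce formal variables $t_1, \dots, t_{2k}$ and compute the joint moment generating function $\E_v\!\big[\exp(\sum_{i} t_i \ip{v}{d_i})\big]$. Since $\sum_i t_i \ip{v}{d_i} = \ip{v}{\sum_i t_i d_i}$, and since for a standard Gaussian $v$ and any fixed vector $w$ one has $\E_v[e^{\ip{v}{w}}] = e^{\norm{w}^2/2}$ (a product of one-dimensional Gaussian moment generating functions over the coordinates), this generating function equals
\[ \E_v\!\left[\exp\!\Big(\sum_i t_i \ip{v}{d_i}\Big)\right] = \exp\!\left(\frac12 \Big\|\sum_i t_i d_i\Big\|^2\right) = \exp\!\left(\frac12 \sum_{i,j} t_i t_j \ip{d_i}{d_j}\right). \]

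Next I would extract the coefficient of the squarefree monomial $t_1 t_2 \cdots t_{2k}$ from both sides. On the left, expanding the exponential and collecting the multilinear term shows that the coefficient of $t_1 \cdots t_{2k}$ is exactly $\E_v[\ip{v}{d_1}\cdots\ip{v}{d_{2k}}]$, the quantity we want (the $(2k)!$ orderings in the $2k$-th power cancel the $1/(2k)!$ from the exponential). On the right, writing $Q = \tfrac12\sum_{i,j} t_i t_j \ip{d_i}{d_j}$ and expanding $\exp(Q) = \sum_m Q^m/m!$, only the term $Q^k/k!$ can contribute to a degree-$2k$ monomial, since each factor of $Q$ contributes degree $2$ in the $t$ variables.

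The combinatorial heart of the argument is then to count, within $Q^k/k!$, the multiplicity with which a given perfect matching $M$ on $[2k]$ appears in the coefficient of $t_1\cdots t_{2k}$. Each of the $k$ factors of $Q$ supplies one ordered pair $(i,j)$ and a factor $\ip{d_i}{d_j}$; to cover each index exactly once, the resulting unordered pairs must form a perfect matching. A fixed matching $M$ arises in $k!$ ways (assigning its $k$ pairs to the $k$ factors of $Q$) and, within each factor, in $2$ ways (the orderings $t_it_j$ and $t_jt_i$, which agree since $\ip{d_i}{d_j}$ is symmetric), for a total of $k! \cdot 2^k$. These exactly cancel the prefactor $\tfrac{1}{k!}\cdot\tfrac{1}{2^k}$, so the coefficient on the right is $\sum_M \prod_{(i,j)\in M}\ip{d_i}{d_j}$. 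Matching coefficients proves the identity. For the odd case, I would note that $\exp(Q)$ is a power series in the quadratic form $Q$ and so contains only even-degree monomials in the $t_i$, whence the coefficient of any squarefree odd-degree monomial vanishes; equivalently, the integrand is odd under $v \mapsto -v$.

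I expect the only delicate point to be the multiplicity count in the previous paragraph---verifying that the $k!$ and $2^k$ overcounting factors precisely cancel the normalizing constants coming from $\exp(Q)$ and from the symmetric double sum defining $Q$. Everything else is bookkeeping, and the reduction through the generating function sidesteps any need to invoke the general theory of jointly Gaussian vectors.
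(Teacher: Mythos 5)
Your proof is correct, but it takes a genuinely different route from the paper's. The paper argues coordinate-wise: it expands each $\ip{v}{d_i} = \sum_a d_{i,a}v_a$, pulls the expectation inside by linearity, and uses independence of the coordinates together with the single-variable moment formula $\E[Z^{2m}] = (2m-1)!!$, interpreted as the number of perfect matchings of $2m$ objects; regrouping the resulting sum by matchings and resumming over coordinate labels reassembles the inner products $\ip{d_i}{d_j}$. You instead compute the joint moment generating function $\E_v[\exp(\sum_i t_i \ip{v}{d_i})] = \exp\bigl(\tfrac12 \norm{\sum_i t_i d_i}^2\bigr)$ in closed form and extract the coefficient of $t_1\cdots t_{2k}$ on both sides, with the matching sum emerging from the $k!\,2^k$ count that cancels the normalizations in $\exp(Q)$. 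Both are classical proofs of Wick/Isserlis, and your multiplicity count is right. Your route is arguably slicker, since it never tracks repeated coordinate labels and their multiplicities; the one step you should make explicit (beyond the count you already flag) is the interchange of $\E_v$ with the power-series expansion in $t$, i.e.\ that the Taylor coefficients of the entire function $t \mapsto \E_v[\exp(\sum_i t_i\ip{v}{d_i})]$ are exactly the mixed moments --- standard via dominated convergence or differentiation under the integral, but that is where the analytic content of your argument lives. What the paper's approach buys in this context is uniformity: the same coordinate-expansion skeleton is reused essentially verbatim for the generalization with $\ip{v}{v}^{2l}$ factors (\cref{thm:isserlis-proof}), for the spherical Isserlis theorem (where only the coordinate moments change, via \cref{lem:sphericalmonomialmoments}), and for the Boolean analogue, whereas the MGF route is special to the Gaussian, the one distribution among the three whose moment generating function has such a clean closed form.
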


We will need the following minor generalization.
\begin{lemma}
    For fixed $d_1, \dots, d_{2k} \in \R^n$ and $v \sim \calN(0,\Id_n),$
    \[\E_{v} [\ip{v}{v}^{2l}\ip{v}{d_1} \cdots \ip{v}{d_{2k}}] =  n(n+2)\cdots (n+2l-2)  \sum_{\substack{\text{perfect matchings}\\ M \text{ on }[2k]}} \prod_{(u,v) \in {M}} \ip{d_u}{d_v}.\]
\begin{proof}
    See~\cref{app:isserlis-theorem}, \cref{thm:isserlis-proof}.
\end{proof}
\end{lemma}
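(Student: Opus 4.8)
The plan is to build directly on the Isserlis lemma already available to us, peeling off the radial factors $\ip{v}{v} = \norm{v}^2$ one at a time by Gaussian integration by parts (Stein's identity), until none remain and the base case is exactly that lemma. To set this up cleanly I would first prove the slightly more general statement for $G_m \defeq \E_v[\norm{v}^{2m}\ip{v}{d_1}\cdots\ip{v}{d_{2k}}]$, so that the quantity we want is $G_{2l}$ and the base case $G_0 = \E_v[\ip{v}{d_1}\cdots\ip{v}{d_{2k}}]$ is handed to us by Isserlis' theorem as $\sum_{M}\prod_{(u,v)\in M}\ip{d_u}{d_v}$ (this is where the hypothesis that there are an even number, $2k$, of linear factors is used; an odd number would vanish, matching the remark after Isserlis).

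The engine of the argument is a one-line recursion $G_m = (n + 2k + 2m - 2)\,G_{m-1}$. To derive it I would write $\norm{v}^{2m} = \sum_a v_a\cdot(v_a\norm{v}^{2m-2})$ and apply Stein's identity $\E[v_a\,g(v)] = \E[\partial_{v_a}g(v)]$ coordinatewise to $g = v_a\,\norm{v}^{2m-2}\prod_i\ip{v}{d_i}$. The product rule splits into three groups: differentiating the explicit $v_a$ and summing over $a$ produces the factor $n$; differentiating $\norm{v}^{2m-2} = (\sum_b v_b^2)^{m-1}$ produces the factor $2m-2$; and differentiating the linear part gives $\sum_a v_a\,\partial_{v_a}\prod_i\ip{v}{d_i} = \sum_i\ip{v}{d_i}\prod_{i'\neq i}\ip{v}{d_{i'}}$, which collapses to the factor $2k$. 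The point is that in all three groups the surviving integrand is the \emph{same} expression $\norm{v}^{2m-2}\prod_i\ip{v}{d_i}$, so each step merely multiplies $G_{m-1}$ by a scalar and leaves the linear part, hence the eventual matching-sum structure, completely untouched. Iterating from $m = 2l$ down to $m = 0$ then gives $G_{2l} = \big(\prod_{m=1}^{2l}(n+2k+2m-2)\big)\cdot\sum_{M}\prod_{(u,v)\in M}\ip{d_u}{d_v}$, a product of linear-in-$n$ factors times the matching sum.

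As an alternative route that avoids the derivative bookkeeping, I would use the polar decomposition $v = R\omega$ with $R = \norm{v}$ and $\omega$ uniform on the sphere independent of $R$, where $R^2$ is distributed as $\chi^2_n$. Since $\ip{v}{v}^{2l}\prod_i\ip{v}{d_i} = R^{4l+2k}\prod_i\ip{\omega}{d_i}$, the expectation factors as $\E[R^{4l+2k}]\cdot\E_\omega[\prod_i\ip{\omega}{d_i}]$; applying ordinary Isserlis to $\E[\prod_i\ip{v}{d_i}] = \E[R^{2k}]\,\E_\omega[\prod_i\ip{\omega}{d_i}]$ identifies the angular average as $\tfrac{1}{\E[R^{2k}]}\sum_M\prod\ip{d_u}{d_v}$, after which the remaining scalar $\E[R^{4l+2k}]/\E[R^{2k}]$ is read off from the chi-squared moment formula $\E[R^{2m}] = n(n+2)\cdots(n+2m-2)$.

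The only genuine obstacle is bookkeeping rather than ideas: in the integration-by-parts version one must carry out the product rule and the resummation over the coordinate index $a$ carefully, in particular recognizing that the cross term collapses to a clean count $2k$. Once the recursion $G_m = (n+2k+2m-2)\,G_{m-1}$ is established, the matching-sum factor is inherited verbatim from the base case and the prefactor is just the telescoping product of the linear coefficients. To pin down the exact endpoints and length of that product I would check a scalar instance, e.g. $n = k = l = 1$, where the left side is $\E[v^6] = 15$; matching this against the telescoped constant is the quickest way to verify the prefactor I carry into the final statement.
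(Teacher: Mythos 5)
Your proposal is correct, and your primary route is essentially the paper's own proof in a different technical dress: the paper's proof (\cref{app:isserlis-theorem}, \cref{thm:isserlis-proof}) expands the product of linear forms into coordinate monomials and peels off the $\ip{x}{x}$ factors one at a time using the moment identity $\E\left[\left(\sum_j x_j^2\right)\prod_i x_i^{p_i}\right] = \left(n+\sum_j p_j\right)\E\left[\prod_i x_i^{p_i}\right]$, which is exactly your recursion $G_m = (n+2k+2m-2)\,G_{m-1}$; you derive the per-step multiplier by Stein's identity rather than by explicit one-dimensional Gaussian moments, but the skeleton (peel radial factors, reduce to the standard Isserlis theorem as the base case) is identical. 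Your alternative polar-decomposition route $v = R\omega$ with $R^2 \sim \chi^2(n)$ independent of the direction $\omega$ is genuinely different from the appendix argument, and it is in fact the same device the paper uses in the reverse direction to prove the spherical Isserlis theorem in \cref{sec:spherical-case}; what it buys is that the prefactor falls out as a ratio of chi-squared moments with no product-rule bookkeeping at all.

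One point deserves emphasis: the prefactor you obtain, $\prod_{m=1}^{2l}(n+2k+2m-2)$, agrees with the appendix statement (\cref{thm:isserlis-proof} gives $\prod_{j=1}^{p}(n+k+2j-2)$; substitute $p=2l$ and $2k$ linear factors) but \emph{not} with the prefactor $n(n+2)\cdots(n+2l-2)$ printed in the lemma you were asked to prove. The printed prefactor is a typo: it omits the shift by $2k$ and has $l$ rather than $2l$ factors. Your own sanity check detects this --- at $n=k=l=1$ both sides of your formula give $\E[v^6] = 15 = 3\cdot 5$, whereas the printed prefactor would give $1$; even quicker, at $k=0$, $l=1$ one has $\E[\norm{v}^4] = n(n+2) \neq n$. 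So your proof establishes the correct statement, namely the one actually proved in the appendix and the one needed when iterating over the vertices of $G$ to compute $\E[m_G]$, and the main-text lemma should be read with the corrected prefactor.
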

The generalization can be iterated to compute $\E[m_G]$ for a given graph $G$.
We take the expectation
over the vectors $d_u$ one at a time, and each application reduces our expression to a sum over graphs that no longer involve $u$.

To capture the combinatorics of the $p_G$, 
we look at matchings of the edge endpoints incident to a given vertex. More specifically we use a collection $M$ of (partial or perfect) matchings on
incident edges, one for each vertex.

\begin{definition}
Let $\mathcal{PM}(G)$ be the set of all perfect matching collections of the edges incident to each vertex of $G$. Each element of $\mathcal{PM}(G)$ specifies $\abs{V(G)}$ perfect matchings, and the perfect matching for vertex $v$ is on $\deg(v)$ elements. 

Let $\calM(G)$ denote the set of all partial or perfect matching collections of the edges incident to each vertex of $G$.
\end{definition}

\begin{definition}
For $M \in \cal{M}(G)$, define the routed graph $\route(M)$ to be the graph obtained by connecting up edge endpoints that are matched at each vertex $v$. Closed cycles are deleted, and paths are replaced by a single edge between the final path endpoints. 
\end{definition}
\begin{definition}
For $M \in \cal{M}(G)$, define $\cycles(M)$ to be the number of closed cycles formed by routing.
\end{definition}

We give an example in \cref{fig:routing-example}. The graph on the left has 5
vertices, and 10 edges denoted by solid lines. 
The edges are partially matched up at each vertex using the
dashed edges. The right side shows the result of routing.
$\cycles(M) = 1$ and one closed cycle, the triangle, was deleted.

\begin{figure}[h]
    \centering
    \includegraphics[width=0.25\textwidth]{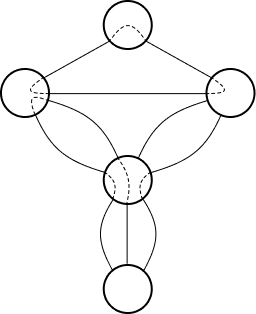}\hspace{1in}
    \includegraphics[width=0.25\textwidth]{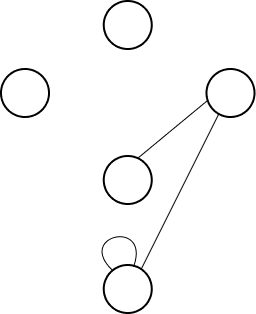}
    \caption{Left: Unrouted graph with dashed edges denoting the partial matching collection. Right: Result of routing.}
    \label{fig:routing-example}
\end{figure}

Using these definitions, we have the following formula for the expectation $\E[m_G]$,
\begin{lemma}
\label{lem:monom-expectation}
$\E[m_G] = 0$ if some vertex in $G$ has odd degree. Otherwise,
\[\E [m_G]= \displaystyle\sum_{M \in \PM(G)} n^{\cycles(M)}.\]
\begin{proof}
Expanding $m_G$ and grouping by vertex,
\[ m_G = \displaystyle\sum_{\sigma : E \to [n]} \prod_{u \in V, i \in [n]} d_{u, i}^{\#\{e\ni u \;:\; \sigma(e) = i\}}. \]
Taking expectations, the $d_{u,i}$ are independent Gaussians. If one of the vertices has odd degree, one of the labels $i$ will necessarily occur an odd number of times at that vertex and the overall expectation will be zero. Otherwise, $\E \left[Z^{2k} \right]= (2k-1)!!$ for $Z \sim \mathcal{N}(0,1)$. The expression $(2k-1)!!$ counts the number of perfect matchings of $2k$ elements; in this case when computing $\E \left[d_{u, i}^{\#\{e\ni u \;:\; \sigma(e) = i\}}\right]$ these should be thought of as summing 1 for each perfect matching of the edges incident to $u$ which are labeled $i$. In summary, each $\sigma$ sums over a subset of $\mathcal{PM}$.

Now fix a given collection of perfect matchings $M \in\mathcal{PM}$; which $\sigma$ contribute to it? We require that, at each vertex, every pair of endpoints matched in $M$ are assigned the same label. Therefore, in any cycle formed by $\route(M)$, the labeling $\sigma$ must assign all edges of the cycle the same label. These labels can be any number from $[n]$, and disjoint cycles don't affect each other. Therefore there are $n^{\cycles(M)}$ such $\sigma$.
\end{proof}
\end{lemma}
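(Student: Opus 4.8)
The plan is to expand $m_G$ explicitly as a sum over edge-labelings $\sigma: E(G) \to [n]$, then take the expectation inside the sum and recognize that each nonzero term corresponds to a perfect matching collection $M \in \PM(G)$ together with a compatible labeling. First I would write $\ip{d_u}{d_v} = \sum_{i=1}^n d_{u,i}d_{v,i}$, so that expanding the product over all edges produces one term for each function $\sigma$ assigning a coordinate label in $[n]$ to every edge. Grouping the resulting monomial by vertex and coordinate, the monomial $\prod_{e}\prod_{\text{endpoints}} d_{u,\sigma(e)}$ can be rewritten as $\prod_{u \in V}\prod_{i \in [n]} d_{u,i}^{c_{u,i}}$, where $c_{u,i} = \#\{e \ni u : \sigma(e) = i\}$ counts how many edges incident to $u$ receive label $i$.

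Next I would push the expectation through the sum over $\sigma$ and use independence of the coordinates $d_{u,i} \sim \calN(0,1)$. The expectation factors as $\prod_{u,i}\E[d_{u,i}^{c_{u,i}}]$, and since a standard Gaussian has $\E[Z^{2k}] = (2k-1)!!$ and $\E[Z^{\text{odd}}] = 0$, two facts emerge. If some vertex $u$ has odd degree, then for every $\sigma$ some label $i$ must occur an odd number of times at $u$ (because the total count $\sum_i c_{u,i} = \deg(u)$ is odd), forcing the term to vanish; this immediately gives $\E[m_G] = 0$ in the odd-degree case. Otherwise, I would interpret the factor $(2k-1)!!$ combinatorially as the number of perfect matchings on the $2k$ edge-endpoints at $u$ that all share the label $i$. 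This is the key move: it replaces the analytic Gaussian moment with a count of matchings, so that summing over all labels and all vertices identifies each contributing configuration with an element of $\PM(G)$.

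The final step is to reorganize the double sum: instead of summing over $\sigma$ and then counting matchings, I would fix a perfect matching collection $M \in \PM(G)$ and count how many labelings $\sigma$ are consistent with it. Consistency requires that any two edge-endpoints matched together at a vertex receive the same label, which by transitivity along the routing propagates a single label around each connected structure formed by $\route(M)$. The connected structures are exactly the closed cycles counted by $\cycles(M)$ (paths do not arise here since every matching is perfect, so every endpoint is matched and all components close up), and distinct cycles impose independent constraints. Each cycle may be freely assigned any of the $n$ labels, giving $n^{\cycles(M)}$ valid $\sigma$ per collection $M$, and summing over $M \in \PM(G)$ yields the claimed formula.

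The main obstacle I anticipate is the bookkeeping in the last step, namely carefully justifying the bijection between $(\sigma, \text{matchings at each vertex})$ pairs on one side and the pair (collection $M \in \PM(G)$, label assignment to cycles) on the other. One must check that the label-propagation constraint couples edges precisely according to the cycle structure of $\route(M)$ and not more or less, so that the count of compatible $\sigma$ is exactly $n^{\cycles(M)}$ with no over- or under-counting; getting the combinatorial correspondence airtight is where the real care is needed, whereas the Gaussian moment computation itself is routine.
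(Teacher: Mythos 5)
Your proposal is correct and follows essentially the same route as the paper's proof: expand $m_G$ over edge-labelings $\sigma : E(G) \to [n]$, factor the expectation over independent coordinates, interpret the Gaussian moment $(2k-1)!!$ as a count of perfect matchings at each vertex, and then exchange the order of summation to count the $n^{\cycles(M)}$ labelings compatible with each $M \in \PM(G)$. Your closing observation that all components of $\route(M)$ close up into cycles (since the matchings are perfect) is implicit in the paper but worth making explicit, as you do.
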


\begin{corollary}
The magnitude of $\E[m_G]$ is $n^k$ where $k$ is the maximum number of cycles into which $E(G)$ can be partitioned (note that this is NP-hard to compute from $G$).
\end{corollary}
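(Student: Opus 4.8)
The plan is to read the claim off directly from \cref{lem:monom-expectation}, which already gives $\E[m_G] = \sum_{M \in \PM(G)} n^{\cycles(M)}$ (and $\E[m_G] = 0$ when some vertex has odd degree). The right-hand side is a polynomial in $n$ whose coefficients are non-negative integers: grouping terms by the value of $\cycles(M)$, the coefficient of $n^j$ is exactly $\abs{\{M \in \PM(G) : \cycles(M) = j\}}$. A polynomial with non-negative coefficients and a positive leading coefficient has magnitude governed entirely by its degree, so it suffices to identify $\deg_n \E[m_G] = \max_{M \in \PM(G)} \cycles(M)$ with the combinatorial quantity $k$, the maximum number of cycles in a partition of $E(G)$.

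Thus the crux is the identity $\max_{M \in \PM(G)} \cycles(M) = k$, which I would establish via a correspondence between elements of $\PM(G)$ and circuit partitions of $E(G)$. Given a perfect matching collection $M$, every edge-endpoint at every vertex is matched, so routing leaves no unmatched path endpoints; consequently $\route(M)$ produces only \emph{closed} cycles, and these cycles use every edge exactly once. Hence $M$ determines a partition of $E(G)$ into exactly $\cycles(M)$ closed cycles. Conversely, any circuit partition passes through a vertex $v$ as a set of consecutively-used edge pairs, and these pairs form a perfect matching of the $\deg(v)$ endpoints at $v$; collecting them over all $v$ yields an $M \in \PM(G)$ whose number of routed cycles equals the number of circuits. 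Under this correspondence $\cycles(M)$ equals the number of cycles in the associated partition, so maximizing one maximizes the other, giving $\max_M \cycles(M) = k$.

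With the degree pinned down, the magnitude claim follows immediately: the leading coefficient $\abs{\{M \in \PM(G) : \cycles(M) = k\}}$ is at least $1$ since the maximum is attained, so $\E[m_G] = \Theta(n^k)$ and its top-degree term is a positive multiple of $n^k$. I would also record that the degenerate cases agree: if some vertex of $G$ has odd degree then no circuit partition exists (each closed cycle contributes even degree to every vertex it visits), which matches both $\PM(G) = \emptyset$ and $\E[m_G] = 0$. The main obstacle is making the correspondence between perfect matching collections and circuit partitions fully rigorous — in particular verifying that routing a perfect matching collection yields genuinely edge-disjoint closed cycles covering all of $E(G)$, and that the construction is onto circuit partitions. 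This is a transition-system / Eulerian-decomposition argument that is intuitively clear but requires care about the definition of a cycle as a closed walk and about repeated vertices and parallel edges in the multigraph.
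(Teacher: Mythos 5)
Your proof is correct and follows essentially the same route the paper intends: the corollary is stated as an immediate consequence of \cref{lem:monom-expectation}, and your argument (non-negativity of the coefficients so that the degree $\max_{M \in \PM(G)} \cycles(M)$ controls the magnitude, plus the standard transition-system correspondence between perfect matching collections and circuit partitions of $E(G)$) is precisely the intended justification, including the observation that the odd-degree case is consistent on both sides.
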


We now give several alternate definitions of the polynomials $p_G$.
\begin{definition}[(Hermite sum definition)]
\label{def:hermites}
Define $p_G$ by
\[p_G = \displaystyle\sum_{\sigma : E(G) \to [n]} \prod_{u \in V, i \in [n]} h_{\abs{\{e \ni u \; : \; \sigma(e) = i\}}}(d_{u,i}). \]
\end{definition}
Note that in this definition we consider a self-loop at $u$ labeled $i$ to contribute 2 to $\abs{\{e \ni u \; : \; \sigma(e) = i\}}.$
\begin{definition}[(Routing definition)]
\label{def:routing}
Define $p_G$ by
\[p_G = \displaystyle\sum_{M \in \mathcal{M}(G)} m_{\route(M)} \cdot n^{\cycles(M)} \cdot (-1)^{\abs{M}}. \]
\end{definition}

A given graph $K$ can appear as $\route(M)$ for several different matchings $M$ (even with different numbers of cycles). This gives rise to interesting and nontrivial coefficients on the monomials $m_K$.

\begin{definition}[(Generic construction from~\cref{prop:generic-construction})]
\label{def:truncation}
Consider the Hermite expansion of $m_G$ in the variables $d_{u,i}$, and let $p_G$ be the truncation to the top level i.e. keep only those Hermite coefficients $c_\alpha h_\alpha$ with $\abs{\alpha} = 2\abs{E(G)}$. 

Since $m_G$ is homogeneous as a function of the $d_{u, i}$ and each monomial appears with coefficient 1, this amounts to taking each monomial $d^\alpha$ and replacing it by $h_\alpha(d)$.
\end{definition}

\begin{lemma}
The three definitions above are equivalent.
\end{lemma}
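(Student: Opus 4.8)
The plan is to prove the easy equivalence of the Hermite-sum definition (\cref{def:hermites}) and the generic/truncation definition (\cref{def:truncation}) by unwinding definitions, and then to connect the Hermite-sum definition to the routing definition (\cref{def:routing}) through the standard combinatorial formula expressing a Hermite polynomial as a signed sum over partial matchings.

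First I would dispatch \cref{def:hermites} $=$ \cref{def:truncation}. Expanding each inner product $\ip{d_u}{d_v} = \sum_i d_{u,i}d_{v,i}$ gives the monomial expansion already recorded in the proof of \cref{lem:monom-expectation}, namely
\[ m_G = \sum_{\sigma : E(G) \to [n]} \prod_{u \in V, i \in [n]} d_{u,i}^{\abs{\{e \ni u \,:\, \sigma(e) = i\}}}, \]
a homogeneous polynomial of degree $2\abs{E(G)}$ in the $d_{u,i}$. By \cref{def:truncation}, $p_G$ is obtained by replacing each top-degree monomial $d^\alpha$ with $h_\alpha(d) = \prod_{u,i} h_{\alpha_{u,i}}(d_{u,i})$. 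Applying this map term-by-term to the expansion above reproduces \cref{def:hermites} verbatim; the only thing to check is that every $\sigma$-term has degree exactly $2\abs{E(G)}$, which holds since each edge contributes a factor of degree two, so nothing is discarded by the truncation.

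The substantive step is \cref{def:hermites} $=$ \cref{def:routing}. I would start from the combinatorial expansion of the probabilist's Hermite polynomial, $h_k(x) = \sum_{P} (-1)^{\abs{P}} x^{\,k - 2\abs{P}}$, where $P$ ranges over partial matchings of a $k$-element set and $\abs{P}$ is the number of matched pairs. Substituting this into \cref{def:hermites}, the factor $h_{\abs{\{e \ni u : \sigma(e)=i\}}}(d_{u,i})$ becomes a sum over partial matchings $P_{u,i}$ of the edge-endpoints incident to $u$ that $\sigma$ labels $i$. Assembling the $P_{u,i}$ over all labels $i$ at a fixed vertex, and then over all vertices, is exactly the data of a partial matching collection $M \in \mathcal{M}(G)$ together with the constraint that any two endpoints matched by $M$ belong to edges receiving the same $\sigma$-label. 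This lets me swap the order of summation: sum first over $M \in \mathcal{M}(G)$, then over labelings $\sigma$ compatible with $M$.

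The main obstacle, and the heart of the argument, is to verify that for a fixed $M$ the inner sum over compatible $\sigma$ collapses to $(-1)^{\abs{M}} n^{\cycles(M)} m_{\route(M)}$. Here the routing picture does the work: following edges through the matchings of $M$ partitions $E(G)$ into closed cycles (there are $\cycles(M)$ of them, deleted in routing) and open paths (each becoming one edge of $\route(M)$). Because consecutive edges along a cycle or path are matched, compatibility forces all edges within each cycle or path to share a single label; these labels are otherwise free, so the cycle labels contribute a factor $n^{\cycles(M)}$ while the path labels range over $[n]$ independently. The sign is $(-1)^{\sum \abs{P_{u,i}}} = (-1)^{\abs{M}}$. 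It then remains to identify the surviving monomial: at each vertex the unmatched endpoints are precisely the path-endpoints, so the leftover factors $d_{u,i}$ assemble, one path at a time, into $d_{u,\ell}d_{v,\ell}$ for a path from $u$ to $v$ with label $\ell$ --- exactly the expansion of $m_{\route(M)}$ over labelings of its edges. Summing this path-label expansion gives $m_{\route(M)}$, and multiplying by the $n^{\cycles(M)}$ cycle-label choices and the sign yields the summand of \cref{def:routing}. I expect the careful bookkeeping in this last identification --- matching unmatched endpoints to edges of $\route(M)$ and cleanly separating cycle labels from path labels --- to be the only delicate point; everything else is formal.
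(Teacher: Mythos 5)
Your proposal is correct and follows essentially the same route as the paper's proof: both establish \cref{def:hermites} $=$ \cref{def:truncation} by observing that the truncation map applied to the monomial expansion of $m_G$ reproduces the Hermite sum, and both prove \cref{def:hermites} $=$ \cref{def:routing} by expanding each $h_k$ as a signed generating function over partial matchings, regrouping by the matching collection $M \in \mathcal{M}(G)$, and checking that the compatible labelings $\sigma$ contribute $n^{\cycles(M)}$ from cycles, $m_{\route(M)}$ from path endpoints, and $(-1)^{\abs{M}}$ from the Hermite signs. Your write-up is, if anything, slightly more explicit about the bookkeeping (e.g.\ that nothing is discarded by truncation and how path labels reassemble into the edges of $\route(M)$) than the paper's sketch.
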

\begin{proof}
After checking that the leading monomial of $p_G$ in \cref{def:hermites} is $m_G$, it is clear that \cref{def:hermites} and \cref{def:truncation} are equivalent.

We argue \cref{def:hermites} and \cref{def:routing} agree. The coefficient of $x^{k-2i}$ in the Hermite polynomial $h_k(x)$ can be interpreted as the number of matchings of $2i$ objects out of $k$ total (there is also an alternating sign). In this way $h_k$ is a generating function for all partial matchings on $[k]$. Looking at $h_{\abs{\{e \ni u \; : \; \sigma(e) = i\}}}(d_{u,i})$, we interpret this (ignoring the sign for now) as summing over all partial matchings on the incident edges with a given label $i$; any matched edges are given a factor of 1 while the unmatched edges are given $d_{u,i}$.

Now look at the view from a given collection $M$ of partial matchings, one per vertex. Which $\sigma$ contribute? Along any closed cycle in $\route(M)$, the labels $\sigma$ must be all the same, and if this is the case the contribution is 1. Along any path, the labels assigned by $\sigma$ must also be the same, say $i$. The multiplicative contribution of any interior vertices is 1, but the contribution of the two endpoint vertices $u$ and $v$ is $d_{u,i}$ and $d_{v,i}$. When all valid $\sigma$ are summed over, we obtain a factor of $n$ for each cycle, and the inner product between the endpoints of each path.

The $(-1)^{\abs{M}}$ factor comes from the signings of the Hermite coefficients.
\end{proof}
\begin{example}
Let $G$ be the graph with vertices $V(G) = \{u,v_1,v_2,v_3\}$ and edges $E(G) = \{\{u,v_1\}, \{u,v_2\}, \{u,v_3\}\}$. We have that 
\begin{align*}
m_G &= \ip{d_u}{d_{v_1}}\ip{d_u}{d_{v_2}}\ip{d_u}{d_{v_3}}\\
&=\sum_{\text{distinct } i,j,k \in [n]}{d_{u,i}d_{u,j}d_{u,k}d_{{v_1},i}d_{{v_2},j}d_{{v_3},k}} + \sum_{i \neq j \in [n]}{d_{u,i}^{2}d_{u,j}d_{{v_1},i}d_{{v_2},i}d_{{v_3},j}}  \\
&+ \sum_{i \neq j \in [n]}{d_{u,i}^{2}d_{u,j}d_{{v_1},i}d_{{v_2},j}d_{{v_3},i}}+ \sum_{i \neq j \in [n]}{d_{u,i}^{2}d_{u,j}d_{{v_1},j}d_{{v_2},i}d_{{v_3},i}} + \sum_{i \in [n]}{d_{u,i}^{3}d_{{v_1},i}d_{{v_2},i}d_{{v_3},i}}.
\end{align*}
Replacing the monomial $d_{u,i}^{2}$ with the corresponding Hermite polynomial $d_{u,i}^{2} - 1$ and replacing the monomial $d_{u,i}^{3}$ with the corresponding Hermite polynomial $d_{u,i}^{3} - 3d_{u,i}$, we have that 
\begin{align*}
p_G &=\sum_{\text{distinct } i,j,k \in [n]}{d_{u,i}d_{u,j}d_{u,k}d_{{v_1},i}d_{{v_2},j}d_{{v_3},k}} + \sum_{i \neq j \in [n]}{(d_{u,i}^{2} - 1)d_{u,j}d_{{v_1},i}d_{{v_2},i}d_{{v_3},j}} \\
&+ \sum_{i \neq j \in [n]}{(d_{u,i}^{2} - 1)d_{u,j}d_{{v_1},i}d_{{v_2},j}d_{{v_3},i}} 
+ \sum_{i \neq j \in [n]}{(d_{u,i}^{2} - 1)d_{u,j}d_{{v_1},j}d_{{v_2},i}d_{{v_3},i}} \\
&+ \sum_{i \in [n]}{(d_{u,i}^{3} - 3d_{u,i})d_{{v_1},i}d_{{v_2},i}d_{{v_3},i}}
\end{align*}
The term $-d_{u,j}d_{{v_1},i}d_{{v_2},i}d_{{v_3},j}$ and one of the $3$ terms $-d_{u,i}d_{{v_1},i}d_{{v_2},i}d_{{v_3},i}$ correspond to the collection of matchings $M$ where $v_1$ is matched to $v_2$ at $u$ (and all other matchings are trivial). Summing these terms over all $i \neq j \in [n]$ gives  $-\ip{d_{v_1}}{d_{v_2}}\ip{d_u}{d_{v_3}}$.

Similarly, the term $-d_{u,j}d_{{v_1},i}d_{{v_2},j}d_{{v_3},i}$ and one of the $3$ terms $-d_{u,i}d_{{v_1},i}d_{{v_2},i}d_{{v_3},i}$ correspond to the collection of matchings $M$ where $v_1$ is matched to $v_3$ at $u$ (and all other matchings are trivial). Summing these terms over all $i \neq j \in [n]$ gives  $-\ip{d_{v_1}}{d_{v_3}}\ip{d_u}{d_{v_2}}$.

Finally, the term $-d_{u,j}d_{{v_1},j}d_{{v_2},i}d_{{v_3},i}$ and one of the $3$ terms $-d_{u,i}d_{{v_1},i}d_{{v_2},i}d_{{v_3},i}$ correspond to the collection of matchings $M$ where $v_2$ is matched to $v_3$ at $u$ (and all other matchings are trivial). Summing these terms over all $i \neq j \in [n]$ gives  $-\ip{d_{v_2}}{d_{v_3}}\ip{d_u}{d_{v_1}}$.

Putting everything together,
\[
p_G = \ip{d_u}{d_{v_1}}\ip{d_u}{d_{v_2}}\ip{d_u}{d_{v_3}} -\ip{d_{v_1}}{d_{v_2}}\ip{d_u}{d_{v_3}} -\ip{d_{v_1}}{d_{v_3}}\ip{d_u}{d_{v_2}} -\ip{d_{v_2}}{d_{v_3}}\ip{d_u}{d_{v_1}}.
\]
\end{example}

As a consequence of the routing definition we have
\begin{lemma}
The polynomials $p_G$ are orthogonally invariant.
\end{lemma}

\begin{corollary}
Definitions~\ref{def:hermites},~\ref{def:routing},~\ref{def:truncation} are 
equal to the degree-orthogonal Gram-Schmidt process on $m_G$.
    \begin{proof}
    From~\cref{prop:generic-construction}, the polynomials defined above are 
    degree-orthogonal and monic. The previous lemma shows that they are orthogonally invariant. Therefore they match the result of Gram-Schmidt by~\cref{lem:unique-gs}.
    \end{proof}
\end{corollary}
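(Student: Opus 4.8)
The plan is to reduce the statement to a single application of the uniqueness result \cref{lem:unique-gs}. Because the preceding lemma shows that \cref{def:hermites}, \cref{def:routing}, and \cref{def:truncation} all produce the same polynomial $p_G$, it suffices to verify that this one family coincides with the output of the degree-orthogonal Gram-Schmidt process on $\{m_G\}$; I will invoke whichever of the three descriptions is most convenient for each property I need.

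First I would assemble the hypotheses of \cref{lem:unique-gs}. Degree preservation, monicity, and degree-orthogonality are exactly the content of \cref{prop:generic-construction} applied to the spanning set $\{m_G\}$ via \cref{def:truncation}, so condition (ii) of \cref{lem:unique-gs} holds and $\deg(p_G) = \deg(m_G)$ with leading coefficient $1$. The one thing \cref{prop:generic-construction} does \emph{not} supply — and which the caveat immediately following it flags — is that $p_G$ lands in the orthogonally-invariant subspace; this is precisely what the previous lemma (orthogonal invariance of $p_G$) provides. Since the $m_K$ span the orthogonally-invariant polynomials up to the relevant degree, orthogonal invariance forces $p_G \in \linspan\{m_K\}$, so $p_G$ is genuinely a polynomial in the inner-product monomials and it makes sense to speak of its expansion in the $m_K$.

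Next I would check condition (i) of \cref{lem:unique-gs} — that the unique maximum-degree monomial of $p_G$, now read in the $m_K$ basis rather than in the $d_{u,i}$ variables, is $m_G$. Here the routing description \cref{def:routing} is the cleanest tool: it writes $p_G = \sum_{M \in \M(G)} m_{\route(M)} \cdot n^{\cycles(M)} \cdot (-1)^{\abs{M}}$, the empty matching contributes $m_G$ with coefficient $1$, and every nonempty $M$ routes away at least one pair of edge-endpoints, so $\route(M)$ has strictly fewer edges and $m_{\route(M)}$ has strictly smaller degree. Thus $p_G$ equals $m_G$ plus lower-degree $m_K$'s, which is condition (i). With (i) and (ii) verified, and with the $\{m_G\}$ of bounded degree linearly independent (by the linear-independence lemma in the regime $\abs{E(G)} \le n$), \cref{lem:unique-gs} produces a unique monic degree-orthogonal family with these leading monomials, and that family is by definition the output of the degree-orthogonal Gram-Schmidt process. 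Hence $p_G$ equals the Gram-Schmidt output, and by the equivalence of the three definitions so do all three.

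I expect the main obstacle to be bookkeeping the two different meanings of ``leading term''. \cref{prop:generic-construction} and \cref{def:truncation} naturally control the top monomial in the $d_{u,i}$ variables, whereas \cref{lem:unique-gs} demands control of the top monomial in the inner-product monomials $m_K$; the bridge between them is supplied jointly by orthogonal invariance (to land in $\linspan\{m_K\}$) and by the routing expansion (to exhibit $m_G$ as the unique top $m_K$). A secondary point to handle carefully is that the clean ``unique basis'' phrasing of \cref{lem:unique-gs} requires linear independence of the $m_G$, which holds only in the low-degree regime $\abs{E(G)} \le n$; outside it the $m_G$ merely span, the $p_G$ form a spanning set rather than a basis, and the asserted equality with Gram-Schmidt is then understood at the level of the orthogonalization procedure applied to this spanning set.
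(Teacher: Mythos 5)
Your proposal is correct and takes essentially the same route as the paper's proof: \cref{prop:generic-construction} supplies monicity and degree-orthogonality, the preceding lemma supplies orthogonal invariance, and \cref{lem:unique-gs} then identifies the family with the Gram-Schmidt output. Your extra verifications --- that $m_G$ is the unique top term in the $m_K$ expansion (via \cref{def:routing}) and that the $m_G$ are linearly independent only in the low-degree regime --- are details the paper leaves implicit but do not constitute a different argument.
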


In fact, the proof of~\cref{prop:generic-construction} shows that $p_G$ have a stronger ``ultra-orthogonality'' property.
If $G$ and $H$ have different degree at $u$, then only taking the expectation over $d_u$ already results in the zero polynomial.
\begin{lemma}
Let $G$ and $H$ be two multigraphs. If $\deg_G(u) \neq \deg_H(u)$ for some $u \in V$, then
\[\E_{d_u \sim \calN(0,\Id_n)}[p_G \cdot p_H] = 0.\]
\end{lemma}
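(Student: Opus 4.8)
The plan is to read the statement off directly from the Hermite sum definition (\cref{def:hermites}), which exposes how $p_G$ depends on the single block of coordinates $d_u = (d_{u,1}, \dots, d_{u,n})$. In that definition every term is a product $\prod_{w, i} h_{k_{w,i}}(d_{w,i})$, and the Hermite indices attached to the vertex $u$ satisfy $\sum_i k_{u,i} = \deg_G(u)$ for every labeling $\sigma$ (using the convention that a self-loop at $u$ contributes $2$, since $\sum_i |\{e \ni u : \sigma(e) = i\}| = \deg_G(u)$). Hence, viewed as a function of $d_u$ alone, $p_G$ is a linear combination of the multivariate Hermite polynomials $h_\alpha(d_u) = \prod_i h_{\alpha_i}(d_{u,i})$ of total degree exactly $\deg_G(u)$, and likewise $p_H$ is supported on Hermite polynomials of total degree $\deg_H(u)$.

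Concretely, I would write
\[ p_G = \sum_{\alpha \,:\, |\alpha| = \deg_G(u)} h_\alpha(d_u)\, A_\alpha, \qquad p_H = \sum_{\beta \,:\, |\beta| = \deg_H(u)} h_\beta(d_u)\, B_\beta, \]
where the coefficients $A_\alpha$ and $B_\beta$ are polynomials in the remaining variables $\{d_w : w \neq u\}$ and are independent of $d_u$. Taking the expectation only over $d_u$ lets these coefficients factor out, so that
\[ \E_{d_u}[p_G \cdot p_H] = \sum_{\alpha, \beta} A_\alpha B_\beta \cdot \E_{d_u}[h_\alpha(d_u) h_\beta(d_u)]. \]
I then invoke orthogonality of the multivariate Hermite polynomials under $\calN(0,\Id_n)$: since the coordinates of $d_u$ are independent, $\E_{d_u}[h_\alpha(d_u) h_\beta(d_u)] = \prod_i \E[h_{\alpha_i}(d_{u,i}) h_{\beta_i}(d_{u,i})]$, which equals $\alpha!$ when $\alpha = \beta$ and $0$ otherwise. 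Because $\deg_G(u) \neq \deg_H(u)$, every index $\alpha$ occurring in $p_G$ has $|\alpha| \neq |\beta|$ for every $\beta$ occurring in $p_H$, so $\alpha \neq \beta$ in each cross term and every expectation vanishes. Thus the sum is identically zero as a polynomial in the remaining variables, which is exactly the claim.

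The only step requiring genuine care is the block-homogeneity observation in the first paragraph, namely that the Hermite degrees at $u$ always sum to $\deg_G(u)$; this is immediate from \cref{def:hermites} once the self-loop convention is accounted for. Everything else is the single-vertex specialization of the argument already used in the proof of \cref{prop:generic-construction}: there one only needed the existence of some vertex at which the two graphs differ in degree, whereas here we simply record that the expectation over that one vertex alone already annihilates the product. No computation beyond Hermite orthogonality is needed, so I do not expect any substantial obstacle.
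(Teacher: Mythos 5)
Your proof is correct and is essentially the paper's own argument: the paper deduces this lemma directly from the proof of \cref{prop:generic-construction}, which rests on exactly your observation that every term of $p_G$, viewed in the block of variables $d_u$, is a multivariate Hermite polynomial of total degree exactly $\deg_G(u)$ (self-loops counting twice), so that the expectation over $d_u$ alone annihilates every cross term by degree-orthogonality of the Hermite family. The only cosmetic difference is that you invoke full Hermite orthogonality $\E[h_\alpha(d_u)h_\beta(d_u)] = \alpha!\,\delta_{\alpha\beta}$ where the weaker degree-orthogonality ($|\alpha| \neq |\beta|$ implies vanishing) already suffices.
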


We now derive an explicit formula for the inner product and variance of $p_G$. For two graphs $G,H$ on $V$, we define $G \cup H$ to be the disjoint union of the edges (the edge multiplicity in $G \cup H$ is the sum of the multiplicities in $G$ and $H$). 

\begin{definition}
For two multigraphs, write $G \lrarrow H$ if $\deg_G(u) = \deg_H(u)$ for all $u \in V$.
\end{definition}

\begin{definition}
Let $\mathcal{PM}(G, H) \subseteq \PM(G \cup H)$ be perfect matching collections 
such that at each vertex $v$, the matching goes between edges incident to $v$ in 
$G$ and edges incident to $v$ in $H$. Note that if $G \notlrarrow H$, then 
$\mathcal{PM}(G, H)$ is empty.
\end{definition}

\begin{lemma}
\label{lem:gaussian-inner-product}
Let $G$ and $H$ be two multigraphs. 
\[ \E[p_G \cdot p_H] = \displaystyle\sum_{M \in \mathcal{PM}(G, H)} n^{\cycles(M)} \]
\end{lemma}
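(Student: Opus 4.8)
The plan is to work from the Hermite sum definition (\cref{def:hermites}) rather than the routing definition, since the routing definition carries the signs $(-1)^{\abs{M}}$ and cycle weights that would force me to track delicate cancellations after multiplying the two expansions together. The Hermite form, by contrast, is tailor-made for computing an expectation: the univariate (probabilist's) Hermite polynomials satisfy $\E_{X \sim \calN(0,1)}[h_a(X) h_b(X)] = a! \cdot \delta_{a,b}$, and both $p_G$ and $p_H$ are already written as sums of products of such polynomials in the independent coordinate variables $d_{u,i}$.

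Concretely, writing $a_{u,i}(\sigma) \defeq \abs{\{e \in E(G) : e \ni u, \sigma(e) = i\}}$ and defining $b_{u,i}(\tau)$ analogously for $H$, I would expand
\[ p_G \cdot p_H = \sum_{\sigma : E(G) \to [n]} \; \sum_{\tau : E(H) \to [n]} \; \prod_{u \in V, \, i \in [n]} h_{a_{u,i}(\sigma)}(d_{u,i}) \, h_{b_{u,i}(\tau)}(d_{u,i}), \]
and then push the expectation inside, using independence of the $d_{u,i}$ across the pairs $(u,i)$. By Hermite orthogonality each factor contributes $a_{u,i}(\sigma)! \cdot \delta_{a_{u,i}(\sigma),\, b_{u,i}(\tau)}$, so only those pairs $(\sigma,\tau)$ with $a_{u,i}(\sigma) = b_{u,i}(\tau)$ for every $u,i$ survive, each weighted by $\prod_{u,i} a_{u,i}(\sigma)!$.

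The heart of the argument is to reinterpret this surviving sum combinatorially and reorganize it around $\PM(G,H)$. For a fixed vertex $u$ and label $i$, the factor $a_{u,i}(\sigma)!$ counts bijections between the $G$-edges at $u$ labeled $i$ and the (equinumerous, by the surviving constraint) $H$-edges at $u$ labeled $i$; taking the product over all $i$ assembles these into a single label-preserving bijection $\pi_u$ between the edges of $G$ at $u$ and the edges of $H$ at $u$, i.e. exactly a perfect matching at $u$ in the sense of $\PM(G,H)$. I would therefore swap the order of summation: first sum over $M = \{\pi_u\}_{u \in V} \in \PM(G,H)$, then over the labelings $(\sigma,\tau)$ \emph{compatible} with $M$, meaning $\sigma(e) = \tau(\pi_u(e))$ whenever $\pi_u$ matches the $G$-edge $e$ to an $H$-edge at $u$. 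This rewrites $\E[p_G p_H]$ as $\sum_{M \in \PM(G,H)} \#\{(\sigma,\tau) \text{ compatible with } M\}$.

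It remains to count compatible labelings, which I expect to be the step needing the most care in the bookkeeping. The compatibility constraints are precisely the identifications made by routing: since $M$ matches $G$-edges to $H$-edges at each vertex, every routing cycle alternates between edges of $G$ and of $H$, and the chain of equalities $\sigma(e) = \tau(\pi_u(e))$ forces all edges along a single cycle to carry one common label in $[n]$, with distinct cycles independent of one another. Hence the number of compatible $(\sigma,\tau)$ is exactly $n^{\cycles(M)}$, mirroring the label-constancy-around-cycles count in the proof of \cref{lem:monom-expectation}, and this yields the claimed formula. Finally, if $G \notlrarrow H$ then some vertex $u$ has $\deg_G(u) \neq \deg_H(u)$, so $\sum_i a_{u,i}(\sigma) = \sum_i b_{u,i}(\tau)$ is impossible under the per-label equalities; the sum is then empty and both sides vanish, consistent with $\PM(G,H) = \emptyset$. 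The one convention to keep in mind throughout is that a self-loop at $u$ labeled $i$ contributes $2$ to $a_{u,i}$, which is compatible with the matching picture and does not affect the count.
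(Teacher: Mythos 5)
Your proposal is correct, but it takes a genuinely different route from the paper's. The paper expands \emph{both} factors via the routing definition (\cref{def:routing}), takes the expectation with \cref{lem:monom-expectation}, and then groups terms by an overall matching $M \in \PM(G \cup H)$; the signs $(-1)^{\abs{M_1}+\abs{M_2}}$ then cancel the contribution of every $M$ having a matched pair lying entirely in $G$ or entirely in $H$, leaving exactly the terms $M \in \PM(G,H)$, each with coefficient $1$. You instead work from the Hermite-sum definition (\cref{def:hermites}) and never see a sign: the orthogonality relation $\E[h_a(X)h_b(X)] = a!\,\delta_{a,b}$ kills mismatched label patterns outright, and the surviving weight $\prod_{u,i} a_{u,i}(\sigma)!$ is reinterpreted by double counting as the number of matchings $M \in \PM(G,H)$ compatible with $(\sigma,\tau)$, after which exchanging the order of summation and counting one free label per routing cycle gives $n^{\cycles(M)}$. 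Both directions of your double count are sound: for fixed surviving $(\sigma,\tau)$ the compatible $M$ are exactly the products over $u$ and $i$ of bijections between equinumerous label classes, and for fixed $M$ the compatible pairs $(\sigma,\tau)$ correspond to choices of one label per cycle, since cycles alternate between $G$- and $H$-edges and force label constancy. The one point to state more carefully is that the bijections $\pi_u$ should be between edge \emph{incidences} at $u$ rather than edges, which is precisely what makes your self-loop convention (a loop contributes $2$ to $a_{u,i}$) consistent with $\PM(G,H)$. Your approach buys a cancellation-free argument in which every surviving term is visibly non-negative; the paper's approach buys a template --- routing expansion plus sign cancellation --- that is reused essentially verbatim in the spherical setting (\cref{lem:cM}), where no analogue of the clean factorial orthogonality relation is available.
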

\begin{remark}
Determining the maximum number of cycles in $M \in \PM(G,H)$ is NP-hard via a slight modification of~\cite{Holyer81EdgePartitioning}. 
This remains true if we restrict the cycles to be simple.
\end{remark}
\begin{proof}
Use the routing definition of $p_H$,
\[p_G \cdot p_H = \displaystyle\sum_{\substack{M_1 \in \mathcal{M}(G), \\ M_2 \in \calM(H)}} m_{\route_G(M_1)} \cdot m_{\route_H(M_2)} \cdot n^{\cycles_G(M_1) + \cycles_H(M_2)} \cdot (-1)^{\abs{M_1} + \abs{M_2}} .\]
Taking expectations, by Lemma~\ref{lem:monom-expectation} we sum over all perfect matchings of $\route_G(M_1) \cup \route_H(M_2)$ which ``complete'' the partial matchings $M_1$ and $M_2$, when viewed as a matching on the graph $G \cup H$. The power of $n$ is the number of cycles in the completed matching. The net effect is to sum over all perfect matching collections in $G \cup H$,
\[\E [p_Gp_H] = \displaystyle\sum_{M \in  \mathcal{PM}(G \cup H)} n^{\cycles_{G \cup H}(M)} \sum_{\text{pick some $M$-matched pairs to be in } M_1 \text{ or }M_2} (-1)^{\abs{M_1} + \abs{M_2}}. \]
The inner summation often cancels to zero. In the graph $G \cup H$, each edge-vertex incidence comes from either $G$ or $H$. We can only add an $M$-matched pair to $M_1$ if both matched edge-vertex incidences come from $G$; similarly only matched pairs which are both in $H$ can be picked for $M_2$. If there are any such pairs, the inner summation is automatically zero.

The remaining terms are those $M$ in which, at every vertex, the perfect matching is a perfect matching between incoming edges in $G$ and those in $H$ -- that is, matching collections in $\mathcal{PM}(G,H)$. For these terms, the inner summation is trivially 1, which finishes the proof.
\end{proof}

\begin{corollary}
\label{cor:gaussian-variance}
$n^{\abs{E(G)}} \leq \E[p_G^2] \leq \abs{E(G)}^{2\abs{E(G)}} \cdot n^{\abs{E(G)}}$.
\begin{proof}
Using the result of Lemma~\ref{lem:gaussian-inner-product}, we claim $\max_{M \in \mathcal{PM}(G, G)} \cycles(M) = \abs{E(G)}$. On the one hand, $\abs{E(G)}$ is achievable by matching each edge with its duplicate to create 2-cycles. On the other hand, every cycle in $\route(M)$ needs at least two edges (there can be no self-loops as matchings with self-loops are not in $\mathcal{PM}(G, G)$). This shows $n^{\abs{E(G)}} \leq \E p_G^2 \leq \abs{\mathcal{PM}(G, G)} \cdot n^{\abs{E(G)}}$.

$\mathcal{PM}(G, G)$ consists of choosing a perfect matching at each vertex between two sets of size $\deg(v)$.
\[\abs{\mathcal{PM}(G, G)} = \prod_{v \in V} \deg(v)! \leq \abs{E(G)}^{2\abs{E(G)}}.\]
\end{proof}
\end{corollary}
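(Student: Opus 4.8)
The plan is to read off $\E[p_G^2]$ from \cref{lem:gaussian-inner-product} with $H = G$, which gives $\E[p_G^2] = \sum_{M \in \mathcal{PM}(G,G)} n^{\cycles(M)}$. Since every summand is a power of $n$ and hence nonnegative (in fact at least $1$), the sum is trapped between its largest term and that term times the number of terms:
\[ \max_{M \in \mathcal{PM}(G,G)} n^{\cycles(M)} \;\le\; \E[p_G^2] \;\le\; \abs{\mathcal{PM}(G,G)} \cdot \max_{M \in \mathcal{PM}(G,G)} n^{\cycles(M)}. \]
So the corollary reduces to two independent combinatorial tasks: (a) determining $\max_M \cycles(M)$, and (b) counting or bounding $\abs{\mathcal{PM}(G,G)}$.

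For (a) I would show $\max_{M \in \mathcal{PM}(G,G)} \cycles(M) = \abs{E(G)}$. The routing operates on $G \cup G$, which has $2\abs{E(G)}$ edges. Every closed cycle produced by routing uses at least two of these edges: since a collection in $\mathcal{PM}(G,G)$ matches, at each vertex, edge-endpoints coming from $G$ to edge-endpoints coming from $H = G$, consecutive edges along any routed cycle alternate between the two copies, and in particular no single edge can close up on itself. Hence the number of cycles is at most $2\abs{E(G)}/2 = \abs{E(G)}$. This bound is attained by the collection that matches every edge of $G$ with its duplicate in $H$ at both shared endpoints, producing $\abs{E(G)}$ disjoint $2$-cycles. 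Plugging this value into the left inequality gives the lower bound $n^{\abs{E(G)}} \le \E[p_G^2]$.

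For (b), a matching collection in $\mathcal{PM}(G,G)$ is specified independently at each vertex, and at vertex $v$ it is exactly a perfect matching between the $\deg(v)$ edge-endpoints from $G$ and the $\deg(v)$ endpoints from $H = G$; there are $\deg(v)!$ of these, so $\abs{\mathcal{PM}(G,G)} = \prod_{v \in V} \deg(v)!$. Using $\deg(v)! \le \deg(v)^{\deg(v)} \le \abs{E(G)}^{\deg(v)}$ together with the handshake identity $\sum_{v} \deg(v) = 2\abs{E(G)}$ yields $\abs{\mathcal{PM}(G,G)} \le \abs{E(G)}^{2\abs{E(G)}}$. Substituting this and the value from (a) into the right inequality gives the upper bound and completes the proof.

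The step I expect to require the most care is the cycle-length argument inside (a): one must verify rigorously that the $\mathcal{PM}(G,G)$ structure (matching $G$-incidences to $H$-incidences) rules out length-one cycles, i.e. self-loops, in $\route(M)$, since this is precisely what forces each cycle to consume at least two edges and hence caps the cycle count at $\abs{E(G)}$. Everything else is bookkeeping on top of \cref{lem:gaussian-inner-product}.
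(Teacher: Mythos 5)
Your proposal is correct and follows essentially the same route as the paper: apply \cref{lem:gaussian-inner-product} with $H = G$, show $\max_{M \in \mathcal{PM}(G,G)} \cycles(M) = \abs{E(G)}$ via duplicate-edge $2$-cycles for attainability and the no-self-loop/alternation observation for the upper bound, and then bound $\abs{\mathcal{PM}(G,G)} = \prod_{v} \deg(v)! \leq \abs{E(G)}^{2\abs{E(G)}}$. The only difference is that you spell out the bookkeeping (the handshake identity and the alternation between the two copies of $G$) slightly more explicitly than the paper does.
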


\section{Polynomial Basis for the Spherical Setting}
\label{sec:spherical-case}

Let $S^{n-1} = \{x \in \R^n : \norm{x}_2 = 1\}$. With the $d_u$ drawn uniformly
and independently from $S^{n-1}$ instead of the Gaussian distribution, for each multigraph $G$
with no self-loops (reflecting the fact that $\ip{d_u}{d_u} = 1$) we
construct a polynomial $p_G$. 
We again construct the polynomials in terms of routings (\cref{def:routing-sphere}) and study
the inner product (\cref{sec:spherical-inner-product}) and variance (\cref{cor:spherical-variance}).
For the most part, the proofs in this section mirror their counterparts in the previous section, with the notable
exception of the inner product formula, which exhibits surprising mathematical depth.

Let $\alpha \in \N^n$ be a multi-index and $\abs{\alpha} \defeq \sum_{i=1}^n \alpha_i$. We will need the Maxwell representation of harmonic polynomials~\cite[Theorem 1.1.9]{DaiXu-SphericalPolysBook}. Concretely, let the spherical harmonic $s_\alpha:~S^{n-1}~\to~\R$ be (the restriction to $S^{n-1}$ of the function on $\R^n$)
\[s_\alpha(x) = \norm{x}^{2\abs{\alpha} + n-2} \pd{}{x^\alpha}\norm{x}^{-n+2} \]
and then scaled to be monic. An alternate method to write down $s_\alpha$ is to first write down the Hermite polynomial $\prod_{i=1}^n h_{\alpha_i}(x_i)$ and then multiply each non-leading monomial of total degree $\abs{\alpha} - 2k$ by approximately\footnote{Multiplying the monomials by exactly $n^{-k}$ creates polynomials orthogonal under the distribution $\calN(0, \Id_n/n)$, which is similar to the unit sphere.} $n^{-k}$. More precisely, we let $x^{\underline{\underline{k}}}$ be notation for the ``fall-by-2'' falling factorial, \[x^{\underline{\underline{k}}} \defeq x(x-2)(x-4)\cdots(x-2k+2) , \]
and let $x^{\underline{\underline{-k}}} \defeq 1/x^{\underline{\underline{k}}}$. We also define $x^{\overline{\overline{k}}}$ likewise for rise-by-2. Then:
\begin{fact}
\label{fact:hermite-to-sphere}
To form $s_\alpha$ from $h_\alpha$, multiply monomials with degree $\abs{\alpha} - 2k$ by $(n+2\abs{\alpha}-4)^{\underline{\underline{-k}}}$.
\end{fact}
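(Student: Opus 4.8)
The plan is to route through the harmonic projection of the monomial $x^\alpha$ and to recognize the rescaling in the Fact as (the restriction to $S^{n-1}$ of) that projection. Write $r=\norm{x}$ and $\Delta=\sum_{i=1}^n\partial_{x_i}^2$, and recall the operator identity $h_\alpha=e^{-\Delta/2}x^\alpha=\sum_{k\ge 0}\frac{(-1)^k}{2^k k!}\Delta^k x^\alpha$, which is the univariate fact $h_m=e^{-\frac12\partial_x^2}x^m$ tensored over the coordinates. Hence the homogeneous part of $h_\alpha$ of degree $\abs{\alpha}-2k$ is exactly $\frac{(-1)^k}{2^k k!}\Delta^k x^\alpha$, and the polynomial produced by the Fact is
\[ q_\alpha \defeq \sum_{k\ge 0} (n+2\abs{\alpha}-4)^{\falling{-k}}\,\tfrac{(-1)^k}{2^k k!}\,\Delta^k x^\alpha = \sum_{k\ge 0} b_k\,\Delta^k x^\alpha, \qquad b_k \defeq \tfrac{(-1)^k}{2^k k!}\,(n+2\abs{\alpha}-4)^{\falling{-k}}. \]
Its homogenization $\hat q_\alpha\defeq\sum_{k\ge 0} b_k\,r^{2k}\Delta^k x^\alpha$ agrees with $q_\alpha$ on $S^{n-1}$ because $r^{2k}=1$ there, so it suffices to identify $\hat q_\alpha|_{S^{n-1}}$ with $s_\alpha$.

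The core step is to show $\hat q_\alpha$ is harmonic. First I would establish, for $g$ homogeneous of degree $d$, the pointwise identity
\[ \Delta(r^{2k}g) = 2k\,(2k-2+n+2d)\,r^{2k-2}g + r^{2k}\Delta g, \]
by expanding $\sum_i\partial_{x_i}^2(r^{2k}g)$ and using Euler's relation $\sum_i x_i\partial_{x_i}g=d\,g$. Applying this with $g=\Delta^k x^\alpha$ (degree $d=\abs{\alpha}-2k$) and collecting the coefficient of $r^{2k}\Delta^{k+1}x^\alpha$ gives
\[ \Delta\hat q_\alpha = \sum_{k\ge 0}\Big(b_k + 2(k+1)\,(n+2\abs{\alpha}-2k-4)\,b_{k+1}\Big)\,r^{2k}\,\Delta^{k+1}x^\alpha. \]
It then remains to verify the scalar recursion $b_{k+1}=-b_k/\big(2(k+1)(n+2\abs{\alpha}-2k-4)\big)$, which is a one-line consequence of the definition of $b_k$ together with the fall-by-2 factorial identity $(n+2\abs{\alpha}-4)^{\falling{k+1}}=(n+2\abs{\alpha}-4)^{\falling{k}}\cdot(n+2\abs{\alpha}-2k-4)$. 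Every bracket then vanishes, so $\Delta\hat q_\alpha=0$.

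With harmonicity in hand, $\hat q_\alpha$ is a homogeneous harmonic polynomial of degree $\abs{\alpha}$ with $\hat q_\alpha - x^\alpha\in(r^2)$ (every $k\ge 1$ term carries a factor $r^{2k}$, and $b_0=1$); thus $\hat q_\alpha=\mathcal H[x^\alpha]$, the harmonic projection of $x^\alpha$. The space of homogeneous degree-$\abs{\alpha}$ harmonics whose class modulo $(r^2)$ is a multiple of $x^\alpha$ is one-dimensional, since it is the preimage of $\R\cdot x^\alpha$ under $\mathcal H$, whose kernel is $(r^2)$. The Maxwell solid harmonic $\norm{x}^{2\abs{\alpha}+n-2}\partial^\alpha\norm{x}^{-n+2}$ defining $s_\alpha$ lies in this space by \cite[Theorem 1.1.9]{DaiXu-SphericalPolysBook}, so it is a scalar multiple of $\hat q_\alpha$. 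The monic normalization fixes the scalar: $q_\alpha$ has top-degree part exactly $x^\alpha$, so $q_\alpha|_{S^{n-1}}=\hat q_\alpha|_{S^{n-1}}=s_\alpha$, which is the assertion of the Fact.

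I expect the harmonicity computation to be routine once the Laplacian identity is set up, so the genuinely delicate point is the normalization bookkeeping in the last step. The subtlety is that the Maxwell harmonic scaled to be monic \emph{as a homogeneous polynomial} does not restrict to $q_\alpha$ on the sphere: the two differ by a nonzero scalar, namely the coefficient of the monomial $x^\alpha$ in $\mathcal H[x^\alpha]$ (which is not $1$, as the $r^2$ terms feed back into $x^\alpha$). One must therefore read ``monic'' as referring to the top-degree coefficient of the reduced, non-homogeneous representative $q_\alpha$, and invoke the standard characterization of the Maxwell harmonics as the homogeneous harmonics with prescribed leading monomial to close the identification.
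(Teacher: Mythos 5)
Your proof is correct, but there is nothing in the paper to compare it against: \cref{fact:hermite-to-sphere} is stated without proof, with the surrounding text simply deferring to the Maxwell representation \cite[Theorem 1.1.9]{DaiXu-SphericalPolysBook}. So your argument fills a gap rather than paralleling an existing one. Checking it on its own merits: the operator identity $h_\alpha = e^{-\Delta/2}x^\alpha$ is the standard tensorization of the univariate fact, so the degree-$(\abs{\alpha}-2k)$ part of $h_\alpha$ is indeed $\frac{(-1)^k}{2^k k!}\Delta^k x^\alpha$; the Laplacian identity $\Delta(r^{2k}g) = 2k(2k-2+n+2d)r^{2k-2}g + r^{2k}\Delta g$ follows from the product rule plus Euler's relation as you say; and the recursion $b_{k+1} = -b_k/\bigl(2(k+1)(n+2\abs{\alpha}-2k-4)\bigr)$ is exactly what the identity $(n+2\abs{\alpha}-4)^{\falling{k+1}} = (n+2\abs{\alpha}-4)^{\falling{k}}\cdot(n+2\abs{\alpha}-2k-4)$ delivers, so $\Delta \hat q_\alpha = 0$. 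The identification $\hat q_\alpha = \mathcal{H}[x^\alpha]$ and the one-dimensionality of the space of homogeneous harmonics congruent to a multiple of $x^\alpha$ modulo $(r^2)$ are both right, and together with Dai--Xu they pin down $\hat q_\alpha$ as a scalar multiple of the Maxwell solid harmonic. Your closing remark about the normalization is also the correct reading of the paper: since $s_\alpha$ is used as the monic family $\chi_\alpha$ in \cref{prop:generic-construction} and in \cref{def:harmonics} (where the leading monomial of $p_G$ must come out to be $m_G$), ``monic'' must refer to the reduced non-homogeneous representative having $x^\alpha$-coefficient $1$, which is what your $q_\alpha$ satisfies. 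As a consistency check, applying the Fact to $h_4(x) = x^4 - 6x^2 + 3$ gives $x^4 - \frac{6}{n+4}x^2 + \frac{3}{(n+4)(n+2)}$, matching both your formula $\sum_k b_k \Delta^k x^\alpha$ and the $x_{12}^4$ entry in the paper's spherical table.
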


We will need the moments of the uniform distribution on the sphere (using the notation introduced above):
\begin{fact}
\[\E_{x \unif S^{n-1}} [x^\alpha] = 
\left\{\begin{array}{lr} n^{\overline{\overline{-\abs{\alpha}/2}}} \cdot \E_{Z \sim \calN(0,\Id_n)} [Z^\alpha]  & \text{If }\alpha_i \text{ even for all } i\\
0 & \text{Otherwise}
\end{array}\right.\]
\end{fact}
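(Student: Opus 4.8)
The plan is to relate the spherical moments to Gaussian moments through the standard polar decomposition of a Gaussian vector. First I would recall the key distributional fact: if $Z \sim \calN(0, \Id_n)$, then $Z/\norm{Z}$ is distributed uniformly on $S^{n-1}$ and is \emph{independent} of the radial part $\norm{Z}$. This rotational-invariance property is exactly what lets us split a Gaussian moment into a radial factor and a spherical factor.

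Writing $Z = \norm{Z} \cdot x$ with $x \defeq Z/\norm{Z} \unif S^{n-1}$, each coordinate satisfies $Z_i = \norm{Z}\, x_i$, so $Z^\alpha = \norm{Z}^{\abs{\alpha}} x^\alpha$. Taking expectations and using independence gives
\[ \E_{Z}[Z^\alpha] = \E_Z\!\left[\norm{Z}^{\abs{\alpha}}\right] \cdot \E_{x \unif S^{n-1}}[x^\alpha]. \]
Solving for the spherical moment yields $\E[x^\alpha] = \E[Z^\alpha] / \E[\norm{Z}^{\abs{\alpha}}]$, provided the radial moment is nonzero.

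It then remains to compute the radial moment when every $\alpha_i$ is even. In that case $\abs{\alpha} = 2k$ with $k = \abs{\alpha}/2$, and $\norm{Z}^2 = \sum_i Z_i^2$ is a chi-squared random variable with $n$ degrees of freedom. The $k$-th moment of the $\chi^2_n$ distribution is
\[ \E\!\left[\left(\norm{Z}^2\right)^{k}\right] = \frac{2^k\,\Gamma(n/2 + k)}{\Gamma(n/2)} = n(n+2)\cdots(n+2k-2) = n^{\overline{\overline{k}}}, \]
which is precisely the rise-by-2 factorial appearing in the statement. Substituting gives $\E[x^\alpha] = n^{\overline{\overline{-k}}} \cdot \E[Z^\alpha]$, matching the claimed formula.

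Finally I would dispatch the case where some $\alpha_i$ is odd. The uniform measure on $S^{n-1}$ is invariant under flipping the sign of the $i$-th coordinate, which negates $x^\alpha$ and forces $\E[x^\alpha] = 0$; equivalently, $\E[Z^\alpha] = 0$ already, since $Z_i$ is a mean-zero Gaussian. I do not expect any serious obstacle here: the only genuinely substantive ingredient is the independence of the radius and the direction of a Gaussian vector, and the main point requiring care is verifying that the chi-squared moment equals exactly $n^{\overline{\overline{k}}}$ rather than some other normalization.
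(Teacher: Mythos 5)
Your proof is correct, but it takes a different route from the paper's own derivation. The paper proves this Fact (in the guise of Lemma \ref{lem:sphericalmonomialmoments} in the appendix) by brute-force integration: it writes the spherical moment as an iterated integral in angular coordinates $x_i = r_i \sin(\Theta_i)$, establishes reduction identities for $\int_0^{\pi/2}\sin(\Theta)^a\cos(\Theta)^b\,d\Theta$, and peels off one coordinate at a time to arrive at $\prod_i (p_i!!) \big/ \prod_{i=1}^{p_{total}/2}(n+2i-2)$, which is exactly $n^{\rising{-\abs{\alpha}/2}}\,\E[Z^\alpha]$ under the paper's double-factorial convention. You instead use the polar decomposition $Z = \norm{Z}\cdot x$ with $x = Z/\norm{Z} \unif S^{n-1}$ independent of $\norm{Z}$, so that $\E[Z^\alpha] = \E[\norm{Z}^{\abs{\alpha}}]\cdot\E[x^\alpha]$, and then evaluate the radial factor as the $\chi^2_n$ moment $n(n+2)\cdots(n+\abs{\alpha}-2) = n^{\rising{\abs{\alpha}/2}}$; the odd case follows from sign symmetry in either argument. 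Interestingly, your trick is precisely the one the paper \emph{does} use to prove the spherical Isserlis theorem in \cref{sec:spherical-case} (multiplying by an independent $\sqrt{Q}$, $Q \sim \chi^2(n)$, and factoring out $\E[Q^k] = n^{\rising{k}}$), so your argument is very much in the paper's spirit, just applied one level earlier. What each approach buys: yours is shorter and conceptually cleaner, resting on the standard independence of radius and direction for a Gaussian vector together with the known chi-squared moments; the paper's integration is longer but fully self-contained, deriving everything from elementary calculus without invoking either of those probabilistic facts.
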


These spherical harmonics are degree-orthogonal (as functions of a 
single vector):
\begin{fact}
If $\abs{\alpha} \neq \abs{\beta}$, then $\displaystyle\E_{x \unif S^{n-1}} [s_\alpha(x) s_\beta(x)] = 0$.
\end{fact}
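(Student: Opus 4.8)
The plan is to recognize $s_\alpha$ not through the (non-homogeneous) polynomial built in \cref{fact:hermite-to-sphere}, but through its Maxwell representation $s_\alpha(x) = \norm{x}^{2\abs{\alpha}+n-2}\,\pd{}{x^\alpha}\norm{x}^{-n+2}$, which exhibits it as the restriction to $S^{n-1}$ of a \emph{harmonic homogeneous} polynomial of degree $\abs{\alpha}$. Once this is established, the statement reduces to the classical fact that harmonic polynomials of distinct degrees are orthogonal on the sphere, which I would prove in two lines via Green's identity. The two polynomial representatives of $s_\alpha$ agree on $S^{n-1}$ since they differ by a multiple of $\norm{x}^2-1$, so either may be used inside the spherical expectation.

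First I would check the two structural properties of the Maxwell representative. Homogeneity of degree $\abs{\alpha}$ is a degree count: $\norm{x}^{-n+2}$ has degree $-n+2$, applying $\pd{}{x^\alpha}$ lowers this by $\abs{\alpha}$, and the prefactor restores $2\abs{\alpha}+n-2$, for a net degree of $\abs{\alpha}$. Harmonicity I would get from the three standard ingredients behind the cited Maxwell representation theorem: the Newtonian kernel $\norm{x}^{-n+2}$ is harmonic on $\R^n \setminus \{0\}$, the Laplacian commutes with $\pd{}{x^\alpha}$ so $\pd{}{x^\alpha}\norm{x}^{-n+2}$ is again harmonic off the origin, and multiplication by $\norm{x}^{2\abs{\alpha}+n-2}$ is the Kelvin transform, which both clears the singularity (turning the expression into an honest polynomial) and preserves harmonicity. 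I would cite this rather than reprove it.

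With $s_\alpha$ and $s_\beta$ realized as harmonic homogeneous polynomials of degrees $p \defeq \abs{\alpha}$ and $q \defeq \abs{\beta}$, I would apply Green's second identity on the unit ball $B$:
\[ \int_{B} \left( s_\alpha \Delta s_\beta - s_\beta \Delta s_\alpha \right) dx = \int_{S^{n-1}} \left( s_\alpha\, \partial_\nu s_\beta - s_\beta\, \partial_\nu s_\alpha \right) dS. \]
The left-hand side vanishes since both polynomials are harmonic. On $S^{n-1}$ the outward normal is $\nu = x$, so by Euler's identity for homogeneous functions $\partial_\nu s_\beta = x \cdot \nabla s_\beta = q\, s_\beta$ and likewise $\partial_\nu s_\alpha = p\, s_\alpha$. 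Hence the right-hand side equals $(q-p)\int_{S^{n-1}} s_\alpha s_\beta \, dS$. Since $p \neq q$ we conclude $\int_{S^{n-1}} s_\alpha s_\beta \, dS = 0$, and dividing by the surface area (so that the measure becomes the uniform one) gives $\E_{x \unif S^{n-1}}[s_\alpha s_\beta] = 0$.

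The only real obstacle is the harmonicity of the Maxwell representative, which is exactly the content of the cited representation theorem; everything after that is routine. As a more self-contained alternative that leans on the facts already in this section, I could instead reduce the spherical expectation to a Gaussian one: for a homogeneous polynomial $f$ of degree $d$, splitting $Z$ into radius and direction gives $\E_{Z \sim \calN(0,\Id_n)}[f(Z)] = \E[\norm{Z}^d]\cdot \E_{x \unif S^{n-1}}[f(x)]$, so since $\E[\norm{Z}^d]>0$ it suffices to show $\E_Z[s_\alpha(Z) s_\beta(Z)] = 0$. This in turn follows because a harmonic homogeneous polynomial has a Hermite expansion supported only on its top degree (the lower-degree corrections are governed by powers of $\Delta$, which annihilate it), and Hermite polynomials of different total degree are Gaussian-orthogonal. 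I expect the Green's identity route to be the shortest to write.
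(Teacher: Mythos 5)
Your proof is correct, but there is little in the paper to compare it against: the paper states this fact without proof, treating degree-orthogonality of the $s_\alpha$ as a standard property of spherical harmonics and importing the Maxwell representation from \cite[Theorem 1.1.9]{DaiXu-SphericalPolysBook}. What you supply is the classical justification that the paper leaves to the literature. Your main route is sound: the Maxwell representative is homogeneous of degree $\abs{\alpha}$ (your degree count is right) and harmonic (derivatives of the Newtonian kernel remain harmonic, and multiplication by $\norm{x}^{2\abs{\alpha}+n-2}$ is exactly the Kelvin transform of that homogeneous function, which is what the cited theorem packages), after which Green's second identity on the unit ball together with Euler's identity for homogeneous functions gives $(\abs{\beta}-\abs{\alpha})\int_{S^{n-1}} s_\alpha s_\beta \, dS = 0$, forcing the integral to vanish. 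You are also right that the monic rescaling and the choice between the two polynomial representatives (which differ by a multiple of $\norm{x}^2-1$) are immaterial inside a spherical expectation. Your alternative route is, if anything, more in the spirit of this paper: reducing $\E_{x \unif S^{n-1}}$ to $\E_{Z \sim \calN(0,\Id_n)}$ by radial--angular factorization of a homogeneous integrand, and then noting that a harmonic homogeneous polynomial equals its own top-level Hermite expansion (the Hermite coefficients of $f$ are the monomial coefficients of $e^{\Delta/2}f$, which equals $f$ when $f$ is harmonic), runs parallel to the paper's own conversion between $h_\alpha$ and $s_\alpha$ in \cref{fact:hermite-to-sphere} and to the generic degree-orthogonal construction of \cref{prop:generic-construction}. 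The only caveat, inherited from the paper's definition rather than introduced by you, is that $\norm{x}^{-n+2}$ is constant when $n=2$, so the Maxwell representation (and hence your harmonicity input) requires $n \geq 3$; this is irrelevant in the paper's large-$n$ regime.
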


We remark that $\{s_\alpha : \alpha \in \N^n\}$ is not completely linearly independent as functions on $S^{n-1}$ because of the identity $\ip{v}{v} = 1$:
\begin{fact}
For each $k$, the set $\{s_\alpha : \abs{\alpha} \leq k, \alpha_n = 0 \text{ or }1\}$ is a basis for the set of degree-$(\leq k)$ polynomial functions on $S^{n-1}$.
The same holds for the monomials $x^\alpha$.
\end{fact}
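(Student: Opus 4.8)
The plan is to prove the statement first for the monomials $\{x^\alpha\}$ and then transfer the conclusion to the $\{s_\alpha\}$ by a triangularity argument. For the monomials I would verify the two defining properties of a basis separately: that the listed monomials span the degree-$(\leq k)$ functions on $S^{n-1}$, and that they are linearly independent as functions on $S^{n-1}$.

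For spanning, I would use the sphere relation $x_n^2 = 1 - \sum_{i<n} x_i^2$ to rewrite any monomial $x^\alpha$ with $\alpha_n \geq 2$. Substituting $x_n^{\alpha_n} = (1 - \sum_{i<n}x_i^2)^{\lfloor \alpha_n/2\rfloor}\, x_n^{\alpha_n \bmod 2}$ and expanding produces a combination of monomials each with last exponent in $\{0,1\}$. The key bookkeeping point, which I would check explicitly, is that this substitution never increases the total degree: the factor $(1-\sum_{i<n}x_i^2)^{\lfloor\alpha_n/2\rfloor}$ has degree $2\lfloor\alpha_n/2\rfloor \leq \alpha_n$, so every monomial produced has degree $\leq \abs{\alpha}$. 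Hence this reduction expresses every degree-$(\leq k)$ monomial, as a function on the sphere, inside $\linspan\{x^\beta : \beta_n \in \{0,1\}, \abs{\beta}\leq k\}$.

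For linear independence I would use a two-hemispheres evaluation, which avoids any appeal to the Nullstellensatz or to the dimension of the harmonics. Suppose $f \defeq \sum_{\alpha} c_\alpha x^\alpha$ vanishes on $S^{n-1}$, the sum ranging over $\alpha$ with $\alpha_n \in \{0,1\}$. Grouping terms by the last exponent, write $f = g(x_1,\dots,x_{n-1}) + x_n\, h(x_1,\dots,x_{n-1})$. For every $y$ in the open unit ball of $\R^{n-1}$ both points $(y, \pm\sqrt{1-\norm{y}^2})$ lie on $S^{n-1}$, so $g(y) \pm \sqrt{1-\norm{y}^2}\,h(y) = 0$. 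Adding and subtracting these identities gives $g(y) = 0$ and $\sqrt{1-\norm{y}^2}\,h(y) = 0$ throughout the open ball, forcing $g$ and $h$ to vanish on a nonempty open set and hence to be the zero polynomial. Therefore all $c_\alpha = 0$, and combined with spanning this shows $\{x^\alpha : \alpha_n \in \{0,1\}, \abs{\alpha}\leq k\}$ is a basis.

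Finally I would transfer to the $s_\alpha$. By \cref{fact:hermite-to-sphere}, as a polynomial on $\R^n$ each $s_\alpha$ equals $x^\alpha$ plus a combination of monomials of total degree $\abs{\alpha}-2, \abs{\alpha}-4, \dots$. Reducing $s_\alpha$ to the monomial basis above is degree-preserving, so in that basis $s_\alpha$ is $x^\alpha$ (already reduced when $\alpha_n \in \{0,1\}$) plus terms of strictly smaller degree. Ordering the index set by total degree, the change-of-basis matrix from $\{s_\alpha : \alpha_n \in \{0,1\}, \abs{\alpha}\leq k\}$ to $\{x^\alpha : \alpha_n \in \{0,1\}, \abs{\alpha}\leq k\}$ is therefore unitriangular, hence invertible, so the $s_\alpha$ also form a basis. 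I expect the only genuinely delicate step to be the linear independence argument; the hemisphere trick is what makes it elementary, and the transfer to $s_\alpha$ is then routine triangular linear algebra.
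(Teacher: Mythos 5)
Your proof is correct, but there is nothing in the paper to compare it against: the paper records this statement as a \textbf{Fact} with no proof at all, implicitly deferring to the standard spherical-harmonics literature (the Dai--Xu book cited just above it), where one would typically argue via the decomposition of a polynomial into $\norm{x}^{2j}$ times harmonic polynomials and a dimension count. Your route is genuinely different and more elementary. The spanning step (substituting $x_n^2 = 1 - \sum_{i<n} x_i^2$ and checking the reduction never raises total degree) is exactly the same device the paper itself uses later, in the proof that the $m_G$ with $\abs{E(G)} \leq n-1$ are linearly independent on $(S^{n-1})^V$, so it is very much in the paper's spirit. Your two-hemispheres evaluation, splitting $f = g + x_n h$ and evaluating at $(y, \pm\sqrt{1-\norm{y}^2})$ to force $g$ and $h$ to vanish on an open set, gives linear independence without any appeal to harmonic decompositions, the Nullstellensatz, or dimension formulas. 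The transfer to the $s_\alpha$ is also sound: by \cref{fact:hermite-to-sphere} each $s_\alpha$ is $x^\alpha$ plus monomials of total degree $\abs{\alpha}-2, \abs{\alpha}-4, \dots$, and since the reduction to the monomial basis is degree-non-increasing (the property you verified, though you call it ``degree-preserving''), the change-of-basis matrix ordered by degree is triangular with unit diagonal, hence invertible. What your approach buys is a self-contained proof usable for any $n$ and $k$ with no outside machinery; what the standard approach buys is the finer information (the graded decomposition into harmonics and the exact dimension of each degree), which the paper does not need here.
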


The monomials $m_G$ are defined as before for each multigraph on $V$ without self-loops.
They are not completely
linearly independent as functions on $(S^{n-1})^V.$
We restrict ourselves to the set of low-degree functions, which are linearly independent.
\begin{lemma}
    The set of $m_G$ with $\abs{E(G)} \leq n-1$ is linearly independent
    as functions on $(S^{n-1})^V$.
\end{lemma}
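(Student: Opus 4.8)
The plan is to reduce to the Gaussian linear independence lemma proved above (the set of $m_G$ with $\abs{E(G)} \le n$ is linearly independent as polynomials in the $d_{u,i}$), by splitting the relation $\sum_G c_G m_G = 0$ according to the degree sequences of the graphs. The essential difficulty relative to the Gaussian case is the relation $\norm{d_u}^2 = 1$: an identity that holds on $(S^{n-1})^V$ need \emph{not} hold as a polynomial identity, so one cannot directly read off a witness monomial as in the Gaussian proof. I would get around this using the spherical harmonic decomposition in each vector.

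\textbf{Step 1 (set-up).} Suppose $\sum_G c_G m_G = 0$ as a function on $(S^{n-1})^V$, where every $G$ has $\abs{E(G)} \le n-1$. Decompose $L^2((S^{n-1})^V) = \bigotimes_{u \in V} L^2(S^{n-1})$ into the orthogonal pieces $\bigotimes_{u} \mathcal{H}_{d_u}$, where $\mathcal{H}_{k}$ denotes the degree-$k$ spherical harmonics in the variable $d_u$ and $\vec d = (d_u)_{u \in V}$ ranges over multidegrees; let $\Pi_{\vec d}$ be the orthogonal projection onto the $\vec d$ summand. Since $m_G$ has degree $\deg_G(u)$ in the variable $d_u$, its restriction to the sphere has harmonic content only in multidegrees $\vec k$ with $k_u \le \deg_G(u)$ and $k_u \equiv \deg_G(u) \bmod 2$; in particular $\Pi_{\vec d}(m_G) \neq 0$ forces $\deg_G(u) \ge d_u$ for all $u$.

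\textbf{Step 2 (isolate a maximal degree sequence).} Choose $\vec d$ maximal, in the coordinatewise order, among $\{\deg_G : c_G \neq 0\}$. Applying $\Pi_{\vec d}$ to the hypothesis and using Step 1, the only surviving terms are those $G$ with $\deg_G \ge \vec d$; by maximality these satisfy $\deg_G = \vec d$. Hence
\[ \sum_{G \,:\, \deg_G = \vec d} c_G \, \Pi_{\vec d}(m_G) = 0 \qquad \text{as functions on } (S^{n-1})^V. \]
Each $\Pi_{\vec d}(m_G)$ is the restriction of the harmonic projection $\tilde m_G$ of $m_G$, a harmonic polynomial of multidegree $\vec d$. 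Because the restriction map from multidegree-$\vec d$ harmonic polynomials into $L^2((S^{n-1})^V)$ is injective (a tensor product of the injective single-sphere restrictions), the displayed identity upgrades to the polynomial identity $\sum_{\deg_G = \vec d} c_G \tilde m_G = 0$.

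\textbf{Step 3 (extract the leading part and invoke the Gaussian lemma).} The harmonic projection alters $m_G$ only by terms divisible by some $\norm{d_u}^2$, which have strictly smaller degree in that variable, so $\tilde m_G = m_G + (\text{strictly lower multidegree})$. As all $G$ in the sum share the multidegree $\vec d$, the multidegree-$\vec d$ component of $\sum c_G \tilde m_G = 0$ is exactly $\sum_{\deg_G = \vec d} c_G m_G = 0$, now a genuine polynomial identity in the $d_{u,i}$. Since these $G$ have a common degree sequence and $\abs{E(G)} = \frac12 \sum_u d_u \le n-1 \le n$, the Gaussian linear independence lemma yields $c_G = 0$ for every such $G$. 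Removing these graphs leaves a shorter spherical identity, and iterating over the finitely many degree sequences present shows all $c_G = 0$. The main obstacle is precisely the passage from a spherical identity to a polynomial one, which is exactly what Steps 2--3 accomplish: separating multidegrees via the harmonic projection, and verifying that the lower-order harmonic corrections cannot contaminate the top multidegree part. The only external inputs are the (standard) injectivity of the restriction of harmonic polynomials to the sphere and the previously established Gaussian lemma.
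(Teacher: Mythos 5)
Your Steps 1 and 2 are sound, but Step 3 contains a genuine gap: it conflates ``lower harmonic content as a function on the sphere'' with ``lower degree as a polynomial.'' The homogeneous harmonic projection $\tilde m_G$ of $m_G$ differs from $m_G$ by terms divisible by some $\norm{d_u}^2$, and such a term $\norm{d_u}^2 r_u$ has the \emph{same} degree in $d_u$ as $m_G$ (only its harmonic factor $r_u$ has smaller degree). Consequently $\tilde m_G$ is itself homogeneous of multidegree $\vec d$, so ``taking the multidegree-$\vec d$ component'' of the identity $\sum_{\deg_G = \vec d} c_G \tilde m_G = 0$ returns that identity verbatim, not $\sum_{\deg_G = \vec d} c_G m_G = 0$. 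Concretely, for $G$ equal to two parallel edges on $\{1,2\}$ one has $m_G = \ip{d_1}{d_2}^2$ and $\tilde m_G = \ip{d_1}{d_2}^2 - \frac{1}{n}\norm{d_1}^2\norm{d_2}^2$, both of multidegree $(2,2)$; no component extraction recovers $m_G$ from $\tilde m_G$. Equivalently: writing $P$ for the harmonic-projection map on multidegree-$\vec d$ polynomials, what you have actually proved is $P\bigl(\sum_{\deg_G = \vec d} c_G m_G\bigr) = 0$, and $P$ has a nontrivial kernel (the multidegree-$\vec d$ polynomials of the form $\sum_u \norm{d_u}^2 q_u$), so this does not yield the polynomial identity needed to invoke the Gaussian lemma.

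The gap is fixable, but the fix is exactly the paper's key device, not a formality. You must show that the span of $\{m_G : \deg_G = \vec d,\ \abs{E(G)} \le n-1\}$ meets $\ker P$ trivially. The natural argument: every monomial of an element $\sum_u \norm{d_u}^2 q_u = \sum_u \sum_i d_{u,i}^2 q_u$ of the kernel is divisible by some $d_{u,i}^2$, hence is not multilinear; on the other hand, an injective labeling $\sigma : E(G) \to [n-1]$ (which exists since $\abs{E(G)} \le n-1$) produces a multilinear ``special monomial'' that appears in $m_G$ and, because the distinct labels let one read off the edges, in no other $m_{G'}$. So if some $c_G \neq 0$, then $\sum_G c_G m_G$ contains a multilinear monomial with nonzero coefficient and cannot lie in $\ker P$. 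This is in essence the paper's proof, which dispenses with harmonic analysis altogether: it substitutes $d_{u,n}^2 = 1 - \sum_{i=1}^{n-1} d_{u,i}^2$ to put the spherical function in a normal form and observes that the special monomial (multilinear, labels in $[n-1]$, of maximum degree) survives the reduction uncancelled. Your harmonic-decomposition framing is a viable alternative route, but as written it omits its crucial step, and that step is the heart of the matter.
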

\begin{proof}
    Suppose $\sum_{G : \abs{E(G)} \leq n}c_G m_G = 0$ where $c_G$ are not all zero, and let 
    $G$ be a nonzero graph with maximum number of edges.
    Expanding $m_G$,
    \[ m_G = \sum_{\sigma : E(G) \to [n]} \prod_{\{u,v\} \in E(G)} d_{u,\sigma(\{u,v\})} d_{v, \sigma(\{u,v\})}.\]
    Letting $\sigma$ be an injective assignment of labels from $[n-1]$ (which exists
    because $\abs{E(G)} \leq n-1$), we claim that the corresponding monomial, which we call the ``special monomial'',
    is uncancelled and appears with coefficient $c_G$.

    First, the relations $\ip{d_u}{d_u} = 1$ mean that polynomials do not have
    a unique representation as functions on $(S^{n-1})^V$. We amend this
    by using the relations to reduce the degree of variable $d_{u,n}$ to 0 or 1 for each vertex $u$, replacing $d_{u,n}^2 = 1 - \sum_{i=1}^{n-1}d_{u,i}^2$.
    Nothing needs to be done for the special monomial.
    
    After performing the replacement, the special monomial still does not arise from
    any other graphs. This is because the reduction step must either lower the degree,
    or introduce a variable with degree 2, whereas the special monomial is multilinear
    and was chosen to have maximum degree. Therefore, the special monomial has coefficient $c_G$, which
    is nonzero, a contradiction.
\end{proof}

There is a spherical Isserlis theorem which gives a recursive method to compute $\E[m_G]$.
\begin{lemma}[(Spherical Isserlis theorem)]
Fix vectors $d_1, \dots, d_{2k} \in \R^n$. Then for $v \unif S^{n-1}$,
\[\E_{v} [\ip{v}{d_1}\cdots \ip{v}{d_{2k}}] = n^{\overline{\overline{-k}}} \displaystyle\sum_{\substack{\text{perfect matchings}\\ \mathcal{M} \text{ on }[2k]}} \prod_{(u,v) \in \mathcal{M}} \ip{d_u}{d_v}.\]
Observe also that the expectation is zero when there are an odd number of inner products.
\end{lemma}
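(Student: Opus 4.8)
The plan is to deduce the spherical Isserlis theorem from the Gaussian one via the polar decomposition of a Gaussian vector. First I would write a standard Gaussian $Z \sim \calN(0, \Id_n)$ as $Z = r \cdot v$, where $r \defeq \norm{Z}$ and $v \defeq Z/\norm{Z}$; it is a standard fact that $r$ and $v$ are independent and that $v \unif S^{n-1}$ (this is exactly the rotational invariance of $\calN(0,\Id_n)$). Since $\ip{Z}{d_i} = r \ip{v}{d_i}$ for every $i$, the product of the $2k$ inner products factors as $r^{2k} \prod_{i=1}^{2k} \ip{v}{d_i}$, and independence gives
\[ \E_Z[\ip{Z}{d_1}\cdots\ip{Z}{d_{2k}}] = \E[r^{2k}] \cdot \E_v[\ip{v}{d_1}\cdots\ip{v}{d_{2k}}]. \]

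Next I would evaluate the left-hand side by the (already-proved) Gaussian Isserlis theorem, obtaining the sum over perfect matchings $\sum_{M} \prod_{(u,v)\in M}\ip{d_u}{d_v}$. Solving for the spherical expectation, it remains only to compute the normalizing constant $\E[r^{2k}] = \E[\norm{Z}^{2k}]$. Since $\norm{Z}^2 = \sum_i Z_i^2$ is chi-squared with $n$ degrees of freedom, its $k$-th moment is $\E[\norm{Z}^{2k}] = n(n+2)(n+4)\cdots(n+2k-2) = n^{\overline{\overline{k}}}$, so that $1/\E[r^{2k}] = n^{\overline{\overline{-k}}}$, which is exactly the claimed prefactor.

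Finally, for the case of an odd number of inner products, both sides vanish: the spherical expectation is odd under the measure-preserving symmetry $v \mapsto -v$ of $S^{n-1}$, hence is zero (equivalently, the Gaussian expectation vanishes while the radial factor is nonzero). There is no serious obstacle here — the only nontrivial computation is the chi-squared moment $\E[\norm{Z}^{2k}] = n^{\overline{\overline{k}}}$, which can be verified either from $\E[(\chi^2_n)^k] = 2^k\,\Gamma(n/2+k)/\Gamma(n/2)$ or from the recursion $\E[\norm{Z}^{2k}] = (n+2k-2)\,\E[\norm{Z}^{2k-2}]$. The single point requiring care is the independence of $r$ and $v$, which I would cite as the standard polar decomposition of a spherically symmetric distribution.
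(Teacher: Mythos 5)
Your proposal is correct and takes essentially the same route as the paper: the paper also reduces to the Gaussian Isserlis theorem by writing a standard Gaussian as $\sqrt{Q}\,v$ with $Q \sim \chi^2(n)$ independent of $v \unif S^{n-1}$ (your polar decomposition read in reverse), factoring out $\E[Q^k] = n^{\rising{k}}$, and dividing. The only cosmetic difference is the direction of the decomposition (constructing the Gaussian from $(Q,v)$ versus decomposing $Z$ into radius and direction), which is the same fact.
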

\begin{proof}
This follows from the standard Isserlis theorem. 
Let $Q \sim\chi^2(n)$ be a chi-square random variable with $n$ 
degrees of freedom, independent from $v$. Then
\[ \E_{v, Q} \left[\ip{\sqrt{Q}v}{d_1}\cdots \ip{\sqrt{Q}v}{d_{2k}}\right] = \E_{Z\sim \calN(0,\Id_n)} [\ip{Z}{d_1}\cdots \ip{Z}{d_{2k}}].\]
Factoring out $Q^k$, the left-hand side is
\[ \E_{v, Q} [\ip{\sqrt{Q}v}{d_1}\cdots \ip{\sqrt{Q}v}{d_{2k}}] = \E_Q[Q^k] \cdot \E_{v} [\ip{v}{d_1}\cdots \ip{v}{d_{2k}}] 
= n^{\rising{k}} \E_{v} [\ip{v}{d_1}\cdots \ip{v}{d_{2k}}].\]
By the Gaussian Isserlis theorem, the right-hand side equals
\[\E_{Z\sim \calN(0,\Id_n)} [\ip{Z}{d_1}\cdots \ip{Z}{d_{2k}}] = 
\displaystyle\sum_{\substack{\text{perfect matchings}\\ \mathcal{M} \text{ on }[2k]}} \prod_{(u,v) \in \mathcal{M}} \ip{d_u}{d_v}.\]
Dividing by $n^{\rising{k}}$ proves the claim.
\end{proof}

We also have explicit formulas based on matching collections,
\begin{lemma}
\label{lem:sphere-monom-expectation}
$\E[m_G] = 0$ if there is a vertex of odd degree, otherwise, 
\[\E[m_G] = \prod_{v \in V} n^{\overline{\overline{-\deg(v)/2}}} \displaystyle\sum_{M \in \mathcal{PM}(G)} n^{\cycles(M)}.\]
\begin{proof}
The proof goes through exactly as in the Gaussian case, 
but plug in the spherical moments which contribute the rising factorial terms.
\end{proof}
\end{lemma}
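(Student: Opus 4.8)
The plan is to run the proof of the Gaussian case (\cref{lem:monom-expectation}) essentially verbatim, substituting the single-vertex spherical moment for the single-vertex Gaussian moment. Both arguments rest on the fact that the vectors $d_u$ for distinct $u \in V$ are independent. Expanding the monomial and grouping by vertex,
\[ m_G = \sum_{\sigma : E(G) \to [n]} \prod_{u \in V} \prod_{i \in [n]} d_{u,i}^{\#\{e \ni u \,:\, \sigma(e) = i\}}, \]
so taking expectations factorizes the expectation over vertices:
\[ \E[m_G] = \sum_{\sigma : E(G) \to [n]} \prod_{u \in V} \E_{d_u}\bigl[ d_u^{\alpha_u(\sigma)} \bigr], \]
where $\alpha_u(\sigma) \in \N^n$ has coordinates $\alpha_{u,i}(\sigma) = \#\{e \ni u : \sigma(e) = i\}$. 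Since there are no self-loops in this setting, each edge incident to $u$ contributes exactly one power of $d_u$, so $\abs{\alpha_u(\sigma)} = \deg(u)$ for every $\sigma$.

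The odd-degree case is immediate: if $\deg(u)$ is odd for some $u$, then $\abs{\alpha_u(\sigma)}$ is odd, so the coordinates of $\alpha_u(\sigma)$ cannot all be even, and the spherical moment fact gives $\E_{d_u}[d_u^{\alpha_u(\sigma)}] = 0$ for every $\sigma$; hence $\E[m_G] = 0$. In the remaining case I would apply the spherical moment fact at each vertex to write
\[ \E_{d_u}\bigl[ d_u^{\alpha_u(\sigma)} \bigr] = n^{\rising{-\deg(u)/2}} \cdot \E_{Z \sim \calN(0,\Id_n)}\bigl[ Z^{\alpha_u(\sigma)} \bigr], \]
using $\abs{\alpha_u(\sigma)} = \deg(u)$.

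The one observation carrying all the weight is that the prefactor $n^{\rising{-\deg(u)/2}}$ depends only on $\deg(u)$, which is fixed by $G$, and not on the labeling $\sigma$. Consequently it detaches from the sum entirely,
\[ \E[m_G] = \Bigl( \prod_{u \in V} n^{\rising{-\deg(u)/2}} \Bigr) \sum_{\sigma : E(G) \to [n]} \prod_{u \in V} \E_Z\bigl[ Z^{\alpha_u(\sigma)} \bigr]. \]
Because the coordinates of a standard Gaussian are independent, $\prod_{u} \E_Z[Z^{\alpha_u(\sigma)}]$ is exactly the $\sigma$-summand appearing in the Gaussian expansion of $m_G$, so the residual double sum is precisely the Gaussian expectation treated in \cref{lem:monom-expectation}, namely $\sum_{M \in \PM(G)} n^{\cycles(M)}$. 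Combining this with the extracted prefactors yields the claimed formula. I do not expect a genuine obstacle here; the only point requiring care is confirming that the spherical moment contributes exactly a $\sigma$-independent, degree-indexed prefactor, so that the combinatorial sum separates cleanly and reduces to the already-established Gaussian count.
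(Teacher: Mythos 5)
Your proposal is correct and is essentially the paper's own proof: the paper simply says the Gaussian argument of \cref{lem:monom-expectation} goes through with the spherical moments plugged in, and you have spelled out exactly that substitution, including the key point that the prefactor $n^{\rising{-\deg(u)/2}}$ depends only on $\deg(u)$ and so factors out of the sum over labelings $\sigma$, leaving the Gaussian count $\sum_{M \in \PM(G)} n^{\cycles(M)}$. No gaps worth noting.
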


\begin{remark}
\label{rmk:sphere-monom-loops}
\cref{lem:sphere-monom-expectation} is still valid if $G$ has self-loops.
\end{remark}

We now give three definitions of the orthogonal polynomials for the spherical case which are analogous to Definitions \ref{def:hermites}, \ref{def:routing}, and \ref{def:truncation} for the Gaussian case:
\begin{definition}[(Spherical harmonic sum definition)]
\label{def:harmonics}
Define $p_G$ by
\[p_G =  \displaystyle\sum_{\sigma : E(G) \to [n]} \prod_{u \in V} s_{\text{histogram of }\{\sigma(e): e \ni u\}}(d_u).\]
\end{definition}

\begin{definition}[(Routing definition)]
\label{def:routing-sphere}
Define $p_G$ by
\[p_G =  \displaystyle\sum_{M \in \mathcal{M}(G)} m_{\route(M)} \cdot n^{\cycles(M)} \cdot (-1)^{\abs{M}} \cdot \prod_{v \in V} (n+2\deg(v) - 4)^{\underline{\underline{-\abs{M_v}}}} \]
where $M_v$ is the partial matching of incident edges at $v$.
\end{definition}

\begin{definition}[(Generic construction from~\cref{prop:generic-construction})]
\label{def:truncate-sphere}
To construct $p_G$, expand the function $m_G$ in the basis of spherical harmonics as a function of $d_{u,i}$ then truncate to the top-level coefficients of degree $2\abs{E(G)}$.
\end{definition}

\begin{lemma}
The three definitions above are equivalent.
\begin{proof}
Definitions~\ref{def:harmonics} and \ref{def:truncate-sphere} agree once we check that the leading monomial in \cref{def:harmonics} is $m_G$.

Definitions~\ref{def:harmonics} and \ref{def:routing-sphere} agree as a consequence of equality between \cref{def:hermites} and \cref{def:routing} in the Gaussian case by the following argument. For reference, we recall the two equal formulas for $p_G$ in the Gaussian case,
\begin{equation} \label{eq:def1}
p_G = \displaystyle\sum_{\sigma : E(G) \to [n]} \prod_{u \in V, i \in [n]} h_{\abs{\{e \ni u \; : \; \sigma(e) = i\}}}(d_{u,i})
\end{equation}
\begin{equation} \label{eq:def2}
p_G = \displaystyle\sum_{M \in \mathcal{M}(G)} m_{\route(M)} \cdot n^{\cycles(M)} \cdot (-1)^{\abs{M}}.
\end{equation}
For each fixed $\delta \in V^\N$, let us restrict to only the monomials in the variables $d_{u,i}$ with total degree $\delta(u)$ on the variables $\{d_{u,i} : i \in [n]\}$. We clearly still have equality between~\cref{eq:def1} and~\cref{eq:def2} after making this restriction. The equality still holds if we multiply both sides by an appropriate function of $n$; we choose this function of $n$ to be the product that appears on the right side of \cref{def:routing-sphere}, which only depends on $\delta$. This clearly converts \cref{eq:def2} into \cref{def:routing-sphere}. Due to the choice of function, it also turns \cref{eq:def1} into \cref{def:harmonics} because of the conversion between $h_\alpha$ and $s_\alpha$ in~\cref{fact:hermite-to-sphere}.
\end{proof}
\end{lemma}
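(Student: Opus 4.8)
The plan is to prove the two equalities \cref{def:harmonics} $=$ \cref{def:truncate-sphere} and \cref{def:harmonics} $=$ \cref{def:routing-sphere} separately, in each case leaning on the corresponding Gaussian argument and inserting the scalar corrections recorded in \cref{fact:hermite-to-sphere}.

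For \cref{def:harmonics} $=$ \cref{def:truncate-sphere} I would repeat the Gaussian reasoning verbatim. The monomial $m_G$ is homogeneous of degree $2\abs{E(G)}$ in the variables $d_{u,i}$ and is literally the sum $\sum_\sigma d^{\alpha(\sigma)}$ of its constituent monomials, each with coefficient $1$. Since every spherical harmonic $s_\alpha$ is monic with leading monomial $x^\alpha$, we have $d^{\alpha} = s_\alpha + (\text{lower-degree harmonics})$ for each top-degree monomial. Hence the degree-$2\abs{E(G)}$ part of the spherical-harmonic expansion of $m_G$ is exactly $\sum_\sigma \prod_u s_{\alpha_u(\sigma)}(d_u)$, which is \cref{def:harmonics}; and this top-level truncation is what \cref{def:truncate-sphere} prescribes. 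The only thing to check is that the leading monomial produced by \cref{def:harmonics} is $m_G$, which again follows from monicity of the $s_\alpha$.

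For \cref{def:harmonics} $=$ \cref{def:routing-sphere} I would transfer the already-established Gaussian identity \cref{def:hermites} $=$ \cref{def:routing} one degree-profile at a time. Group the terms of both Gaussian formulas according to the profile $\delta \in \N^V$ recording the total degree $\delta(v)$ carried by the block $\{d_{v,i}: i \in [n]\}$; equality is preserved after restricting to a fixed $\delta$. On the routing side a collection $M$ contributes to profile $\delta$ precisely when $\abs{M_v} = (\deg(v) - \delta(v))/2$ at every $v$, since routing leaves $\deg(v) - 2\abs{M_v}$ unmatched endpoints at $v$. On the Hermite side the profile-$\delta$ part of $h_{\alpha_v}(d_v)$ (with $\abs{\alpha_v} = \deg(v)$) consists of the degree-$\delta(v)$ monomials, i.e. those dropped by $k_v = (\deg(v)-\delta(v))/2$ from the top. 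Now multiply the profile-$\delta$ slice of both identities by the scalar $\prod_{v} (n + 2\deg(v) - 4)^{\falling{-(\deg(v)-\delta(v))/2}}$, which depends only on $\delta$ and the degree sequence of $G$ and may therefore be pulled across both representations. By \cref{fact:hermite-to-sphere} (with $\abs{\alpha} = \deg(v)$ and $k = k_v$) this scalar converts each $h_{\alpha_v}(d_v)$ into $s_{\alpha_v}(d_v)$, turning \cref{def:hermites} into \cref{def:harmonics}; and since $k_v = \abs{M_v}$ on this slice, the same scalar equals the product $\prod_v (n+2\deg(v)-4)^{\falling{-\abs{M_v}}}$ appearing in \cref{def:routing-sphere}, turning \cref{def:routing} into \cref{def:routing-sphere}. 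Summing over all profiles $\delta$ yields the full identity.

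The main obstacle is the bookkeeping that makes a single scalar correction serve double duty: I must verify that the quantity $k$ by which \cref{fact:hermite-to-sphere} lowers the degree on the Hermite side equals the number of matched pairs $\abs{M_v}$ on the routing side, and that the base of the falling factorial is $n + 2\deg(v) - 4$ in both places. Both reduce to the identity $\abs{\alpha_v} = \deg(v)$ (the harmonic degree at $v$ is the graph-degree) together with $\delta(v) = \deg(v) - 2\abs{M_v}$. Once these match up, the decisive and essentially formal point is that the correction factor is a function of $\delta$ alone, so multiplying it into a true equation keeps the equation true while simultaneously realizing both spherical definitions.
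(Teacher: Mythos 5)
Your proposal is correct and follows essentially the same route as the paper: both parts use the identical strategy of (a) invoking monicity of the $s_\alpha$ to identify the harmonic sum with the top-level truncation, and (b) slicing the established Gaussian identity by degree profile $\delta$ and multiplying each slice by the scalar $\prod_v (n+2\deg(v)-4)^{\falling{-(\deg(v)-\delta(v))/2}}$, which converts Hermites to spherical harmonics via \cref{fact:hermite-to-sphere} on one side and inserts the routing-definition factor on the other. The only difference is that you spell out the bookkeeping ($\delta(v) = \deg(v) - 2\abs{M_v}$, hence $k_v = \abs{M_v}$) that the paper leaves implicit, which is a virtue rather than a deviation.
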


The following properties follow directly as they did in the Gaussian case:
\begin{lemma}
The polynomials $p_G$ are orthogonally invariant.
\end{lemma}
\begin{lemma}
\label{lem:sphere-ultraorthogonal}
Let $G$ and $H$ be two multigraphs. If $\deg_G(u) \neq \deg_H(u)$ for some $u\in V$,
then $\E_{d_u \unif S^{n-1}}[p_G \cdot p_H] = 0$.
\end{lemma}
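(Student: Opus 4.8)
The statement to prove is \cref{lem:sphere-ultraorthogonal}: if $\deg_G(u) \neq \deg_H(u)$ for some vertex $u$, then $\E_{d_u \unif S^{n-1}}[p_G \cdot p_H] = 0$.

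The plan is to mirror the argument that established the Gaussian analogue via \cref{prop:generic-construction}. The key structural fact is that in the spherical setting, each $p_G$ is built through \cref{def:harmonics} as a sum over labelings $\sigma : E(G) \to [n]$ of products $\prod_{u \in V} s_{\alpha_u(\sigma)}(d_u)$, where $\alpha_u(\sigma)$ is the histogram (multi-index) of the labels $\sigma$ assigns to edges incident to $u$. Crucially, the total degree $|\alpha_u(\sigma)|$ always equals $\deg_G(u)$, since each incident edge contributes exactly one unit to the histogram at $u$. Thus every term of $p_G$, viewed as a function of the single vector $d_u$, is a spherical harmonic $s_{\alpha_u}(d_u)$ of degree exactly $\deg_G(u)$, and similarly every term of $p_H$ is a spherical harmonic of degree exactly $\deg_H(u)$ in $d_u$.

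First I would expand the product $p_G \cdot p_H$ into a double sum over labelings $\sigma_G : E(G) \to [n]$ and $\sigma_H : E(H) \to [n]$, and factor each term as a product over vertices. Then I would push the expectation $\E_{d_u \unif S^{n-1}}$ inside, isolating the factor $s_{\alpha_u(\sigma_G)}(d_u) \cdot s_{\beta_u(\sigma_H)}(d_u)$ that depends on $d_u$ (all other factors being functions of the remaining vectors, which are constant with respect to this expectation and can be pulled out). The degree-orthogonality of the spherical harmonics as functions of a single vector — recorded in the excerpt as the fact that $\E_{x \unif S^{n-1}}[s_\alpha(x) s_\beta(x)] = 0$ whenever $|\alpha| \neq |\beta|$ — then forces each such factor to vanish, because $|\alpha_u(\sigma_G)| = \deg_G(u) \neq \deg_H(u) = |\beta_u(\sigma_H)|$ by hypothesis. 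Since every term of the double sum vanishes after taking $\E_{d_u}$, the whole expectation is zero.

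The main obstacle, as in the Gaussian case, is purely bookkeeping rather than conceptual: one must verify carefully that for a fixed vector $d_u$ the expectation factorizes, i.e. that the $d_u$-dependence of each summand is confined to the single harmonic factor $s_{\alpha_u}(d_u)$, so that the single-vector degree-orthogonality fact applies cleanly. This is immediate from the product-over-vertices structure of \cref{def:harmonics}, since $d_u$ appears in no other vertex factor. The only subtlety worth a sentence is to confirm $|\alpha_u(\sigma)| = \deg_G(u)$ for every $\sigma$, which holds because the histogram of labels on the $\deg_G(u)$ incident edge-endpoints has total count $\deg_G(u)$. Given the excerpt's explicit remark that this lemma ``follows directly as it did in the Gaussian case,'' I expect the entire argument to reduce to this one application of single-vector degree-orthogonality, with no genuinely hard step.
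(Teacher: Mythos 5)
Your proposal is correct and is essentially the paper's own argument: the paper proves this lemma by pointing back to \cref{prop:generic-construction} (via the Gaussian case), whose proof is exactly your reasoning — every term of $p_G$ has its $d_u$-dependence confined to a single degree-$\deg_G(u)$ orthogonal factor ($s_{\alpha_u}$ here, with $|\alpha_u(\sigma)| = \deg_G(u)$ automatic from the histogram construction), so single-vector degree-orthogonality kills each term of $p_G \cdot p_H$ under $\E_{d_u}$. You have simply written out explicitly what the paper leaves as a remark, including the correct verification that the factorization over vertices isolates the $d_u$-factor.
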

\begin{lemma}
Definitions~\ref{def:harmonics},~\ref{def:routing-sphere},~\ref{def:truncate-sphere} are equal to the output of the degree-orthogonal Gram-Schmidt process on $m_G$.
\end{lemma}

\subsection{Inner product}\label{sec:spherical-inner-product}

Unfortunately, we do not have a clean formula for the inner product of two spherical polynomials.
Compared to the Gaussian case, there are several complications.
First, in the spherical case some polynomials with $G \lrarrow H$ are orthogonal (whereas in the Gaussian case $p_G$ and $p_H$ are orthogonal iff $G \notlrarrow H$, via \cref{lem:gaussian-inner-product}). For example, the following two polynomials are orthogonal:
    \[ p_G = x_{12}x_{13}x_{45} - \frac{x_{23}x_{45}}{n}, \qquad p_H = x_{14}x_{15}x_{23} - \frac{x_{45}x_{23}}{n}.\]
This shows that extra cancellations occur in the spherical case.
Second, when $n$ is small some of the $p_G$ are degenerate. For example, if $G$ is a triangle and $n = 2$ then $p_G = 0$.
Third, even in the asymptotic regime of large $n$ and constant-size graphs, 
in the spherical case the inner product may be negative
(whereas in the Gaussian case the inner product is always non-negative).
For example, this occurs if $E(G), E(H)$ partition the edges of $K_5$, with 
the inner 5-cycle in $G$ and the outer 5-cycle in $H$, as in \cref{fig:k5}.

\begin{figure}[h]
    \centering
    \includegraphics[width=0.3\textwidth]{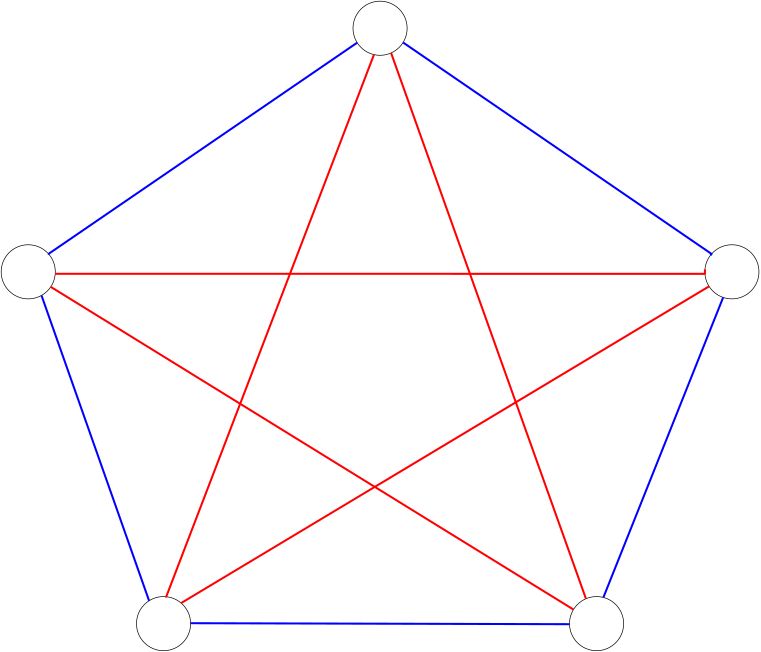}
    \caption{}
    \label{fig:k5}
\end{figure}

In this case it can be computed~(see \cref{app:k5})
that
\[\E[p_G \cdot p_H] = \frac{-8(n-1)(n-2)(n-4)}{n^8(n+2)^4}.\]
Interestingly, we conjecture that negative inner product can only occur
if the graph $G \cup H$ is nonplanar.

To attack these complications, we first give a general expression for the inner product.
We use it to upper bound the magnitude of the inner product, showing
that it's no larger than the Gaussian case, up to normalization (\cref{lem:generic-ip-ubound}).
We then study some situations when cancellations occur in an effort to determine
the exact magnitude in $n$ of the inner product.
Due to the inherent difficulties, this section is a bit technical.

The proof strategy we use is to consider the contribution $c_M$ from each matching collection $M \in \PM(G\cup H)$ and 
then isolate cancellations that occur between these terms (similarly to
how the inner product was computed in the Gaussian case, \cref{lem:gaussian-inner-product}).
\begin{definition}
For some $M \in \mathcal{PM}(G \cup H)$, define a \textit{$G$-pair} as a pair of matched endpoints in $M$ where both come from $G$. An $H$-pair and a $(G,H)$-pair are defined analogously. 
\end{definition}
Let $g_M(v)$ denote the number of $G$-pairs at vertex $v$ and $g_M$ denote the total
number of $G$-pairs.

If $G \not \leftrightarrow H$ then $\E[p_G p_H] = 0$ 
by \cref{lem:sphere-ultraorthogonal},
so we may assume $G \leftrightarrow H$.

\begin{lemma}
\label{lem:cM}
Let $G,H$ be
arbitrary multigraphs such that $G \leftrightarrow H$. 
Let $d(v) = \deg_G(v) = \deg_H(v)$.
Then
\[\E[p_G \cdot p_H] = \prod_{v \in V} n^{\rising{-d(v)}}\displaystyle\sum_{M \in \PM(G \cup H)} c_M \]
where the coefficients $c_M$ are
\[ c_M = n^{\cycles(M)}\prod_{v \in V} \frac{(-2)^{\falling{g_M(v)}}}{(n+2d(v)-4)^{\falling{g_M(v)}}}.\]
\end{lemma}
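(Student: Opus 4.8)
The plan is to mimic the Gaussian computation in \cref{lem:gaussian-inner-product}, but to carefully track the extra rising- and falling-factorial weights that the spherical routing definition \cref{def:routing-sphere} attaches at each vertex. First I would expand $p_G \cdot p_H$ using the spherical routing definition for both factors, obtaining a double sum over partial matching collections $M_1 \in \calM(G)$ and $M_2 \in \calM(H)$. Each term is a product of two routed monomials $m_{\route(M_1)} \cdot m_{\route(M_2)}$, a power of $n$ from the cycles created inside each routing, a sign $(-1)^{\abs{M_1}+\abs{M_2}}$, and the vertex weights $\prod_{v}(n+2d(v)-4)^{\falling{-\abs{(M_1)_v}}}$ and likewise for $M_2$. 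I would then take the expectation using \cref{lem:sphere-monom-expectation} applied to the union graph $\route(M_1) \cup \route(M_2)$: the expectation is nonzero only when every vertex of this routed union has even degree, and it equals a product of rising-factorial prefactors $\prod_v n^{\rising{-d(v)}}$ (the degree at each vertex is preserved by routing and equals $d(v)$ since $G \lrarrow H$) times a sum over perfect matching collections of the routed union weighted by $n^{\cycles}$.

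The key step, exactly as in the Gaussian proof, is to recognize that choosing $M_1$, $M_2$, and then a perfect matching collection completing them on $\route(M_1)\cup\route(M_2)$ is the same data as choosing a single perfect matching collection $M \in \PM(G \cup H)$ together with a designation, at each vertex, of which matched pairs were ``already present'' in the partial matchings $M_1$ or $M_2$. Reindexing the double sum this way, I would collect all contributions that produce the same overall $M \in \PM(G \cup H)$. The cycle count $\cycles(M)$ of the fully completed matching is what survives in the power of $n$, since cycles formed during the partial routings merge consistently with cycles completed afterward. The resulting coefficient $c_M$ is then an inner sum over subsets of the $M$-matched pairs at each vertex, where a pair may be assigned to $M_1$ only if both its endpoints are $G$-incidences (a $G$-pair) and to $M_2$ only if both are $H$-incidences (an $H$-pair); $(G,H)$-pairs can be designated to neither.

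The main obstacle, and the place where the spherical case genuinely departs from the Gaussian case, is evaluating this inner sum per vertex together with its factorial weights. In the Gaussian case the analogous inner sum was a plain signed sum over subsets that collapsed to $1$ or $0$; here each designated pair carries a nontrivial factor from the falling-factorial vertex weights $(n+2d(v)-4)^{\falling{-\abs{(M_1)_v}}}$, and I expect the signs $(-1)^{\abs{M_1}+\abs{M_2}}$ to combine with these weights into a binomial-type identity. Concretely, at a vertex $v$ with $g_M(v)$ $G$-pairs (and, by the degree-matching $G \lrarrow H$, the same number of $H$-pairs), summing over which $G$-pairs go into $M_1$ and which $H$-pairs go into $M_2$ should telescope, via the ``fall-by-$2$'' factorial identities for $(n+2d(v)-4)^{\falling{\cdot}}$ recorded around \cref{fact:hermite-to-sphere}, to the stated closed form $(-2)^{\falling{g_M(v)}}/(n+2d(v)-4)^{\falling{g_M(v)}}$. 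The verification that the per-vertex sums factor cleanly and that the rising-factorial prefactors assemble into $\prod_v n^{\rising{-d(v)}}$ is the crux; once this combinatorial identity is established the lemma follows by multiplying the vertex contributions and the global $n^{\cycles(M)}$ together.
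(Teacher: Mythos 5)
Your skeleton—expand via the routing definition, take expectations with \cref{lem:sphere-monom-expectation}, reindex by an overall collection $M \in \PM(G \cup H)$ together with a designation of which $G$-pairs (resp.\ $H$-pairs) were already matched in $M_1$ (resp.\ $M_2$), then evaluate a signed per-vertex sum—has the right shape. But there is a concrete error at the expectation step: your parenthetical claim that ``the degree at each vertex is preserved by routing and equals $d(v)$'' is false. Routing removes two edge-endpoints at $v$ for every pair matched at $v$, so $\route_G(M_1) \cup \route_H(M_2)$ has degree $2d(v) - 2\abs{(M_1)_v} - 2\abs{(M_2)_v}$ at $v$, and the prefactor produced by \cref{lem:sphere-monom-expectation} is $\prod_v n^{\rising{-d(v) + \abs{(M_1)_v} + \abs{(M_2)_v}}}$, not the constant $\prod_v n^{\rising{-d(v)}}$. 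This dependence on the designated pairs is not a detail you can defer: it is exactly what couples the rising-factorial prefactors to the falling-factorial vertex weights and produces the factor $(-2)^{\falling{g_M(v)}}/(n+2d(v)-4)^{\falling{g_M(v)}}$ in $c_M$. If the prefactor really were constant, the per-vertex sum would factor as $\bigl(\sum_{k} \binom{g_M(v)}{k}(-1)^k (n+2d(v)-4)^{\falling{-k}}\bigr)^2$, which is not the claimed coefficient (it does not even have the right sign).

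The second, larger gap is that the per-vertex identity you call ``the crux'' is precisely the content of the lemma, and ``should telescope'' is not a proof. In the paper this evaluation is an explicit chain: factor out $n^{\rising{-d(v)}}$ via $n^{\rising{-d+k}} = n^{\rising{-d}}(n+2d-2)^{\falling{k}}$, rewrite $(n+2d-4)^{\falling{-k}} = (n+2d-2g-2)^{\rising{g-k}}/(n+2d-4)^{\falling{g}}$, convert signs by $(-1)^k x^{\falling{k}} = (-x)^{\rising{k}}$, and close with the umbral Vandermonde identity $(x+y)^{\rising{m}} = \sum_{k=0}^m \binom{m}{k} x^{\rising{k}} y^{\rising{m-k}}$, which collapses the sum to $(-2g)^{\rising{g}} = (-2)^{\falling{g}}$. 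Moreover, your version of this identity is strictly harder than necessary: because you expand \emph{both} $p_G$ and $p_H$, your inner sum runs over designations of $G$-pairs \emph{and} $H$-pairs, a double sum. The paper avoids this with one observation you missed: by degree-orthogonality, $\E[p_G \cdot p_H] = \E[p_G \cdot m_H]$, so only $p_G$ is expanded by routing and the inner sum is a single sum over subsets of $G$-pairs. Your double-sum route can in fact be completed—applying the Vandermonde step to the inner sum over $H$-pair designations makes every term with a nonempty $G$-pair designation vanish, leaving the same closed form—but that is an additional argument you would have to supply, on top of correcting the prefactor error above.
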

\begin{proof}
By orthogonality, $\E[p_G \cdot p_H] = \E[p_G \cdot m_H]$.
Using the routing definition,
\begin{multline*}
p_G \cdot m_H = \displaystyle\sum_{M \in \mathcal{M}(G)} m_{\route_G(M)} \cdot m_{H} \cdot n^{\cycles_G(M)} \cdot (-1)^{\abs{M}} \cdot \prod_{v \in V} (n+2d(v) - 4)^{\underline{\underline{-\abs{M_v}}}}.
\end{multline*}
Taking expectations\footnote{We should not remove the self-loops in $\route_G(M)$ which is permitted by~\cref{rmk:sphere-monom-loops}.} using~\cref{lem:sphere-monom-expectation}, we expand $\E[m_{\route_G(M)} \cdot m_{H}]$ into a sum over all completions $C$ of the partial matching $M$ (on the graph $G \cup H$).
As in the Gaussian case, we collect terms based on the overall matching $M \cup C \in \mathcal{PM}(G\cup H)$.
Performing the grouping of terms, we have
\begin{align*}
\E p_Gm_H &= \displaystyle\sum_{M \in  \mathcal{PM}(G \cup H)} n^{\cycles_{G \cup H}(M)} \sum_{S \subseteq \text{$G$-pairs}} (-1)^{\abs{S}}
\cdot \prod_{v \in V}  (n+2d(v) - 4)^{\falling{-\abs{S_v}}} \cdot  n^{\rising{-d(v) + \abs{S_v}}}\\
&= \displaystyle\sum_{M \in  \mathcal{PM}(G \cup H)} n^{\cycles(M)} \prod_{v \in V}\sum_{S_v \subseteq \text{$G$-pairs at }v} (-1)^{\abs{S_v}}
(n+2d(v) - 4)^{\falling{-\abs{S_v}}} \cdot  n^{\rising{-d(v) + \abs{S_v}}}\\
&= \displaystyle\sum_{M \in  \mathcal{PM}(G \cup H)} n^{\cycles(M)} \prod_{v \in V}\sum_{k=0}^{g_M(v)} \binom{g_M(v)}{k}(-1)^{k}
(n+2d(v) - 4)^{\falling{-k}} \cdot  n^{\rising{-d(v) + k}}.
\end{align*}
The inner summation (with $v$ fixed) is
\begin{align*}
 &\sum_{k=0}^{g_M(v)} \binom{g_M(v)}{k}(-1)^{k}
(n+2d(v) - 4)^{\falling{-k}} \cdot  n^{\rising{-d(v) + k}}\\
&= n^{\rising{-d(v)}}\sum_{k=0}^{g_M(v)} \binom{g_M(v)}{k}(-1)^{k}
(n+2d(v) - 4)^{\falling{-k}} \cdot  (n+2d(v)-2)^{\falling{k}}\\
&= \frac{n^{\rising{-d(v)}}}{(n+2d(v)-4)^{\falling{g_M(v)}}}\sum_{k=0}^{g_M(v)} \binom{g_M(v)}{k}(-1)^{k}
(n+2d(v) - 2g_M(v) -2)^{\rising{g_M(v) - k}} \cdot  (n+2d(v)-2)^{\falling{k}}\\
&= \frac{n^{\rising{-d(v)}}}{(n+2d(v)-4)^{\falling{g_M(v)}}}\sum_{k=0}^{g_M(v)}\binom{g_M(v)}{k}
(n+2d(v) - 2g_M(v)-2)^{\rising{g_M(v) - k}} \cdot  (-n-2d(v)+2)^{\rising{k}}.
\end{align*}
    Using the umbral formula $(x+y)^{\rising{m}} = \displaystyle\sum_{k=0}^m \binom{m}{k}x^{\rising{k}} y^{\rising{m-k}}$~\cite{RomanBook},
\begin{align*}
    &= \frac{n^{\rising{-d(v)}}}{(n+2d(v)-4)^{\falling{g_M(v)}}}(-2g_M(v))^{\rising{g_M(v)}}\\
    &= \frac{n^{\rising{-d(v)}}}{(n+2d(v)-4)^{\falling{g_M(v)}}}(-2)^{\falling{g_M(v)}}.
\end{align*}

\end{proof}

\begin{corollary}
\label{cor:spherical-variance}
For $G$ such that $\abs{E(G)} \leq o(\log n /\log\log n)$, 
\[\E [p_G^2] = n^{-\abs{E(G)} + o(1)}.\]
\end{corollary}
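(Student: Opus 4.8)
The plan is to specialize the general inner-product formula of \cref{lem:cM} to $H = G$ and extract the leading power of $n$. Writing $d(v) = \deg_G(v)$, \cref{lem:cM} gives
\[\E[p_G^2] = \prod_{v \in V} n^{\rising{-d(v)}} \sum_{M \in \PM(G \cup G)} c_M, \qquad c_M = n^{\cycles(M)}\prod_{v \in V}\frac{(-2)^{\falling{g_M(v)}}}{(n+2d(v)-4)^{\falling{g_M(v)}}}.\]
The prefactor is easy: since $n^{\rising{d(v)}} = n(n+2)\cdots(n+2d(v)-2)$ is a degree-$d(v)$ polynomial in $n$ with leading coefficient $1$, the product $\prod_v n^{\rising{-d(v)}}$ equals $n^{-\sum_v d(v)}(1\pm o(1)) = n^{-2\abs{E(G)}+o(1)}$, the correction being $1\pm o(1)$ because $\abs{E(G)}$ is tiny relative to $n$. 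So it suffices to show $\sum_M c_M = n^{\abs{E(G)}+o(1)}$.

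For the main term I would first pin down the largest power of $n$ in each $c_M$. As $(n+2d(v)-4)^{\falling{g_M(v)}}$ is a degree-$g_M(v)$ polynomial in $n$, we have $c_M = (\text{const})\cdot n^{\cycles(M)-g_M}(1+O(1/n))$ with $g_M = \sum_v g_M(v)$. The combinatorial heart is the claim $\max_{M\in\PM(G\cup G)}(\cycles(M)-g_M) = \abs{E(G)}$. Indeed, every closed cycle from routing uses at least two edges of $G\cup G$ (no self-loops), and $G\cup G$ has $2\abs{E(G)}$ edges, so $\cycles(M)\le\abs{E(G)}$; with $g_M\ge 0$ this gives the bound, and equality forces every cycle to be a $2$-cycle (digon) and $g_M = 0$ (every matched pair is a $(G,H)$-pair). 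Such $M$ exist — pair each edge of the first copy with its duplicate in the second — so the maximum is attained.

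The crucial point, and where the spherical case could a priori misbehave, is that the leading coefficient does not cancel. For every maximizer, $g_M(v)=0$ at all $v$, so the product over $v$ is exactly $1$ and $c_M = n^{\abs{E(G)}}$ with positive sign. Hence the coefficient of $n^{\abs{E(G)}}$ in $\sum_M c_M$ is the number of such digon matchings, namely $\prod_{\{u,v\}}\mu_{uv}! \ge 1$ where $\mu_{uv}$ is the multiplicity of $\{u,v\}$ in $G$ — strictly positive. This is exactly opposite to the off-diagonal inner products $\E[p_G p_H]$ of the rest of the section, where the analogous leading terms can cancel or even turn negative; for the variance the uniform positive sign is what saves us, and confirming this no-cancellation is the real obstacle.

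It remains to make the $o(1)$ uniform as the graph grows, which is where the hypothesis $\abs{E(G)} = o(\log n/\log\log n)$ enters. I would bound the number of summands $N = \abs{\PM(G\cup G)} = \prod_v (2d(v)-1)!!$ and all constants at once: for $n$ large every denominator factor exceeds $n/2$, and $\abs{(-2)^{\falling{g_M(v)}}} = 2^{g_M(v)}g_M(v)!$, so $\abs{c_M}\le n^{\cycles(M)-g_M}4^{g_M}g_M! \le n^{\abs{E(G)}}\cdot 4^{\abs{E(G)}}\abs{E(G)}!$, while $N\le (4\abs{E(G)})^{2\abs{E(G)}}$. The logarithm of $N\cdot 4^{\abs{E(G)}}\abs{E(G)}!$ is $O(\abs{E(G)}\log\abs{E(G)})$, which is $o(\log n)$ precisely when $\abs{E(G)} = o(\log n/\log\log n)$; thus all these factors are $n^{o(1)}$. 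This yields $\abs{\sum_M c_M}\le N\cdot\max_M\abs{c_M} = n^{\abs{E(G)}+o(1)}$, and for the matching lower bound the non-maximizing terms contribute only $O(n^{\abs{E(G)}-1+o(1)})$ while the leading coefficient is $\ge 1$, so $\sum_M c_M \ge \tfrac12 n^{\abs{E(G)}}$ for large $n$. Combining gives $\sum_M c_M = n^{\abs{E(G)}+o(1)}$, and multiplying by the prefactor $n^{-2\abs{E(G)}+o(1)}$ yields $\E[p_G^2] = n^{-\abs{E(G)}+o(1)}$.
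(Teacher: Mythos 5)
Your proof is correct and follows essentially the same route as the paper's: specialize \cref{lem:cM} to $H = G$, observe that $\abs{c_M} = n^{\cycles(M)-g_M}$ is maximized exactly by the matchings pairing each edge with its parallel copy (giving $c_M = n^{\abs{E(G)}}$ with no sign cancellation), and use the hypothesis $\abs{E(G)} = o(\log n/\log\log n)$ to make $\abs{\PM(G\cup G)}$ and all constant factors $n^{o(1)}$ so that non-dominant terms are negligible. The only difference is that you spell out the quantitative bounds (the count $\prod_{\{u,v\}}\mu_{uv}!$ of maximizers and the explicit estimates on $(-2)^{\falling{g_M(v)}}$ and the denominators) that the paper leaves implicit.
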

\begin{proof}
    We have 
    \[\prod_{v \in V} n^{\rising{-d(v)}} = \prod_{v \in V}n^{-d(v) + o(1)} = n^{-2\abs{E(G)} + o(1)}.\]
    The magnitude of the coefficient $c_M$ is $n^{\cycles(M) - g_M}$.
    Since $G$ has no self-loops, the max number of cycles for $M \in \PM(G \cup G)$ is 
    $\abs{E(G)}$, therefore $M$ has
    the largest magnitude of $n$ if and only if $M$ pairs each edge with
    a parallel edge from the other copy of the graph.
    For these $M$, $c_M = n^{\abs{E(G)}}$. There is at least one such $M$ and
    possibly up to $\abs{PM(G, H)}.$ Under the size assumption on $G$,
    $\abs{PM(G,H)} = n^{o(1)}$ and therefore non-dominant terms are negligible,
    \[ \E[p_G^2] = n^{-2\abs{E(G)} + \abs{E(G)} + o(1)} = n^{-\abs{E(G)} + o(1)}.\]
\end{proof}

Up to the normalization factor of $\prod_{v \in V}n^{\rising{-d(v)}}$, the
inner product is bounded by the same formula from the Gaussian case.
\begin{corollary}
    \label{lem:generic-ip-ubound}
    Let $G$ and $H$ be two multigraphs such that $G \leftrightarrow H$ with degrees $d(v)$, and $\abs{E(G)}, \abs{E(H)} \leq o(\log n/\log \log n)$. Then
    \[\abs{\E[p_G \cdot p_H]} \leq \prod_{v \in V} n^{\rising{-d(v)}}\displaystyle\sum_{M \in \mathcal{PM}(G, H)} n^{\cycles(M) + o(1)}.\]
\end{corollary}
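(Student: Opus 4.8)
The plan is to begin from the exact expression in \cref{lem:cM}, namely $\E[p_G \cdot p_H] = \prod_{v} n^{\rising{-d(v)}} \sum_{M \in \PM(G \cup H)} c_M$, and to exploit that the normalizing product $\prod_v n^{\rising{-d(v)}}$ is precisely the prefactor appearing in the claimed bound. So it suffices to control $\left|\sum_{M \in \PM(G \cup H)} c_M\right|$. First I would pin down the magnitude of each coefficient: taking absolute values in the formula for $c_M$ gives $|c_M| = n^{\cycles(M)} \prod_v \frac{2^{g_M(v)} g_M(v)!}{\left|(n+2d(v)-4)^{\falling{g_M(v)}}\right|}$, since $\left|(-2)^{\falling{g_M(v)}}\right| = 2^{g_M(v)} g_M(v)!$. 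Under $\abs{E(G)}, \abs{E(H)} \leq o(\log n/\log\log n)$, each denominator is $n^{g_M(v)}(1 \pm o(1))$ and the combinatorial factors $2^{g_M(v)} g_M(v)!$ multiply across $v$ to $n^{o(1)}$, so $|c_M| = n^{\cycles(M) - g_M + o(1)}$. The same size assumption bounds the number of terms, $\abs{\PM(G \cup H)} = \prod_v (2d(v)-1)!! = n^{o(1)}$.

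With these estimates and the triangle inequality, the whole problem reduces to the combinatorial inequality
\[ \cycles(M) - g_M \;\leq\; \max_{M' \in \PM(G,H)} \cycles(M') \;=:\; C^* \qquad \text{for all } M \in \PM(G \cup H). \]
Indeed, granting this, $\sum_M |c_M| \leq n^{o(1)} \cdot n^{C^*}$, and because $\sum_{M' \in \PM(G,H)} n^{\cycles(M')}$ already contains the term $n^{C^*}$, we get $\sum_M |c_M| \leq \sum_{M' \in \PM(G,H)} n^{\cycles(M') + o(1)}$; multiplying through by the common positive factor $\prod_v n^{\rising{-d(v)}}$ yields the corollary.

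I would prove the combinatorial inequality by induction on $g_M$ using a local surgery. The base case $g_M = 0$ is exactly $M \in \PM(G,H)$, where $\cycles(M) \leq C^*$ trivially. For the inductive step I first record that at every vertex the number of $G$-pairs equals the number of $H$-pairs: counting half-edges at $v$ gives $2 g_M(v) + p(v) = d(v) = 2 h_M(v) + p(v)$, where $p(v)$ is the number of $(G,H)$-pairs, so $g_M(v) = h_M(v)$. Hence if $g_M > 0$ there is a vertex $v$ carrying both a $G$-pair $\{e_1, e_2\}$ and an $H$-pair $\{f_1, f_2\}$, and re-pairing them into the two $(G,H)$-pairs $\{e_1, f_1\}, \{e_2, f_2\}$ produces $M_{\mathrm{new}} \in \PM(G \cup H)$ with $g_{M_{\mathrm{new}}} = g_M - 1$. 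Viewing $\route$ as a $2$-regular graph on the half-edges (edge-connections together with matching-connections), this re-pairing is a $2$-switch on $e_1, e_2, f_1, f_2$: deleting the two old matching-connections leaves exactly two open paths, whose external endpoints realize one of three pairings, and a short case check shows that installing the new connections changes the closed-cycle count by $+1$, $0$, or $-1$. In every case $\cycles(M_{\mathrm{new}}) \geq \cycles(M) - 1$, so the inductive hypothesis $\cycles(M_{\mathrm{new}}) - g_{M_{\mathrm{new}}} \leq C^*$ gives $\cycles(M) - g_M \leq \cycles(M_{\mathrm{new}}) + 1 - (g_{M_{\mathrm{new}}} + 1) \leq C^*$.

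The main obstacle is this surgery step. Verifying that a compatible $G$-pair and $H$-pair always coexist at a single vertex is handled cleanly by the per-vertex identity $g_M(v) = h_M(v)$; the genuinely delicate point is that the $2$-switch never drops the cycle count by more than one. I expect to establish this by enumerating the three ways the unchanged part of the routing can join the four affected half-edges and checking directly that the old and new connection patterns yield cycle counts differing by at most one, which is the crux of the whole argument.
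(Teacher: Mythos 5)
Your proposal is correct and takes essentially the same approach as the paper: both start from \cref{lem:cM} and use the same local surgery (re-matching a $G$-pair and an $H$-pair at a vertex into two $(G,H)$-pairs, which decreases $g_M$ by one while changing $\cycles(M)$ by at most one) to conclude that every $M \in \PM(G \cup H)$ is dominated in magnitude by some $M' \in \PM(G,H)$, with the size assumption absorbing the number of terms and the constant factors into $n^{o(1)}$. Your explicit induction on $g_M$, the per-vertex counting identity $g_M(v) = h_M(v)$ guaranteeing the surgery can always be performed, and the triangle-inequality bookkeeping are simply careful write-ups of the step the paper compresses into ``iterating this, the dominant terms are $M \in \PM(G,H)$.''
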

\begin{proof}
From~\cref{lem:cM},
\begin{align*}\E[p_G \cdot p_H] &= \prod_{v \in V} n^{\rising{-d(v)}} \displaystyle\sum_{M \in \PM(G \cup H)} c_M\\
&= \prod_{v \in V} n^{\rising{-d(v)}} \displaystyle\sum_{M \in \PM(G \cup H)} n^{\cycles(M)}\prod_{v \in V} \frac{(-2)^{\falling{g_M(v)}}}{(n+2d(v)-4)^{\falling{g_M(v)}}}.
\end{align*}
If $M$ has both a $G$-pair and an $H$-pair at $v$, observe how the magnitude
of $c_M$ changes if we re-match them into two $(G,H)$-pairs to get a new
matching $M'$.
$g_M(v)$ goes down by 1. $\cycles(M)$ may increase by 1, decrease by 1, 
or stay the same. Therefore the magnitude of $c_{M'}$ is at least as large as $c_M$.
Iterating this, the dominant terms are $M \in \PM(G, H)$, and the size assumption means
they are dominant up to a $n^{o(1)}$ factor.
\end{proof}

There are often significantly more cancellations than the Gaussian case. 
We conjecture that the magnitude for planar graphs $G \cup H$ is given by the \textit{simple} matchings $M\in\PM(G, H)$. 
\begin{definition}
For a multigraph $G$ and $M \in \PM(G)$, we say that $M$ is $v$-simple if $v$ is visited at most once in each cycle induced by $M$. We say that $M$ is simple if every cycle is simple.
\end{definition}
\begin{conjecture}
\label{conj:planar-inner-product}
Let $G$ and $H$ be two loopless multigraphs such that $G \cup H$ is planar, and $\abs{E(G)}, \abs{E(H)}~\leq~o(\log n / \log \log n)$.
Then
\[ \E[p_G \cdot p_H] = \frac{1}{n^{\abs{E(G)} + \abs{E(H)}}}\cdot\left(\displaystyle\sum_{\text{simple }M \in \mathcal{PM}(G, H)} n^{\cycles(M)}\right)\cdot (1 \pm o(1)). \]
If there are no simple $M \in \PM(G, H)$, then the expectation is zero.
\end{conjecture}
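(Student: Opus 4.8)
The plan is to start from the exact expansion in \cref{lem:cM},
\[\E[p_G \cdot p_H] = \prod_{v \in V} n^{\rising{-d(v)}} \sum_{M \in \PM(G \cup H)} c_M,\]
and to extract the leading-order behaviour of the inner sum by exhibiting the cancellations among the $c_M$ explicitly. First I would dispose of the prefactor: since $\sum_v d(v) = \abs{E(G)} + \abs{E(H)}$ (as $G \lrarrow H$ forces $\abs{E(G)} = \abs{E(H)}$) and each $n^{\rising{d(v)}} = n^{d(v)}(1 \pm o(1))$ under the degree assumption, we get $\prod_v n^{\rising{-d(v)}} = n^{-(\abs{E(G)}+\abs{E(H)})}(1\pm o(1))$, matching the prefactor in the conjecture. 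By \cref{lem:generic-ip-ubound} the matchings with $g_M > 0$ are individually of lower order, so the non-trivial content is that the \emph{non-simple} matchings of $\PM(G,H)$ (which have $g_M = 0$, hence the same nominal order $n^{\cycles(M)}$ as the simple ones) must be cancelled by specific $g_M>0$ matchings. Everything then reduces to showing $\sum_{M \in \PM(G\cup H)} c_M = \sum_{\text{simple } M \in \PM(G,H)} n^{\cycles(M)}\,(1\pm o(1))$.

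The engine of the argument is a local ``skein'' relation at a vertex where a cycle fails to be simple. Fix $v$, hold the rest of the routing fixed (this determines an external pairing $\pi$ of four endpoints $g_1,g_2$ from $G$ and $h_1,h_2$ from $H$ meeting at $v$), and note that the three ways to pair these endpoints are the two $(G,H)$-pairings $M_1,M_2 \in \PM(G,H)$ and the pairing $M'$ with one $G$-pair and one $H$-pair (so $g_{M'}(v)=1$). A direct computation of the three $c_M$ — using that $(G,H)$-pairings carry weight $1$ while $M'$ carries the weight $\tfrac{-2}{n+2d(v)-4} = \tfrac{-2}{n}(1\pm o(1))$, and that the three local routings differ only in whether a single $4$-cycle is split into two $2$-cycles — shows that in every case the configurations in which one cycle passes through $v$ twice contribute $0$ at leading order, while the $v$-simple $(G,H)$-pairing survives with weight $n^{\cycles}$. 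For instance, when $\pi$ is the $G$-$G$/$H$-$H$ pairing, both $M_1$ and $M_2$ are non-$v$-simple and, writing $C$ for their common cycle count, $c_{M_1}+c_{M_2}+c_{M'} = n^{C}+n^{C}-2n^{C}+o(n^{C})$; this is precisely the mechanism behind the orthogonal pair $p_G = x_{12}x_{13}x_{45} - x_{23}x_{45}/n$, $p_H = x_{14}x_{15}x_{23} - x_{45}x_{23}/n$ displayed above.

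The second and harder step is to globalize this local cancellation. I would set up an induction on the total number of self-intersections of the routed curves, using the skein relation as a reduction rule that trades a non-simple matching for matchings that are ``more simple'' at $v$ plus a lower-order error. The base case is a configuration all of whose cycles are simple, which lies in $\PM(G,H)$ and contributes exactly its $n^{\cycles}$; if no simple $M \in \PM(G,H)$ exists, the induction terminates at $0$, yielding the final clause of the conjecture.

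The main obstacle, and the place where planarity must enter, is making this reduction globally consistent: resolving a self-intersection at $v$ can create or destroy self-intersections at other vertices, so a one-at-a-time reduction need not terminate and may introduce interference between the cancellations at different vertices. I expect the resolution to use the planar embedding of $G \cup H$ directly — the routed walks become closed curves in the plane, their self-intersections are honest crossings, and the nesting/interlacement structure of planar curves should supply a canonical order (e.g.\ innermost crossing first) in which to smooth crossings so that each non-simple matching is cancelled exactly once. Concretely, I would try to identify $\sum_M c_M$ with a specialization of the circuit-partition/transition (Martin) polynomial from the related work and invoke its planar form, where planarity is exactly the hypothesis forcing the transition sum to collapse to the simple-circuit count. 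The non-planar obstruction is already visible in the $K_5$ example, where the crossings are essentially linked and a residual (negative) term survives the cancellation.
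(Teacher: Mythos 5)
The first thing to say is that this statement is not a theorem of the paper: it is \cref{conj:planar-inner-product}, an open conjecture, and the paper offers no proof of it. So there is no ``paper's own proof'' to compare against; the relevant comparison is with the paper's partial progress toward the conjecture, and that comparison shows your proposal runs into exactly the obstruction that stopped the authors. Your local ingredients are sound, and in fact already in the paper: the prefactor analysis is correct, and your ``skein relation'' at a vertex with $\deg_G(v)=\deg_H(v)=2$ is precisely the paper's \cref{lem:cancellation} (specialized to $d(v)=2$): if the routing outside $v$ induces a $G$-pair at $v$, the extensions at $v$ sum to exactly zero. One slip in your setup: the $g_M>0$ matchings are \emph{not} ``individually of lower order'' --- in the cancellation, the $G$-pair term has one extra cycle, so its magnitude $n^{\cycles(M)-g_M}$ is of the same order as the two non-simple $(G,H)$-terms it kills; if it were of lower order it could not cancel them.

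The genuine gap is the globalization step, which you present as an expectation (``I would set up an induction\dots'', ``I expect the resolution to use the planar embedding\dots'') rather than an argument, and which is precisely the open content of the conjecture. The paper documents that the one-vertex-at-a-time reduction fails: a single matching $M$ with $G$-pairs may be needed to cancel induced re-matchings at several \emph{distinct} vertices simultaneously, so the local cancellations interfere and such $M$ can survive with nonzero coefficients; the authors state that this is why they ``initially conjectured'' the statement for all graphs, and the $K_5$ decomposition (\cref{fig:k5}) shows the extension is false: there is no simple $M \in \PM(G,H)$, yet $\E[p_G p_H] < 0$, so the final clause of your induction (``terminates at $0$'') cannot be established without using planarity in an essential, as-yet-unknown way. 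The paper's own partial progress (\cref{lem:degree-4}, \cref{lem:characterization-of-dom}) quantifies this interference --- the uncancelled part of each $M \in \PM(G,H)$ is an alternating sum over non-crossing subsets of $\gloop(M)$ --- and even with that machinery the paper only reduces \cref{conj:planar-inner-product} to a further purely combinatorial conjecture that is also left open. Your suggestion to identify $\sum_M c_M$ with a planar specialization of the Martin/circuit-partition polynomial is a plausible research direction, but it is not carried out: the weights $c_M$ carry signed falling-factorial corrections at each vertex rather than the uniform transition weights of the Martin polynomial, and no identity is cited that collapses such a weighted transition sum to the simple-matching count under planarity. As it stands, the proposal rediscovers the paper's local cancellation lemma and then gestures at the hard step; it is a reasonable plan of attack on an open problem, not a proof.
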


$K_5$ is a counterexample to an extension of the conjecture to non-planar graphs.
$\E[p_G p_H]~<~0$ for $G$ and $H$ equal to two 5-cycles despite that:
\begin{proposition}
    Decomposing $K_5$ into two 5-cycles $G$ and $H$,
    there is no simple $M \in \PM(G, H)$.
\end{proposition}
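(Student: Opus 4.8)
The plan is to exploit the alternating $G/H$ structure of the routed cycles together with an elementary counting argument. First I would observe that since $G$ and $H$ are both $5$-cycles, every vertex $v$ is incident to exactly two $G$-edges and two $H$-edges, so any $M \in \PM(G,H)$ pairs up the two $G$-half-edges at $v$ with the two $H$-half-edges at $v$. Because $M$ is a \emph{perfect} matching collection, $\route(M)$ leaves no dangling endpoints, and hence the routed edges together with the deleted closed cycles partition all $10$ edges of $G \cup H = K_5$ into closed cycles. Moreover, since at each vertex the matching always joins a $G$-half-edge to an $H$-half-edge, each such cycle strictly alternates between edges of $G$ and edges of $H$.

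The key structural step is to pin down the possible length of a \emph{simple} cycle. An alternating cycle has equally many $G$- and $H$-edges, hence even length. Since $G \cup H = K_5$ is a simple graph (between any two vertices there is exactly one edge, lying in $G$ or in $H$ but not both), there are no length-$2$ cycles, so every cycle has length at least $4$. Finally, a simple cycle visits distinct vertices, of which there are only $5$; combined with evenness, this forces every simple alternating cycle to have length exactly $4$.

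With this in hand the conclusion is immediate. If some $M \in \PM(G,H)$ were simple, then every cycle produced by routing would be a simple alternating cycle, hence of length $4$, and these cycles partition the $10$ edges of $K_5$ --- impossible, since $4 \nmid 10$. (Equivalently, each length-$4$ alternating cycle uses exactly two edges of $G$, so the total number of $G$-edges would be even, contradicting $\abs{E(G)} = 5$.) Therefore no simple $M \in \PM(G,H)$ exists.

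I expect the only delicate point to be the bookkeeping of the second paragraph: verifying carefully that routing a perfect matching collection yields solely closed cycles with strict $G/H$ alternation, and that simplicity together with the five-vertex constraint leaves length $4$ as the sole possibility. Once that is established, the resulting parity/divisibility contradiction needs no computation, and I would note that the argument is insensitive to which decomposition of $K_5$ into two $5$-cycles is used, relying only on $2$-regularity of $G$ and $H$, the simplicity of $K_5$, and the edge count $\abs{E(K_5)} = 10$.
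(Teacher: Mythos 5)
Your proof is correct and follows essentially the same route as the paper's: both rest on the facts that cycles induced by $M \in \PM(G,H)$ alternate between $G$- and $H$-edges (hence have even length), that length-$2$ cycles are impossible because $E(G) \cap E(H) = \emptyset$, and that a simple cycle can visit at most the $5$ available vertices. The only cosmetic difference is the final step: the paper enumerates the possible cycle-length partitions of the $10$ edges ($4+6$ or $10$) and notes neither is simple, while you observe that simplicity forces every cycle to have length exactly $4$ and conclude from $4 \nmid 10$ --- the same argument in contrapositive order.
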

\begin{proof}
    The cycles created by $M \in \PM(G, H)$ are necessarily even-length since
    they alternate between $G$ and $H$ edges. They can't be length-2 since $E(G) \cap E(H) = \emptyset.$
    Therefore there must be two cycles of lengths 4 and 6, or one cycle of length 10, 
    but a length-6 or 10 cycle is not simple.
\end{proof}
There are other examples with $K_5$ minors with negative inner product.
Taking $G,H$ to be the red and blue edges in \cref{fig:k5-minor1},
\[\E[p_G p_H] = \frac{-16(n-1)(n-2)(n-4)}{n^{11}(n+2)^5}.\]
Taking $G, H$ to be the red and blue edges in \cref{fig:k5-minor2},
\[ \E[p_G p_H] = \frac{-16(n-1)(n-2)^2(n-4)}{n^{11}(n+2)^6}.\]
\begin{figure}[h]
    \centering
\begin{minipage}{.5\textwidth}
  \centering
  {\includegraphics[width=0.7\textwidth]{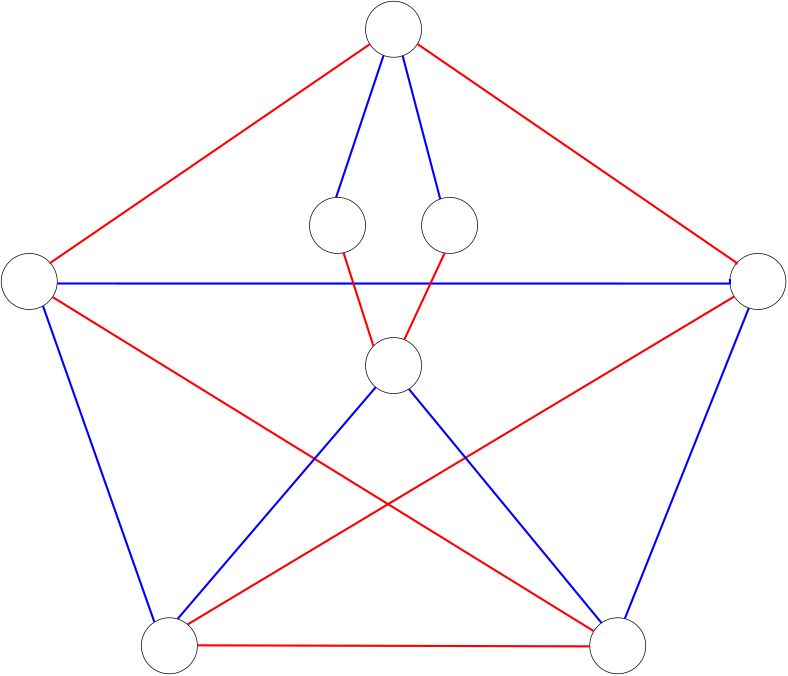}}
  \caption{}
  \label{fig:k5-minor1}
\end{minipage}%
\begin{minipage}{.5\textwidth}
  \centering 
  {\includegraphics[width=0.7\textwidth]{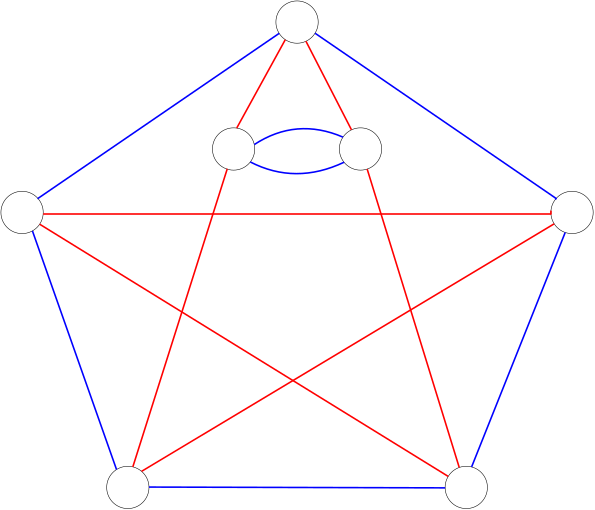}}
  \caption{}
  \label{fig:k5-minor2}
\end{minipage}%
\end{figure}

We don't know of any similar examples based on $K_{3,3}$. 
To lend support to the conjecture, we consider an approach that almost works, and
show some intuition for why the failure of the approach is related to planarity
(or is at least topological in nature).

Observe that fixing a matching collection off of a vertex $v$ induces a matching of the edges incident to $v$.
To wit, if you leave $v$ along edge $e$, and follow the fixed matching around
outside of $v$, you will eventually return to $v$ along some edge.
If the induced matching at $v$ has a $G$-pair
then we claim that summing over matchings at $v$ produces zero.
\begin{lemma}
\label{lem:cancellation}
Let $M'$ be a perfect matching collection for all vertices except $v$.
If there is a $G$-pair in the induced matching at $v$, then
\[ \sum_{M: M\text{ extends }M'\text{ at }v} c_M = 0.\]
\end{lemma}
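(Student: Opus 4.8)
The plan is to fix $M'$ and reduce the statement to a self-contained identity about the perfect matchings of the $2d(v)$ edge-endpoints at $v$. Fixing $M'$ freezes almost everything in $c_M$: writing $d \defeq d(v)$ and $N \defeq n+2d-4$, the external routing induced by $M'$ determines a fixed-point-free perfect matching $\pi$ of the endpoints at $v$, and the closed cycles through $v$ in $\route(M)$ biject with the cycles of the $2$-regular graph $M_v \cup \pi$, so that $\cycles(M) = C + \cycles(M_v \cup \pi)$ where $C$ counts the cycles avoiding $v$ (a constant determined by $M'$) and $M_v$ is the matching chosen at $v$. Likewise $g_M(w)$ is constant for $w \neq v$. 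After pulling out the constant prefactor (including the $\prod_v n^{\rising{-d(v)}}$ from \cref{lem:cM} and all $w \neq v$ terms), the claim reduces to proving
\[ F(\pi) \defeq \sum_{M_v} n^{\cycles(M_v \cup \pi)} \frac{(-2)^{\falling{g(M_v)}}}{N^{\falling{g(M_v)}}} = 0, \]
where the sum ranges over all perfect matchings $M_v$ of the $2d$ endpoints, $g(M_v)$ is the number of $G$-pairs at $v$, and the hypothesis gives us a distinguished $G$-pair $\{a,b\}$ of $\pi$ (two $G$-endpoints with $\pi(a)=b$).

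The main step is a weighted many-to-one grouping that uses this distinguished pair. I would classify each $M_v$ by its behaviour on $\{a,b\}$: either (i) $M_v$ matches $a$ to $b$, or (ii) $M_v(a)=s$ and $M_v(b)=t$ for two other endpoints $s,t$. In both cases I contract to a perfect matching $M_0$ of the remaining $2d-2$ endpoints $R$: in case (i) take $M_0 = M_v|_R$, and in case (ii) replace the two $M_v$-edges $as,bt$ by the single edge $st$. Since $\pi(a)=b$ as well, contracting the length-three path $s\,\text{-}\,a\,\text{-}\,b\,\text{-}\,t$ preserves the cycle structure, giving $\cycles(M_v\cup\pi) = \cycles(M_0\cup\pi|_R)$ in case (ii), while case (i) produces an extra $2$-cycle on $\{a,b\}$, giving $\cycles(M_v\cup\pi) = 1 + \cycles(M_0\cup\pi|_R)$. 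One checks this is a bijection in which each $M_0$ has exactly one type-(i) preimage and, for each of its $d-1$ edges together with each of the two orientations, one type-(ii) preimage.

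Then I would collect the contributions of a fixed $M_0$ and show they cancel. Writing $\gamma$ for the number of $G$-pairs of $M_0$, the type-(i) preimage has $g = 1+\gamma$, while the type-(ii) preimage from an edge $\{s,t\}\in M_0$ has $g = \gamma + \epsilon_{st}$, where $\epsilon_{st}=1$ unless both $s,t$ are $H$-endpoints. Here the key combinatorial input is the balance identity: a perfect matching of $R$ (which has $d-2$ $G$-endpoints and $d$ $H$-endpoints) always has exactly one more $HH$-pair than $GG$-pair, so $M_0$ has $\gamma+1$ edges with $\epsilon=0$ and $d-2-\gamma$ edges with $\epsilon=1$. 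The total contribution of $M_0$ is therefore
\[ n^{\cycles(M_0\cup\pi|_R)}\left( n\,\frac{(-2)^{\falling{\gamma+1}}}{N^{\falling{\gamma+1}}} + 2(d-2-\gamma)\,\frac{(-2)^{\falling{\gamma+1}}}{N^{\falling{\gamma+1}}} + 2(\gamma+1)\,\frac{(-2)^{\falling{\gamma}}}{N^{\falling{\gamma}}} \right). \]
Factoring out $(-2)^{\falling{\gamma}}/N^{\falling{\gamma}}$ and using $(-2)^{\falling{\gamma+1}}/(-2)^{\falling{\gamma}} = -2(\gamma+1)$ and $N^{\falling{\gamma+1}}/N^{\falling{\gamma}} = N-2\gamma$, the bracket becomes a scalar multiple of $\bigl(n+2(d-2-\gamma)\bigr)\tfrac{-2(\gamma+1)}{N-2\gamma} + 2(\gamma+1)$, which vanishes precisely because $n+2(d-2-\gamma) = N-2\gamma$ by the definition $N = n+2d-4$. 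Since every $M_0$ contributes zero, summing gives $F(\pi)=0$.

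The hardest part is setting up the contraction and carrying out the $G$-pair bookkeeping: one has to recognize that the only invariant of a contracted edge relevant to the weight is $\epsilon_{st}$, and that the $HH$/$GG$ balance identity is exactly what converts the edge-counts into the coefficients $d-2-\gamma$ and $\gamma+1$ that, together with the fall-by-two factorials, force the cancellation at the specific value $N = n+2d(v)-4$. The remaining points (that $\pi$ is a fixed-point-free involution, that the contraction is the claimed weighted bijection, and that cycles are preserved) are routine once the grouping is in place.
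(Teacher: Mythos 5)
Your proof is correct and is essentially the paper's own argument in different notation: your contraction of each $M_v$ to a matching $M_0$ on the remaining $2d-2$ endpoints produces exactly the same grouping as the paper's re-matching map anchored at the distinguished externally-paired $G$-endpoints (your type-(i) preimage is the paper's matching with $e_1$ matched to $e_2$, and your type-(ii) preimages are its re-matchings), with the same cycle bookkeeping and the same final cancellation $(N-2\gamma)\tfrac{-2(\gamma+1)}{N-2\gamma}+2(\gamma+1)=0$. The only difference is presentational: you spell out the $GG$/$HH$ balance identity and the fiber counts that the paper states implicitly.
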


\begin{proof}
Abbreviate the sum in the statement as $\sum_{M \psdgeq M'}$.
The $c_M$ factor out a term for vertices that are not $v$,
\[\sum_{M \psdgeq M'} c_M = \prod_{\substack{w \in V,\\w \neq v}} \frac{(-2)^{\falling{g_{M'}(w)}}}{(n+2d(w)-4)^{\falling{g_M(w)}}}\sum_{M \psdgeq M'} n^{\cycles(M)} \frac{(-2)^{\falling{g_M(v)}}}{(n+2d(v)-4)^{\falling{g_M(v)}}}. \]
We will argue that the latter sum is zero.

Let $e_1$ and $e_2$ be two edges which form a $G$-pair induced by the matchings $M'$ outside of $v$. Consider the following map from matchings of the edges incident to $v$ where $e_1$ is not matched to $e_2$ to matchings where $e_1$ is matched to $e_2$. If $e_1$ is matched to $e_i$ and $e_2$ is matched to $e_j$ then we match $e_1$ and $e_2$ and match $e_i$ and $e_j$.

To invert this mapping, given a matching where $e_1$ is matched with $e_2$, we need to know which of the $d(v) - 1$ other matched pairs $e_i$ and $e_j$ to swap with and we need to know whether  to match $e_1$ with $e_i$ and $e_2$ with $e_j$ or $e_1$ with $e_j$ and $e_2$ with $e_i$.

Consider a given matching where $e_1$ is matched with $e_2$. Letting $c+1$ be the number of cycles in the matching and $k+1$ be the number of 
$G$-pairs at $v$, this matching gives a value of 
\[
n^{c+1}\cdot \frac{(-2)^{\falling{k}}}{{(n+2d(v)-4)^{\falling{k}}}}\cdot \frac{-2(k+1)}{n+2d(v)-2k-4}.
\]
We now show that this term cancels with the terms for all of the matchings where $e_1$ is not matched to $e_2$ which map to this matching. 
Observe an important property of the induced matching: for all re-matchings
of $e_1$ and $e_2$ with $e_i,e_j$, the number of cycles decreases by exactly 1.
For the $2(k+1)$ matchings where $e_1$ and $e_2$ are mixed with an $H$-pair, each such matching gives a value of 
\[
n^c \cdot \frac{(-2)^{\falling{k}}}{(n+2d(v)-4)^{\falling{k}}}.
\]
For the $2d(v) - 2k - 4$ matchings where $e_1$ and $e_2$ are mixed with another $G$-pair 
or are matched with a $(G,H)$-pair, each such matching gives a value of 
\[
n^c \cdot \frac{(-2)^{\falling{k}}}{{(n+2d(v)-4)^{\falling{k}}}}\cdot\frac{-2(k+1)}{n+2d(v)-2k-4}.
\]
Adding these terms together and dividing by $n^c \cdot \frac{(-2)^{\falling{k}}}{(n+2d(v)-4)^{\falling{k}}}$, we obtain
\[
\frac{-2(k+1)n}{n+2d(v)-2k-4} + 2(k+1) - \frac{2(k+1)(2d(v)-2k-4)}{n+2d(v)-2k-4} = 0.
\]
\end{proof}
The next corollary explains some cancellations.
\begin{corollary}\label{cancellationcorollary}
    If $G \cup H$ has a cut vertex $v$ such that a component $C$ of $(G \cup H) \setminus v$
    has an unequal number of $G$ edges and $H$ edges incident to $v$,
    then $\E[p_G p_H] = 0$.
\end{corollary}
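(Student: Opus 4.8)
The plan is to fix the matching collection everywhere except at the cut vertex $v$ and invoke the cancellation of Lemma~\ref{lem:cancellation} separately for each fixed configuration. Starting from the expression in Lemma~\ref{lem:cM}, partition $\PM(G\cup H)$ according to the restriction $M'$ of the matching collection to all vertices other than $v$. It then suffices to show that for every such $M'$ the inner sum over all $M$ extending $M'$ at $v$ vanishes; summing over $M'$ gives $\sum_{M\in\PM(G\cup H)}c_M=0$, and since $\E[p_G\cdot p_H]=\prod_{v}n^{\rising{-d(v)}}\sum_M c_M$, this yields $\E[p_G\cdot p_H]=0$.

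The crux is to show that the matching at $v$ induced by any fixed $M'$ always contains a monochromatic pair among the edges entering the unbalanced component $C$. First I would use that $v$ is a cut vertex: following the external routing determined by $M'$ from any edge incident to $v$ that enters $C$, the resulting walk stays within $C\cup\{v\}$ and must terminate back at $v$ along another edge entering $C$. Indeed, every vertex of $C$ has its full matching fixed by $M'$, so the walk can only stop upon reaching $v$, and it cannot escape $C$ without passing through the cut vertex. Consequently the induced matching at $v$ pairs the edges entering $C$ among themselves. Writing $a$ and $b$ for the numbers of $G$-edges and $H$-edges of $C$ incident to $v$, these $a+b$ edges are perfectly matched, so $a+b$ is even and the number of $G$-pairs minus the number of $H$-pairs among them equals $(a-b)/2$.

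By hypothesis $a\neq b$, so this difference is a nonzero even integer, and the induced matching contains at least one $G$-pair (when $a>b$) or at least one $H$-pair (when $a<b$). Using the symmetry $\E[p_G\cdot p_H]=\E[p_H\cdot p_G]$ together with the symmetry of the hypothesis in $G$ and $H$, I may assume without loss of generality that $a>b$, so that a $G$-pair is present in the induced matching at $v$. Lemma~\ref{lem:cancellation} then applies verbatim and gives that the sum of $c_M$ over all $M$ extending $M'$ at $v$ is zero. As $M'$ was an arbitrary matching collection off of $v$, the full sum vanishes and $\E[p_G\cdot p_H]=0$.

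The step I expect to be the main obstacle is the cut-vertex argument that the induced matching at $v$ restricts to a perfect matching on the edges entering $C$: one must verify that the routing walk begun at $v$ and following $M'$ genuinely returns to $v$, rather than closing into a cycle that avoids $v$ or escaping $C$ altogether. This is precisely where both the cut-vertex property of $v$ and the fact that $M'$ is a perfect matching at every vertex of $C$ are needed. Once this structural fact is established, the parity count forcing a monochromatic pair and the reduction to Lemma~\ref{lem:cancellation} are routine.
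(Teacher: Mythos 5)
Your proof is correct and takes essentially the same approach as the paper's: fix the matching collection off the cut vertex, use the cut-vertex property to show the induced matching at $v$ pairs the $C$-edges among themselves, extract a monochromatic pair from the imbalance $a \neq b$, and invoke Lemma~\ref{lem:cancellation}. The only cosmetic difference is in handling the case where only an $H$-pair is forced: you swap the roles of $G$ and $H$ via $\E[p_G p_H] = \E[p_H p_G]$, while the paper implicitly relies on the fact that the full induced matching at $v$ has equally many $G$-pairs and $H$-pairs (since $v$ has $d(v)$ endpoints of each type), so an $H$-pair anywhere forces a $G$-pair somewhere; both reductions are valid.
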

\begin{proof}
    Fixing any perfect matching collection on $C$, this necessarily induces either an $H$-pair or a $G$-pair
    at $v$. By the previous lemma, summing over the matchings at $v$ yields zero.
\end{proof}

Even when we cannot apply Corollary \ref{cancellationcorollary}, Lemma \ref{lem:cancellation} can still be very useful in computing $\E[p_G p_H]$.
\begin{example}
\label{ex:two-4-cycles}
    Consider the graphs $G$ and $H$ depicted in \cref{fig:two-four-cycles} where $V(G) = V(H) = \{1,2,3,4\}$, $E(G) = \{\{1,2\}, \{1,2\}, \{3,4\}, \{3,4\}\}$, and $E(H) = \{\{2,3\}, \{2,3\}, \{4,1\}, \{4,1\}\}$.
    
    We can compute $\E[p_G p_H]$ as follows. Consider vertex $1$ and the edges incident to it. We partition the collections of matchings based on how these edges are connected to each other in the remainder of the graph. We then sum over the possible matchings at vertex $1$.
    
    Let $e_1$ and $e_2$ be the two copies of $\{1,2\}$ and let $e_3$ and $e_4$ be the two copies of $\{1,4\}$. 
    \begin{enumerate}
        \item If there is a path from $e_1$ to $e_2$ and a path from $e_3$ to $e_4$ (outside of vertex $1$) then by Lemma \ref{lem:cancellation}, everything cancels at vertex $1$.
        \item If there is a path from $e_1$ to $e_3$ and a path from $e_2$ to $e_4$ (outside of vertex $1$) then summing over the matchings at vertex $1$ gives
        \[
        \frac{n^2}{n(n+2)} + \frac{n}{n(n+2)} - \frac{2n}{n^2(n+2)} = \frac{n^2 + n - 2}{n(n+2)} = \frac{n-1}{n}
        \]
        To see this, note that the first term corresponds to the matching $\{e_1,e_3\}, \{e_2,e_4\}$ at vertex $1$ as this gives two cycles and gives a factor of $\frac{1}{n(n+2)}$ for vertex $1$. The second term corresponds to the matching $\{e_1,e_4\}, \{e_2,e_3\}$ at vertex $1$ as this gives one cycle and gives a factor of $\frac{1}{n(n+2)}$ for vertex $1$. The third term corresponds to the matching $\{e_1,e_2\}, \{e_3,e_4\}$ at vertex $1$ as this gives one cycle and gives a factor of $\frac{-2}{n^2(n+2)}$ for vertex $1$.
        
        Note that in order to have these paths, there must be $G$-$H$ matchings at the other $3$ vertices and there are $4$ ways to do this and route $e_1$ to $e_3$
        and $e_2$ to $e_4$. When there are $G$-$H$ matchings at the other $3$ vertices, each of these vertices gives a factor of $\frac{1}{n(n+2)}$
        \item The case when there is a path from $e_1$ to $e_4$ and a path from $e_2$ to $e_3$ behaves in the same way as the previous case.
    \end{enumerate}
    Adding everything together, 
    \[
    \E[p_G p_H] = 2 \cdot 4 \cdot \left(\frac{1}{n(n+2)}\right)^{3} \cdot \frac{n-1}{n} = \frac{8(n-1)}{n^4(n+2)^3}
    \]
\end{example}
\begin{figure}[h]
    \centering
\begin{tikzpicture}[scale=0.15]
\tikzstyle{every node}+=[inner sep=0pt]
\draw [black] (22.9,-17.6) circle (3);
\draw (22.9,-17.6) node {$1$};
\draw [black] (43.1,-17.6) circle (3);
\draw (43.1,-17.6) node {$4$};
\draw [black] (43.1,-35.1) circle (3);
\draw (43.1,-35.1) node {$3$};
\draw [black] (22.9,-35.1) circle (3);
\draw (22.9,-35.1) node {$2$};
\draw [black] (21.651,-32.376) arc (-160.18984:-199.81016:17.782);
\draw (20.1,-26.35) node [left] {$G$};
\draw [black] (23.955,-20.406) arc (16.50389:-16.50389:20.925);
\draw (25.32,-26.35) node [right] {$G$};
\draw [black] (25.692,-34.008) arc (107.77772:72.22228:23.935);
\draw (33,-32.36) node [above] {$H$};
\draw [black] (40.268,-36.086) arc (-74.04667:-105.95333:26.445);
\draw (33,-37.6) node [below] {$H$};
\draw [black] (25.689,-16.499) arc (107.92481:72.07519:23.756);
\draw (33,-14.85) node [above] {$H$};
\draw [black] (40.252,-18.537) arc (-74.87636:-105.12364:27.795);
\draw (33,-20) node [below] {$H$};
\draw [black] (41.678,-32.463) arc (-157.13162:-202.86838:15.731);
\draw (39.94,-26.35) node [left] {$G$};
\draw [black] (44.684,-20.141) arc (25.88675:-25.88675:14.221);
\draw (46.61,-26.35) node [right] {$G$};
\end{tikzpicture}
  \caption{}
  \label{fig:two-four-cycles}
\end{figure}

Lemma \ref{lem:cancellation} suggests an approach for cancelling matchings $M \in \PM(G \cup H) \setminus \PM(G, H)$ or which are not simple.
For a matching $M \in \PM(G \cup H)$ and a vertex $v$, define the \textit{induced re-matching at $v$} to be 
$M' \in \PM(G \cup H)$ which agrees with $M$ for all vertices except $v$, where it
is the induced matching at $v$. Now, if there is a $G$-pair at $v$ in $M'$,
then $c_M$ can be cancelled out by summing over matchings at $v$ and using \cref{lem:cancellation}.
If $M'$ has only $(G, H)$-pairs at $v$, observe that (1)~the re-matching operation
increases the magnitude of $c_M$, i.e. $c_{M'} \geq c_M$ and (2)~the re-matching is
guaranteed to be $v$-simple as well.
Therefore $c_M$ is not dominant, and it can be upper bounded by the $v$-simple $(G,H)$-matching
$M'$.
We would like to continue to the next vertex until the only remaining dominant matchings are between $G$ and $H$ and are simple.
Based on this argument, we initially conjectured that \cref{conj:planar-inner-product}
held for all $G, H$ not necessarily planar.

Unfortunately this strategy doesn't work as some matchings $M$ may be needed to cancel out
re-matchings for multiple distinct vertices.
These matchings may appear with nonzero coefficients.
The observation is that such matchings must have certain ``crossing" structure.
To explain this we specialize to the case where $G,H$ have max degree 2. In this case
\[c_M = n^{\cycles(M)}\prod_{v \in V}\left\{\begin{array}{cc}
    1 & d(v) \leq 1\\
    1 &  d(v) = 2\text{ and no $G$-pair at }v\\
    \frac{-2}{n} & d(v) = 2\text{ and $G$-pair at }v
\end{array}\right. \]

We group the $c_M$ based on an overall matching $M \in \PM(G, H)$. This can be seen as applying the 
cancellation trick in \cref{lem:cancellation} simultaneously to all vertices.

\begin{definition}
    For multigraphs $G, H$ let $V_4$ be the set of vertices with $\deg_G(v) = \deg_H(v) = 2$.
    For $M \in \PM(G, H)$ and $v \in V_4$ let $M^{\oplus v} \in \PM(G \cup H)$ be defined by re-matching $v$ into a $G$-pair and $H$-pair instead of two $(G,H)$-pairs.
    For $S \subseteq V_4$ let $M^{\oplus S}$ re-match all vertices in $S$.\footnote{Note that
    the re-matchings in this definition are not necessarily induced re-matchings.}
\end{definition}

\begin{lemma}\label{lem:degree-4}
    Let $G, H$ with max degree 2 and let $V_4$ be the set of degree-4 vertices in $G \cup H$.
    \[ \E[p_G p_H] = \prod_{v \in V} n^{\rising{-d(v)}} \sum_{M \in \PM(G, H)} n^{\cycles(M)}\left(\sum_{S \subseteq V_4} (-1)^{\abs{S}} \frac{n^{\cycles(M^{\oplus S})}}{n^{\cycles(M) + \abs{S}}}\right).\]
\end{lemma}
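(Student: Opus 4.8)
The plan is to start from the explicit coefficient formula of \cref{lem:cM}, which already gives $\E[p_G p_H] = \prod_{v \in V} n^{\rising{-d(v)}} \sum_{M' \in \PM(G \cup H)} c_{M'}$, and to merely regroup the inner sum over the finer index set $\PM(G \cup H)$ around the coarser index set $\PM(G,H)$. First I would record the specialized shape of $c_{M'}$ under the max-degree-$2$ hypothesis: a vertex of $G \cup H$ has degree $0$, $2$, or $4$, and a $G$-pair requires two incident $G$-edges, so $g_{M'}(v) = 0$ at every non-degree-$4$ vertex and $g_{M'}(v) \in \{0,1\}$ at each $v \in V_4$. Plugging $n + 2d(v) - 4 = n$ into \cref{lem:cM} gives $c_{M'} = n^{\cycles(M')} \prod_{v \in V_4 \,:\, g_{M'}(v) = 1} \frac{-2}{n}$, so that $c_{M'}$ depends on $M'$ only through $\cycles(M')$ and through the set $T(M') \defeq \{v \in V_4 : g_{M'}(v) = 1\}$ of ``split'' vertices.

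Next I would set up the regrouping map $(M, S) \mapsto M^{\oplus S}$ for $M \in \PM(G,H)$ and $S \subseteq V_4$, and compute its fibers. Since $M$ has only $(G,H)$-pairs and $M^{\oplus S}$ re-matches exactly the vertices of $S$ into a $G$-pair and an $H$-pair, we have $T(M^{\oplus S}) = S$; hence $M^{\oplus S} = M'$ forces $S = T(M')$. The key combinatorial point is that this map is $2^{\abs{S}}$-to-one: at each $v \in S$ there are two $(G,H)$-configurations of $M$ (the two ways to pair the two $G$-edges with the two $H$-edges), and both re-match to the unique split configuration that pairs the two $G$-edges together and the two $H$-edges together, namely the configuration that $M'$ exhibits at $v$; at every other vertex $M$ is forced to agree with $M'$. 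Thus each $M' \in \PM(G \cup H)$ has exactly $2^{\abs{T(M')}}$ preimages $(M, T(M'))$, all with $M^{\oplus S} = M'$ and hence all sharing the same value $c_{M^{\oplus S}} = c_{M'}$.

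Finally I would match coefficients. A direct expansion shows that the summand of the claimed formula equals $c_{M^{\oplus S}}/2^{\abs{S}}$: cancelling $n^{\cycles(M)}$ leaves $(-1)^{\abs{S}} n^{\cycles(M^{\oplus S})} / n^{\abs{S}}$, whereas $c_{M^{\oplus S}} = (-2)^{\abs{S}} n^{\cycles(M^{\oplus S})} / n^{\abs{S}}$, the two differing precisely by the factor $2^{\abs{S}}$. Summing this summand over the $2^{\abs{T(M')}}$ preimages of a fixed $M'$ therefore yields $2^{\abs{T(M')}} \cdot c_{M'}/2^{\abs{T(M')}} = c_{M'}$, so the double sum over $(M,S)$ collapses to $\sum_{M' \in \PM(G \cup H)} c_{M'}$, and reinstating the prefactor $\prod_{v} n^{\rising{-d(v)}}$ gives $\E[p_G p_H]$ by \cref{lem:cM}.

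The main obstacle — and the only genuinely non-bookkeeping step — is this multiplicity count. It is tempting to treat $(M,S) \mapsto M^{\oplus S}$ as a bijection onto $\PM(G \cup H)$, which would produce the wrong normalization; the correct statement is that it is $2^{\abs{S}}$-to-one, and it is exactly this over-counting that is absorbed by the discrepancy between the $(-1)^{\abs{S}}$ appearing in the claimed formula and the $(-2)^{\abs{S}}$ appearing in $c_{M^{\oplus S}}$. I would take care to verify that $\cycles(M^{\oplus S})$ is constant across a fiber (immediate, since $M^{\oplus S} = M'$ for every preimage), so that the grouping is well defined and the cancellation of $2^{\abs{S}}$ against $2^{\abs{T(M')}}$ is exact.
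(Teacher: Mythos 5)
Your proposal is correct and is essentially the paper's own proof, made explicit: both start from \cref{lem:cM}, specialize the coefficient to $c_M = n^{\cycles(M)}(-2/n)^{g_M}$ under the max-degree-2 hypothesis, and regroup the sum over $\PM(G \cup H)$ according to rematchings at the split vertices. The paper's one-line device of ``splitting the coefficient $\frac{-2}{n}$ between the two rematchings'' is exactly your observation that $(M,S) \mapsto M^{\oplus S}$ is $2^{\abs{S}}$-to-one, with that multiplicity absorbed by the discrepancy between $(-1)^{\abs{S}}$ and $(-2)^{\abs{S}}$.
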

\begin{proof}
    Recall from \cref{lem:cM},
    \[\E[p_G p_H] = \prod_{v \in V} n^{\rising{-d(v)}} \sum_{M \in \PM(G \cup H)} c_M.\]
    Each $M \in \PM(G \cup H) \setminus \PM(G, H)$ has a coefficient of
    $c_M = n^{\cycles(M)}\left(\frac{-2}{n}\right)^{g_M}$.
    For each $G$-pair, say at $v$, there are two ways to rematch at $v$ to get 
    two $(G,H)$-pairs. Split the coefficient $\frac{-2}{n}$ between these two rematchings.
\end{proof}

We say that a matching $M$ is ``uncancelled'' if the inner summation over $V_4$ is nonzero.
Let us fix $G, H$ and an uncancelled term $M \in \PM(G, H)$ and assume for
the sake of exposition that the 
inner summation is uncancelled and of order $\Omega(1)$; this is the maximum possible
magnitude for the inner summation, as the next lemma shows.
\begin{lemma}
$\cycles(M^{\oplus S}) \leq \cycles(M) + \abs{S}$
\begin{proof}
    We have $\cycles(M^{\oplus v}) \leq \cycles(M) + 1$ since the only way
    to increase the number of cycles is if one cycle splits into two.
    The claim follows by induction.
\end{proof}
\end{lemma}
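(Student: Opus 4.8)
The plan is to establish the single-vertex bound $\cycles(M^{\oplus v}) \leq \cycles(M) + 1$ and then bootstrap to arbitrary $S$ by induction on $\abs{S}$. First I would note that for distinct vertices the operations $\oplus v$ touch disjoint local data—each alters only the matching at $v$—so they commute, and $M^{\oplus S}$ is exactly the result of performing $\oplus v$ for the vertices $v \in S$ one at a time, with every intermediate matching again lying in $\PM(G \cup H)$. Hence if each single step increases the number of cycles by at most one, then after $\abs{S}$ steps the total increase is at most $\abs{S}$. The induction itself is routine bookkeeping; all of the content sits in the base case.

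For the base case, fix $v \in V_4$ and freeze the matching at every other vertex. As noted in the discussion before \cref{lem:cancellation}, following the frozen matchings away from $v$ joins the four edge-endpoints at $v$ into two pairs, yielding an induced external matching $\nu$ on these four endpoints. The closed cycles of the routing that pass through $v$ are precisely the cycles of the multigraph on four vertices whose edges are the matching at $v$ together with $\nu$; the cycles that avoid $v$ are identical for $M$ and $M^{\oplus v}$ and can be ignored. The key elementary fact is that the union of two perfect matchings on four points is either a single $4$-cycle (one component) or a pair of $2$-cycles (two components). Consequently the number of cycles through $v$ equals $1$ or $2$ both for the two-$(G,H)$-pair matching used by $M$ and for the $G$-pair/$H$-pair matching used by $M^{\oplus v}$, so the two counts differ by at most one. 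An increase by one occurs exactly when the re-matching splits a single cycle into two, which gives $\cycles(M^{\oplus v}) \leq \cycles(M) + 1$.

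Combining the two ingredients, I would write $S = S' \cup \{v\}$ and apply the base case at $v$ to the matching $M^{\oplus S'}$, obtaining $\cycles(M^{\oplus S}) = \cycles\bigl((M^{\oplus S'})^{\oplus v}\bigr) \leq \cycles(M^{\oplus S'}) + 1 \leq \cycles(M) + \abs{S'} + 1 = \cycles(M) + \abs{S}$, where the middle inequality is the inductive hypothesis.

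The main obstacle—indeed essentially the only nontrivial point—is the base case, and within it the identification of the induced external matching $\nu$ together with the observation that two perfect matchings on four endpoints form either one or two cycles. Once that local surgery picture is in place, the independence of the $\oplus v$ operations at distinct vertices and their composition into $M^{\oplus S}$ are straightforward, and the induction closes immediately.
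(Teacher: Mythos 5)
Your proposal is correct and takes essentially the same route as the paper: a single-vertex bound $\cycles(M^{\oplus v}) \leq \cycles(M) + 1$ (re-matching at $v$ can at most split one cycle into two) followed by induction on $\abs{S}$. Your local analysis via the union of two perfect matchings on the four endpoints at $v$ merely fills in the detail the paper leaves implicit, and it correctly applies to the intermediate matchings, which lie in $\PM(G \cup H)$ rather than $\PM(G,H)$.
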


We show how the non-cancelling property in this case is 
due to topological properties of $G$ and $H$ (\cref{lem:characterization-of-dom}).

\begin{definition}
    $S \subseteq V_4$ is dominant
    if $\cycles(M^{\oplus S}) = \cycles(M) + \abs{S}.$
\end{definition}

\begin{definition}
    For each cycle $C$ in $M$, let $\gloop(C)$ be the set of vertices $v$ such that 
    \begin{enumerate}[(i)]
        \item $v$ is visited twice in $C$,
        \item restricting the matching collection $M$ to all vertices except $v$,
        the induced matching at $v$ has a $G$-pair.
    \end{enumerate}
    Let $\gloop(M) = \bigcup_{C \in M} \gloop(C)$.
\end{definition}
In other words, $M^{\oplus v}$ for vertices in $\gloop(C)$ will split $C$ into
two subcycles. 
\cref{fig:gloop-example} gives an example with $G = \{\{1,2\},\{1,2\},\{2,3\},\{3,4\}\}, H = \{\{2,3\},\{3,4\}\}$. 
The same cycle $C$ is drawn in two different ways.
In the left image, the dashed lines are the matching collections for each vertex.
In the right image, a vertex which is visited more than once by $C$ is drawn more than once. 
The two vertices that are visited more than once are 2 and 3, and only 2 has
an induced $G$-pair, so $\gloop(C) = \{2\}$.

\begin{figure}[h]
    \centering
  {\includegraphics[width=0.45\textwidth]{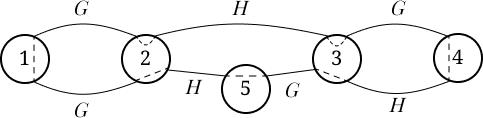}} \qquad
  {\includegraphics[width=0.3\textwidth]{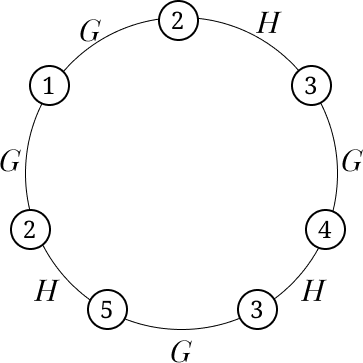}}
  \caption{$\gloop(C) = \{2\}.$}
  \label{fig:gloop-example}
\end{figure}

\begin{remark}
    If one defines $\textup{hh}(M)$ analogously using $H$, then $\textup{hh}(M) = \gloop(M)$.
\end{remark}

\begin{definition}
    $S \subseteq V_4$ is non-crossing if for each cycle $C$, $S$ is a non-crossing subset of $C$ drawn in a circle.
\end{definition}
In other words, the induced re-matching of all vertices in a non-crossing set $S$ in a cycle will subdivide the cycle. A picture is given in~\cref{fig:non-crossing}.

\begin{figure}[h]
    \centering
  {\includegraphics[width=0.25\textwidth]{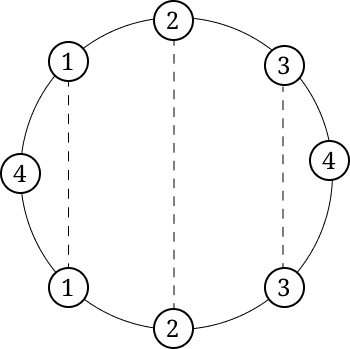}}\qquad
  {\includegraphics[width=0.6\textwidth]{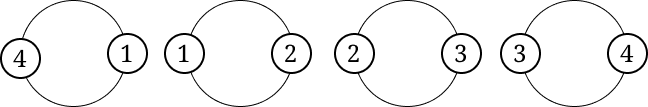}}
  \caption{In the left circle, $\{1,2,3\}$ is a non-crossing subset of $C$. The right four circles show the result
  of the induced rematching of $\{1,2,3\}$.}
  \label{fig:non-crossing}
\end{figure}

The key lemma is that the dominant terms are precisely non-crossing subsets of $\gloop(M)$:
\begin{lemma}
\label{lem:characterization-of-dom}
    $S$ is dominant if and only if $S$ is a non-crossing subset of $\gloop(M)$.
\end{lemma}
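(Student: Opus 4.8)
The plan is to reduce the statement to a purely topological fact about resolving chords on a circle, and then invoke the standard interlacement count for the number of loops produced.

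First I would analyze a single re-matching. Fix a cycle $C$ of $M$ and a vertex $v \in V_4$ on $C$. Since $M \in \PM(G,H)$, both pairs at $v$ are $(G,H)$-pairs, so $C$ traverses $v$ either once (if its two pairs lie on two distinct cycles) or twice (if both lie on $C$). A direct local computation, tracking the two arcs of $C \setminus v$ and how the new $G$-pair and $H$-pair reconnect them, shows that $M^{\oplus v}$ splits $C$ into two loops exactly when the induced matching at $v$ consists of a $G$-pair and an $H$-pair, i.e.\ exactly when $v \in \gloop(C)$; in every other case ($v$ visited twice but not in $\gloop(C)$, or $v$ bridging two cycles) the operation keeps the count unchanged or merges, decreasing it. This base case identifies $\gloop$ vertices with ``splitting'' re-matchings. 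Passing to the chord picture, for $S \subseteq \gloop(M)$ each $v \in S$ is visited twice by a unique cycle $C_v$; drawing $C_v$ as a circle, the two occurrences of $v$ determine a chord, and the re-matching $M^{\oplus v}$ is precisely the surgery cutting $C_v$ along this chord into two loops. Because re-matching a vertex alters only the cycles through it, $M^{\oplus S}$ decomposes over the cycles of $M$, and verifying $\cycles(M^{\oplus S}) = \cycles(M) + \abs{S}$ reduces to the per-cycle equality that resolving $S \cap \gloop(C)$ yields $1 + \abs{S \cap \gloop(C)}$ loops.

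For the forward implication (non-crossing subset of $\gloop(M)$ $\Rightarrow$ dominant) I would argue per cycle by induction on $\abs{S \cap \gloop(C)}$: a non-crossing family of chords on a circle always contains an innermost chord, one arc of which meets no other chord endpoint; resolving it splits off a single loop carrying none of the remaining chords and returns a non-crossing diagram with one fewer chord on the complementary loop, so the loop count increases by exactly one at each step. Summing the per-cycle counts gives $\cycles(M^{\oplus S}) = \cycles(M) + \abs{S}$.

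For the reverse implication I would use the interlacement loop-count formula. Associating to the chords on a cycle $C$ their crossing graph over $\mathbb{F}_2$ (a $1$ for each interleaved pair), the number of loops obtained by resolving all of them equals $1 + (\#\,\text{chords}) - \mathrm{rank}_{\mathbb{F}_2}(\text{crossing matrix})$. Since the crossing matrix is symmetric with zero diagonal, its $\mathbb{F}_2$-rank is even and is positive precisely when some two chords cross; hence the maximal value $1 + \abs{S \cap \gloop(C)}$ is attained iff no two chords cross, i.e.\ iff $S \cap \gloop(C)$ is non-crossing. Combined with the base-case fact that any $v \in S \setminus \gloop(M)$ contributes a non-splitting surgery, this forces a dominant $S$ to be a non-crossing subset of $\gloop(M)$.

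The hard part, and the genuinely topological core, is this reverse implication: proving that crossing chords strictly lower the loop count, equivalently establishing the interlacement rank formula in this setting. A naive ``re-match one vertex at a time'' induction fails because the re-matchings interact; after resolving part of $S$, the local splitting-or-merging type of a remaining vertex is no longer governed by $\gloop_M$ alone but by the already-modified arcs, which is exactly the crossing phenomenon. I would therefore treat all chords of a cycle simultaneously via the chord-diagram (ribbon-surface) structure rather than sequentially, which is also where the care needed to exclude non-$\gloop$ vertices from a dominant $S$ resides.
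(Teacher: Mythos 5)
Your forward direction (non-crossing subsets of $\gloop(M)$ are dominant, split off one cycle at a time via an innermost chord) is correct and matches the paper's. The converse is where your plan diverges, and where it has a genuine gap. The paper's proof rests on a structural fact you never state: dominance is downward-closed (\cref{lem:dom-subset} in the paper). Since a single re-matching can raise the cycle count by at most one, a set whose subset fails to be dominant can never ``catch up,'' so every subset of a dominant set is dominant. With this in hand, the entire converse collapses to two finite checks: a singleton $\{v\}$ with $v \notin \gloop(M)$ is not dominant (the re-matching either merges two cycles or leaves the count unchanged), and a crossing pair $\{v,w\} \subseteq \gloop(C)$ is not dominant because $\cycles(M^{\oplus\{v,w\}}) = \cycles(M)$ rather than $\cycles(M)+2$ --- a one-line computation. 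No simultaneous analysis of all the chords of a cycle is ever required.

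Your route instead attacks the simultaneous surgery head-on via the interlacement rank formula. That formula is a real classical theorem (the Cohn--Lempel equality), and your identification of $\gloop$-vertices with coherent smoothings is correct, so this part could be completed by citation rather than proof --- though as written you both invoke it and concede it is ``the hard part'' still to be established. The more serious problem is your exclusion of $v \in S \setminus \gloop(M)$: you justify it only by the ``base-case fact'' that such a $v$ contributes a non-splitting surgery when performed alone. By your own (correct) observation that re-matchings interact --- the splitting/merging type of a vertex changes once other vertices have been re-matched --- this per-vertex fact does not by itself prevent a set containing such a $v$ from being dominant; the rank formula you state covers only chords of $\gloop$ type on a single cycle, so it cannot close this hole either (vertices lying on two cycles of $M$ are not even chords of one circle). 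The patch is exactly the downward-closure property above, and the irony is that once you have it, the Cohn--Lempel machinery becomes unnecessary: only singletons and crossing pairs ever need to be examined. That monotonicity lemma is the missing idea --- it is simultaneously what your argument needs to be sound and what makes the paper's proof short.
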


\begin{proposition}
    \label{lem:dom-subset}
    If $S$ is dominant then any subset $S' \subseteq S$ is also dominant.
\end{proposition}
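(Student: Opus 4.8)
The plan is to prove this directly from the single-vertex bound rather than from \cref{lem:characterization-of-dom}, since that characterization is naturally proved with this proposition in hand, and a direct argument is both shorter and avoids any circularity. The whole proof rests on the observation that being dominant is equivalent to saying that, along \emph{any} way of building up $S$ one vertex at a time, each re-matching contributes the maximum possible $+1$ to the cycle count; monotonicity of this property along prefixes then yields the claim for subsets.

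First I would record two bookkeeping facts. Re-matchings at distinct vertices act on disjoint sets of incident edge-endpoints, so they commute and $M^{\oplus(T \cup \{v\})} = (M^{\oplus T})^{\oplus v}$ for $v \notin T$. Moreover, since $M \in \PM(G,H)$ carries two $(G,H)$-pairs at every vertex of $V_4$, each $v \notin T$ still carries two $(G,H)$-pairs in $M^{\oplus T}$, so $\oplus v$ there is the same local operation as in $M$. Consequently the single-vertex bound $\cycles(N^{\oplus v}) \le \cycles(N) + 1$ applies at every intermediate matching $N = M^{\oplus T}$, giving $\cycles\big((M^{\oplus T})^{\oplus v}\big) \le \cycles(M^{\oplus T}) + 1$.

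With these in hand, fix any ordering $v_1, \dots, v_k$ of $S$ (with $k = \abs{S}$) and telescope:
\[ \cycles(M^{\oplus S}) - \cycles(M) = \sum_{i=1}^{k}\Big(\cycles\big(M^{\oplus\{v_1,\dots,v_i\}}\big) - \cycles\big(M^{\oplus\{v_1,\dots,v_{i-1}\}}\big)\Big). \]
Each summand is an integer that is at most $1$ by the single-vertex bound applied at $M^{\oplus\{v_1,\dots,v_{i-1}\}}$. Since $S$ is dominant, the left-hand side equals $k$, i.e. a sum of $k$ integers each $\le 1$ equals $k$; this forces every summand to equal exactly $1$, so every prefix satisfies $\cycles\big(M^{\oplus\{v_1,\dots,v_i\}}\big) = \cycles(M) + i$. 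To finish, given $S' \subseteq S$ I would choose the ordering of $S$ that lists the elements of $S'$ first; then $S'$ is a prefix, and the previous line gives $\cycles(M^{\oplus S'}) = \cycles(M) + \abs{S'}$, which is exactly dominance of $S'$.

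The only real content — and the step I would be most careful about — is that the single-vertex bound survives at the intermediate matchings $M^{\oplus T}$, which are no longer $(G,H)$-matchings globally. The point is that this bound is purely local: re-matching $v$ is a surgery on the (at most two) cycles passing through $v$, and such a surgery can only split one cycle into two, never create more than one extra cycle. Since this reasoning uses no global property of the ambient matching, it transfers verbatim from $M$ to each $M^{\oplus T}$, and everything else reduces to the commutativity observation and the telescoping sum above.
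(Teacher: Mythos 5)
Your proof is correct and is essentially the same argument as the paper's: the paper also decomposes $M^{\oplus S}$ into single-vertex re-matchings, applies the bound $\cycles(N^{\oplus v}) \le \cycles(N)+1$ at each step, and concludes (in contrapositive form) that a non-dominant subset $S'$ prevents $\cycles(M^{\oplus S})$ from ``catching up'' to $\cycles(M)+\abs{S}$. Your telescoping formulation, together with the explicit checks that re-matchings at distinct vertices commute and that the single-vertex bound persists at the intermediate matchings $M^{\oplus T}$, simply spells out details the paper leaves implicit.
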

\begin{proof}
    We have $\cycles(M^{\oplus v}) \leq \cycles(M) + 1$. If $S'$ is not dominant,
    meaning $\cycles(M^{\oplus S'}) < \cycles(M) + \abs{S'}$, then
    $\cycles(M^{\oplus S})$ cannot ``catch up'' to $\cycles(M) + \abs{S}$.
\end{proof}

\begin{proof}[Proof of~\cref{lem:characterization-of-dom}]
    First, observe that any non-crossing subset of $\gloop(M)$ is dominant.
    $v \in \gloop(M)$ ensures that $M^{\oplus v}$ splits the cycle containing $v$
    into two cycles.
    Because the set of vertices is non-crossing, further splits will create one
    new cycle each time.
    
    Now we show the converse. Let $S \subseteq V_4$ be dominant. There are three
    possibilities for $v \in V_4$: 
    \begin{enumerate}[(i)]
        \item $v$ is in two different cycles of $M$, 
        \item $v$ occurs twice in the same cycle and is in $\gloop(M)$, 
        \item $v$ occurs twice in the same cycle and is not in $\gloop(M)$.
    \end{enumerate}
    In the first case, $\cycles(M^{\oplus v}) = \cycles(M) - 1$ decreases. Therefore $\{v\}$ is not dominant, and by \cref{lem:dom-subset}, $v$ cannot be in $S$.
    In the third case, $\cycles(M^{\oplus v}) = \cycles(M)$. Again, $\{v\}$ is not dominant
    and $v$ cannot be in $S$. We deduce $S \subseteq \gloop(M)$.

    Next, we claim that a pair of crossing vertices $v, w \in \gloop(C)$ are not dominant.
    We have $\cycles(M^{\oplus \{v,w\}}) = \cycles(M)$ whereas a dominant term should
    increase the cycle count by 2.
    Therefore, by \cref{lem:dom-subset} we conclude that $S$ cannot contain any crossing
    pairs. This completes the proof of the lemma.
\end{proof}

Let $s_M$ be the leading coefficient,
\begin{definition}
    $\displaystyle s_M \defeq \sum_{\text{non-crossing } S \subseteq \gloop(M)} (-1)^{\abs{S}}$.
\end{definition}

One step towards \cref{conj:planar-inner-product} is to show that for planar graphs,
an uncancelled term is always upper bounded by a simple matching. A concrete, purely combinatorial
conjecture is the following,
\begin{conjecture}
    If $G, H$ are graphs, $G \cup H$ is planar, $M \in \PM(G, H)$ and $s_M \neq 0$,
    then there is a simple matching $M' \in \PM(G, H)$ with $\cycles(M') \geq \cycles(M).$
\end{conjecture}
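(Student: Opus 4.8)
The plan is to reduce the statement to a single topological lemma about how the planarity of $G\cup H$ constrains the self-crossings of the cycles of $M$, and then to simplify $M$ cycle-by-cycle using the flips available inside $\PM(G,H)$. Since ``non-crossing'' is defined per cycle and $(-1)^{|S|}$ factors over cycles, the leading coefficient factors as $s_M=\prod_C s_C$ with $s_C=\sum_{\text{non-crossing }S\subseteq\gloop(C)}(-1)^{|S|}$, so $s_M\neq 0$ is equivalent to $s_C\neq 0$ for every cycle $C$. I would first record the combinatorial content of $s_C\neq 0$: viewing each $v\in\gloop(C)$ as a chord of the circle $C$ joining its two occurrences, $s_C$ is the independence polynomial of the crossing graph $\Gamma_C$ of these chords evaluated at $-1$. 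In particular $s_C\neq 0$ forces $\Gamma_C$ to have no isolated vertex (an isolated chord factors out a $1+(-1)=0$), so if $\gloop(C)\neq\emptyset$ then some pair of chords in $\gloop(C)$ must cross.

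Next I would set up the local moves. Inside $\PM(G,H)$ each vertex of $V_4$ admits exactly two $(G,H)$-matchings; call switching between them a \emph{flip} at $v$. Routing one vertex against its induced matching, exactly as in the proof of \cref{lem:cM}, shows a flip changes $\cycles$ by $+1$, $-1$, or $0$, and a short case analysis yields the key fact: if a single cycle $C$ visits $v$ twice and $v\notin\gloop(M)$, then the induced matching at $v$ is the \emph{other} $(G,H)$-matching, so flipping splits $C$ into two cycles each visiting $v$ once. Such a flip simultaneously makes $v$ simple and raises $\cycles$ by one, and splitting a cycle never destroys simplicity at any other vertex. Running these flips greedily therefore reduces the problem to configurations in which every non-simple vertex lies in $\gloop(M)$, and this is exactly where planarity must enter.

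The heart of the argument is to show planarity rules out this remaining obstruction. Fixing a planar embedding of $G\cup H$, the cycles of $M$ become closed curves immersed in the plane, and $v\in\gloop(C)$ is a self-tangency whose ``$G$-pair'' resolution $M^{\oplus v}$ splits $C$ — precisely the cycle gain recorded in \cref{lem:characterization-of-dom}, where non-crossing dominant sets $S\subseteq\gloop(M)$ raise the cycle count by $|S|$. Since $M^{\oplus S}$ lives in $\PM(G\cup H)$ rather than $\PM(G,H)$, one route is to realize its gain by genuine flips: resolve an innermost crossing pair of $\gloop$ chords and repair the created $G$-pair/$H$-pair by rerouting along an alternating path supplied by the embedding, re-entering $\PM(G,H)$ with no net loss of cycles. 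The cleanest route, however, is to prove the topological lemma directly: \emph{if $G\cup H$ is planar then for every $M\in\PM(G,H)$ and every cycle $C$ the chords of $\gloop(C)$ are pairwise non-crossing.} Combined with the first paragraph this forces $\gloop(C)=\emptyset$ whenever $s_C\neq 0$, so the hypothesis $s_M\neq 0$ collapses to $\gloop(M)=\emptyset$ and the greedy flips of the second paragraph terminate at a simple matching with at least as many cycles.

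I expect the main obstacle to be exactly this topological lemma, i.e. ruling out an interleaved pair $v,w\in\gloop(C)$ in a planar $G\cup H$. The natural approach is to extract a forbidden minor: an interleaved pair on $C$, together with the two alternating return paths that certify $v,w\in\gloop$, should assemble into a $K_5$- or $K_{3,3}$-subdivision, mirroring the role of $K_5$ in the stated counterexamples. The difficulty is that the arcs of $C$ and the return paths share the other cycles of $M$ through the remaining half-edges of their interior vertices, so the non-planar witness genuinely uses the global structure of $G\cup H$ and not merely the immersion of $C$; controlling these interactions is where I expect the argument to be hardest, and it is plausible that the signed refinement $s_M\neq 0$ (rather than planarity alone) is what is truly needed here. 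A secondary difficulty is bookkeeping the greedy flips: a flip alters the induced matchings, and hence $\gloop$-membership, throughout the cycle it acts on, so a monovariant (for instance an extremal choice maximizing $\cycles$) is needed to guarantee the process terminates at a simple matching rather than cycling.
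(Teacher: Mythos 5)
A preliminary remark: the statement you were asked to prove is not proven in the paper at all --- it appears there as an open conjecture, offered as a concrete combinatorial step toward \cref{conj:planar-inner-product} --- so there is no paper proof to compare against, and your proposal must stand on its own. It does not, and not only because you explicitly defer the key step. The pieces you do establish are correct: $s_M$ factors over cycles as $\prod_C s_C$; $s_C$ is the independence polynomial of the chord-crossing graph evaluated at $-1$, so an isolated chord forces $s_C = 0$; and flipping, within $\PM(G,H)$, at a vertex visited twice by a cycle but lying outside $\gloop(M)$ splits that cycle and increases $\cycles$ by one. The problem is that everything is then made to rest on your ``topological lemma'' --- that planarity of $G \cup H$ forces the chords of $\gloop(C)$ to be pairwise non-crossing --- which you concede you cannot prove.

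That lemma is false, even with $s_M \neq 0$ added as a hypothesis, so the reduction is a dead end rather than a fillable gap. Take $V = \{v,w,x,y\}$ with $E(G) = \{\{v,w\},\{v,x\},\{w,y\}\}$ and $E(H) = \{\{v,w\},\{w,x\},\{v,y\}\}$: then $G \lrarrow H$, both have max degree $2$, and $G \cup H$ is planar (the $4$-cycle $v,x,w,y$ plus two parallel $vw$ edges). Let $M \in \PM(G,H)$ pair, at $v$, the edge $\{v,w\}_G$ with $\{v,y\}_H$ and $\{v,x\}_G$ with $\{v,w\}_H$, and, at $w$, the edge $\{v,w\}_G$ with $\{w,x\}_H$ and $\{w,y\}_G$ with $\{v,w\}_H$ (the matchings at $x$ and $y$ are forced). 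Then $M$ has a single cycle, traversing $\{v,w\}_G,\{w,x\}_H,\{v,x\}_G,\{v,w\}_H,\{w,y\}_G,\{v,y\}_H$ in order; it visits $v$ and $w$ twice each, in interleaved (crossing) positions, and both $v$ and $w$ lie in $\gloop(M)$, since leaving $v$ along $\{v,w\}_G$ the induced walk returns along $\{v,x\}_G$ (a $G$-pair), and symmetrically at $w$. So $\gloop(M) = \{v,w\}$ consists of two crossing chords inside a planar $G \cup H$, and $s_M = 1 - 1 - 1 = -1 \neq 0$ (the non-crossing subsets of $\gloop(M)$ are $\emptyset$, $\{v\}$, $\{w\}$). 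Hence the implication ``$s_M \neq 0$ and planar $\Rightarrow \gloop(M) = \emptyset$'' that your third paragraph relies on is simply wrong, and your greedy process is stuck at exactly this $M$: its only non-simple vertices are $\gloop$ vertices, where your flip does not increase the cycle count. The conjecture itself survives in this example --- re-matching both $v$ and $w$ so that the two parallel $vw$ edges form a $2$-cycle yields a simple matching with two cycles $\geq 1 = \cycles(M)$ --- but that matching is reached only by flips at $\gloop$ vertices, which your procedure never performs. Any viable attack must compare uncancelled non-simple matchings against simple ones, rather than trying to show that planarity excludes them.
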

It is easy to check that a simple $M \in \PM(G, H)$ is always
uncancelled since $\cycles(M^{\oplus S})<\cycles(M) + \abs{S}$ holds with strict inequality for all $S \neq \emptyset$.
If the above conjecture is true, terms with $s_M \neq 0$ will therefore be dominated
by simple terms.

\section{Polynomial Basis for the Boolean Setting}
\label{sec:boolean-case}

Let $H_n = \{-1,+1\}^n$. Letting $d_u \unif H_n$, let $\symbool$
be the set of polynomials $p$ in the $d_u$ which are
symmetric under simultaneous automorphism of the hypercube:
for any $\pi \in \Aut(H_n)$, 
\[p(d_1, \dots, d_u, \dots) = p(\pi d_1, \dots, \pi d_u, \dots).\]

$\Aut(H_n)$ is well-known to be the hyperoctahedral group.
\begin{fact}
    $\Aut(H_n)$ consists of permutations of the coordinates $[n]$
    and bitflips using any $z \in \{-1,+1\}^n$. Formally, $\Aut(H_n)$ is a semidirect product of $S_n$ and $\Z_2^n$.
\end{fact}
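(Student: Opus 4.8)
The plan is to interpret $\Aut(H_n)$ as the group of graph automorphisms of the hypercube graph on $\{-1,+1\}^n$ — the adjacency-preserving bijections, where two vertices are adjacent iff they differ in exactly one coordinate (equivalently, the Hamming isometries fixing the metric structure). First I would verify the easy containment: a coordinate permutation $\pi \in S_n$ and a sign flip $\mathrm{flip}_z$ (negate the coordinates in the support of $z \in \{-1,+1\}^n$) each send pairs differing in one coordinate to pairs differing in one coordinate, so both are automorphisms. I would also record two structural facts used repeatedly: the sign flips $\Z_2^n$ act simply transitively on the vertices (the unique flip carrying $\mathbf 1 = (1,\dots,1)$ to $v$ negates exactly the $-1$-coordinates of $v$), and any automorphism fixing $\mathbf 1$ preserves Hamming weight, since graph distance equals Hamming distance and distance to $\mathbf 1$ is preserved.

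The core of the argument is the reverse containment: every automorphism $\phi$ is a sign flip followed by a coordinate permutation. By simple transitivity of $\Z_2^n$, I may compose $\phi$ with a sign flip so that $\phi(\mathbf 1) = \mathbf 1$. Then $\phi$ preserves Hamming weight and hence permutes the $n$ weight-one neighbors of $\mathbf 1$; identifying each such neighbor with the coordinate it flips yields a permutation of $[n]$, and composing $\phi$ with the corresponding element of $S_n$ I may further assume $\phi$ fixes $\mathbf 1$ and each of its neighbors. It then remains to show such a $\phi$ is the identity, which I would do by induction on Hamming weight $d$: for the step, a weight-$(d{+}1)$ vertex $w$ with $-1$-set $T$ (where $|T| = d{+}1 \ge 2$) is uniquely determined by its set of weight-$d$ neighbors, since those neighbors have $-1$-sets $\{\,T \setminus \{i\} : i \in T\,\}$ whose union recovers $T$; as $\phi$ fixes those weight-$d$ neighbors by induction and preserves weight, $\phi(w) = w$.

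Finally I would assemble the semidirect product structure. The sign flips form a normal subgroup because conjugation satisfies $\pi\,\mathrm{flip}_z\,\pi^{-1} = \mathrm{flip}_{\pi z}$; the permutations $S_n$ form a complement, and $\Z_2^n \cap S_n$ is trivial since every coordinate permutation fixes $\mathbf 1$ whereas every nonidentity sign flip moves it. Hence $\Aut(H_n) = \Z_2^n \rtimes S_n \cong S_n \ltimes \Z_2^n$, of order $2^n\, n!$. The main obstacle is the rigidity step — showing the only automorphism fixing $\mathbf 1$ and its neighbors is the identity — which is exactly where the combinatorial structure of the cube (a vertex being reconstructible from its lower-weight neighborhood) is needed; the rest is routine verification and bookkeeping.
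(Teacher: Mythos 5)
Your proposal is correct, but there is nothing in the paper to compare it against: the paper states this Fact without any proof, simply citing it as the well-known description of the hyperoctahedral group. What you have written is a complete, self-contained proof of the standard kind: verify that coordinate permutations and sign flips are automorphisms of the hypercube graph; use the simply transitive action of $\Z_2^n$ to reduce to the stabilizer of $\mathbf{1}$; use the fact that graph distance equals Hamming distance to see that this stabilizer permutes the weight-one neighbors of $\mathbf{1}$, hence can be corrected by an element of $S_n$; and then show rigidity, i.e.\ that an automorphism fixing $\mathbf{1}$ and all of its neighbors is the identity. Your induction on weight is sound: for $\abs{T} = d+1 \geq 2$, a weight-$(d+1)$ vertex is determined by its weight-$d$ neighbors, since the union of the sets $T \setminus \{i\}$ over $i \in T$ recovers $T$, and the base cases (weights $0$ and $1$) are exactly what fixing $\mathbf{1}$ and its neighbors provides. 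The final bookkeeping (normality of $\Z_2^n$ via $\pi\,\mathrm{flip}_z\,\pi^{-1} = \mathrm{flip}_{\pi z}$, trivial intersection with $S_n$) correctly yields $\Aut(H_n) = \Z_2^n \rtimes S_n$ of order $2^n n!$, which is precisely the claimed semidirect product structure. So you have supplied a valid proof where the paper deferred to folklore.
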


We give a nice basis for such functions, showing formulas that mirror the general
theme of routings and matchings in the underlying graph.

\begin{definition}[(Generalized inner product)]
    For $d_1, \dots, d_{2k}$, let
    \[ \langle d_1, \dots, d_{2k}\rangle = \sum_{i=1}^n d_{1,i} \cdots d_{2k,i}. \]
This is also denoted by the variable $x_{1,\dots, 2k}$.
\end{definition}

We say that a hypergraph is even if the size of every hyperedge is even.
Let $\deg_G(v)$ be the number of edges of $G$ containing $v$.
Given an even hypergraph $G$ on vertex set $[m]$, let
\[m_G =  \prod_{\{e_1, \dots, e_{2k}\} \in E(G)} x_{e_1, \dots, e_{2k}} 
.\]
Note that edges are allowed to repeat.

The $m_G$ are not linearly independent. A basis is:
\begin{lemma}
    The set of $m_G$ such that: there is $\sigma : E(G) \to [n]$ such that for all vertices 
    $u \in V$ and edges $e, f \ni u$, $\sigma(e) \neq \sigma(f)$, is a basis for $\symbool$.
\end{lemma}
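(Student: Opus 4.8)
The plan is to pass to the \emph{reduced} representation of $\symbool$ as multilinear functions (using $d_{u,i}^2 = 1$) and to exhibit the $m_G$ as a unitriangular change of basis away from the natural orbit-sum basis. Concretely, a multilinear monomial $\prod_{(u,i)\in S} d_{u,i}$, indexed by $S \subseteq V \times [n]$, is invariant under the bitflip subgroup $\Z_2^n$ iff every ``column'' $S^{(i)} := \{u : (u,i) \in S\}$ has even size, and averaging over the coordinate permutations $S_n$ then shows that $\symbool$ has a basis of orbit sums $O_\lambda$, one for each multiset $\lambda$ of nonempty even subsets of $V$ with $|\lambda| \le n$ (the parts of $\lambda$ record the nonempty columns, placed in distinct coordinates). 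I would first record this basis and its dimension, since it pins down exactly which index set the $m_G$ must be matched against.

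Next I would expand a candidate $m_G$ and read off its leading orbit term. Writing $m_G = \sum_{\sigma : E(G) \to [n]} \prod_{e} \prod_{v \in e} d_{v,\sigma(e)}$ and reducing mod $d_{u,i}^2 = 1$, the column placed in coordinate $i$ acquires the \emph{symmetric difference} $\bigtriangleup_{e : \sigma(e)=i}\, e$. A labeling $\sigma$ that is injective on the edges (which is exactly what the stated condition supplies for the relevant graphs) keeps the edges in separate columns, so it contributes the ``spread-out'' orbit sum $O_{E(G)}$; summing over all such injective $\sigma$ gives $O_{E(G)}$ a nonzero integer coefficient. Every non-injective $\sigma$ places two edges in a common column and hence produces a term whose number of nonempty columns is strictly smaller than $|E(G)|$. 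Ordering the orbit indices by number of parts, this shows $m_G = c\cdot O_{E(G)} + (\text{strictly lower terms})$ with $c \neq 0$.

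From here the basis claim follows from triangularity: letting $G$ range over the index set (equivalently, over multisets of nonempty even subsets with at most $n$ parts, realized as edge multisets), the matrix expressing the $m_G$ in the orbit-sum basis is triangular with nonzero diagonal, hence invertible, so the $m_G$ are simultaneously linearly independent and spanning. Alternatively, one can prove linear independence directly by the ``special monomial'' argument used for the Gaussian and spherical cases: an injective labeling produces a multilinear monomial from which $E(G)$ can be uniquely reconstructed, so in any vanishing combination a maximal $G$ would survive uncancelled; spanning is then obtained by downward induction on the number of parts, peeling off $O_\lambda$ using $m_{G_\lambda}$ together with the lower-order terms supplied by the previous step.

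The main obstacle is the leading-term analysis, and in particular making precise the interplay between the labeling condition and the orbit-index constraint $|\lambda| \le n$. Because the map $G \mapsto m_G$ is many-to-one (edges sharing a label merge via symmetric differences), one must check that the labeling hypothesis cuts out exactly one representative graph per orbit index and that a genuinely injective labeling is available for each such representative, so that the diagonal coefficient $c$ is nonzero; this is the step where the hypothesis on $\sigma$ is essential. The remaining bookkeeping---that a symmetric difference of even sets stays even, that merging strictly decreases the number of parts, and that the induced partial order is well-founded---is routine.
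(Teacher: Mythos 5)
Your overall strategy is the same as the paper's: reduce modulo $d_{u,i}^2 = 1$, read off the multiset of nonempty ``columns'' of each surviving multilinear monomial, observe that edges sharing a label merge into symmetric differences (strictly decreasing the number of nonempty columns), and identify the leading, shape-$E(G)$ part as coming precisely from injective labelings. The paper runs this as a maximal-graph, uncancelled-monomial argument for independence together with a degree-reduction argument for spanning, whereas you package it as a unitriangular change of basis against an explicit orbit-sum basis $O_\lambda$; your organization is cleaner and more self-contained (it yields spanning without separately assuming that the $m_G$ span $\symbool$), but it is not a genuinely different idea.

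The genuine problem is the check you defer to your last paragraph, because that check fails. The stated hypothesis is only a proper edge coloring: edges sharing a vertex get distinct labels, but disjoint edges may share a label. Hence a graph can satisfy the hypothesis while having more than $n$ edges: take the $n+1$ pairwise disjoint edges $\{1,2\},\{3,4\},\dots,\{2n+1,2n+2\}$, which admit even the constant labeling. For such a $G$ there is no injective $\sigma$, the would-be leading index $E(G)$ has more than $n$ parts and so is not an orbit index at all, and $m_G$ lies in the span of the $m_H$ with $|E(H)| \le n$ --- for instance, when $n=1$ one has $x_{12}x_{34} = x_{1234}$ identically on the support. So the hypothesis does not ``cut out exactly one representative graph per orbit index,'' and the diagonal-coefficient claim collapses exactly for these graphs: the set as literally defined is spanning but linearly dependent, i.e.\ not a basis. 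In fairness, this is a defect of the statement itself and equally of the paper's own proof, whose step ``$m_G$ contains a multilinear monomial with shape $G$'' also silently requires an injective $\sigma$ that the hypothesis does not supply. Both your argument and the paper's become correct verbatim once the index set is taken to be the graphs admitting an injective labeling, equivalently $|E(G)| \le n$, which is the only form the paper actually uses afterwards (its corollary on graphs with at most $n$ hyperedges).
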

\begin{proof}
    Expand 
    \[m_G = \sum_{\sigma : E(G) \to [n]} \prod_{e = \{e_1, \dots, e_{2k} \in E(G)\}} d_{e_1, \sigma(e)} \cdots d_{e_{2k}, \sigma(e)}.\]
    If there is no such $\sigma$, then every term above has a square term $d_{ij}^2 = 1$.
    Therefore $m_G$ simplifies to a lower-degree polynomial, and it can be expressed
    in terms of other $m_G$.
    
    If there is a $\sigma$ for $G$, then $m_G$ contains a multilinear monomial with ``shape'' $G$,
    which is linearly independent from other $m_G$.
    More formally, to show linear independence, suppose $\sum_G c_G m_G = 0$ for some $c_G$ not all zero. Taking a 
    nonzero graph $G$ with maximum number of edges,
    precisely the coefficient $c_G$ appears on multilinear monomials with ``shape'' $G$,
    such as the monomial for $\sigma$,
    which is a contradiction.
\end{proof}
\begin{corollary}
    The set of $m_G$ such that $G$ has at most $n$ hyperedges is linearly independent. 
\end{corollary}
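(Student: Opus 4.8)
The plan is to read this off directly from the preceding lemma. That lemma identifies a basis for $\symbool$ as the collection of all $m_G$ for which there exists a labeling $\sigma : E(G) \to [n]$ assigning distinct labels to every pair of \emph{distinct} edges that share a common vertex. Since any subset of a linearly independent set is itself linearly independent, it suffices to verify that every even hypergraph $G$ with $\abs{E(G)} \le n$ meets this condition, so that the family in question is contained in the basis.

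First I would observe that the local-distinctness requirement is automatically satisfied by any labeling $\sigma$ that is injective on all of $E(G)$: if no two edges receive the same label, then in particular no two distinct edges $e, f$ meeting at a common vertex $u$ can, so the hypothesis $\sigma(e) \neq \sigma(f)$ holds vacuously fast. Second, whenever $\abs{E(G)} \le n$ an injective map $\sigma : E(G) \to [n]$ exists simply because the domain is no larger than the codomain. Combining these two observations, every such $G$ belongs to the basis supplied by the preceding lemma, and hence the set of $m_G$ with at most $n$ hyperedges is a subset of that basis and is therefore linearly independent.

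I do not expect any genuine obstacle here. The entire content is the remark that an injective labeling trivially respects the per-vertex distinctness condition, which lets us replace the somewhat awkward existence-of-$\sigma$ hypothesis of the preceding lemma by the cleaner and more usable cardinality bound $\abs{E(G)} \le n$. The only thing to be careful about is reading the quantifier in the lemma correctly (it ranges over distinct incident edges), but this is exactly what makes the injective labeling suffice.
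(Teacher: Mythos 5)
Your proof is correct and is exactly the argument the paper leaves implicit: since $\abs{E(G)} \le n$, an injective labeling $\sigma : E(G) \to [n]$ exists, it trivially satisfies the per-vertex distinctness condition of the preceding lemma, and a subset of a linearly independent set (here, of a basis for $\symbool$) is linearly independent. Your reading of the quantifier as ranging over distinct incident edges is also the intended one.
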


\begin{remark}
    The hyperedges are sets, so they don't contain repeats (and thus $G$ has no self-loops). If we did have an edge $e$ with a repeated vertex $i$ in $G$, we could delete two copies of $i$ from $e$ without affecting $m_G$ because we always have that $d_{ij}^2 = 1$.
\end{remark}

As before, we can run Gram-Schmidt to orthogonalize the $m_G$.
We will generalize matching collections to the Boolean case and use them to express
the resulting polynomials $p_G$. In the Boolean case it is also
useful to express $p_G$ and various calculations as a sum over certain functions
$\sigma: E(G) \to [n]$.

\begin{definition}
    Let $\calM_{bool}(G)$ be the set of partitions of $E(G)$.
\end{definition}



\begin{definition}
    For $M \in \calM_{bool}(G)$ define the routed hypergraph $\route(M)$ by
    replacing each block $B$ by a single hyperedge containing $v \in V$ which are incident to an odd
    number of edges in $B$.
    
    Any block such that every $v \in V$ is incident to an even number of edges in $B$ is called a ``closed block''. 
    Closed blocks are deleted from $\route(M)$.
\end{definition}

\begin{definition}
    For $M \in \calM_{bool}(G)$ define the notation $\cycles(M)$ to be the number of closed blocks of the partition.
\end{definition}

\begin{definition}
    Let $\mathcal{PM}_{bool}(G)$ be the set of partitions of $E(G)$ such that
    every block is closed. 
\end{definition}

Denote the falling and rising factorial by 
\[x^{\underline{k}} \defeq x(x-1)\cdots (x-k+1), \qquad \qquad x^{\overline{k}} \defeq x(x+1)\cdots (x+k-1).\]

\begin{lemma}\label{lem:boolean-expectation}
    \[\E[m_G] = \sum_{\substack{\sigma : E(G) \to [n]\\
    \text{s.t. }\forall i. \; \sigma^{-1}(i)\text{ even}}}1 = \sum_{M \in \PM_{bool}(G)} n^{\underline{\cycles(M)}}. \]
\end{lemma}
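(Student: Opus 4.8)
The plan is to mirror the proof of the Gaussian case (\cref{lem:monom-expectation}) but with two modifications appropriate to the Boolean hypercube: the single-coordinate moments change, and the ``matchings at a vertex'' get replaced by ``partitions into closed blocks''. First I would expand each generalized inner product $x_{e_1,\dots,e_{2k}} = \sum_{i=1}^n d_{e_1,i}\cdots d_{e_{2k},i}$, so that
\[ m_G = \sum_{\sigma : E(G) \to [n]} \prod_{e = \{e_1,\dots,e_{2k}\} \in E(G)} d_{e_1,\sigma(e)}\cdots d_{e_{2k},\sigma(e)}. \]
Grouping the product by the coordinate label $i \in [n]$ and by vertex, a fixed $\sigma$ contributes $\prod_{u \in V,\, i \in [n]} d_{u,i}^{\#\{e \ni u \,:\, \sigma(e)=i\}}$. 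Taking expectations coordinatewise, the $d_{u,i}$ are independent uniform $\pm 1$ variables, so $\E[d_{u,i}^t]$ equals $1$ if $t$ is even and $0$ if $t$ is odd. Hence $\E[m_G]$ counts exactly those $\sigma$ for which, at every vertex $u$ and every label $i$, the number of edges incident to $u$ carrying label $i$ is even. Summing over all label classes, this says precisely that each fiber $\sigma^{-1}(i)$ is an \emph{even} subhypergraph (every vertex has even degree within it), which establishes the first equality.

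For the second equality I would set up a bijection-style count grouping the admissible $\sigma$ by the partition $M \in \calM_{bool}(G)$ whose blocks are the fibers $\sigma^{-1}(i)$. An edge subset is a valid fiber exactly when it is a closed block, so the partitions arising this way are exactly those in $\PM_{bool}(G)$. Given a fixed $M \in \PM_{bool}(G)$ with $\cycles(M)$ blocks, the number of admissible $\sigma$ inducing $M$ is the number of ways to assign distinct labels from $[n]$ to the blocks — distinct because two different blocks must receive different labels to remain the fibers of $\sigma$ — which is the falling factorial $n^{\underline{\cycles(M)}} = n(n-1)\cdots(n-\cycles(M)+1)$. Summing over $M \in \PM_{bool}(G)$ gives the stated closed form.

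The one subtlety worth checking carefully, and the place I expect the only genuine obstacle, is the passage from ``$\sigma^{-1}(i)$ even for all $i$'' to a clean partition count: a priori the empty fibers and the labeling multiplicities must be handled correctly. The key observation is that the \emph{nonempty} fibers form a partition of $E(G)$ into closed blocks, and the condition ``$\sigma^{-1}(i)$ even for all $i$'' is equivalent to every nonempty block being closed; an empty fiber is vacuously even and simply corresponds to an unused label. Thus each admissible $\sigma$ determines a unique $M \in \PM_{bool}(G)$, and conversely recovering $\sigma$ from $M$ requires precisely an injection from the blocks of $M$ into $[n]$, justifying the falling factorial. Unlike the Gaussian case — where $\E[Z^{2k}]=(2k-1)!!$ forced a further sum over perfect matchings within each label class and produced $n^{\cycles}$ — here $\E[d_{u,i}^{2k}]=1$, so there is no internal matching factor and the distinct-label constraint yields a falling rather than ordinary power of $n$. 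I would make sure to emphasize this contrast, since it is exactly the ``combination of Gaussian and spherical features'' flagged in the introduction.
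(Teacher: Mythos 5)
Your proposal is correct and follows essentially the same route as the paper's proof: expand $m_G$ over label assignments $\sigma$, use linearity of expectation with the Boolean moments $\E[d_{u,i}^t] \in \{0,1\}$ to get the first equality, then group admissible $\sigma$ by the induced partition of $E(G)$ into closed blocks and count injective label assignments to obtain the falling factorial $n^{\underline{\cycles(M)}}$. Your explicit handling of empty fibers and the contrast with the Gaussian case are sound additions, but the underlying argument is identical.
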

\begin{proof}
    The first equality is obtained by expanding $m_G$ into a sum of over all
    $\sigma :E(G) \to [n]$, then using linearity of expectation.
    The second equality is obtained by casing on which values of
    $\sigma(e)$ are equal, which induces a partition of $E(G)$.
    We have that $\sigma$ contributes to the first sum
    if and only if all of the blocks of the induced partition are
    closed.
    Once the partition is fixed, there are $n^{\underline{\cycles(M)}}$ ways to
    choose distinct values for each cycle.
\end{proof}

For now we give only one definition of $p_G$. The definition in terms of matchings is more
complicated and is included in \cref{app:boolean-mobius}.

\begin{definition}[(Generic construction from \cref{prop:generic-construction})]
\label{def:truncate-boolean}
    \[p_G = \sum_{\substack{\sigma: E(G) \to [n]\\
    \text{s.t. }\forall e,f \ni u. \; \sigma(e) \neq \sigma(f)}} 
    \prod_{e = \{e_1, \dots, e_{2k}\} \in E(G)} d_{e_1, \sigma(e)} d_{e_2, \sigma(e)} \cdots d_{e_{2k}, \sigma(e)}.\]
\end{definition}

\begin{lemma}[(Automorphism-invariance)]
    $p_G \in \symbool$.
    \begin{proof}
        Neither of the two types of $H_n$ symmetries changes $p_G$.
        Coordinate permutation doesn't change $p_G$ because $\sigma$ doesn't
        depend on the names of the coordinates. Bitflips don't change $p_G$
        because every hyperedge is even (so flips cancel out).
    \end{proof}
\end{lemma}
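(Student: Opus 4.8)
The plan is to reduce to checking invariance under the two generating families of $\Aut(H_n)$ identified in the Fact above---coordinate permutations $\tau \in S_n$ and bitflips $z \in \{-1,+1\}^n$---and to verify each by tracking its action on the sum in \cref{def:truncate-boolean}. Since these two families together generate $\Aut(H_n)$, invariance of $p_G$ under every element of each family yields invariance under all of $\Aut(H_n)$, which is exactly the assertion $p_G \in \symbool$.

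First I would handle coordinate permutations. Applying $\tau$ simultaneously to every $d_u$ relabels coordinates, sending $d_{u,i} \mapsto d_{u,\tau^{-1}(i)}$, and hence sending the monomial indexed by $\sigma$ to the monomial indexed by $\tau^{-1}\circ\sigma$. The key observation is that the admissibility constraint on $\sigma$---that $\sigma(e)\neq\sigma(f)$ whenever $e,f$ share a vertex $u$---is preserved under postcomposition with the bijection $\tau^{-1}$, because a bijection of $[n]$ carries distinct labels to distinct labels. Therefore $\sigma \mapsto \tau^{-1}\circ\sigma$ is a bijection of the admissible index set, so the action of $\tau$ merely permutes the terms of the sum and leaves $p_G$ unchanged.

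Next I would handle bitflips, working term by term. For a fixed admissible $\sigma$, replacing each $d_{u,i}$ by $z_i d_{u,i}$ multiplies the monomial
\[\prod_{e=\{e_1,\dots,e_{2k}\}\in E(G)} d_{e_1,\sigma(e)}\cdots d_{e_{2k},\sigma(e)}\]
by the scalar $\prod_{e\in E(G)} z_{\sigma(e)}^{\,\abs{e}}$, since each of the $\abs{e}$ factors coming from edge $e$ contributes one copy of $z_{\sigma(e)}$. Because $G$ is an even hypergraph, every exponent $\abs{e}$ is even, and $z_{\sigma(e)}^2 = 1$, so each factor equals $1$. Hence every individual term---and thus $p_G$---is fixed by the bitflip.

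I expect no serious obstacle here; the content lies entirely in choosing the right bookkeeping. The one point that must be handled carefully is that the two cases are structurally different: for permutations the argument is a reindexing of the \emph{entire sum}, which forces me to verify that admissibility survives relabeling so that the induced map on indices is a genuine bijection, whereas for bitflips the argument is \emph{term-by-term} and relies essentially on the evenness hypothesis $\abs{e}\in 2\N$. Combining the two establishes invariance under all of $\Aut(H_n)$ and completes the proof.
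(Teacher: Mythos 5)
Your proof is correct and follows the same approach as the paper's: checking invariance under the two generating families of $\Aut(H_n)$, with coordinate permutations handled by reindexing the sum over admissible $\sigma$ and bitflips handled term-by-term via the evenness of the hyperedges. Your version simply spells out in more detail what the paper's two-sentence proof leaves implicit.
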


\begin{corollary}
    $p_G$ equals the output of the degree-orthogonal Gram-Schmidt process on the $m_G$.
\end{corollary}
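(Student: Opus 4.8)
The plan is to mirror the Gaussian and spherical corollaries: combine the generic construction (\cref{prop:generic-construction}), the automorphism-invariance lemma just proved, and the uniqueness of the degree-orthogonal Gram-Schmidt process (\cref{lem:unique-gs}). Concretely, I would verify that the family $\{p_G\}$ of \cref{def:truncate-boolean} is (a) monic with leading term $m_G$ in the $m_H$-basis, (b) degree-orthogonal under the uniform distribution on $H_n^V$, and (c) contained in $\symbool$; then uniqueness pins it down as the Gram-Schmidt output.

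First, I would identify \cref{def:truncate-boolean} as an instance of the generic construction of \cref{prop:generic-construction}, where the single-vector degree-orthogonal family $\{\chi_\alpha\}$ is the multilinear (Fourier) character basis of the Boolean cube $H_n$. The point is that the relations $d_{u,i}^2 = 1$ force the map $d_u^\alpha \mapsto \chi_\alpha(d_u)$ to annihilate every non-multilinear monomial, so the top-degree truncation of $m_G$ keeps exactly those $\sigma : E(G)\to[n]$ assigning distinct labels to edges sharing a vertex, which is precisely \cref{def:truncate-boolean}. Since each surviving term is multilinear of total degree $\sum_{e\in E(G)}\abs{e}$, the polynomial $p_G$ is homogeneous of that degree, and the discarded terms (those $\sigma$ with a repeated label at some vertex) reduce via $d_{u,i}^2=1$ to strictly lower-degree elements of $\symbool$; hence $p_G = m_G + (\text{lower-degree } m_H)$, giving the monic/leading-term condition (a).

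For (b) I would run the degree-orthogonality argument of \cref{prop:generic-construction} directly. If $\deg(p_G)\neq\deg(p_H)$ then $\sum_v \deg_G(v) \neq \sum_v \deg_H(v)$, so some vertex $u$ has $\deg_G(u)\neq\deg_H(u)$. In every term of $p_G p_H$, the $d_u$-part is $\prod_{i\in S_G} d_{u,i}\cdot\prod_{i\in S_H} d_{u,i}$ with $\abs{S_G}=\deg_G(u)$, $\abs{S_H}=\deg_H(u)$ and each $S$ multilinear; taking $\E$ over $d_u\unif H_n$ leaves $\E\prod_{i\in S_G\triangle S_H} d_{u,i}$, which vanishes since $S_G\neq S_H$. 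This is exactly the orthogonality of the Boolean single-vector characters invoked by \cref{prop:generic-construction}, and in fact yields the stronger statement $\E_{d_u}[p_G p_H]=0$. Ingredient (c) is the automorphism-invariance lemma, $p_G\in\symbool$, already established. Finally, restricting to graphs with at most $n$ hyperedges (so the $m_G$ are linearly independent on the support by the earlier basis corollary), \cref{lem:unique-gs} applies and forces $\{p_G\}$ to coincide with the degree-orthogonal Gram-Schmidt output.

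I expect the main obstacle to be bookkeeping the degeneracy caused by $d_{u,i}^2 = 1$: unlike the Gaussian case, ``degree'' and ``leading monomial'' are only well-defined after multilinear reduction, so the chief care is in checking that \cref{def:truncate-boolean} really is the top-degree truncation (condition (a)) and that the single-vector family feeding \cref{prop:generic-construction} is the genuinely orthonormal character basis rather than a degenerate one. Once these identifications are pinned down, degree-orthogonality and invariance are immediate and the corollary is a one-line appeal to \cref{lem:unique-gs}.
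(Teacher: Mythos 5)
Your proposal is correct and follows essentially the same route as the paper: the corollary is deduced by combining \cref{prop:generic-construction} (monic and degree-orthogonal), the automorphism-invariance lemma ($p_G \in \symbool$), and the uniqueness of the degree-orthogonal Gram--Schmidt output (\cref{lem:unique-gs}), exactly as in the Gaussian-case corollary. Your extra care with the Boolean degeneracy $d_{u,i}^2 = 1$ --- checking that \cref{def:truncate-boolean} really is the top-degree truncation and restricting to linearly independent $m_G$ --- fills in details the paper leaves implicit, but it is the same argument.
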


We can easily compute the inner product of $p_G$ and $p_H$ in the Boolean case. The idea is that $p_G$ and $p_H$ only contain terms where each vertex appears in each block at most once. When we multiply $p_G$ and $p_H$ together, these blocks may merge, giving us blocks where each vertex appears at most twice. If there is a block where a vertex appears only once, this block will have zero expected value, so the only terms which have nonzero expected value are the terms where in each block, each vertex either doesn't appear or appears twice, once from a $G$-edge and once from an $H$-edge. We now make this argument more precise.

\begin{definition}
    Let $\mathcal{PM}_{bool}(G, H)$ be the set of partitions of $E(G) \cup E(H)$ such that
    for each vertex and each block, the number of $G$-edges containing the vertex
   equals the number of $H$-edges. 
    
    We say that a partition $M \in \mathcal{PM}_{bool}(G, H)$ is simple if for each block, each vertex appears at most $2$ times.
\end{definition}
\begin{lemma}
    \label{lem:boolean-inner-product}
    \begin{align*}
        \E[p_Gp_H] &= \abs{\Sigma(G, H)} =  \sum_{\text{simple }M \in \mathcal{PM}_{bool}(G, H)} n^{\underline{\cycles(M)}}
    \end{align*}
    where $\Sigma(G, H)$ is the set of functions $\sigma : E(G \cup H)~\to~[n]$ such that
    \begin{enumerate}[(i)]
        \item For $e, f \in E(G)$ such that $e \cap f \neq \emptyset$, $\sigma(e) \neq \sigma(f).$ 
        \item For $e, f \in E(H)$ such that $e \cap f \neq \emptyset$, $\sigma(e) \neq \sigma(f).$ 
        \item For all $u, i$, the size of $\{u \in e \in E(G \cup H) : \sigma(e)  =i \}$ is even.
        Note that from conditions (i) and (ii) it must be size either 0 or 2.
    \end{enumerate}
\end{lemma}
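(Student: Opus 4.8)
The plan is to establish the chain of equalities in \cref{lem:boolean-inner-product} by expanding both $p_G$ and $p_H$ via \cref{def:truncate-boolean}, taking the product, and applying linearity of expectation, then identifying the surviving terms both as a count of functions $\sigma \in \Sigma(G,H)$ and as a weighted count of simple partitions in $\PM_{bool}(G,H)$.

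First I would write $p_G$ as a sum over $\sigma_G : E(G) \to [n]$ satisfying the local-injectivity condition, and $p_H$ as a sum over $\sigma_H : E(H) \to [n]$ satisfying the analogous condition. Multiplying these and pushing the expectation inside, each product term is $\E\left[\prod_{i,u} d_{u,i}^{\#\{u \in e : \sigma(e) = i\}}\right]$ where $\sigma = \sigma_G \cup \sigma_H$ ranges over $E(G \cup H)$. Since the $d_{u,i}$ are independent uniform $\pm 1$ entries, $\E[d_{u,i}^{m}]$ is $1$ if $m$ is even and $0$ if $m$ is odd. Thus a term survives exactly when, for every vertex $u$ and every label $i$, the quantity $\#\{u \in e \in E(G \cup H) : \sigma(e) = i\}$ is even. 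This is precisely condition (iii), and combined with the local-injectivity conditions (i) and (ii) inherited from the two definitions, the surviving $\sigma$ are exactly the elements of $\Sigma(G,H)$, each contributing $1$. This gives the first equality $\E[p_G p_H] = \abs{\Sigma(G,H)}$.

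For the second equality, I would translate the count $\abs{\Sigma(G,H)}$ into the matching-collection language, mirroring the second equality in \cref{lem:boolean-expectation}. Given $\sigma \in \Sigma(G,H)$, the fibers $\sigma^{-1}(i)$ induce a partition $M$ of $E(G \cup H)$. Conditions (i) and (ii) force that within a block, no two edges sharing a vertex come from the same graph, so a vertex incident to an edge in a block is incident to at most one $G$-edge and at most one $H$-edge there; condition (iii) then forces these incidences to pair up, meaning each vertex appears in a block either zero times or exactly twice (once from $G$, once from $H$). This is exactly the condition $M \in \PM_{bool}(G,H)$ together with simplicity. Conversely, given a simple $M \in \PM_{bool}(G,H)$, the number of $\sigma$ inducing $M$ is the number of ways to assign distinct labels in $[n]$ to the $\cycles(M)$ blocks, namely $n^{\underline{\cycles(M)}}$. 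Summing over $M$ yields the stated formula.

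The main obstacle I anticipate is the careful bookkeeping in the second step: verifying that the local-injectivity conditions (i), (ii) together with the evenness condition (iii) are genuinely equivalent to ``$M \in \PM_{bool}(G,H)$ and $M$ simple'', and in particular that they force every vertex incidence within a block to come in matched $G$/$H$ pairs rather than, say, two $G$-edges. The key point is that (i) rules out two $G$-edges sharing a vertex in the same block and (ii) rules out two $H$-edges, so the only way to achieve even incidence (which must be $0$ or $2$, never $4$ or more) is one $G$-edge paired with one $H$-edge — this is exactly the definition of $\PM_{bool}(G,H)$ and of simplicity. Once this equivalence is nailed down, the counting of label assignments is routine and identical to the argument already given in \cref{lem:boolean-expectation}.
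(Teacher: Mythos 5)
Your proposal is correct and follows essentially the same route as the paper: expand $p_G p_H$ via \cref{def:truncate-boolean} and use linearity of expectation (each $\pm 1$ monomial has expectation $1$ exactly when all exponents are even) to get $\abs{\Sigma(G,H)}$, then pass to the partition induced by the fibers of $\sigma$ and observe that conditions (i)--(iii) are equivalent to that partition being simple and in $\PM_{bool}(G,H)$, with $n^{\underline{\cycles(M)}}$ label assignments per partition. The paper's proof is just a terser statement of exactly this argument, so your write-up is a faithful (and more detailed) version of it.
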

\begin{proof}
    The first equality follows from expanding $p_G, p_H$ and using linearity of expectation.
    The second equality follows from looking at the partition induced by $\sigma$.
    The definition of $\Sigma(G, H)$ exactly checks that this partition is simple
    and in $\PM_{bool}(G ,H)$.
\end{proof}
\begin{corollary}
    \label{cor:boolean-variance}
    $n^{\underline{\abs{E(G)}}} \leq \E[p_G^2] \leq (2\abs{E(G)})^{2\abs{E(G)}}n^{\underline{\abs{E(G)}}}$.
\end{corollary}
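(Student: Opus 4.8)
The plan is to follow the Gaussian argument of \cref{cor:gaussian-variance} almost verbatim, now starting from the inner-product formula of \cref{lem:boolean-inner-product} specialized to $H = G$:
\[\E[p_G^2] = \sum_{\text{simple }M \in \PM_{bool}(G, G)} n^{\underline{\cycles(M)}}.\]
Every summand is non-negative, since $n^{\underline{k}} = n(n-1)\cdots(n-k+1) \geq 0$ for all integers $n, k \geq 0$ (it is strictly positive for $k \le n$ and vanishes for $k > n$). Thus both bounds reduce to understanding the possible values of $\cycles(M)$ and the number of terms, and I work in the natural regime $\abs{E(G)} \le n$ (matching the linear-independence hypotheses earlier in the section), where $n^{\underline{k}}$ is increasing in $k$ for $0 \le k \le n$.

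The key combinatorial claim is that $\max_{M} \cycles(M) = \abs{E(G)}$ over simple $M \in \PM_{bool}(G,G)$. For achievability, pair each hyperedge $e$ of the $G$-copy with its twin $e$ in the $H$-copy; the resulting block contains every vertex of $e$ exactly twice (once from each copy), so the block is closed, the collection lies in $\PM_{bool}(G, G)$, and it is simple. This produces $\abs{E(G)}$ closed blocks, so $\cycles(M) = \abs{E(G)}$, and this single term contributes $n^{\underline{\abs{E(G)}}}$; combined with non-negativity of all terms, this already gives the lower bound $\E[p_G^2] \ge n^{\underline{\abs{E(G)}}}$. For the upper bound on $\cycles$, observe that a closed block cannot consist of a single hyperedge: a lone even hyperedge of size $2k$ has each of its $2k$ vertices appearing exactly once, which is odd, so the block would fail to be closed. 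Hence every block has at least two edges, and since $E(G) \cup E(H)$ has exactly $2\abs{E(G)}$ edges, there are at most $\abs{E(G)}$ blocks.

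This gives $\cycles(M) \le \abs{E(G)}$ for every term, so by monotonicity each summand is at most $n^{\underline{\abs{E(G)}}}$, and it remains to bound the number of terms. I do this crudely: a simple $M \in \PM_{bool}(G, G)$ is in particular a set partition of the $2\abs{E(G)}$-element set $E(G) \cup E(H)$, and the number of set partitions of an $N$-element set is at most $N^N$ (assign each element a block label). Taking $N = 2\abs{E(G)}$, the number of terms is at most $(2\abs{E(G)})^{2\abs{E(G)}}$, which yields $\E[p_G^2] \le (2\abs{E(G)})^{2\abs{E(G)}}\, n^{\underline{\abs{E(G)}}}$. The only content-bearing step is the maximum-blocks claim, and the main point to get right there is twofold: that simplicity does not obstruct the extremal duplicate-pairing matching, and that the ``each closed block needs $\ge 2$ edges'' observation relies precisely on hyperedges being even. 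The counting bound is deliberately loose---the extra factor of $2$ in the exponent compared with the Gaussian case simply reflects that we partition $2\abs{E(G)}$ edges rather than matching $\deg(v)$ incidences vertex-by-vertex---so no sharpness is required.
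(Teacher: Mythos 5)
Your proof is correct and follows essentially the same route as the paper's: the same specialization of \cref{lem:boolean-inner-product} to $H=G$, the same duplicate-pairing matching for the lower bound, the same observation that every closed block needs at least two edges (hence $\cycles(M) \le \abs{E(G)}$), and the same crude bound of $(2\abs{E(G)})^{2\abs{E(G)}}$ on the number of partitions for the upper bound. The only difference is that you make explicit the monotonicity of $n^{\underline{k}}$ and the regime $\abs{E(G)} \le n$ in which it holds --- a hypothesis the paper's one-line proof leaves implicit but which is genuinely needed for the stated upper bound.
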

\begin{proof}
    Each cycle in $M$ requires at least two edges, and hence the maximum magnitude is
    bounded by $n^{\underline{\abs{E(G)}}}$. Furthermore,
    this can be achieved by matching each edge with its duplicate.
    The number of partitions of a $k$-element set is at most $k^k$, which proves the upper bound.
\end{proof}
The inner product formula implies that all inner products are non-negative, so the Boolean case does not exhibit the ``negative inner product'' abnormality of the spherical case with the $K_5$ example.

\section{Inversion Formula for Approximate Orthogonality}
\label{sec:inversion-formula}

Consider the problem of Fourier inversion: given parameters $\widehat{f}(G)$ for different graphs $G$, find an orthogonally invariant function $f:(\R^n)^V \to \R$ such that
\[ \ip{f}{p_G} = \widehat{f}(G) .\] If the $p_G$ were completely orthogonal, then the function
\[ f = \displaystyle\sum_G \widehat{f}(G) \cdot \frac{p_G}{\E p_G^2}\]
is the unique $f$ in the span of $p_G$ for given $G$. In general, let $Q$ be the square matrix indexed by graphs $G$ with entries $Q[G, H] \defeq \ip{p_G}{p_H}$. Then $f$ is given by
\[f = \displaystyle\sum_G \parens{\sum_H Q^{-1}[G, H] \cdot \widehat{f}(H)} \cdot p_G \]
provided that $Q$ is invertible.

Because of approximate orthogonality, $Q$ is close to a diagonal matrix. Therefore $Q^{-1}$ is also close to a diagonal matrix. Formally we show

\begin{lemma}\label{lem:approximate-inversion}
Suppose we are in either the Gaussian, spherical, or Boolean setting.
Let finitely many nonzero $\widehat{f}(G) \in \R$ be given where $G$ is a graph of the
appropriate type for the setting, and assume that $\abs{E(G)} = o\parens{\frac{\log n}{\log \log n}}$ for all given $G$. For sufficiently large $n$, there is a unique $f$ satisfying $\ip{f}{p_G} = \widehat{f}(G)$, and $f$ equals
\[f =  \displaystyle\sum_H \parens{\widehat{f}(H) + o(1) \cdot \max_{G \leftrightarrow H}\abs{\widehat{f}(G)}} \cdot \frac{p_H}{\E p_H^2}.\]
\end{lemma}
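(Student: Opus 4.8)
The plan is to leverage the approximate orthogonality established earlier---specifically the variance bounds (\cref{cor:gaussian-variance}, \cref{cor:spherical-variance}, \cref{cor:boolean-variance}) and the inner product upper bounds---to show that the matrix $Q$, after an appropriate diagonal rescaling, is a small perturbation of the identity. First I would normalize: let $\tilde{p}_H = p_H / \sqrt{\E p_H^2}$ so that the $\tilde{p}_H$ are unit vectors, and let $\tilde{Q}$ be the Gram matrix $\tilde{Q}[G,H] = \ip{\tilde p_G}{\tilde p_H}$, which has $1$'s on the diagonal. The key structural fact is ultra-orthogonality: $\tilde Q[G,H] = 0$ unless $G \lrarrow H$, so $\tilde Q$ is block-diagonal with one block for each degree sequence, and each block is finite-dimensional (determined by the finitely many given $G$, all of which share bounded edge count). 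It therefore suffices to work within a single block and bound the off-diagonal entries.

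The heart of the argument is to show each off-diagonal entry $\tilde Q[G,H]$ with $G \neq H$, $G \lrarrow H$ is $o(1)$. I would combine the inner product formulas with the variance lower bounds. In the Gaussian case, \cref{lem:gaussian-inner-product} gives $\ip{p_G}{p_H} = \sum_{M \in \PM(G,H)} n^{\cycles(M)}$; since $G \neq H$ but $G \lrarrow H$, no matching $M \in \PM(G,H)$ can pair every edge with a parallel partner, so $\max_M \cycles(M) \leq \abs{E(G)} - 1$, giving $\abs{\ip{p_G}{p_H}} \leq \abs{\PM(G,H)} \cdot n^{\abs{E(G)}-1}$. Dividing by $\sqrt{\E p_G^2 \cdot \E p_H^2} \geq n^{(\abs{E(G)}+\abs{E(H)})/2}$ from \cref{cor:gaussian-variance} and using $\abs{E(G)} = \abs{E(H)}$ (forced by $G \lrarrow H$ having equal total degree) yields a bound of order $\abs{\PM(G,H)} / n$, which is $n^{-1+o(1)}$ under the edge-count hypothesis. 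The spherical and Boolean cases follow the same template using \cref{cor:spherical-variance}/\cref{lem:generic-ip-ubound} and \cref{lem:boolean-inner-product}/\cref{cor:boolean-variance} respectively; the normalization factors $\prod_v n^{\rising{-d(v)}}$ cancel cleanly between numerator and denominator since $G$ and $H$ share the same degree sequence.

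Once each block of $\tilde Q$ is shown to be $I + E$ where $\|E\|_\infty = o(1)$ (entrywise, hence also in operator norm since the blocks have bounded dimension $n^{o(1)}$ coming from the finitely many inputs), invertibility follows for large $n$ via the Neumann series, and $\tilde Q^{-1} = I - E + E^2 - \cdots$ has diagonal entries $1 + o(1)$ and off-diagonal entries $o(1)$. Translating back through the normalization, $Q^{-1}[G,H] = \tilde Q^{-1}[G,H] / \sqrt{\E p_G^2 \cdot \E p_H^2}$, and substituting into $f = \sum_G (\sum_H Q^{-1}[G,H]\widehat f(H)) p_G$ gives the claimed expansion: the diagonal contribution produces the main term $\widehat f(H) \cdot p_H / \E p_H^2$, while the off-diagonal contributions, supported only on $G \lrarrow H$, produce the error term of size $o(1) \cdot \max_{G \lrarrow H}\abs{\widehat f(G)}$. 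Uniqueness is immediate from invertibility of $Q$ on the span of the relevant $p_G$.

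The main obstacle I anticipate is bookkeeping the normalization in the spherical case, where the variance is $n^{-\abs{E(G)}+o(1)}$ rather than a clean power, and where the inner product bound from \cref{lem:generic-ip-ubound} already carries an $n^{o(1)}$ slack and the shared rising-factorial prefactors must be verified to cancel exactly; I would need the degree-sequence equality $d(v) = \deg_G(v) = \deg_H(v)$ to guarantee $\prod_v n^{\rising{-d(v)}}$ is identical for $G$ and $H$ so it factors out of the ratio. A secondary subtlety is confirming that the $o(1)$ error, which a priori depends on the matrix dimension, remains $o(1)$: this is where the hypothesis $\abs{E(G)} = o(\log n / \log\log n)$ is essential, since it forces $\abs{\PM(G,H)} = \prod_v \deg(v)! = n^{o(1)}$ and keeps both the per-entry bound and the (finite but growing) block dimension subpolynomial, so the Neumann series converges with the stated error.
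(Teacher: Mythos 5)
Your proposal is correct and follows essentially the same route as the paper's proof: block-diagonality of $Q$ from degree-orthogonality, diagonal entries controlled by the variance bounds, off-diagonal entries controlled by the inner product formulas together with the key observation that $G \neq H$ forces $\cycles(M) \leq \abs{E(G)} - 1$, and the edge-count hypothesis keeping block dimensions and matching counts at $n^{o(1)}$ so that the off-diagonal entries are smaller than the diagonal by $n^{1-o(1)}$. Your explicit normalization $\tilde{p}_H = p_H/\sqrt{\E p_H^2}$ and the Neumann series argument merely make precise the inversion step that the paper states without elaboration.
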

\begin{proof}
Since the $p_G$ are orthogonal if $G \notlrarrow H$, the matrix $Q$ is block diagonal with blocks defined by $\lrarrow$.
The bound on the size of given $G$ implies that the dimension of each block is $n^{o(1)}$.

The diagonal terms are 
\[\E[p_G^2] = \begin{cases}
    n^{\abs{E(G)} + o(1)} & \text{Gaussian case (\cref{cor:gaussian-variance})}\\
    n^{-\abs{E(G)} + o(1)} & \text{Spherical case (\cref{cor:spherical-variance})}\\
    n^{\abs{E(G)} + o(1)} & \text{Boolean case (\cref{cor:boolean-variance})}
\end{cases}.\]

The off-diagonal terms with $G \leftrightarrow H$ are bounded by
\[\abs{\E[p_G p_H]} \leq \begin{cases}
    \displaystyle\max_{M \in \PM(G, H)}n^{\cycles(M) + o(1)} & \text{Gaussian case (\cref{lem:gaussian-inner-product})}\\
    \displaystyle n^{-2\abs{E(G)}}\max_{M \in \PM(G, H)}n^{\cycles(M) + o(1)} & \text{Spherical case (\cref{lem:generic-ip-ubound})}\\
    \displaystyle\max_{\text{simple }M \in \PM_{bool}(G, H)}n^{\cycles(M) + o(1)} & \text{Boolean case (\cref{lem:boolean-inner-product})}
\end{cases}.\]
When $G \neq H$, we claim that $\cycles(M)$ must be strictly less than $\abs{E(G)}$.
Any $M\in\PM(G, H)$ achieving $\abs{E(G)}$ cycles must pair up edges of $G$ and $H$, 
which shows the contrapositive.

Therefore the off-diagonal terms are smaller by a factor of $n^{1-o(1)}$ than the diagonal
term. 
Therefore $Q$ is invertible (for sufficiently large $n$) and 
\[Q^{-1}[G, H] = \begin{cases}
    \frac{1}{\E [p_G^2]} & G = H\\
    n^{o(1)-1} \cdot \frac{1}{\E [p_G^2]} & G \neq H
\end{cases}.\]
This proves the approximate Fourier inversion.
\end{proof}
\begin{remark}
    Using more careful counting, the assumption
    on $\abs{E(G)}$ can likely be improved to $\abs{E(G)} \leq n^\delta$ for
    some explicit $\delta > 0$.
\end{remark}

\bibliographystyle{alphaurl}
\bibliography{jones}

\appendix
\section{Tables of polynomials}
\label{sec:appendix}

The following table gives $p_G$ for the Gaussian case, $d_u \sim \calN(0, \Id_n)$, for all
connected graphs with up to 3 edges (up to isomorphism).

\noindent\begin{tabular}{|l|c|c|c|}
\hline
     Degree & $m_G$ & Picture of $G$ & $p_G$\\
    \hline
    0 & 1 & & 1\\
    \hline 
    1 & $x_{11}$ & \resizebox{!}{1em}{\begin{tikzpicture}
\tikzstyle{every node}+=[inner sep=0pt]
\draw [black] (35.3,-23.1) circle (3);
\draw [black] (32.62,-24.423) arc (324:36:2.25);
\end{tikzpicture}}  & $x_{11} - n$\\
    \hline
     1 & $x_{12}$ & \resizebox{!}{1em}{\begin{tikzpicture}
\tikzstyle{every node}+=[inner sep=0pt]
\draw [black] (35,-28.5) circle (3) node {1};
\draw [black] (48.6,-28.4) circle (3) node {3};
\draw [black] (45.6,-28.42) -- (38,-28.48);
\end{tikzpicture}} & $x_{12}$\\
     \hline
     2 & $x_{11}^2$ & 
     \resizebox{!}{1em}{\begin{tikzpicture}
\tikzstyle{every node}+=[inner sep=0pt]
\draw [black] (35.2,-25.1) circle (3);
\draw [black] (32.847,-26.943) arc (-24.19718:-312.19718:2.25);
\draw [black] (38.05,-24.2) arc (135.25384:-152.74616:2.25);
\end{tikzpicture}} & $x_{11}^2 - 2(n+2)x_{11} + (n+2)n$\\
     \hline
     2 & $x_{12}^2$ & 
     \resizebox{!}{0.5em}{\begin{tikzpicture}
\tikzstyle{every node}+=[inner sep=0pt]
\draw [black] (35,-28.5) circle (3) node {1};
\draw [black] (48.6,-28.4) circle (3) node {3};
\draw [black] (45.6,-28.42) -- (38,-28.48);
\draw [black] (45.6,-26.42) -- (38,-26.48);
\end{tikzpicture}} & $x_{12}^2 - x_{11} - x_{22} + n$\\
     \hline
     2 & $x_{11}x_{12}$ & 
     \resizebox{!}{0.5em}{\begin{tikzpicture}
\tikzstyle{every node}+=[inner sep=0pt]
    \draw [black] (35.2,-24.6) circle (3);
    \draw [black] (46.7,-24.6) circle (3);
    \draw [black] (32.52,-25.923) arc (-36:-324:2.25);
    \draw [black] (38.2,-24.6) -- (43.7,-24.6);
\end{tikzpicture}} & $x_{11}x_{12} - (n+2)x_{12}$\\
    \hline
    2 &$x_{12}x_{23}$ &\resizebox{!}{0.5em}{\begin{tikzpicture}
\tikzstyle{every node}+=[inner sep=0pt]
\draw [black] (36.7,-34.3) circle (3);
\draw [black] (48.8,-34.3) circle (3);
\draw [black] (23.5,-34.3) circle (3);
\draw [black] (26.5, -34.3) -- (33.7, -34.3);
\draw [black] (45.8, -34.3) -- (39.7, -34.3);
\end{tikzpicture}} & $x_{12}x_{23} - x_{13}$\\
    \hline
    3 & $x_{11}^3$ & 
    \resizebox{!}{1em}{\begin{tikzpicture}
    \draw [black] (35.2,-24.6) circle (3);
    \draw [black] (32.52,-25.923) arc (-36:-324:2.25);
    \draw [black] (34.991,-21.619) arc (211.75098:-76.24902:2.25);
    \draw [black] (38.006,-25.628) arc (97.60282:-190.39718:2.25);
\end{tikzpicture}} & $x_{11}^3 - 3(n+4)x_{11}^2 + 3(n+4)(n+2)x_{11} - (n+4)(n+2)$ \\
    \hline
    3 & $x_{12}^3$ & 
    \resizebox{!}{0.5em}{\begin{tikzpicture}
\tikzstyle{every node}+=[inner sep=0pt]
\draw [black] (25.4,-26.6) circle (3) node {1};
\draw [black] (38.1,-26.6) circle (3) node {2};
\draw [black] (28.4, -26.6) -- (35.1, -26.6);
\draw [black] (35.734,-28.42) arc (-62.42395:-117.57605:8.606);
\draw [black] (28.108,-25.327) arc (107.91498:72.08502:11.84);
\end{tikzpicture}} & $x_{12}^3 - x_{12}$ \\
    \hline
    3 & $x_{11}x_{12}^2$ & 
    \resizebox{!}{1em}{\begin{tikzpicture}
\tikzstyle{every node}+=[inner sep=0pt]
\draw [black] (48.1,-24.2) circle (3);
\draw [black] (32.2,-24.2) circle (3);
\draw [black] (29.52,-25.523) arc (324:36:2.25);
\draw [black] (34.64,-22.469) arc (118.02282:61.97718:11.727);
\draw [black] (45.535,-25.743) arc (-65.57952:-114.42048:13.024);
\end{tikzpicture}} & \thead{$x_{11}x_{12}^2 - (n+4)x_{12}^2 - x_{11}x_{22} - x_{11}^2$ \\$+ (n+2)x_{22} + 2(n+2)x_{11} - n(n+2)$} \\
    \hline
    3 & $x_{11}^2x_{12}$ & 
    \resizebox{!}{1em}{\begin{tikzpicture}
\tikzstyle{every node}+=[inner sep=0pt]
    \draw [black] (48.1,-24.2) circle (3);
    \draw [black] (33.2,-24.2) circle (3);
    \draw [black] (30.326,-23.383) arc (281.86241:-6.13759:2.25);
    \draw [black] (32.628,-27.133) arc (16.69605:-271.30395:2.25);
    \draw [black] (36.2,-24.2) -- (45.1,-24.2);
\end{tikzpicture}} & $x_{11}^2x_{12} -2(n+4)x_{11}x_{12} + (n+4)(n+2)x_{12}$ \\
    \hline
    3 & $x_{11}x_{12}x_{22}$ & 
    \resizebox{!}{0.6em}{\begin{tikzpicture}
\tikzstyle{every node}+=[inner sep=0pt]
    \draw [black] (48.3,-24.2) circle (3);
    \draw [black] (32.2,-24.2) circle (3);
    \draw [black] (35.2,-24.2) -- (45.3,-24.2);
    \draw [black] (29.52,-25.523) arc (324:36:2.25);
    \draw [black] (50.98,-22.877) arc (144:-144:2.25);
\end{tikzpicture}} & $x_{11}x_{12}x_{22} - (n+2)(x_{11}x_{12} + x_{12}x_{22}) + (n+2)^2x_{12}$ \\

    \hline
    3 & $x_{12}^2x_{23}$ & 
    \resizebox{!}{0.6em}{\begin{tikzpicture}
\tikzstyle{every node}+=[inner sep=0pt]
    \draw [black] (48.1,-24.2) circle (3);
    \draw [black] (32.2,-24.2) circle (3);
    \draw [black] (64.3,-24.2) circle (3);
    \draw [black] (34.64,-22.469) arc (118.02282:61.97718:11.727);
    \draw [black] (45.535,-25.743) arc (-65.57952:-114.42048:13.024);
    \draw [black] (61.3,-24.2) -- (51.1,-24.2);
\end{tikzpicture}}
    & $x_{12}^2x_{23} - x_{11}x_{23} - x_{22}x_{23} - 2x_{12}x_{13}  + (n+2) x_{23} $\\
    \hline
    3 & $x_{11}x_{12}x_{23}$ & 
    \resizebox{!}{0.6em}{\begin{tikzpicture}
\tikzstyle{every node}+=[inner sep=0pt]
    \draw [black] (48.1,-24.2) circle (3);
    \draw [black] (32.2,-24.2) circle (3);
    \draw [black] (64.3,-24.2) circle (3);
    \draw [black] (35.2,-24.2) -- (45.1,-24.2);
    \draw [black] (61.3,-24.2) -- (51.1,-24.2);
    \draw [black] (29.52,-25.523) arc (-36:-324:2.25);
\end{tikzpicture}}
    & $x_{11}x_{12}x_{23} - (n+2)x_{12}x_{23} - x_{11}x_{13} + (n+2)x_{13}$\\
    \hline
    3 & $x_{12}x_{22}x_{23}$ & 
    \resizebox{!}{1em}{\begin{tikzpicture}
\tikzstyle{every node}+=[inner sep=0pt]
    \draw [black] (48.1,-24.2) circle (3);
    \draw [black] (32.2,-24.2) circle (3);
    \draw [black] (64.3,-24.2) circle (3);
    \draw [black] (35.2,-24.2) -- (45.1,-24.2);
    \draw [black] (61.3,-24.2) -- (51.1,-24.2);
\draw [black] (49.423,-26.88) arc (54:-234:2.25);\end{tikzpicture}}
    & $x_{12}x_{22}x_{23} - (n+4)x_{12}x_{23} - x_{13}x_{22} + (n+2)x_{13}$\\
    \hline
    3 & $x_{12}x_{23}x_{13}$ & \resizebox{!}{1em}{\begin{tikzpicture}
\tikzstyle{every node}+=[inner sep=0pt]
\draw [black] (35,-28.5) circle (3) node {1};
\draw [black] (41.9,-16.6) circle (3) node {2};
\draw [black] (48.6,-28.4) circle (3) node {3};
\draw [black] (36.5,-25.9) -- (40.4,-19.2);
\draw [black] (43.38,-19.21) -- (47.12,-25.79);
\draw [black] (45.6,-28.42) -- (38,-28.48);
\end{tikzpicture}} & $x_{12}x_{23}x_{13} - (x_{12}^2 + x_{13}^2 + x_{23}^2) + x_{11} + x_{22} + x_{33}  - n $\\
\hline
    3 & $x_{12}x_{13}x_{14}$ & \resizebox{!}{1em}{\begin{tikzpicture}
\tikzstyle{every node}+=[inner sep=0pt] 
\draw [black] (48.3,-33.9) circle (3);
\draw [black] (32.2,-24.2) circle (3);
\draw [black] (64.3,-24.2) circle (3);
\draw [black] (48.3,-54.6) circle (3);
\draw [black] (34.77,-25.75) -- (45.73,-32.35);
\draw [black] (61.73,-25.76) -- (50.87,-32.34);
\draw [black] (48.3,-51.6) -- (48.3,-36.9);
\end{tikzpicture}} & $x_{12}x_{13}x_{14} - (x_{12}x_{34} + x_{13}x_{24} + x_{14}x_{23})$\\
    \hline
    3 & $x_{12}x_{23}x_{34}$ &
    \resizebox{!}{0.5em}{\begin{tikzpicture}
\tikzstyle{every node}+=[inner sep=0pt] 
    \draw [black] (48.3,-24.2) circle (3);
    \draw [black] (32.2,-24.2) circle (3);
    \draw [black] (64.3,-24.2) circle (3);
    \draw [black] (15.9,-24.2) circle (3);
    \draw [black] (35.2,-24.2) -- (45.3,-24.2);
    \draw [black] (61.3,-24.2) -- (51.3,-24.2);
    \draw [black] (18.9,-24.2) -- (29.2,-24.2);
\end{tikzpicture}} 
    & $x_{12}x_{23}x_{34} - (x_{12}x_{24} + x_{13}x_{34}) + x_{14}$\\
 \hline
\end{tabular}

The following table gives $p_G$ for the spherical case, $d_u \unif S^{n-1}$, for all connected, loopless graphs with up to 4 edges (up to isomorphism).

\noindent\begin{tabular}{|l|c|c|c|}
\hline
     Degree & $m_G$ & Picture of $G$ & $p_G$\\
    \hline
    0 & 1 & & 1\\
    \hline
     1 & $x_{12}$ & \resizebox{!}{1em}{\begin{tikzpicture}
\tikzstyle{every node}+=[inner sep=0pt]
\draw [black] (35,-28.5) circle (3) node {1};
\draw [black] (48.6,-28.4) circle (3) node {3};
\draw [black] (45.6,-28.42) -- (38,-28.48);
\end{tikzpicture}} & $x_{12}$\\
     \hline
     2 & $x_{12}^2$ & 
     \resizebox{!}{0.5em}{\begin{tikzpicture}
\tikzstyle{every node}+=[inner sep=0pt]
\draw [black] (35,-28.5) circle (3) node {1};
\draw [black] (48.6,-28.4) circle (3) node {3};
\draw [black] (45.6,-28.42) -- (38,-28.48);
\draw [black] (45.6,-26.42) -- (38,-26.48);
\end{tikzpicture}} & $x_{12}^2 - \frac{1}{n}$\\
    \hline
    2 &$x_{12}x_{23}$ &\resizebox{!}{0.5em}{\begin{tikzpicture}
\tikzstyle{every node}+=[inner sep=0pt]
\draw [black] (36.7,-34.3) circle (3);
\draw [black] (48.8,-34.3) circle (3);
\draw [black] (23.5,-34.3) circle (3);
\draw [black] (26.5, -34.3) -- (33.7, -34.3);
\draw [black] (45.8, -34.3) -- (39.7, -34.3);
\end{tikzpicture}} & $x_{12}x_{23} - \frac{1}{n}x_{13}$\\
    \hline
    3 & $x_{12}^3$ & 
    \resizebox{!}{0.5em}{\begin{tikzpicture}
\tikzstyle{every node}+=[inner sep=0pt]
\draw [black] (25.4,-26.6) circle (3) node {1};
\draw [black] (38.1,-26.6) circle (3) node {2};
\draw [black] (28.4, -26.6) -- (35.1, -26.6);
\draw [black] (35.734,-28.42) arc (-62.42395:-117.57605:8.606);
\draw [black] (28.108,-25.327) arc (107.91498:72.08502:11.84);
\end{tikzpicture}} & $x_{12}^3 - \frac{3}{n+2}x_{12}$ \\
    \hline
    3 & $x_{12}^2x_{23}$ & \resizebox{!}{0.6em}{\begin{tikzpicture}
\tikzstyle{every node}+=[inner sep=0pt]
    \draw [black] (48.1,-24.2) circle (3);
    \draw [black] (32.2,-24.2) circle (3);
    \draw [black] (64.3,-24.2) circle (3);
    \draw [black] (34.64,-22.469) arc (118.02282:61.97718:11.727);
    \draw [black] (45.535,-25.743) arc (-65.57952:-114.42048:13.024);
    \draw [black] (61.3,-24.2) -- (51.1,-24.2);
\end{tikzpicture}} & $x_{12}^2x_{23} -\frac{2}{n+2} x_{12}x_{13} - \frac{1}{n+2} x_{23} $\\
    \hline
    3 & $x_{12}x_{23}x_{13}$ & \resizebox{!}{1em}{\begin{tikzpicture}
\tikzstyle{every node}+=[inner sep=0pt]
\draw [black] (35,-28.5) circle (3) node {1};
\draw [black] (41.9,-16.6) circle (3) node {2};
\draw [black] (48.6,-28.4) circle (3) node {3};
\draw [black] (36.5,-25.9) -- (40.4,-19.2);
\draw [black] (43.38,-19.21) -- (47.12,-25.79);
\draw [black] (45.6,-28.42) -- (38,-28.48);
\end{tikzpicture}} & $x_{12}x_{23}x_{13} - \frac{1}{n}(x_{12}^2 + x_{13}^2 + x_{23}^2) + \frac{2}{n^2} $\\
\hline
    3 & $x_{12}x_{13}x_{14}$ & \resizebox{!}{1em}{\begin{tikzpicture}
\tikzstyle{every node}+=[inner sep=0pt] 
\draw [black] (48.3,-33.9) circle (3);
\draw [black] (32.2,-24.2) circle (3);
\draw [black] (64.3,-24.2) circle (3);
\draw [black] (48.3,-54.6) circle (3);
\draw [black] (34.77,-25.75) -- (45.73,-32.35);
\draw [black] (61.73,-25.76) -- (50.87,-32.34);
\draw [black] (48.3,-51.6) -- (48.3,-36.9);
\end{tikzpicture}} 
& $x_{12}x_{13}x_{14} - \frac{1}{n+2}(x_{12}x_{34} + x_{13}x_{24} + x_{14}x_{23})$\\
    \hline
    3 & $x_{12}x_{23}x_{34}$ &     \resizebox{!}{0.5em}{\begin{tikzpicture}
\tikzstyle{every node}+=[inner sep=0pt] 
    \draw [black] (48.3,-24.2) circle (3);
    \draw [black] (32.2,-24.2) circle (3);
    \draw [black] (64.3,-24.2) circle (3);
    \draw [black] (15.9,-24.2) circle (3);
    \draw [black] (35.2,-24.2) -- (45.3,-24.2);
    \draw [black] (61.3,-24.2) -- (51.3,-24.2);
    \draw [black] (18.9,-24.2) -- (29.2,-24.2);
\end{tikzpicture}} 
& $x_{12}x_{23}x_{34} - \frac{1}{n}(x_{12}x_{24} + x_{13}x_{34}) + \frac{1}{n^2}x_{14}$\\
    \hline
    4 & $x_{12}^4$ & 
    \resizebox{!}{0.8em}{\begin{tikzpicture}
\tikzstyle{every node}+=[inner sep=0pt]
    \draw [black] (43.8,-14.9) circle (3);
    \draw [black] (21,-14.9) circle (3);
    \draw [black] (23.958,-14.401) arc (98.14252:81.85748:59.605);
    \draw [black] (40.9,-15.665) arc (-77.42078:-102.57922:39.029);
    \draw [black] (23.137,-12.802) arc (128.68018:51.31982:14.822);
    \draw [black] (41.625,-16.959) arc (-52.26464:-127.73536:15.073);
\end{tikzpicture}} 
& $x_{12}^4 - \frac{6}{n+4}x_{12}^2 + \frac{3}{(n+4)(n+2)}$\\
    \hline
    4 & $x_{12}^3x_{23}$ & \resizebox{!}{0.5em}{\begin{tikzpicture}
\tikzstyle{every node}+=[inner sep=0pt]
\draw [black] (25.4,-26.6) circle (3) node {1};
\draw [black] (38.1,-26.6) circle (3) node {2};
\draw [black] (50.8,-26.6) circle (3) node {2};
\draw [black] (28.4, -26.6) -- (35.1, -26.6);
\draw [black] (41.1, -26.6) -- (47.8, -26.6);
\draw [black] (35.734,-28.42) arc (-62.42395:-117.57605:8.606);
\draw [black] (28.108,-25.327) arc (107.91498:72.08502:11.84);
\end{tikzpicture}} 
& $x_{12}^3x_{23} - \frac{3}{n+4} x_{12}^2x_{13} - \frac{3}{n+4}x_{12}x_{23} + \frac{3}{(n+4)(n+2)}x_{13}$\\
    \hline
    4 & $x_{12}^2x_{23}^2$ &\resizebox{!}{0.5em}{\begin{tikzpicture}
\tikzstyle{every node}+=[inner sep=0pt]
\draw [black] (25.4,-26.6) circle (3) node {1};
\draw [black] (38.1,-26.6) circle (3) node {2};
\draw [black] (51,-26.6) circle (3) node {3};
\draw [black] (35.734,-28.42) arc (-62.42395:-117.57605:8.606);
\draw [black] (28.108,-25.327) arc (107.91498:72.08502:11.84);
\draw [black] (48.58,-28.35) arc (-63.60504:-116.39496:9.066);
\draw [black] (40.834,-25.382) arc (107.18048:72.81952:12.581);
\end{tikzpicture}} & \thead{$x_{12}^2x_{23}^2 - \frac{4}{n+4}x_{12}x_{23}x_{13} - \frac{1}{n+4}(x_{12}^2 + x_{23}^2)$ \\ $ + \frac{2}{(n+4)(n+2)}x_{13}^2 + \frac{1}{(n+4)(n+2)} $}\\
    \hline
    4 & $x_{12}^2x_{23}x_{13}$ & \resizebox{!}{1.5em}{\begin{tikzpicture}
\tikzstyle{every node}+=[inner sep=0pt]
\draw [black] (34.6,-35.9) circle (3);
\draw [black] (46.3,-16) circle (3);
\draw [black] (24.2,-16) circle (3);
\draw [black] (32.022,-34.372) arc (-125.19145:-179.62423:18.975);
\draw [black] (33.21,-33.24) -- (25.59,-18.66);
\draw [black] (44.78,-18.59) -- (36.12,-33.31);
\draw [black] (27.2,-16) -- (43.3,-16);
\end{tikzpicture}}& \thead{$x_{12}^2x_{23}x_{13} - \frac{2}{n+2}(x_{12}x_{23}^2 + x_{12}x_{13}^2) - \frac{1}{n}x_{12}^3 - \frac{n-2}{(n+2)^2} x_{23}x_{13}$ \\ $ + \left(\frac{2}{(n+2)^2} + \frac{3}{n(n+2)}\right)x_{12}$}\\
    \hline
    4 & $x_{12}x_{23}x_{34}x_{14}$ & \resizebox{!}{2em}{\begin{tikzpicture}
\tikzstyle{every node}+=[inner sep=0pt]
\draw [black] (36.7,-34.3) circle (3);
\draw [black] (48.8,-34.3) circle (3);
\draw [black] (36.7,-20.8) circle (3);
\draw [black] (48.8,-20.8) circle (3);
\draw [black] (36.7,-23.8) -- (36.7,-31.3);
\draw [black] (39.7,-34.3) -- (45.8,-34.3);
\draw [black] (48.8,-31.3) -- (48.8,-23.8);
\draw [black] (45.8,-20.8) -- (39.7,-20.8);
\end{tikzpicture}} & \thead{$x_{12}x_{23}x_{34}x_{14} $\\$- \frac{1}{n}(x_{12}x_{23}x_{13} + x_{13}x_{34}x_{14} + x_{23}x_{34}x_{14} + x_{12}x_{24}x_{14})$\\ $+ \frac{1}{n^2}(x_{12}^2 + x_{13}^2 + x_{14}^2 + x_{23}^2 + x_{24}^2 + x_{34}^2) - \frac{3}{n^3} $}\\
\hline
    4 & $x_{12}x_{13}x_{14}x_{15}$ & 
\resizebox{!}{2em}{\begin{tikzpicture}
\tikzstyle{every node}+=[inner sep=0pt]
    \draw [black] (44.6,-34.4) circle (3);
\draw [black] (36.3,-24.2) circle (3);
\draw [black] (27.9,-34.4) circle (3);
\draw [black] (27.9,-13.9) circle (3);
\draw [black] (44.6,-13.9) circle (3);
\draw [black] (38.19,-26.53) -- (42.71,-32.07);
\fill [black] (42.71,-32.07) -- (42.59,-31.14) -- (41.81,-31.77);
\draw [black] (29.81,-32.08) -- (34.39,-26.52);
\fill [black] (34.39,-26.52) -- (33.5,-26.82) -- (34.27,-27.45);
\draw [black] (29.8,-16.22) -- (34.4,-21.88);
\fill [black] (34.4,-21.88) -- (34.29,-20.94) -- (33.51,-21.57);
\draw [black] (38.18,-21.86) -- (42.72,-16.24);
\end{tikzpicture}}
 & \thead{$x_{12}x_{13}x_{14}x_{15} $ \\ $- \frac{1}{n+4}(x_{12}x_{13}x_{45} + x_{12}x_{14}x_{35} + x_{12}x_{15}x_{34} + x_{13}x_{14}x_{25} $\\
 $+ x_{13}x_{15}x_{24} + x_{14}x_{15}x_{23}) $ \\ $+ \frac{1}{(n+4)(n+2)}(x_{23}{x_{45}} + x_{24}x_{35} + x_{25}x_{34})$}\\
    \hline 
    4 & $x_{12}x_{23}x_{34}x_{45}$ &  \resizebox{!}{0.5em}{\begin{tikzpicture}
\tikzstyle{every node}+=[inner sep=0pt] 
    \draw [black] (48.3,-24.2) circle (3);
    \draw [black] (32.2,-24.2) circle (3);
    \draw [black] (64.3,-24.2) circle (3);
    \draw [black] (80.3,-24.2) circle (3);
    \draw [black] (15.9,-24.2) circle (3);
    \draw [black] (35.2,-24.2) -- (45.3,-24.2);
    \draw [black] (61.3,-24.2) -- (51.3,-24.2);
    \draw [black] (67.3,-24.2) -- (77.3,-24.2);
    \draw [black] (18.9,-24.2) -- (29.2,-24.2);
\end{tikzpicture}}  & \thead{$x_{12}x_{23}x_{34}x_{45} -\frac{1}{n}(x_{13}x_{34}x_{45} + x_{12}x_{24}x_{45} + x_{12}x_{23}x_{35})$ \\ $ + \frac{1}{n^2}(x_{13}x_{35} + x_{14}x_{45} + x_{12}x_{25}) - \frac{1}{n^3}x_{15}$}\\
    \hline
    4 & $x_{12}^2x_{23}x_{34}$ & \resizebox{!}{0.6em}{\begin{tikzpicture}
\tikzstyle{every node}+=[inner sep=0pt]
    \draw [black] (48.1,-24.2) circle (3);
    \draw [black] (32.2,-24.2) circle (3);
    \draw [black] (64.3,-24.2) circle (3);
    \draw [black] (80.3,-24.2) circle (3);
    \draw [black] (34.64,-22.469) arc (118.02282:61.97718:11.727);
    \draw [black] (45.535,-25.743) arc (-65.57952:-114.42048:13.024);
    \draw [black] (61.3,-24.2) -- (51.1,-24.2);
    \draw [black] (67.3,-24.2) -- (77.1,-24.2);
\end{tikzpicture}} & \thead{$x_{12}^2x_{23}x_{34} - \frac{1}{n}x_{12}^2x_{24} - \frac{2}{n+2}x_{12}x_{13}x_{34} + \frac{2}{n(n+2)}x_{12}x_{14}$\\$ - \frac{1}{n+2}x_{23}x_{34}+ \frac{1}{n(n+2)}x_{24}$}\\
    \hline
    4 & $x_{12}x_{23}^2x_{34}$ & \resizebox{!}{0.6em}{\begin{tikzpicture}
\tikzstyle{every node}+=[inner sep=0pt]
    \draw [black] (48.1,-24.2) circle (3);
    \draw [black] (32.2,-24.2) circle (3);
    \draw [black] (16.2,-24.2) circle (3);
    \draw [black] (64.3,-24.2) circle (3);
    \draw [black] (34.64,-22.469) arc (118.02282:61.97718:11.727);
    \draw [black] (45.535,-25.743) arc (-65.57952:-114.42048:13.024);
    \draw [black] (61.3,-24.2) -- (51.1,-24.2);
    \draw [black] (19.2,-24.2) -- (29.2,-24.2);
\end{tikzpicture}} & \thead{$x_{12}x_{23}^2x_{34} - \frac{2}{n+2}(x_{12}x_{23}x_{24} + x_{13}x_{23}x_{34}) - \frac{n}{(n+2)^2}x_{12}x_{34}$ \\ $+ \frac{2}{(n+2)^2}(x_{14}x_{23} + x_{13}x_{24})$} \\
    \hline
    4 & $x_{12}x_{23}x_{34}x_{25}$ & \resizebox{!}{1.5em}{\begin{tikzpicture}
\tikzstyle{every node}+=[inner sep=0pt]
\draw [black] (48.8,-24.2) circle (3);
\draw [black] (36.3,-24.2) circle (3);
\draw [black] (23.9,-24.2) circle (3);
\draw [black] (36.3,-12.2) circle (3);
\draw [black] (61.4,-24.2) circle (3);
\draw [black] (39.3,-24.2) -- (45.8,-24.2);
\draw [black] (26.9,-24.2) -- (33.3,-24.2);
\draw [black] (36.3,-21.2) -- (36.3,-15.2);
\draw [black] (58.4,-24.2) -- (51.8,-24.2); 
\end{tikzpicture}} & \thead{$x_{12}x_{23}x_{34}x_{25} - \frac{1}{n+4}(x_{12}x_{34}x_{35} + x_{13}x_{25}x_{34} + x_{15}x_{23}x_{34})$ \\ $+\frac{1}{(n+4)(n+2)}(x_{12}x_{45} + x_{14}x_{25} + x_{15}x_{24})$}\\
    \hline
    4 & $x_{12}^2x_{13}x_{14}$ & 
    \resizebox{!}{2em}{\begin{tikzpicture}
\tikzstyle{every node}+=[inner sep=0pt]
    \draw [black] (47.7,-30.4) circle (3);
    \draw [black] (36.3,-24.2) circle (3);
    \draw [black] (24.9,-30.4) circle (3);
    \draw [black] (36.3,-12.2) circle (3);
    \draw [black] (38.94,-25.63) -- (45.06,-28.97);
    \draw [black] (27.54,-28.97) -- (33.66,-25.63);
    \draw [black] (34.822,-21.603) arc (-159.27957:-200.72043:9.62);
    \draw [black] (37.896,-14.724) arc (22.73965:-22.73965:8.992);
\end{tikzpicture}}
& \thead{$x_{12}^2x_{13}x_{14} $ \\ $- \frac{1}{n+4}(2x_{12}x_{13}x_{24} + 2x_{12}x_{14}x_{23} + x_{12}^2x_{34} + x_{13}x_{14}) $ \\ $+ \frac{1}{(n+4)(n+2)}(2x_{23}{x_{24}} + x_{34})$}\\
    \hline
    4 & $x_{12}x_{23}x_{13}x_{14}$ &
    \resizebox{!}{1.5em}{\begin{tikzpicture}
\tikzstyle{every node}+=[inner sep=0pt]
    \draw [black] (52,-24.2) circle (3);
    \draw [black] (36.3,-24.2) circle (3);
    \draw [black] (21,-32.4) circle (3);
    \draw [black] (21,-14.9) circle (3);
    \draw [black] (39.3,-24.2) -- (49,-24.2);
    \draw [black] (23.64,-30.98) -- (33.66,-25.62);
    \draw [black] (23.56,-16.46) -- (33.74,-22.64);
    \draw [black] (21,-29.4) -- (21,-17.9);
\end{tikzpicture}}
    & \thead{$x_{12}x_{23}x_{13}x_{14} - \frac{1}{n}(x_{12}^2x_{14} + x_{13}^2x_{14}) - \frac{1}{n+2}x_{14}x_{23}^2 $ \\$- \frac{1}{n+2}(x_{12}x_{23}x_{34} + x_{13}x_{23}x_{34}) $\\$+ \frac{2}{n(n+2)}x_{14} + \frac{2}{n(n+2)}(x_{12}x_{24} + x_{13}x_{34})$}\\
    \hline 
\end{tabular}

The following table gives $p_G$ for the Boolean case, $d_u \unif \{-1,+1\}^n$, for all
connected hypergraphs up to degree 6 in $d$ plus a few more cases.

\noindent\begin{tabular}{|l|c|c|}
    \hline
    Degree in $d$ & $m_G$ & $p_G$ \\
    \hline
    0 & $1$ & 1\\
    \hline
    2 & $x_{12}$ & $x_{12}$\\
    \hline
    4 & $x_{12}^2$ & $x_{12}^2 - n$\\
    \hline
    4 & $x_{12}x_{23}$ & $x_{12}x_{23} - x_{13}$\\
    \hline
    4 & $x_{1234}$ & $x_{1234}$\\
    \hline
    6 & $x_{12}^3$ & $x_{12}^3 - (3n-2)x_{12}$\\
    \hline
    6 & $x_{12}^2x_{23}$ & $x_{12}^2x_{23} -2x_{12}x_{13} - (n-2)x_{23} $\\
    \hline
    6 & $x_{12}x_{13}x_{23}$ & $x_{12}x_{13}x_{23} - (x_{12}^2 + x_{13}^2 + x_{23}^2) + 2n$\\
    \hline
    6 & $x_{12}x_{13}x_{14}$ & $x_{12}x_{13}x_{14} - (x_{12}x_{34}+ x_{13}x_{24} + x_{14}x_{23}) + 2x_{1234}$\\
    \hline
    6 & $x_{12}x_{23}x_{34}$ & $x_{12}x_{23}x_{34} - x_{13}x_{34} - x_{12}x_{24} + x_{14}$\\
    \hline
    6 & $x_{1234}x_{12}$ & $x_{1234}x_{12} - x_{34}$\\
    \hline
    6 & $x_{1234}x_{15}$ & $x_{1234}x_{15} - x_{2345}$\\
    \hline
    6 & $x_{123456}$ & $x_{123456}$\\
    \hline
    8 & $x_{12}^4$ & $x_{12}^4 - (6n - 8)x_{12}^2 + 3n^2 - 6n$\\
    \hline
    8 & $x_{12}x_{23}x_{34}x_{14}$ & \thead{$x_{12}x_{23}x_{34}x_{14} - (x_{12}x_{23}x_{13} + x_{13}x_{34}x_{14} + x_{23}x_{34}x_{14} + x_{12}x_{24}x_{14})$\\
    $+ (x_{12}^2 + x_{23}^2 + x_{34}^2 + x_{14}^2 + x_{13}^2 + x_{24}^2) - 3n$}\\
    \hline
\end{tabular}

\section{Proof of Isserlis' theorem}
\label{app:isserlis-theorem}

Isserlis' theorem is as follows:
\begin{theorem}
For any even $k$ and any fixed $d_1,\ldots,d_k$, 
\[
\E_{x \sim \calN(0,\Id_n)}\left[\prod_{i=1}^{k}{\langle{x,d_i}\rangle}\right] = \sum_{\text{perfect matchings } M}{\prod_{(i,j) \in M}{\langle{d_i,d_j}\rangle}}
\]
\end{theorem}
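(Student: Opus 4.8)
The plan is to reduce the statement to the elementary moments of a single standard Gaussian by expanding each inner product in coordinates, exactly as in the proof of \cref{lem:monom-expectation}. First I would write $\langle x, d_i\rangle = \sum_{a=1}^n x_a d_{i,a}$ and expand the product, so that
\[ \E_{x}\left[\prod_{i=1}^k \langle x, d_i\rangle\right] = \sum_{a : [k] \to [n]} \left(\prod_{i=1}^k d_{i, a(i)}\right)\, \E_x\left[\prod_{i=1}^k x_{a(i)}\right]. \]
Because $x \sim \calN(0,\Id_n)$ has i.i.d.\ coordinates, the expectation factors over coordinate values as $\prod_{b \in [n]} \E[Z^{c_b}]$ with $Z \sim \calN(0,1)$ and $c_b = \abs{a^{-1}(b)}$, which vanishes unless every $c_b$ is even and otherwise equals $\prod_b (c_b - 1)!!$.

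The key combinatorial step is that $(c_b-1)!!$ counts the perfect matchings of the set $a^{-1}(b)$, so expanding each double factorial turns the sum into a sum over pairs $(a, N)$, where $a$ assigns a coordinate to each index and $N$ is a perfect matching of $[k]$ all of whose pairs lie within a single coordinate class (each matched pair receives a common coordinate). I would then exhibit a bijection with the terms on the right-hand side: expanding $\langle d_i, d_j\rangle = \sum_b d_{i,b} d_{j,b}$, the right-hand side is precisely a sum over pairs $(N, a')$, where $N$ is a perfect matching of $[k]$ and $a'$ assigns a coordinate to each edge of $N$. Assigning each index the coordinate of its incident edge identifies these two indexing sets, and the corresponding monomials in the $d_{i,b}$ agree term by term, which finishes the proof.

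An alternative and perhaps cleaner route is induction on the (even) number $k$ via Gaussian integration by parts. Using Stein's identity $\E[x_a\, g(x)] = \E[\partial_{x_a} g(x)]$ together with linearity, one obtains the recursion
\[ \E_x\left[\prod_{i=1}^k \langle x, d_i\rangle\right] = \sum_{j=2}^{k} \langle d_1, d_j\rangle \; \E_x\left[\prod_{i \neq 1, j} \langle x, d_i\rangle\right], \]
which is exactly the recursion for the matching sum obtained by conditioning on the partner of index $1$; the induction hypothesis then closes the argument, with base case $k=2$ given by $\E[x_a x_b] = \delta_{ab}$.

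I expect the only real subtlety to be bookkeeping rather than analysis: in the first approach, verifying that the map between (coordinate assignment $+$ refining matching) and (matching $+$ per-edge coordinate assignment) is a genuine bijection preserving the associated $d$-monomial, and in the second approach, checking that summing the recursion over all choices of partner for index $1$ reproduces each perfect matching of $[k]$ exactly once. The analytic ingredient (integration by parts, or the existence of all Gaussian moments) is routine since every integrand is a polynomial.
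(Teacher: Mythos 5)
Your first argument is precisely the paper's own proof: expand each inner product in coordinates, use the fact that the Gaussian moments $(c-1)!!$ count perfect matchings within each coordinate class, and exchange the order of summation so that each perfect matching $M$ contributes $\prod_{(i,j)\in M}\langle d_i,d_j\rangle$ after summing over consistent coordinate assignments. Your alternative route via Stein's identity and induction on $k$ is also a correct standard proof, but the coordinate-expansion bijection is the argument the paper actually gives.
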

\begin{proof}
Let $d_{ia}$ be the $a$-th coordinate of $d_i$ and observe that 
\begin{align*}
\E_{x \sim \calN(0,\Id_n)}\left[\prod_{i=1}^{k}{\langle{x,d_i}\rangle}\right] &= \sum_{a_1,\ldots,a_k}{\left(\prod_{i=1}^{k}{d_{i{a_i}}}\right)\E\left[\prod_{i=1}^{k}{x_{a_i}}\right]} \\
&= \sum_{a_1,\ldots,a_k}{\left(\prod_{i=1}^{k}{d_{i{a_i}}}\right)\left|\{\text{perfect matchings }M: \forall (i,j) \in M, a_i = a_j\}\right|} \\
&= \sum_{\text{perfect matchings } M}{\sum_{a_1,\ldots,a_k: \forall (i,j) \in M, a_i = a_j}{\left(\prod_{i=1}^{k}{d_{i{a_i}}}\right)}} \\
&= \sum_{\text{perfect matchings } M}{\prod_{(i,j) \in M}{\langle{d_i,d_j}\rangle}}
\end{align*}
\end{proof}
\begin{theorem}
\label{thm:isserlis-proof}
For any even $k$, any fixed $d_1,\ldots,d_k$, and any $p \in \mathbb{Z} \cup \{0\}$,
\[
\E_{x \sim \calN(0,\Id_n)}\left[{\langle{x,x}\rangle}^{p}\prod_{i=1}^{k}{\langle{x,d_i}\rangle}\right] = \prod_{j=1}^{p}{(n + k + 2j - 2)}\sum_{\text{perfect matchings } M}{\prod_{(i,j) \in M}{\langle{d_i,d_j}\rangle}}
\]
\end{theorem}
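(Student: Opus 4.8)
The plan is to prove the statement by induction on the exponent $p$, keeping $k$ (and the vectors $d_1,\dots,d_k$) universally quantified so that the inductive hypothesis may be applied to a larger collection of vectors. The base case $p = 0$ is exactly the ordinary Isserlis theorem proved immediately above, since the empty product $\prod_{j=1}^{0}(n+k+2j-2)$ equals $1$.

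For the inductive step, I would peel off one factor of $\langle x,x\rangle$ by writing it in coordinates. Letting $e_1,\dots,e_n$ be the standard basis, we have $\langle x,x\rangle = \sum_{a=1}^{n}x_a^2 = \sum_{a=1}^n \langle x,e_a\rangle^2$, so that
\[ \E_x\Bigl[\langle x,x\rangle^{p}\prod_{i=1}^k\langle x,d_i\rangle\Bigr] = \sum_{a=1}^n \E_x\Bigl[\langle x,x\rangle^{p-1}\langle x,e_a\rangle^2\prod_{i=1}^k\langle x,d_i\rangle\Bigr]. \]
For each fixed $a$, the inner expectation is an instance of the theorem with power $p-1$ and with the collection of $k+2$ vectors $d_1,\dots,d_k,e_a,e_a$. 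Applying the inductive hypothesis, its coefficient is $\prod_{j=1}^{p-1}\bigl(n+(k+2)+2j-2\bigr) = \prod_{j=1}^{p-1}(n+k+2j)$, and the matching sum runs over perfect matchings of $[k+2]$.

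Next I would evaluate the matching sum and carry out the sum over $a$ by classifying the perfect matchings $M'$ on $[k+2]$ according to how the two new endpoints (both equal to $e_a$) are matched. If they are matched to each other, the factor $\langle e_a,e_a\rangle = 1$ is produced and $M'$ restricts to a perfect matching $M$ of $[k]$; summing over $a$ multiplies each such term by $n$. Otherwise the two copies of $e_a$ are matched to two distinct $d_i,d_j$, producing $\langle e_a,d_i\rangle\langle e_a,d_j\rangle = d_{i,a}d_{j,a}$, and summing over $a$ yields $\langle d_i,d_j\rangle$; such an $M'$ contributes exactly the weight of the perfect matching $M$ of $[k]$ containing the edge $(i,j)$. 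For each fixed $M$ there is one matching of the first type and, since each of the $k/2$ edges of $M$ can be ``opened up'' in $2$ ways, exactly $k$ matchings of the second type, so the total weight attached to $M$ is $(n+k)\prod_{(i,j)\in M}\langle d_i,d_j\rangle$. Combining this with the coefficient from the hypothesis, the overall constant is $(n+k)\prod_{j=1}^{p-1}(n+k+2j) = \prod_{j=0}^{p-1}(n+k+2j) = \prod_{j=1}^{p}(n+k+2j-2)$, which is exactly the claimed product; this completes the induction.

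The routine parts are the coordinate expansion and the telescoping of the product of constants. The step requiring the most care is the combinatorial classification: verifying that every perfect matching of $[k+2]$ with the two marked endpoints contributes, after summing over $a$, to a unique perfect matching of $[k]$, and correctly counting the multiplicity (the factor $n$ from closing the pair and the factor $k$ from the two orientations of each of the $k/2$ edges). This is where I expect the main obstacle to lie, though it is a finite bookkeeping argument rather than a genuine analytic difficulty.
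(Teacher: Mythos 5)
Your proof is correct, but it takes a genuinely different route from the paper's. The paper expands $\prod_{i=1}^{k}\langle x,d_i\rangle$ into coordinate monomials and then disposes of the $\langle x,x\rangle^{p}$ factor with a short analytic lemma: for even exponents, $\E\left[\left(\sum_{j=1}^{n}x_j^2\right)\prod_{i=1}^{n}x_i^{p_i}\right] = \left(n+\sum_{j=1}^{n}p_j\right)\E\left[\prod_{i=1}^{n}x_i^{p_i}\right]$, proved via the one-dimensional moment recursion $\E[x_j^{p_j+2}]=(p_j+1)\E[x_j^{p_j}]$; iterating this lemma $p$ times yields the factors $(n+k+2j-2)$, and the matching sum then comes from the ordinary Isserlis expansion exactly as in the $p=0$ case. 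You instead induct on $p$ while staying at the level of inner products: you absorb one factor $\langle x,x\rangle = \sum_{a}\langle x,e_a\rangle^2$ into the vector collection as two copies of $e_a$, invoke the inductive hypothesis with $k+2$ vectors (legitimate, since the theorem allows arbitrary, repeated vectors), and recover the factor $(n+k)$ combinatorially---$n$ from the matchings that pair the two copies of $e_a$ with each other, and $k = 2\cdot(k/2)$ from the matchings that open up an edge of a perfect matching of $[k]$. Your classification and multiplicity count are right, as is the telescoping $(n+k)\prod_{j=1}^{p-1}(n+k+2j)=\prod_{j=1}^{p}(n+k+2j-2)$. What the paper's route buys is brevity: its key lemma is two lines, needs no matching bookkeeping beyond the base case, and applies by linearity to any homogeneous polynomial. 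What your route buys is self-containedness: given the $p=0$ theorem you need no further Gaussian moment computations at all, the price being the more delicate recombination argument, which you identified as the crux and executed correctly.
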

\begin{proof}
To prove this, we can expand out $\prod_{i=1}^{k}{\langle{x,d_i}\rangle}$ in the same way and then use the following lemma to take care of the $\langle{x,x}\rangle$ factors.
\begin{lemma}
For any even powers $p_1,\ldots,p_n$,
\[
\E\left[\left(\sum_{j=1}^{n}{x_j^2}\right)\prod_{i=1}^{n}{x_i^{p_i}}\right] = \left(n + \sum_{j=1}^{n}{p_j}\right)\E\left[\prod_{i=1}^{n}{x_i^{p_i}}\right]
\]
\end{lemma}
\begin{proof}
Observe that for all $j \in [n]$, $\E\left[x_j^2\prod_{i=1}^{n}{x_i^{p_i}}\right] = (p_j+1)\E\left[\prod_{i=1}^{n}{x_i^{p_i}}\right]$, so 
\[
\E\left[\left(\sum_{j=1}^{n}{x_j^2}\right)\prod_{i=1}^{n}{x_i^{p_i}}\right] = \sum_{j=1}^{n}{(p_j + 1)}\E\left[\prod_{i=1}^{n}{x_i^{p_i}}\right] = \left(n + \sum_{j=1}^{n}{p_j}\right)\E\left[\prod_{i=1}^{n}{x_i^{p_i}}\right]
\]
\end{proof}
\end{proof}
\subsection{Analogue for the spherical case}
The spherical Isserlis' Theorem is as follows.
\begin{theorem}
For any even $k$ and any fixed $d_1,\ldots,d_k$, 
\[
\E_{x \sim S^{n-1}}\left[\prod_{i=1}^{k}{\langle{x,d_i}\rangle}\right] = \frac{1}{\prod_{i=1}^{\frac{k}{2}}{(n+2i-2)}}\sum_{\text{perfect matchings } M}{\prod_{(i,j) \in M}{\langle{d_i,d_j}\rangle}}
\]
\end{theorem}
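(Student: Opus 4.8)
The plan is to deduce the spherical identity from the (already-established) Gaussian Isserlis theorem by exploiting the standard decomposition of a Gaussian vector into an independent radius and direction. Concretely, if $Z \sim \calN(0,\Id_n)$ and we set $Q \defeq \ip{Z}{Z} = \norm{Z}^2$ and $v \defeq Z/\norm{Z}$, then $Q \sim \chi^2(n)$, $v \unif S^{n-1}$, and $Q$ and $v$ are \emph{independent}. This independence is the crucial structural fact I would invoke, since it is what licenses the factorization below.

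First I would use multilinearity of the inner product to write $\ip{Z}{d_i} = \sqrt{Q}\,\ip{v}{d_i}$ for each $i$, so that (using that $k$ is even, hence $k/2 \in \N$) the product becomes $\prod_{i=1}^k \ip{Z}{d_i} = Q^{k/2}\prod_{i=1}^k \ip{v}{d_i}$. Taking expectations and factoring along the independence of $Q$ and $v$ yields
\[ \E_{Z}\Big[\prod_{i=1}^k \ip{Z}{d_i}\Big] = \E_Q\!\left[Q^{k/2}\right] \cdot \E_{v}\Big[\prod_{i=1}^k \ip{v}{d_i}\Big]. \]

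Next I would pin down the chi-square moment $\E_Q[Q^{k/2}]$. Writing $Q = \sum_{j=1}^n x_j^2$ for $x \sim \calN(0,\Id_n)$, one may either quote the standard formula or rederive it by the same one-variable-at-a-time argument used in the generalized Isserlis lemma above (\cref{thm:isserlis-proof}), obtaining $\E_Q[Q^{k/2}] = n^{\rising{k/2}} = n(n+2)\cdots(n+k-2) = \prod_{i=1}^{k/2}(n+2i-2)$. Applying the Gaussian Isserlis theorem to the left-hand side and then dividing through by this factor delivers exactly the claimed identity, with $\prod_{i=1}^{k/2}(n+2i-2)$ appearing in the denominator of the spherical formula.

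I do not anticipate a serious obstacle: the argument is a clean reduction, and the only points requiring care are justifying the radius–direction independence for the Gaussian (which is what makes the factorization legitimate) and correctly evaluating the half-order-but-integer chi-square moment. For completeness I would also remark on the parity statement: when $k$ is odd both sides vanish—the left-hand side by the Gaussian theorem and the right-hand side because there is no perfect matching of an odd-size set—so the nontrivial content is confined to even $k$ as stated.
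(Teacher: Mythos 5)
Your proof is correct, but it is not the argument the paper uses for this exact statement: the appendix proves it by expanding $\prod_{i=1}^{k}\ip{x}{d_i}$ coordinate-by-coordinate and plugging in the spherical monomial moments of \cref{lem:sphericalmonomialmoments}, namely $\E_{x \unif S^{n-1}}\left[\prod_i x_i^{p_i}\right] = \prod_i (p_i!!) \big/ \prod_{i=1}^{p_{\mathrm{total}}/2}(n+2i-2)$, which are computed from scratch by iterated trigonometric integration; the double factorials are then recognized as counting the perfect matchings compatible with each coordinate labeling. What you propose instead — write $Z \sim \calN(0,\Id_n)$ as an independent radius--direction pair $(\sqrt{Q}, v)$ with $Q \sim \chi^2(n)$ and $v \unif S^{n-1}$, factor the expectation as $\E[Q^{k/2}]\cdot\E_v\left[\prod_i \ip{v}{d_i}\right]$, evaluate $\E[Q^{k/2}] = \prod_{i=1}^{k/2}(n+2i-2)$, and divide the Gaussian Isserlis identity through by this constant — is essentially identical to the proof the paper gives in \cref{sec:spherical-case} for its equivalent (reparametrized with $2k$ factors) ``Spherical Isserlis theorem'' lemma, where the same independence is packaged in the reverse direction: $\sqrt{Q}\,v \sim \calN(0,\Id_n)$ for independent $Q$ and $v$. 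The trade-off between the two routes: yours is a short reduction requiring no new integrals, but it rests on the Gaussian theorem plus radius--direction independence; the appendix's computation is self-contained and produces the spherical moment formula as a by-product, which the paper needs anyway (it underlies the moment fact and \cref{lem:sphere-monom-expectation} in \cref{sec:spherical-case}). Your evaluation of the chi-square moment and your parity remark for odd $k$ are both correct.
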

\begin{proof}
To prove this, we use the same argument as we used to prove Isserlis' Theorem except that the moments of the monomials are different. As we show in Lemma \ref{lem:sphericalmonomialmoments}, for all $j \in \mathbb{N}$ and all even $p_1,\ldots,p_j$, letting $p_{total} = \sum_{i=1}^{j}{p_i}$,
\[
\E_{x \sim S^{n-1}}\left[\prod_{i=1}^{j}{x_i^{p_i}}\right] = \frac{\prod_{i=1}^{j}{(p_i!!)}}{\prod_{i=1}^{\frac{p_{total}}{2}}{(n+2i-2)}}
\]

Let $d_{ia}$ be the $a$-th coordinate of $d_i$ and observe that 
\begin{align*}
\E_{x \sim S^{n-1}}\left[\prod_{i=1}^{k}{\langle{x,d_i}\rangle}\right] &= \sum_{a_1,\ldots,a_k}{\left(\prod_{i=1}^{k}{d_{i{a_i}}}\right)\E_{x \sim S^{n-1}}\left[\prod_{i=1}^{k}{x_{a_i}}\right]} \\
&= \frac{1}{\prod_{i=1}^{\frac{k}{2}}{(n+2i-2)}}\sum_{a_1,\ldots,a_k}{\left(\prod_{i=1}^{k}{d_{i{a_i}}}\right)\left|\{\text{perfect matchings }M: \forall (i,j) \in M, a_i = a_j\}\right|} \\
&= \frac{1}{\prod_{i=1}^{\frac{k}{2}}{(n+2i-2)}}\sum_{\text{perfect matchings } M}{\sum_{a_1,\ldots,a_k: \forall (i,j) \in M, a_i = a_j}{\left(\prod_{i=1}^{k}{d_{i{a_i}}}\right)}} \\
&= \frac{1}{\prod_{i=1}^{\frac{k}{2}}{(n+2i-2)}}\sum_{\text{perfect matchings } M}{\prod_{(i,j) \in M}{\langle{d_i,d_j}\rangle}}
\end{align*}
\end{proof}
In the remainder of this appendix, we compute the moments of the monomials (see Lemma \ref{lem:sphericalmonomialmoments}).
\subsubsection{Preliminaries}
Before proving Lemma \ref{lem:sphericalmonomialmoments}, we need a few preliminaries.
\begin{definition}
For all even $n \in \mathbb{N} \cup \{0\}$, the double factorial of $n$ is $n!! = \prod_{i=1}^{\frac{n}{2}}{(2i-1)}$.
\end{definition}
\begin{lemma}\label{shiftingpowerslemma}
For all $a,b \in \mathbb{N} \cup \{0\}$ such that $a \geq 2$, 
\[
\int_{0}^{\frac{\pi}{2}}{sin(\Theta)^a{cos(\Theta)^{b}}d{\Theta}} = \frac{a-1}{b+1}\int_{0}^{\frac{\pi}{2}}{sin(\Theta)^{a-2}{cos(\Theta)^{b+2}}d{\Theta}}
\]
\end{lemma}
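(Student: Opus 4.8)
The plan is to prove this standard trigonometric reduction by a single integration by parts, integrating the lone factor of $\sin\Theta$ against the cosine power while differentiating the remaining power of sine. Concretely, I would write the integrand as $\sin^{a-1}\Theta \cdot \bigl(\sin\Theta\,\cos^{b}\Theta\bigr)$ and set $u = \sin^{a-1}\Theta$ and $dv = \sin\Theta\,\cos^{b}\Theta\,d\Theta$. Then $du = (a-1)\sin^{a-2}\Theta\,\cos\Theta\,d\Theta$, and the substitution $w = \cos\Theta$ gives $v = -\tfrac{1}{b+1}\cos^{b+1}\Theta$, where the antiderivative is well-defined precisely because $b+1 \geq 1$.

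Applying $\int u\,dv = [uv] - \int v\,du$ over $[0,\tfrac{\pi}{2}]$, the boundary term is $\bigl[-\tfrac{1}{b+1}\sin^{a-1}\Theta\,\cos^{b+1}\Theta\bigr]_0^{\pi/2}$. This vanishes at both endpoints: at $\Theta = \tfrac{\pi}{2}$ the factor $\cos^{b+1}\Theta = 0$ since $b+1 \geq 1$, and at $\Theta = 0$ the factor $\sin^{a-1}\Theta = 0$ since $a-1 \geq 1$. The surviving term is $\tfrac{a-1}{b+1}\int_0^{\pi/2}\sin^{a-2}\Theta\,\cos\Theta\,\cos^{b+1}\Theta\,d\Theta$, and merging the two cosine factors into $\cos^{b+2}\Theta$ produces exactly the claimed identity.

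The calculation is routine, so there is no real obstacle; the one place where a hypothesis is genuinely used is the vanishing of the boundary term at $\Theta = 0$, which requires $a-1 \geq 1$, i.e. the assumption $a \geq 2$. (For $a \in \{0,1\}$ the factor $\sin^{a-1}\Theta$ either blows up or fails to vanish at the origin, which is why the statement is restricted to $a \geq 2$.) I would also note in passing that integrability and the validity of the integration-by-parts formula on the closed interval are immediate, since $u$, $v$, and the integrand are continuous throughout $[0,\tfrac{\pi}{2}]$.
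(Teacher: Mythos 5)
Your proof is correct and is essentially the paper's argument: the paper exhibits the antiderivative $-\tfrac{1}{b+1}\sin^{a-1}(\Theta)\cos^{b+1}(\Theta)$ and applies the product rule plus the fundamental theorem of calculus, which is exactly your integration by parts with the same choice of $u$ and $v$, including the same vanishing of the boundary term via $a \geq 2$ and $b+1 \geq 1$.
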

\begin{proof}
Observe that 
\[
\frac{d\left(-\frac{1}{b+1}sin(\Theta)^{a-1}{cos(\Theta)^{b+1}}\right)}{d{\Theta}} = sin(\Theta)^{a}{cos(\Theta)^{b}} - \frac{a-1}{b+1}sin(\Theta)^{a-2}{cos(\Theta)^{b+2}}
\]
which implies that 
\[
\int_{0}^{\frac{\pi}{2}}{sin(\Theta)^a{cos(\Theta)^{b}}d{\Theta}} = \frac{a-1}{b+1}\int_{0}^{\frac{\pi}{2}}{sin(\Theta)^{a-2}{cos(\Theta)^{b+2}}d{\Theta}}
\]
\end{proof}
\begin{lemma}\label{reducingpowerlemma}
For all integers $k \geq 2$, 
\[
\int_{0}^{\frac{\pi}{2}}{cos(\Theta)^{k}d{\Theta}} = \frac{k-1}{k}\int_{0}^{\frac{\pi}{2}}{cos(\Theta)^{k-2}d{\Theta}}
\] 
\end{lemma}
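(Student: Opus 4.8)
The plan is to reduce the claim directly to the just-established \cref{shiftingpowerslemma} rather than redo an integration by parts from scratch. The key observation is the Pythagorean identity $sin(\Theta)^2 + cos(\Theta)^2 = 1$, which lets me split the lower-power integrand as
\[ cos(\Theta)^{k-2} = sin(\Theta)^2 cos(\Theta)^{k-2} + cos(\Theta)^{k}. \]
Integrating both sides over $\left[0, \frac{\pi}{2}\right]$ gives
\[ \int_{0}^{\frac{\pi}{2}}{cos(\Theta)^{k-2} d\Theta} = \int_{0}^{\frac{\pi}{2}}{sin(\Theta)^2 cos(\Theta)^{k-2} d\Theta} + \int_{0}^{\frac{\pi}{2}}{cos(\Theta)^{k} d\Theta}. \]

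Next I would apply \cref{shiftingpowerslemma} with $a = 2$ (permitted since $a \geq 2$) and $b = k-2$ to the first integral on the right. The lemma converts it into $\frac{a-1}{b+1} = \frac{1}{k-1}$ times $\int_{0}^{\frac{\pi}{2}}{sin(\Theta)^{0} cos(\Theta)^{k} d\Theta} = \int_{0}^{\frac{\pi}{2}}{cos(\Theta)^{k} d\Theta}$. Substituting, the entire right-hand side collapses to
\[ \left(\frac{1}{k-1} + 1\right)\int_{0}^{\frac{\pi}{2}}{cos(\Theta)^{k} d\Theta} = \frac{k}{k-1}\int_{0}^{\frac{\pi}{2}}{cos(\Theta)^{k} d\Theta}, \]
where I use $k \geq 2$, so that $k-1 \geq 1$ and no division by zero occurs.

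Finally, solving the resulting linear relation $\int_{0}^{\frac{\pi}{2}}{cos(\Theta)^{k-2} d\Theta} = \frac{k}{k-1}\int_{0}^{\frac{\pi}{2}}{cos(\Theta)^{k} d\Theta}$ for the target integral yields exactly the claimed $\int_{0}^{\frac{\pi}{2}}{cos(\Theta)^{k} d\Theta} = \frac{k-1}{k}\int_{0}^{\frac{\pi}{2}}{cos(\Theta)^{k-2} d\Theta}$. There is essentially no hard step here; the only things to verify are the two side conditions, namely $a = 2 \geq 2$ for invoking the previous lemma and $k \geq 2$ for the arithmetic. An alternative self-contained route is integration by parts on $cos(\Theta)^{k} = cos(\Theta)^{k-1}\cdot cos(\Theta)$, with the boundary term $cos(\Theta)^{k-1} sin(\Theta)$ vanishing at both endpoints (using $k - 1 \geq 1$) and then the same identity $sin(\Theta)^2 = 1 - cos(\Theta)^2$; this reproduces the identical linear relation but duplicates the computation already packaged inside \cref{shiftingpowerslemma}, so the reduction above is the cleaner choice.
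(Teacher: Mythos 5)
Your proof is correct and is essentially the same as the paper's: both rely on the Pythagorean identity to relate $\cos(\Theta)^k$ and $\cos(\Theta)^{k-2}$, apply \cref{shiftingpowerslemma} with $a=2$, $b=k-2$ to convert $\int_{0}^{\pi/2}\sin(\Theta)^2\cos(\Theta)^{k-2}\,d\Theta$ into $\frac{1}{k-1}\int_{0}^{\pi/2}\cos(\Theta)^{k}\,d\Theta$, and then solve the resulting linear relation. The only difference is cosmetic (you rearrange the identity starting from $\cos(\Theta)^{k-2}$ rather than $\cos(\Theta)^{k}$), so nothing further is needed.
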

\begin{proof}
Observe that by Lemma \ref{shiftingpowerslemma}, 
\begin{align*}
\int_{0}^{\frac{\pi}{2}}{cos(\Theta)^{k}d{\Theta}} &= \int_{0}^{\frac{\pi}{2}}{cos(\Theta)^{k-2}d{\Theta}} - \int_{0}^{\frac{\pi}{2}}{sin(\Theta)^{2}cos(\Theta)^{k-2}d{\Theta}} \\
&= \int_{0}^{\frac{\pi}{2}}{cos(\Theta)^{k-2}d{\Theta}} - \frac{1}{k-1}\int_{0}^{\frac{\pi}{2}}{cos(\Theta)^{k}d{\Theta}}
\end{align*}
which implies that 
\[
\int_{0}^{\frac{\pi}{2}}{cos(\Theta)^{k}d{\Theta}} = \frac{k-1}{k}\int_{0}^{\frac{\pi}{2}}{cos(\Theta)^{k-2}d{\Theta}}
\] 
\end{proof}
\begin{corollary}\label{reducingpowercorollary}
For all $a,b \in \mathbb{N} \cup \{0\}$ such that $a \geq 2$, 
\[
\int_{0}^{\frac{\pi}{2}}{sin(\Theta)^a{cos(\Theta)^{b}}d{\Theta}} = \frac{a!!}{\prod_{i=1}^{\frac{a}{2}}{(b+2i)}}\int_{0}^{\frac{\pi}{2}}{cos(\Theta)^{b}d{\Theta}}
\]
\end{corollary}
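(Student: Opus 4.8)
The plan is to combine the two preceding lemmas: first use \cref{shiftingpowerslemma} to trade all of the sine factors for cosine factors, and then use \cref{reducingpowerlemma} to bring the resulting cosine power back down to $b$. Throughout, note that the statement implicitly assumes $a$ is even (otherwise the double factorial $a!!$ and the product up to $a/2$ are undefined in this paper's convention); this is exactly what makes the first step terminate at $\sin^0$ rather than $\sin^1$. Abbreviate $I(a,b) \defeq \int_0^{\pi/2}\sin(\Theta)^a\cos(\Theta)^b\,d\Theta$.

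First I would iterate \cref{shiftingpowerslemma} a total of $a/2$ times. Each application lowers the sine exponent by $2$ while raising the cosine exponent by $2$, so after $a/2$ steps the sine factor disappears and we arrive at $I(0,a+b)$. Collecting the factors $\tfrac{a-2i+1}{b+2i-1}$ that accumulate over the steps $i = 1,\dots,a/2$ gives
\[ I(a,b) = \left(\prod_{i=1}^{a/2}\frac{a-2i+1}{b+2i-1}\right) I(0,a+b) = \frac{a!!}{\prod_{i=1}^{a/2}(b+2i-1)}\,I(0,a+b), \]
since the numerator $\prod_{i=1}^{a/2}(a-2i+1) = (a-1)(a-3)\cdots 1$ is precisely $a!!$ under the definition $a!! = \prod_{i=1}^{a/2}(2i-1)$.

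Next I would iterate \cref{reducingpowerlemma}, which reduces a pure cosine integral $\int\cos^k$ by the factor $\tfrac{k-1}{k}$ each time it lowers $k$ by $2$. Applying it $a/2$ times to bring $k$ from $a+b$ down to $b$ yields $I(0,a+b) = \bigl(\prod_{i=1}^{a/2}\tfrac{b+2i-1}{b+2i}\bigr)I(0,b)$. Multiplying the two accumulated coefficients, the odd-offset product $\prod_{i=1}^{a/2}(b+2i-1)$ appears in the denominator of the first and the numerator of the second, so it cancels exactly, leaving
\[ I(a,b) = \frac{a!!}{\prod_{i=1}^{a/2}(b+2i)}\,I(0,b), \]
which is the claim since $I(0,b) = \int_0^{\pi/2}\cos(\Theta)^b\,d\Theta$.

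The individual integrations are routine, so the only real care needed is the bookkeeping of the two products and verifying that the odd-offset factors cancel, together with consistently using this paper's nonstandard convention for $a!!$. As a sanity check worth running, the case $a=2,\,b=0$ gives $\tfrac{1}{2}\int_0^{\pi/2}d\Theta = \pi/4 = \int_0^{\pi/2}\sin(\Theta)^2\,d\Theta$. A slicker but less self-contained alternative bypasses \cref{reducingpowerlemma} entirely: combining \cref{shiftingpowerslemma} with the identity $I(a-2,b+2) = I(a-2,b) - I(a,b)$ (coming from $\cos^2 = 1-\sin^2$) produces the single-step recursion $I(a,b) = \tfrac{a-1}{a+b}I(a-2,b)$, in which the cosine exponent never changes, and a one-line induction on $a$ then yields the same coefficient directly.
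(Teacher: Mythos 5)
Your proposal is correct and matches the paper's own proof: both iterate \cref{shiftingpowerslemma} exactly $a/2$ times to convert $\int_0^{\pi/2}\sin^a\cos^b$ into a multiple of $\int_0^{\pi/2}\cos^{a+b}$, then iterate \cref{reducingpowerlemma} $a/2$ times to come back down to $\int_0^{\pi/2}\cos^b$, and finish by cancelling the odd-offset product; your bookkeeping (indexing the second product from the bottom so the cancellation is immediate) is just a mildly cleaner arrangement of the same re-indexing the paper performs.
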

\begin{proof}
Observe that by Lemma \ref{shiftingpowerslemma} and Lemma \ref{reducingpowerlemma}, 
\begin{align*}
\int_{0}^{\frac{\pi}{2}}{sin(\Theta)^a{cos(\Theta)^{b}}d{\Theta}} &= \prod_{i=1}^{\frac{a}{2}}{\frac{a-2i+1}{b+2i-1}}\int_{0}^{\frac{\pi}{2}}{cos(\Theta)^{a+b}d{\Theta}} \\
&= \left(\prod_{i=1}^{\frac{a}{2}}{\frac{(2i-1)}{b+a-2i+1}}\right)\left(\prod_{i=1}^{\frac{a}{2}}{\frac{a+b-2i+1}{a+b-2i+2}}\right)\int_{0}^{\frac{\pi}{2}}{cos(\Theta)^{b}d{\Theta}} \\
&= \frac{a!!}{\prod_{i=1}^{\frac{a}{2}}{(b+2i)}}\int_{0}^{\frac{\pi}{2}}{cos(\Theta)^{b}d{\Theta}}
\end{align*}
\end{proof}
\subsubsection{Computing the Moments of Monomials}
We can now compute the moments of the monomials. As a warm-up, we start with $x_1^k$.
\begin{lemma}
For all even $k$,
\[
\E_{x \unif S^{n-1}}[x_1^k] = \frac{k!!}{\prod_{i=1}^{\frac{k}{2}}{(n+2i-2)}}
\]
\end{lemma}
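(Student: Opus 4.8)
The plan is to reduce the spherical average to the trigonometric integrals already set up in the preliminaries. First I would introduce spherical coordinates adapted to the first axis, writing a uniform $x \unif S^{n-1}$ as $x_1 = \cos\theta$ with the remaining coordinates lying on a scaled copy of $S^{n-2}$. The surface measure then factors as a constant times $\sin^{n-2}\theta\,d\theta$ over the latitude $\theta \in [0,\pi]$, so that
\[ \E_{x \unif S^{n-1}}[x_1^k] = \frac{\int_0^\pi \cos^k\theta\,\sin^{n-2}\theta\,d\theta}{\int_0^\pi \sin^{n-2}\theta\,d\theta}. \]
Because $k$ is even, the integrand in the numerator is symmetric about $\theta = \pi/2$, as is the denominator, so both integrals may be restricted to $[0,\pi/2]$ with the factor of $2$ cancelling in the ratio.

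Next I would substitute $\phi = \pi/2 - \theta$ to move the even power onto the sine factor, giving $\int_0^{\pi/2}\cos^k\theta\,\sin^{n-2}\theta\,d\theta = \int_0^{\pi/2}\sin^k\phi\,\cos^{n-2}\phi\,d\phi$ and likewise $\int_0^{\pi/2}\sin^{n-2}\theta\,d\theta = \int_0^{\pi/2}\cos^{n-2}\phi\,d\phi$. Now Corollary~\ref{reducingpowercorollary} applies with $a = k$ (even, as required) and $b = n-2$, yielding
\[ \int_0^{\pi/2}\sin^k\phi\,\cos^{n-2}\phi\,d\phi = \frac{k!!}{\prod_{i=1}^{k/2}(n-2+2i)}\int_0^{\pi/2}\cos^{n-2}\phi\,d\phi. \]
Dividing by the denominator, the common factor $\int_0^{\pi/2}\cos^{n-2}\phi\,d\phi$ cancels and leaves $k!!\big/\prod_{i=1}^{k/2}(n+2i-2)$, which is exactly the claimed value since $n-2+2i = n+2i-2$.

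The only genuinely delicate point is the derivation of the measure factor $\sin^{n-2}\theta$, i.e. the marginal density of the first coordinate; everything afterwards is bookkeeping that is handled entirely by the preliminary lemmas (Lemma~\ref{shiftingpowerslemma}, Lemma~\ref{reducingpowerlemma}, Corollary~\ref{reducingpowercorollary}). I would justify this factor either by the standard Jacobian computation for spherical coordinates or, to stay self-contained, by the fact that a uniform spherical vector is a normalized Gaussian: sampling $Z \sim \calN(0,\Id_n)$ and setting $x = Z/\norm{Z}$ makes the direction $x$ independent of $\norm{Z}$, from which the latitude density can be recovered. The secondary point to watch is the parity requirement in Corollary~\ref{reducingpowercorollary} --- it reduces the \emph{sine} power and needs it even --- which is precisely why the change of variables $\phi = \pi/2-\theta$ is essential: it places the even exponent $k$ on the sine and the possibly-odd exponent $n-2$ on the cosine, where it simply cancels.
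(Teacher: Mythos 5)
Your proposal is correct and takes essentially the same route as the paper: both reduce the spherical average to the ratio $\int_0^{\pi/2}\sin^k\phi\,\cos^{n-2}\phi\,d\phi \,/\, \int_0^{\pi/2}\cos^{n-2}\phi\,d\phi$ and finish by applying Corollary~\ref{reducingpowercorollary} with $a=k$, $b=n-2$. The only cosmetic difference is your colatitude convention $x_1=\cos\theta$, which forces the extra substitution $\phi=\pi/2-\theta$, whereas the paper parametrizes $x_1=\sin\Theta$ directly and obtains the measure factor by the same slicing/Jacobian argument you sketch.
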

\begin{proof}
Observe that if we let $A(n-1)$ be the surface area of the unit sphere in $\mathbb{R}^n$ then 
\[
\int_{x \in S^{n-1}}{x_1^k{dx}} = \int_{x_1=-1}^{1}{{x_1^k}\frac{1}{\sqrt{1 - x_1^2}}\left(\sqrt{1 - x_1^2}\right)^{n-2}A(n-2)dx_1}
\]
where the $\frac{1}{\sqrt{1 - x_1^2}}$ factor comes from the term from the slope of the curve $y = \sqrt{1 - x_1^2}$.
\[
\sqrt{1 + \left(\frac{d(\sqrt{1 - x_1^2})}{dx_1}\right)^2} = \sqrt{1 + \frac{x_1^2}{1 - x_1^2}} = \sqrt{\frac{1}{1 - x_1^2}}
\]
Plugging in $x_1 = sin(\Theta)$,
\[
\int_{x \in S^{n-1}}{x_1^k{dx}} = 2A(n-2)\int_{\Theta=0}^{\frac{\pi}{2}}{sin(\Theta)^k{cos(\Theta)^{n-2}}d{\Theta}}
\]
Following similar logic, 
\[
A(n-1) = \int_{x \in S^{n-1}}{1{dx}} = 2A(n-2)\int_{\Theta=0}^{\frac{\pi}{2}}{{cos(\Theta)^{n-2}}d{\Theta}}
\]
This implies that 
\[
E_{x \in S^{n-1}}[x_1^k] = \frac{\int_{x \in S^{n-1}}{x_1^k{dx}}}{A(n-1)} = \frac{\int_{0}^{\frac{\pi}{2}}{sin(\Theta)^k{cos(\Theta)^{n-2}}d{\Theta}}}{\int_{0}^{\frac{\pi}{2}}{{cos(\Theta)^{n-2}}d{\Theta}}}
\]
\end{proof}
\begin{lemma}\label{lem:sphericalmonomialmoments}
For all $j \in \mathbb{N}$ and all even $p_1,\ldots,p_j$, letting $p_{total} = \sum_{i=1}^{j}{p_i}$,
\[
\E_{x \sim S^{n-1}}\left[\prod_{i=1}^{j}{x_i^{p_i}}\right] = \frac{\prod_{i=1}^{j}{(p_i!!)}}{\prod_{i=1}^{\frac{p_{total}}{2}}{(n+2i-2)}}
\]
\end{lemma}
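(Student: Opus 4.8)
The plan is to prove \cref{lem:sphericalmonomialmoments} by induction on the number of variables $j$, with the statement quantified over all dimensions $n$; the base case $j = 1$ is exactly the single-variable moment computed just above. Throughout, write $q = p_2 + \cdots + p_j = p_{total} - p_1$. The inductive step peels off the first coordinate by slicing the sphere at fixed $x_1$, and I would also note a one-line alternative via the Gaussian representation at the end.

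First I would record the co-area decomposition of the uniform measure on $S^{n-1}$: if $x \unif S^{n-1}$, then conditioned on $x_1$ the normalized tail $(x_2, \dots, x_n)/\sqrt{1 - x_1^2}$ is uniform on $S^{n-2}$, and the marginal of $x_1 = \sin\Theta$ has density proportional to $\cos^{n-2}\Theta \, d\Theta$ (this is the very same Jacobian factor $\tfrac{1}{\sqrt{1-x_1^2}}(\sqrt{1-x_1^2})^{n-2}$ that appeared in the single-variable proof). Substituting $x_i = \cos\Theta \cdot y_{i-1}$ for $i \geq 2$ with $y \unif S^{n-2}$ factors the moment as
\[ \E_{x \unif S^{n-1}}\left[\prod_{i=1}^{j} x_i^{p_i}\right] = \frac{\int_0^{\pi/2} \sin^{p_1}\Theta \, \cos^{q + n - 2}\Theta \, d\Theta}{\int_0^{\pi/2} \cos^{n-2}\Theta \, d\Theta} \cdot \E_{y \unif S^{n-2}}\left[\prod_{i=2}^{j} y_{i-1}^{p_i}\right]. \]
The second factor is handled by the inductive hypothesis applied in dimension $n-1$, giving $\prod_{i=2}^j (p_i!!) \big/ \prod_{t=1}^{q/2}(n + 2t - 3)$.

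For the angular factor I would apply \cref{reducingpowercorollary} with $a = p_1$ and $b = q + n - 2$ to pull out $p_1!! \big/ \prod_{i=1}^{p_1/2}(n + q - 2 + 2i)$, leaving the ratio $\int_0^{\pi/2}\cos^{q+n-2}\Theta\,d\Theta \big/ \int_0^{\pi/2}\cos^{n-2}\Theta\,d\Theta$. Iterating \cref{reducingpowerlemma} $q/2$ times evaluates this ratio as $\prod_{t=1}^{q/2} \frac{n + 2t - 3}{n + 2t - 2}$. The numerator $\prod_{t=1}^{q/2}(n + 2t - 3)$ cancels exactly against the denominator coming from the inductive hypothesis, and the two surviving products $\prod_{i=1}^{p_1/2}(n + q - 2 + 2i)$ and $\prod_{t=1}^{q/2}(n + 2t - 2)$ reindex to the single product $\prod_{s=1}^{p_{total}/2}(n + 2s - 2)$ (the first supplies the factors with $s > q/2$ and the second those with $s \leq q/2$). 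Collecting the $p_i!!$ terms yields exactly the claimed formula.

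The main obstacle is not any single calculation but making the slicing decomposition rigorous: one must justify both the conditional uniformity of the tail on $S^{n-2}$ and the precise angular density, which is the genuinely geometric input; everything after that is bookkeeping with the two trigonometric recurrences already established, with the only delicate point being the product reindexing that produces the target denominator. As an aside, the whole statement also follows in one line from the Gaussian representation $x = Z/\|Z\|$ with $Z \sim \calN(0, \Id_n)$: independence of $Z/\|Z\|$ from $\|Z\|$ gives $\E_{S^{n-1}}[x^\alpha] = \E[Z^\alpha] \big/ \E[\|Z\|^{p_{total}}]$, where $\E[Z^\alpha] = \prod_i p_i!!$ and $\E[\|Z\|^{p_{total}}] = \prod_{s=1}^{p_{total}/2}(n + 2s - 2)$ is the $\chi^2$ moment computed inside \cref{thm:isserlis-proof}; I would likely include this as the cleaner alternative proof.
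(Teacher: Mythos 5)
Your main inductive argument is correct, and it is essentially the paper's own proof reorganized: the paper writes the full iterated spherical-coordinate integral in one shot, producing the factors $\sin(\Theta_i)^{p_i}\cos(\Theta_i)^{n-i-1+\sum_{i'>i}p_{i'}}$ for all coordinates simultaneously, and then applies \cref{reducingpowercorollary} and \cref{reducingpowerlemma} to each angle, whereas you peel off one coordinate per inductive step and apply exactly the same two lemmas to the single resulting angular integral. Your cancellation of $\prod_{t=1}^{q/2}(n+2t-3)$ against the inductive denominator and the reindexing of $\prod_{i=1}^{p_1/2}(n+q-2+2i)$ and $\prod_{t=1}^{q/2}(n+2t-2)$ into $\prod_{s=1}^{p_{total}/2}(n+2s-2)$ is precisely the telescoping the paper carries out across all coordinates in one pass; the substance is identical, with the bookkeeping packaged as induction on $j$ (quantified over all $n$, which, as you note, the inductive step requires).

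Your closing aside, however, is a genuinely different and cleaner route than the paper's proof of this particular lemma: writing $x = Z/\norm{Z}$ with $Z \sim \calN(0,\Id_n)$ and using independence of the direction from the norm gives $\E_{x \unif S^{n-1}}[x^\alpha] = \E[Z^\alpha]/\E[\norm{Z}^{p_{total}}]$, where $\E[Z^\alpha] = \prod_i p_i!!$ in the paper's double-factorial convention and $\E[\norm{Z}^{p_{total}}]$ is the $k=0$ case of \cref{thm:isserlis-proof}. This is the same radial-decomposition device the paper itself uses in \cref{sec:spherical-case} to deduce the spherical Isserlis theorem from the Gaussian one, but the appendix never applies it to the monomial moments; your version trades the Jacobian/slicing computation for the (standard, though not proved in the paper) direction--norm independence fact, and would shorten the appendix's derivation considerably.
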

\begin{proof}
Let $A(n-1)$ be the surface area of the unit sphere in $\mathbb{R}^n$, let $r_i = \sqrt{1 - \sum_{i'=1}^{i-1}{x_{i'}^2}}$, and observe that 
\[
\int_{x \in S^{n-1}}{\left(\prod_{i=1}^{j}{x_i^{p_i}}\right)dx} = 
\int_{x_1 = -1}^{1}{\int_{x_2 = -r_2}^{r_2}{\cdots \int_{x_j = -r_j}^{r_j}{\left(\prod_{i=1}^{j}{\frac{x_i^{p_i}{r_i}}{\sqrt{r_i^2 - x_i^2}}}\right){r_{j+1}^{n-j-1}}d{x_1}\ldots{d{x_j}}}}}
\]
where the $\frac{r_i}{\sqrt{r_i^2 - x_i^2}}$ term comes from the slope of the curve $y = \sqrt{r_i^2 - x_i^2}$. Setting $x_i = {r_i}sin(\Theta_i)$ and observing that for all $i \in [j]$, $r_{i+1} = {r_i}cos(\Theta_i)$, we have that 
\[
\int_{x \in S^{n-1}}{\left(\prod_{i=1}^{j}{x_i^{p_i}}\right)dx} = 
\int_{\Theta_1 = -\frac{\pi}{2}}^{\frac{\pi}{2}}{\cdots \int_{\Theta_j = -\frac{\pi}{2}}^{\frac{\pi}{2}}{\left(\prod_{i=1}^{j}{sin(\Theta_i)^{p_i}cos(\Theta_i)^{n - i - 1 + \left(\sum_{i'=i+1}^{j}{p_{i'}}\right)}}\right)d{\Theta_1}\ldots{d{\Theta_j}}}}
\]
Applying Corollary \ref{reducingpowercorollary} and Lemma \ref{reducingpowerlemma},
\begin{align*}
&\int_{\Theta_i = -\frac{\pi}{2}}^{\frac{\pi}{2}}{sin(\Theta_i)^{p_i}cos(\Theta_i)^{n - i - 1 + \left(\sum_{i'=i+1}^{j}{p_{i'}}\right)}d\Theta_i}\\
&= \frac{p_i!!}{\prod_{k = 1}^{\frac{p_i}{2}}{\left(n - i - 1 + 2k\right)}}\left(\prod_{k = 1}^{\sum_{i'=i+1}^{j}{\frac{p_{i'}}{2}}}{\frac{n - i + 2k - 2}{n - i + 2k - 1}}\right)\int_{\Theta_i = -\frac{\pi}{2}}^{\frac{\pi}{2}}{cos(\Theta_i)^{n - i - 1}d\Theta_i} \\
&= \frac{p_i!!\prod_{k = 1}^{\sum_{i'=i}^{j}{\frac{p_{i'}}{2}}}{\left(n - (i+1) - 1 + 2k\right)}}{\prod_{k = 1}^{\sum_{i'=i}^{j}{\frac{p_{i'}}{2}}}{\left(n - i - 1 + 2k\right)}}\int_{\Theta_i = -\frac{\pi}{2}}^{\frac{\pi}{2}}{cos(\Theta_i)^{n - i - 1}d\Theta_i}
\end{align*}
Putting these terms together, 
\[
\frac{\int_{x \in S^{n-1}}{\left(\prod_{i=1}^{j}{x_i^{p_i}}\right)dx}}{\int_{x \in S^{n-1}}{1dx}} = \frac{\prod_{i=1}^{j}{(p_i!!)}}{\prod_{k=1}^{\frac{p_{total}}{2}}{(n+2k-2)}}
\]
as needed. 
\end{proof}

\section{Inner product of \texorpdfstring{$K_5$}{K5} in the spherical case}
\label{app:k5}

Decompose the edges of $K_5$ into
\[G = \{1,2\}, \{2,3\},\{3,4\},\{4,5\}, \{1,5\}, \qquad \qquad H = \{1,3\}, \{1,4\}, \{2,4\}, \{2,5\}, \{3,5\}.
\]
as in \cref{fig:k5}. We compute that in the spherical setting,
\[\E[p_G p_H] = \frac{-8(n-1)(n-2)(n-4)}{n^8(n+2)^4}.\]

One way to compute $\E[p_G p_H]$ is to iteratively apply the following procedure as in \cref{ex:two-4-cycles}.
\begin{enumerate}
    \item Consider a vertex $v$ and partition the collections of matchings $M$ into cases based on the paths between the edges incident to $v$ and other edges incident to $v$ or edges incident to a previously considered vertex.
    \item For each case, find the factor given by summing over all of the matchings at $v$. For this, we assign a factor of $n$ to every cycle which contains $v$ and no previously considered vertex,
    a factor of $-2/n$ if the matching at $v$ is $G$-$G$, and a normalization factor
    of $\frac{1}{n(n+2)}.$
\end{enumerate}

We first consider vertex $5$. Let $a$ be the edge $\{5,1\}$, let $b$ be the edge $\{5,2\}$, let $c$ be the edge $\{5,3\}$, and let $d$ be the edge $\{5,4\}$. We now have the following cases
\begin{enumerate}
    \item There is a path from $a$ to $b$ and a path from $c$ to $d$. In this case, vertex $5$ gives a factor of 
    \[
    \frac{n^2}{n(n+2)} + \frac{n}{n(n+2)} - \frac{2n}{n^2(n+2)} = \frac{n^2 + n - 2}{n(n+2)} = \frac{n-1}{n}
    \]
    \item There is a path from $a$ to $c$ and a path from $b$ to $d$. This case is the same as the previous case except that $b$ and $c$ are swapped, so vertex $5$ will also give a factor of $\frac{n-1}{n}$ in this case.
    \item There is a path from $a$ to $d$ and a path from $b$ to $c$. In this case, by Lemma \ref{lem:cancellation}, everything cancels at vertex $5$.
\end{enumerate}
Thus, it is sufficient to consider the first case and multiply the answer we obtain by $2\frac{n-1}{n}$. We now consider vertex $2$. Let $a'$ be the edge $\{2,1\}$, let $b' = b$ be the edge $\{2,5\}$, let $c'$ be the edge $\{2,4\}$, and let $d'$ be the edge $\{2,3\}$.

We will always have that $b' = b$. We have the following cases for how $a'$, $c'$, and $d'$ are connected to each other and/or $a$, $c$, and $d$.
\begin{enumerate}
\item If there is a path from $a$ to $a'$, there is a path from $c$ to $d$, and there is a path from $c'$ to $d'$ then vertex $2$ gives a factor of $\frac{n}{n(n+2)} + \frac{1}{n(n+2)} - \frac{2}{n^2(n+2)} = \frac{n^2 + n - 2}{n^2(n+2)} = \frac{n-1}{n^2}$.

There are two ways for this to happen. The path from $a$ to $a'$ must always consist of the edges $\{5,1\}$ and $\{1,2\}$. If the path from $c$ to $d$ is $\{5,3\}, \{3,4\}, \{4,5\}$ then the path from $c'$ to $d'$ is $\{2,4\}, \{4,1\}, \{1,3\}, \{3,2\}$. If the path from 
$c$ to $d$ is $\{5,3\}, \{3,1\}, \{1,4\}, \{4,5\}$ then the path from $c'$ to $d'$ is $\{2,4\}, \{4,3\}, \{3,2\}$. In either case, the vertices $1$, $3$, and $4$ give a factor of $\frac{4n}{(n^2(n+2))^3}$.
Thus, the total contribution from these cases is $\frac{8(n-1)}{n^7(n+2)^3} = \frac{8(n-1)(n+2)}{n^7(n+2)^4}$.
\item If there is a path from $a$ to $c'$, there is a path from $c$ to $d$, and there is a path from $a'$ to $d'$ then by Lemma \ref{lem:cancellation}, everything cancels at vertex $2$.
\item If there is a path from $a$ to $d'$, there is a path from $c$ to $d$, and there is a path from $a'$ to $c'$ then vertex $2$ gives a factor of $\frac{n-1}{n^2}$. 

There is only one way for this to happen. The path from $c$ to $d$ must consist of the edges $\{5,3\}, \{3,4\}, \{4,5\}$. The path from $a$ to $d'$ must consist of the edges $\{5,1\}, \{1,3\},\{3,2\}$. The path from $a'$ to $c'$ must consist of the edges $\{2,1\},\{1,4\},\{4,2\}$. In this case, the vertices $1$, 
$3$, and $4$ give a factor of $\frac{-2n^2}{(n^2(n+2))^3}$. Thus, the total contribution from this case is $\frac{-2n(n-1)}{n^7(n+2)^3} = \frac{-2n(n-1)(n+2)}{n^7(n+2)^4}$.
\item If there is a path from $a$ to $a'$, a path from $c$ to $c'$, and a path from $d$ to $d'$, vertex $2$ gives a factor of $\frac{1}{n(n+2)}$.

There are two ways for this to happen. The path from $a$ to $a'$ must always consist of the edges $\{5,1\}$ and $\{1,2\}$. If the path from $c$ to $c'$ is $\{5,3\}, \{3,4\}, \{4,2\}$ then the path from $d$ to $d'$ is $\{5,4\}, \{4,1\}, \{1,3\}, \{3,2\}$. In this case, the vertices $1$, $3$, and $4$ 
give a factor of $\frac{-2n^2}{(n^2(n+2))^3}$.  If the path from $c$ to $c'$ is $\{5,3\}, \{3,1\}, \{1,4\}, \{4,2\}$ then the path from $d$ to $d'$ is $\{5,4\}, \{4,3\}, \{3,2\}$. In this case, the vertices $1$, $3$, and $4$ give a factor of 
give a factor of $\frac{-8}{(n^2(n+2))^3}$.

Thus, the total contribution from these cases is $\frac{-2n^2 - 8}{n^7(n+2)^4}$.
\item  If there is a path from $a$ to $a'$, a path from $c$ to $d'$, and a path from $d$ to $c'$, vertex $2$ gives a factor of $\frac{1}{n(n+2)}$.

There are $7$ ways for this to happen depending on which path (if any) the cycle $\{1,3\}, \{3,4\}, \{4,1\}$ is incorporated into.

If the path from $c$ to $d'$ consists of the edges $\{5,3\}, \{3,2\}$, the path from $d$ to $c'$ consists of the edges $\{5,4\}, \{4,2\}$, and the path from $a$ to $a'$ consists of the edges $\{5,1\}, \{1,3\}, \{3,4\}, \{4,1\}, \{1,2\}$ or $\{5,1\}, \{1,3\}, \{3,4\}, \{4,1\}, \{1,2\}$ then the vertices $1$, $3$, and $4$ give a factor of $\frac{n^3}{(n^2(n+2))^3}$.

If the path from $a$ to $a'$ consists of the edges $\{5,1\}, \{1,2\}$ and the path from $d$ to $c'$ consists of the edges $\{5,4\}, \{4,2\}$, there are two choices for how the cycle is incorporated into the path from $c$ to $d'$. If the path from $c$ to $d'$ consists of the edges $\{5,3\}, \{3,1\}, \{1,4\}, \{4,3\}, \{3,2\}$ then the vertices $1$, $3$, and $4$ give a factor of $\frac{4n}{(n^2(n+2))^3}$. If the path from $d$ to $c'$ consists of the edges $\{5,3\}, \{3,4\}, \{4,1\}, \{1,3\}, \{3,2\}$ then the vertices $1$, $3$, and $4$ give a factor of $\frac{-2n^2}{(n^2(n+2))^3}$.

If the path from $a$ to $a'$ consists of the edges $\{5,1\}, \{1,2\}$ and the path from $c$ to $d'$ consists of the edges $\{5,3\}, \{3,2\}$, there are two choices for how the cycle is incorporated into the path from $d$ to $c'$. If the path from $d$ to $c'$ consists of the edges $\{5,4\}, \{4,1\}, \{1,3\}, \{3,4\}, \{4,2\}$ then the vertices $1$, $3$, and $4$ give a factor of $\frac{-2n^2}{(n^2(n+2))^3}$. If the path from $d$ to $c'$ consists of the edges $\{5,4\}, \{4,3\}, \{3,1\}, \{1,4\}, \{4,2\}$ then the vertices $1$, $3$, and $4$ give a factor of $\frac{4n}{(n^2(n+2))^3}$.

If the path from $a$ to $a'$ consists of the edges $\{5,1\}, \{1,2\}$, the path from $c$ to $d'$ consists of the edges $\{5,3\}, \{3,2\}$, the path from $d$ to $c'$ consists of the edges $\{5,4\}, \{4,2\}$ and we have an additional cycle $\{1,3\}, \{3,4\}, \{4,1\}$ then the vertices $1$, $3$, and $4$ give a factor of 
$\frac{-2n^3}{(n^2(n+2))^3}$. Note that this cancels with the two terms where the cycle is incorporated into the path from $a$ to $a'$.

Thus, the total contribution of these terms is 
\[
\frac{2n^3 + 2(-2n^2 + 4n) -2n^3}{n^7(n+2)^4} = \frac{-4n^2 + 8n}{n^7(n+2)^4}
\]
\item If there is a path from $a$ to $c'$, a path from $c$ to $a'$, and a path from $d$ to $d'$, vertex $2$ gives a factor of $\frac{-2}{n^2(n+2)}$.

There is only one way for this to happen. The path from $a$ to $c'$ must consist of the edges $\{5,1\}, \{1,4\},\{4,2\}$. The path from $c$ to $a'$ must consist of the edges $\{5,3\}, \{3,1\},\{1,2\}$. The path from $d$ to $d'$ must consist of the edges $\{5,4\}, \{4,3\}, \{3,2\}$. In this case, the vertices $1$, $3$, and $4$ give a factor of 
$\frac{4n}{(n^2(n+2))^3}$. 

Thus, the total contribution from this case is $\frac{-8}{n^7(n+2)^4}$.
\item If there is a path from $a$ to $c'$, a path from $c$ to $d'$, and a path from $d$ to $a'$, vertex $2$ gives a factor of $\frac{-2}{n^2(n+2)}$.

There are two ways for this to happen.  The path from $c$ to $d'$ must always consist of the edges $\{5,3\}$ and $\{3,2\}$. If the path from $a$ to $c'$ is $\{5,1\}, \{1,4\}, \{4,2\}$ then the path from $d$ to $a'$ is $\{5,4\}, \{4,3\}, \{3,1\}, \{1,2\}$. In this case, the vertices $1$, $3$, and $4$ 
give a factor of $\frac{-2n^2}{(n^2(n+2))^3}$.  If the path from $a$ to $c'$ is $\{5,1\}, \{1,3\}, \{3,4\}, \{4,2\}$ then the path from $d$ to $d'$ is $\{5,4\}, \{4,3\}, \{3,2\}$. In this case, the vertices $1$, $3$, and $4$ give a factor of 
give a factor of $\frac{n^3}{(n^2(n+2))^3}$.

Thus, the total contribution from these cases is $\frac{-2n^2 + 4n}{n^7(n+2)^4}$.
\item If there is a path from $a$ to $d'$, a path from $c$ to $a'$, and a path from $d$ to $c'$, vertex $2$ gives a factor of $\frac{1}{n(n+2)}$.

There are two ways for this to happen. The path from $d$ to $c'$ must always consist of the edges $\{5,4\}$ and $\{4,2\}$. If the path from $a$ to $d'$ is $\{5,1\}, \{1,3\}, \{3,2\}$ then the path from $c$ to $a'$ is $\{5,3\}, \{3,4\}, \{4,1\}, \{1,2\}$. In this case, the vertices $1$, $3$, and $4$ 
give a factor of $\frac{n^3}{(n^2(n+2))^3}$.  If the path from $a$ to $d'$ is $\{5,1\}, \{1,4\}, \{4,3\}, \{3,2\}$ then the path from $c$ to $a'$ is $\{5,3\}, \{3,1\}, \{1,2\}$. In this case, the vertices $1$, $3$, and $4$ give a factor of 
give a factor of $\frac{-2n^2}{(n^2(n+2))^3}$.

Thus, the total contribution from these cases is $\frac{n^3 - 2n^2}{n^7(n+2)^4}$.
\item If there is a path from $a$ to $d'$, a path from $c$ to $c'$, and a path from $d$ to $a'$, vertex $2$ gives a factor of $\frac{1}{n(n+2)}$.

There is only one way for this to happen. The path from $c$ to $c'$ must consist of the edges $\{5,3\}, \{3,4\},\{4,2\}$. The path from $a$ to $d'$ must consist of the edges $\{5,1\}, \{1,3\},\{3,2\}$. The path from $d$ to $a'$ must consist of the edges $\{5,4\}, \{4,1\}, \{1,2\}$. In this case, the vertices $1$, $3$, and $4$ give a factor of 
$\frac{n^3}{(n^2(n+2))^3}$. 

Thus, the total contribution from this case is $\frac{n^3}{n^7(n+2)^4}$.
\end{enumerate}
Adding everything together, we obtain
\begin{align*}
&\frac{8(n-1)(n+2) -2n(n-1)(n+2) + (-2n^2 - 8) + (-4n^2 + 8n) + (-8) + (-2n^2 + 4n) }{n^7(n+2)^4} \\
& \quad + \frac{(n^3 - 2n^2) + n^3}{n^7(n+2)^4}\\
&= \frac{-4n^2 + 24n - 32}{n^7(n+2)^4} = \frac{-4(n-2)(n-4)}{n^7(n+2)^4}
\end{align*}
Multiplying this by $2\frac{n-1}{n}$, our final answer is $\frac{-8(n-1)(n-2)(n-4)}{n^8(n+2)^4}$.

\section{Formulas using the partition poset}
\label{app:boolean-mobius}

Since $p_G$ is automorphism-invariant, it can be expressed in terms of the $m_G$ basis.
However, the coefficients on the $m_G$ are not that easy to work with.

\begin{definition}[(Routing definition)]
\label{def:routing-boolean}
    \[p_G =  \sum_{M \in \Lam^c_G} \mu(\emptyset, M) n^{\cycles(M)} m_{\route(M)}\]
where $\mu$ is the M\"obius function of the poset $\Lam^c_G$ (to be defined
in~\cref{def:connected-poset}).
\end{definition}
These coefficients can be computed by an inclusion-exclusion recurrence (which is in truth computing the M\"obius function
of a poset based on $G$, see \cite[Chapter 3]{StanleyBookVol1} for an overview
of poset combinatorics).
\begin{example}
Let $G$ have four parallel edges $\{s,t\}$.
Using~\cref{def:truncate-boolean},
\[p_G = \sum_{\text{injective }\sigma : [4] \to [n]} d_{s,\sigma(1)}d_{t,\sigma(1)}d_{s,\sigma(2)}d_{t,\sigma(2)}d_{s,\sigma(3)}d_{t,\sigma(3)}d_{s,\sigma(4)}d_{t,\sigma(4)}.\]
The leading monomial is $\ip{d_s}{d_t}^4$. Subtract off terms where two edges are given the same label,
\[ \binom{4}{2}n\ip{d_s}{d_t}.\]
This puts a coefficient of $-2$ on terms with three equal labels and one unequal label. Add them back,
\[ \binom{4}{1}2\ip{d_s}{d_t}^2 .\]
The coefficient of terms with two pairs of two equal labels is $-1$. Add them back,
\[ 3 \cdot n^2.\]
Finally, the coefficient of the all-equal label is now 6. Subtract out
\[ 6n.\]

In total,
\[ p_G = \ip{d_s}{d_t}^4 - \binom{4}{2}n \ip{d_s}{d_t}^2 + \binom{4}{1} 2 \ip{d_s}{d_t}^2 + 3n^2 - 6n.\]
\end{example}

\begin{definition}
\label{def:poset}
    Let $\Lambda_G$ be the partition poset of $E(G)$: the elements are partitions of $E(G)$, and
    $M_1 \psdleq M_2$ if $M_1$ refines $M_2$.
\end{definition}
$\Lam_G$ has a unique minimal element (the partition into singletons, to be denoted by $\emptyset$) and a unique maximal element
(the partition with one block).
The example above corresponds to the standard partition poset of $\{1,2,3,4\}$~\cite[Example~3.10.4]{StanleyBookVol1}.

Observe that the poset refinement relation exactly captures how several coefficients on $m_G$
can contribute to the same coefficient on $d$. Stated formally, for $\sigma : E(G) \to [n]$ let $M(\sigma)$ denote the partition of $E(G)$ induced by $\sigma$. Then
\begin{fact}
    Given $\lam : \Lam_G \to \R$, 
    \[\sum_{M \in \Lam_G} \lam(M) n^{\cycles(M)} m_{\route(M)} =  \sum_{\sigma: E(G) \to [n]} \left(\sum_{M' \psdleq M(\sigma)} \lam(M') \right) \prod_{e = \{e_1, \dots, e_{2k}\} \in E(G)} d_{e_1, \sigma(e)}\cdots d_{e_{2k}, \sigma(e)}. \]
\end{fact}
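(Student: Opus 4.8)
The plan is to verify the identity at the level of \emph{functions} on the Boolean hypercube, using $d_{v,i}^2 = 1$ throughout, rather than as a formal polynomial identity. This is the right framing because the two sides genuinely differ as formal polynomials: the left carries the symmetric-difference hyperedges of $\route(M)$, while the right carries the full edges of $G$. The strategy is to rewrite both sides as one and the same explicit $[n]$-labeled sum and then read off matching coefficients term by term.

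First I would absorb the factor $n^{\cycles(M)}$ into the monomial. Since $x_\emptyset = \sum_{i=1}^n 1 = n$, and a closed block is exactly one whose associated hyperedge is empty, we have $n^{\cycles(M)} m_{\route(M)} = \prod_{B \in M} x_{\partial B}$, where the product now ranges over \emph{all} blocks $B$ of $M$ (closed blocks included, each silently supplying a factor $n$), and $\partial B$ denotes the set of vertices incident to an odd number of edges of $B$. Expanding each $x_{\partial B} = \sum_{i \in [n]} \prod_{v \in \partial B} d_{v,i}$ then converts this into a sum over block-labelings $\tau : M \to [n]$,
\[ n^{\cycles(M)} m_{\route(M)} = \sum_{\tau : M \to [n]} \prod_{B \in M} \prod_{v \in \partial B} d_{v,\tau(B)}. \]

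The key step is a parity computation. For a single block $B$ with label $i = \tau(B)$, using $d_{v,i}^2 = 1$ I would rewrite $\prod_{e \in B}\prod_{v \in e} d_{v,i} = \prod_v d_{v,i}^{\#\{e \in B\,:\, v \in e\}} = \prod_{v \in \partial B} d_{v,i}$, since $v \in \partial B$ precisely when $v$ lies in an odd number of edges of $B$. Setting $\sigma_{M,\tau}(e) \defeq \tau(B)$ for $e$ in block $B$, this shows that, as a function, $\prod_{B}\prod_{v\in\partial B} d_{v,\tau(B)}$ equals the full monomial $\prod_{e}\prod_{v \in e} d_{v,\sigma_{M,\tau}(e)}$ from the right-hand side. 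Thus the left-hand side equals $\sum_{M \in \Lam_G}\lam(M)\sum_{\tau:M \to [n]} \prod_e \prod_{v\in e} d_{v,\sigma_{M,\tau}(e)}$, now written in the same ``full edge'' monomials as the right-hand side.

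Finally I would re-index the double sum over $(M,\tau)$ by the induced label function $\sigma = \sigma_{M,\tau}$. For fixed $\sigma$, a pair $(M,\tau)$ induces it precisely when $\sigma$ is constant on every block of $M$ — that is, when $M$ refines $M(\sigma)$, i.e.\ $M \psdleq M(\sigma)$ — and then $\tau$ is forced by $\tau(B) = \sigma(e)$ for any $e \in B$. This yields a bijection between such pairs and $\{M : M \psdleq M(\sigma)\}$, so the coefficient of $\prod_e\prod_{v\in e} d_{v,\sigma(e)}$ collected on the left is exactly $\sum_{M \psdleq M(\sigma)} \lam(M)$, matching the right-hand side. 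I expect the obstacle to be bookkeeping rather than conceptual: one must keep the identity at the function level (the reduction $d_{v,i}^2=1$ is essential in the parity step and is what makes the two ``shapes'' of monomial agree), and check the reindexing bijection exactly, including the degenerate closed blocks with $\partial B = \emptyset$ where the convention $x_\emptyset = n$ correctly reproduces $n^{\cycles(M)}$.
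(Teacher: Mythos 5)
Your proof is correct and is essentially the paper's own argument: the paper states this as a Fact justified only by the preceding observation that the refinement relation captures how labelings $\sigma$ collapse onto partitions, and your expansion of each $x_{\partial B}$ followed by the reindexing bijection $(M,\tau) \leftrightarrow (M \psdleq M(\sigma), \sigma)$ is exactly the formalization of that observation. Your care about working at the function level (using $d_{v,i}^2 = 1$ in the parity step) and the convention $x_\emptyset = n$ for closed blocks is also consistent with how the paper treats these monomials (cf.\ its remark on deleting repeated vertices from hyperedges).
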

Inverting the coefficients can be done by M\"obius inversion.
\begin{lemma}
\label{lem:mobius-inversion}
    Let $\kappa \subseteq \Lam_G$ be downward-closed. Let $\overline{\kappa} = \{\emptyset\} \cup (\Lam_G \setminus \kappa)$. Then
    \[\sum_{\substack{\sigma : E(G) \to [n]\\ \text{s.t. }M(\sigma) \in \kappa}} \prod_{e = \{e_1, \dots, e_{2k}\} \in E(G)}d_{e_1, \sigma(e)} \cdots d_{e_{2k}, \sigma(e)} = 
    \sum_{M \in \overline{\kappa}} \mu(\emptyset, M) n^{\cycles(M)} m_{\route(M)}\]
    where $\mu$ is the M\"obius function of $\overline{\kappa}$.
\end{lemma}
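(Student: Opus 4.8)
The plan is to derive this directly from the Fact stated immediately above, by applying it with a single well-chosen coefficient function $\lam$. Throughout I take $\kappa$ to be nonempty, so that being downward-closed it contains the minimum $\emptyset$ of $\Lam_G$ (this is the only relevant case, and the one arising for $p_G$). I would set $\lam(M) = \mu(\emptyset, M)$ for $M \in \overline{\kappa}$ and $\lam(M) = 0$ for $M \in \Lam_G \setminus \overline{\kappa}$, where $\mu$ is the M\"obius function of the poset $\overline{\kappa}$ equipped with the order induced from $\Lam_G$; note $\emptyset$ is the global minimum of $\Lam_G$ and lies in $\overline{\kappa}$, so it is the minimum of $\overline{\kappa}$ and $\mu(\emptyset, \cdot)$ is well defined. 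With this $\lam$, the left-hand side of the Fact is exactly the right-hand side of the lemma, so the whole statement reduces to checking that for each $\sigma$ the inner coefficient $\sum_{M' \psdleq M(\sigma)} \lam(M')$ equals the indicator $[M(\sigma) \in \kappa]$.

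Since $\lam$ is supported on $\overline{\kappa}$, that inner coefficient is $\sum_{M' \psdleq M(\sigma),\, M' \in \overline{\kappa}} \mu(\emptyset, M')$, i.e.\ a sum over the order ideal below $M(\sigma)$ inside $\overline{\kappa}$, and I would split on whether $M(\sigma) \in \kappa$. If $M(\sigma) \in \kappa$, then by downward-closure every $M' \psdleq M(\sigma)$ also lies in $\kappa$, so the only element of $\overline{\kappa} = \{\emptyset\} \cup (\Lam_G \setminus \kappa)$ that is $\psdleq M(\sigma)$ is the adjoined minimum $\emptyset$; the sum collapses to $\mu(\emptyset,\emptyset) = 1$. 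If instead $M(\sigma) \notin \kappa$, then $M(\sigma) \in \Lam_G \setminus \kappa \subseteq \overline{\kappa}$ and, because $\emptyset \in \kappa$, we have $M(\sigma) > \emptyset$; hence $\sum_{M' \psdleq M(\sigma),\, M' \in \overline{\kappa}} \mu(\emptyset, M')$ is precisely the sum of $\mu(\emptyset, \cdot)$ over the interval $[\emptyset, M(\sigma)]$ of $\overline{\kappa}$, which vanishes by the defining recurrence $\sum_{\emptyset \psdleq M' \psdleq M}\mu(\emptyset,M') = [M = \emptyset]$ of the M\"obius function. In both cases the inner coefficient equals $[M(\sigma) \in \kappa]$.

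Substituting back, the right-hand side of the Fact becomes $\sum_{\sigma : M(\sigma) \in \kappa} \prod_{e = \{e_1,\dots,e_{2k}\} \in E(G)} d_{e_1,\sigma(e)}\cdots d_{e_{2k},\sigma(e)}$, which is exactly the left-hand side of the lemma, completing the argument. I expect the only genuinely delicate point to be the bookkeeping around re-adjoining $\emptyset$: one must check that $\emptyset$ remains the minimum of $\overline{\kappa}$ and that the downward intervals in $\overline{\kappa}$ are exactly the two index sets used above, so that the M\"obius recurrence applies verbatim. Everything else is forced by the downward-closure of $\kappa$ together with the defining property of $\mu$.
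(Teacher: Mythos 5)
Your proof is correct and takes essentially the same route as the paper: the paper's proof is just the one-line appeal to M\"obius inversion applied to the Fact preceding the lemma, and your argument carries out exactly that inversion, choosing $\lam$ supported on $\overline{\kappa}$ with values $\mu(\emptyset,\cdot)$ and checking via the defining recurrence of $\mu$ that the resulting coefficient equals the indicator $[M(\sigma)\in\kappa]$. Your explicit remark that $\kappa$ must contain $\emptyset$ (equivalently, be nonempty) is a sensible clarification of a degenerate case the paper leaves implicit.
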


\begin{definition}
\label{def:connected-poset}
    Let $\Lam^c_G$ be the set of partitions of $E(G)$ such that either the partition is $\emptyset$ or
    there is a block and a vertex such that there are at least 2 edges of the block containing the vertex.
\end{definition}


$\Lam_G^c = \overline{\kappa}$ where $\kappa$ is the (downward-closed) defining set of
partitions for $p_G$ in \cref{def:truncate-boolean}.
In summary from \cref{lem:mobius-inversion} we have,
\begin{lemma}
    \cref{def:routing-boolean} is equivalent to \cref{def:truncate-boolean}.
\end{lemma}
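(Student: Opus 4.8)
The plan is to obtain the equivalence as a direct instance of the Möbius inversion established in \cref{lem:mobius-inversion}, so that all the genuine content reduces to identifying the correct downward-closed family of partitions and reading off both sides of that lemma.

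First I would pin down the set $\kappa \subseteq \Lam_G$ corresponding to the summation condition in \cref{def:truncate-boolean}. A function $\sigma : E(G) \to [n]$ contributes to that sum exactly when $\sigma(e) \neq \sigma(f)$ for every pair of distinct edges $e, f$ sharing a vertex $u$. Since the induced partition $M(\sigma)$ groups together precisely the edges receiving a common label, two edges lie in the same block of $M(\sigma)$ iff they share a $\sigma$-value; hence the condition holds iff no block of $M(\sigma)$ contains two edges incident to a common vertex. Accordingly I take $\kappa$ to be the set of partitions of $E(G)$ in which no block has two edges meeting at a shared vertex, so that $M(\sigma) \in \kappa$ is equivalent to the defining constraint on $\sigma$.

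Next I would verify the two hypotheses needed to apply the lemma. The family $\kappa$ is downward-closed in $\Lam_G$: refining a partition only splits its blocks into smaller pieces, so if no block of $M$ has two edges at a common vertex, the same is trivially true of every refinement of $M$. I would then confirm that $\overline{\kappa} = \{\emptyset\} \cup (\Lam_G \setminus \kappa)$ coincides with $\Lam^c_G$. By \cref{def:connected-poset}, $\Lam^c_G$ consists of $\emptyset$ together with exactly those partitions possessing some block and some vertex with at least two edges of the block at that vertex; this is precisely $\emptyset$ adjoined to the complement of $\kappa$, matching the claim made in the text that $\Lam^c_G = \overline{\kappa}$.

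With $\kappa$ fixed, \cref{lem:mobius-inversion} yields an identity whose left-hand side, $\sum_{\sigma : M(\sigma) \in \kappa} \prod_{e} d_{e_1,\sigma(e)} \cdots d_{e_{2k},\sigma(e)}$, is verbatim \cref{def:truncate-boolean}, and whose right-hand side, $\sum_{M \in \overline{\kappa}} \mu(\emptyset, M)\, n^{\cycles(M)}\, m_{\route(M)} = \sum_{M \in \Lam^c_G} \mu(\emptyset, M)\, n^{\cycles(M)}\, m_{\route(M)}$, is exactly \cref{def:routing-boolean}; equating the two descriptions finishes the argument. I expect no serious obstacle: the only substantive step is the translation of the ``no two same-vertex edges share a label'' condition into the poset-theoretic complement $\Lam^c_G$, and once that dictionary is established the Möbius inversion supplies all of the coefficient arithmetic.
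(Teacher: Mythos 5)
Your proposal is correct and follows essentially the same route as the paper: the paper's own proof consists precisely of observing that the defining set $\kappa$ of partitions in \cref{def:truncate-boolean} is downward-closed, that $\overline{\kappa} = \Lam^c_G$, and then invoking \cref{lem:mobius-inversion}. Your write-up simply makes explicit the translation between the label-distinctness condition on $\sigma$ and the block condition on $M(\sigma)$, which the paper leaves implicit.
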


There is also a ``Boolean Isserlis theorem''.
The Boolean Isserlis theorem allows us to compute for fixed $d_{ij} \in \R^n$ and $v \unif \{-1,+1\}^n$,
\[ 
    \E_{v \unif \{-1,+1\}^n}[\langle v, d_{1,1},\dots , d_{1,\ell_1}\rangle \langle v, d_{2,1}, \dots, d_{2,\ell_2}\rangle 
    \cdots \langle v, d_{k,1}, \dots, d_{k,\ell_k}\rangle].
\]
Combining together the $d_{ij}$ component-wise, it suffices to compute
\[
    \E_{v \unif \{-1,+1\}^n}[\langle v, d_{1}\rangle \langle v, d_{2}\rangle 
    \cdots \langle v, d_{k}\rangle].
\]

Let $\Lam_{2k}$ be the partition poset for $2k$ elements and $\Lam^e_{2k}$ the subset
where each block has even size.
\begin{lemma}[(Boolean Isserlis theorem)]
    For fixed $d_{i} \in \R^n$ and $v \unif \{-1,+1\}^n$,
    \begin{align*}
     &\E_{v \unif \{-1,+1\}^n}[\langle v, d_{1}\rangle \langle v, d_{2}\rangle 
    \cdots \langle v, d_{2k}\rangle] = \sum_{\substack{\sigma : [2k] \to [n]\\\text{s.t. }\forall i.\; \abs{\sigma^{-1}(i)}\text{ even}}} 
    d_{1,\sigma(1)}\cdots d_{2k,\sigma(2k)}\\
    = & \sum_{M \in \Lambda^e_{2k}} \lam(M) \prod_{\{e_1, \dots, e_\ell\} \in M} \langle d_{e_1}, \dots, d_{e_\ell}\rangle
    \end{align*}
    where $\lam :\Lam_{2k}^e \to \R$ is defined by the recursion
    \begin{align*}
        \lam({\text{perfect matching}}) = 1,\\
        \sum_{M' \psdleq M} \lam(M') = 1.
    \end{align*}
\end{lemma}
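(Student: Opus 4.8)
The plan is to prove the two equalities separately: the first by a direct moment computation, and the second by grouping terms according to the partition poset $\Lam^e_{2k}$ and applying a M\"obius-style inversion, exactly paralleling the passage between \cref{def:truncate-boolean} and \cref{def:routing-boolean}.

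For the first equality, I would expand each factor as $\langle v, d_i\rangle = \sum_{a=1}^n v_a d_{i,a}$ and multiply out, obtaining
\[ \prod_{i=1}^{2k}\langle v, d_i\rangle = \sum_{\sigma:[2k]\to[n]} \Big(\prod_{a=1}^n v_a^{\abs{\sigma^{-1}(a)}}\Big)\prod_{i=1}^{2k} d_{i,\sigma(i)}. \]
Taking the expectation over $v\unif\{-1,+1\}^n$ and using that the coordinates are i.i.d.\ with $\E[v_a^m]=1$ for $m$ even and $0$ for $m$ odd, every monomial in $v$ survives with coefficient $1$ precisely when every fiber size $\abs{\sigma^{-1}(a)}$ is even, and vanishes otherwise. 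This is exactly the first claimed sum.

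For the second equality, the key step is the identity, for any partition $M$ of $[2k]$,
\[ \prod_{\{e_1,\dots,e_\ell\}\in M}\langle d_{e_1},\dots,d_{e_\ell}\rangle = \sum_{\sigma:\, M\psdleq M(\sigma)} \prod_{i=1}^{2k} d_{i,\sigma(i)}, \]
which follows by distributing the product of generalized inner products: a choice of summation index for each block is the same as a function $\sigma$ that is constant on the blocks of $M$, i.e.\ one with $M\psdleq M(\sigma)$. Writing $F(M') \defeq \sum_{\sigma:\,M(\sigma)=M'}\prod_i d_{i,\sigma(i)}$ for the contribution of the functions inducing exactly $M'$, the right-hand side becomes $\sum_{M'\psdgeq M}F(M')$. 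The crucial structural observation is that if $M$ is even (all blocks of even size) then every coarsening $M'\psdgeq M$ is again even, since its blocks are unions of the even blocks of $M$; hence only even $M'$ ever appear.

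Combining, I would rewrite the claimed right-hand side as
\[ \sum_{M\in\Lam^e_{2k}}\lam(M)\sum_{\substack{M'\psdgeq M\\ M'\in\Lam^e_{2k}}} F(M') = \sum_{M'\in\Lam^e_{2k}} F(M')\Big(\sum_{\substack{M\psdleq M'\\ M\in\Lam^e_{2k}}}\lam(M)\Big), \]
and the defining recursion $\sum_{M\psdleq M'}\lam(M)=1$ collapses the inner sum to $1$, leaving $\sum_{M'\in\Lam^e_{2k}}F(M')$, which is exactly the first sum (the $\sigma$ with all fibers even are precisely those with $M(\sigma)\in\Lam^e_{2k}$). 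The one point that needs checking is that the recursion genuinely determines a unique $\lam$ on $\Lam^e_{2k}$: the minimal elements are the perfect matchings, for which $\sum_{M\psdleq M'}\lam(M)=\lam(M')$ forces the stated base case $\lam=1$, and $\lam$ is then determined on all of $\Lam^e_{2k}$ by downward induction, $\lam(M)=1-\sum_{M'\psdleq M,\,M'\neq M}\lam(M')$ (equivalently by M\"obius inversion on $\Lam^e_{2k}$). I expect this bookkeeping --- pinning down the poset direction and confirming closure of evenness under coarsening, so that all sums stay inside $\Lam^e_{2k}$ --- to be the only real subtlety; the rest is formal.
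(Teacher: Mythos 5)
Your proof is correct and follows the same overall strategy as the paper's: the first equality by expansion and linearity of expectation, and the second by grouping monomials according to the induced partition $M(\sigma)$ and matching coefficients over the partition poset. The execution of the second step differs in a way worth noting. The paper extends $\lam$ by zero to all of $\Lam_{2k}$, imposes $\sum_{M' \psdleq M}\lam(M') = 1$ for even $M$ and $\sum_{M' \psdleq M}\lam(M') = 0$ for non-even $M$, and solves this system by M\"obius inversion on the full partition lattice, remarking only at the end that the solution coincides with the stated recursion. You never leave $\Lam^e_{2k}$: your observation that evenness is preserved under coarsening (equivalently, that a partition with an odd block admits no even refinement) shows the ``$=0$'' conditions are vacuous, so the recursion alone---shown to be well-defined by induction upward from the perfect matchings---yields the identity via a direct exchange of summation. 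Your route is more self-contained (no M\"obius function of $\Lam_{2k}$ is needed), and it makes explicit the structural fact that the paper uses only implicitly when asserting that its M\"obius-inversion formula vanishes on non-even partitions: that sum is empty for exactly the reason you identify. What the paper's route buys in exchange is a closed-form expression for $\lam(M)$ as $\sum_{M' \psdleq M,\, M' \in \Lam^e_{2k}} \mu(M', M)$, rather than only a recursive description.
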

\begin{proof}
The first equality is by expanding and applying linearity of expectation.
The second equality sums $\lam(M)$ in a way such that each coefficient on the relevant $d$ is 1 or 0.
The function $\lam : \Lam_{2k} \to \R$ needs to satisfy
\[\forall M \in \Lam_{2k}^e. \; \sum_{M' \psdleq M} \lam(M') = 1, \qquad \qquad \forall M \in \Lam_{2k} \setminus \Lam_{2k}^e. \; \sum_{M' \psdleq M} \lam(M') = 0.\]
There is a unique function, given by M\"obius inversion on $\Lam_{2k}$,
\[\lam(M) = \begin{cases}
\displaystyle\sum_{\substack{M' \psdleq M:\\M' \in \Lam_{2k}^e}} \mu(M', M) & M \in \Lam^e_{2k} \\
0 & M \not \in \Lam_{2k}^e 
\end{cases}\]
where $\mu(M', M)$ is the M\"obius function for $\Lam_{2k}$ given in~\cite[Example 3.10.4]{StanleyBookVol1}.
Equivalently, $\lam(M)$ must equal the recursion given in the lemma statement.
\end{proof}

Examples:
\begin{align*}
    \E [\ip{v}{d_1} \ip{v}{d_2}]  = & \ip{d_1}{d_2}\\
\E [\ip{v}{d_1} \ip{v}{d_2}\ip{v}{d_3}\ip{v}{d_4}] =& \ip{d_1}{d_2}\ip{d_3}{d_4} + \ip{d_1}{d_3}\ip{d_2}{d_4} + \ip{d_1}{d_4}\ip{d_2}{d_3} \\
    &- 2\langle d_1, d_2,d_3, d_4\rangle\\
    \E[\prod_{i=1}^6 \ip{v}{d_i}] =& \ip{d_1}{d_2}\ip{d_3}{d_4}\ip{d_5}{d_6} + \text{15 terms of type (2,2,2)}\\
    &- 2\ip{d_1}{d_2}\langle d_3, d_4, d_5, d_6 \rangle  + \binom{6}{2}\text{ terms of type (2,4)}\\
    &+ 16\langle d_1, d_2, d_3, d_4, d_5, d_6 \rangle \\
\E[\prod_{i=1}^8 \ip{v}{d_i}] =& \ip{d_1}{d_2}\ip{d_3}{d_4}\ip{d_5}{d_6}\ip{d_7}{d_8} + \text{105 terms of type (2,2,2,2)}\\
    &- 2\ip{d_1}{d_2}\ip{d_3}{d_4}\langle d_5, d_6, d_7, d_8\rangle + 3\binom{8}{4}\text{ terms of type (2,2,4)}\\
    &+ 16\ip{d_1}{d_2}\langle d_3, d_4, d_5, d_6, d_7, d_8\rangle + \binom{8}{2}\text{ terms of type (2,6)}\\
    &+ 4\langle d_1, d_2, d_3, d_4\rangle \langle d_5, d_6, d_7, d_8\rangle + \frac{1}{2}\binom{8}{4}\text{ terms of type (4,4)}\\
    &+ 8\langle d_1, d_2, d_3, d_4, d_5, d_6, d_7, d_8  \rangle
\end{align*}

\end{document}